\documentclass[10pt,a4paper]{article}
\usepackage{amsmath}
\usepackage{amsfonts}
\usepackage{comment}
\usepackage{amssymb}
\usepackage{enumerate}
\usepackage{authblk}
\usepackage[left=2.5cm,right=2.5cm,top=2.5cm,bottom=2.5cm]{geometry}
\usepackage{amsthm}
\usepackage{tikz}
\usetikzlibrary{decorations.pathreplacing,calligraphy}

\newcommand{\Exterior}{\mathchoice{{\textstyle\bigwedge}}%
  {{\bigwedge}}%
    {{\textstyle\wedge}}%
   {{\scriptstyle\wedge}}}
\newcommand{\C}{\mathbb{C}}

\newcommand{\R}{\mathbb{R}}

\newcommand{\CP}{\mathbb{CP}}

\newcommand{\ra}{\rightarrow}

 \newcommand{\Real}{{\operatorname{Re}}}
 
\usepackage{graphicx}
\usepackage{tikz-cd} 
\setlength\parskip{0.0pt plus 1.0pt}
\theoremstyle{definition}

\theoremstyle{plain}
\newtheorem{theorem}{Theorem}[section]
\newtheorem{corollary}[theorem]{Corollary}
\newtheorem{lemma}[theorem]{Lemma}

\newtheorem{prop}[theorem]{Proposition}
\newtheorem{proposition}[theorem]{Proposition}

\newtheorem{remark}[theorem]{Remark}
\theoremstyle{plain}
\newtheorem{maintheorem}{Theorem}
 
\author{Jakob Stein}
\affil{University College London, jakob.stein.19@ucl.ac.uk}
\date{}
\title{$SU(2)^2$-invariant gauge theory on asymptotically conical Calabi-Yau 3-folds}

\begin{document}

\maketitle
\begin{abstract}
We give a complete description of the behaviour of Calabi-Yau instantons and monopoles with an $SU(2)^2$-symmetry, on Calabi-Yau 3-folds with asymptotically conical geometry and $SU(2)^2$ acting with co-homogeneity one. We consider gauge theory on the smoothing and small resolution of the conifold, and on the canonical bundle of $\CP^1 \times \CP^1$, with their known asymptotically conical co-homogeneity one Calabi-Yau metrics, and find new one-parameter families of invariant instantons. We also entirely classify the relevant moduli-spaces of instantons and monopoles satisfying a natural curvature decay condition, and show that the expected bubbling phenomena occur in  these families of instantons.    
\end{abstract}

\section{Introduction}

On Calabi-Yau (CY) 3-folds, Riemannian manifolds of real dimension six with holonomy contained in $SU(3)$, one can define analogues of the Bogomol'nyi monopole and anti-self-dual equations found in dimensions three and four. These analogues are referred to as the \textit{Calabi-Yau monopole equations}, and the \textit{Calabi-Yau instanton equations} respectively, and are defined with the additional data of a fixed principal bundle over the 3-fold. It has been conjectured in  \cite{donaldson1996gauge}, \cite{donaldson2009gauge} that one might be able to use the moduli-space of their solutions to construct invariants of the underlying 3-folds. 

However, the analysis of the resulting partial differential equations can be difficult in general: in this article, we focus on the $SU(2)^2$-symmetric setting for both the Calabi-Yau structure and the bundle, so that these equations can be written as ordinary differential equations of a single variable. We further restrict our investigations to bundles with structure group of rank one, and asymptotically conical (AC) geometries, i.e. the Calabi-Yau is diffeomorphic to a cone outside of a compact subset\footnote{the presence of continuous symmetries for full holonomy $SU(3)$ necessitates the manifold be non-compact.}, with a Riemannian metric converging in a suitable sense to the corresponding metric cone. 

In this setting, we are able to explicitly describe the aforementioned moduli-spaces and their structure, which will hopefully shed more light on the wider subject: for example, how the underlying geometry interacts with the gauge theory, and how to construct local models for solutions on compact three-folds with isolated conical singularities, and their de-singularisations. We also prove the relevant bubbling and compactness theorems for these moduli-spaces, in line with the general picture laid out in \cite{donaldson1996gauge}, \cite{donaldson2009gauge}.  

Another related motivation for studying $SU(2)^2$-invariant AC Calabi-Yau gauge theory are the potential applications to an analogous notion of gauge theory on Riemannian manifolds with  exceptional holonomy group $G_2$. In particular, Foscolo, Haskins, and Nordstr\"{o}m have recently constructed one-parameter families of $SU(2)^2$-invariant $G_2$-metrics in \cite{Foscolo2018} with asymptotically locally conical (ALC) geometry at infinity, i.e. outside of a compact subset, these metrics converge to a circle fibration over a Calabi-Yau cone, with fibres of some length $\ell >0$. These families collapse to invariant AC Calabi-Yau 3-folds in the limit as $\ell \rightarrow 0$, and one may be able to use the invariant Calabi-Yau gauge theory constructed in this article to construct invariant $G_2$ instantons near the collapsed limit. Far from the collapsed limit, the families of $G_2$-metrics have AC geometry when $\ell \rightarrow \infty$, see \cite{goncalog2} for partial results comparing invariant instantons on $G_2$-metrics with ALC and AC asymptotics.      
\subsection{Overview}
Let $\left( M^6 ,\omega, \Omega \right)$ be a Calabi-Yau $3$-fold, where $\omega$ denotes the K\"{a}hler form, and $\Omega$ denotes the holomorphic volume form on $M$ such that $\tfrac{1}{3!}\omega^3 = \tfrac{1}{2^3} \Omega \wedge \bar{\Omega}$, and fix a principal $G$-bundle $P \rightarrow M$ with a compact semi-simple Lie group $G$. The pair $(A, \Phi)$, for some connection $A$ on $P$ and non-trivial $\Phi\in \Omega^0 \left( \text{Ad}P \right)$, is called a (Calabi-Yau) \textit{monopole} if it satisfies the \textit{Calabi-Yau monopole equations}: 
\begin{align} \label{CYmonopoleI}
F_{A} \wedge \omega^2 = 0&  &F_{A} \wedge \mathrm{Re}\Omega = * d_A \Phi 
\end{align}
where $*$ is the Hodge star of the Riemannian metric defined by $\left( \omega, \Omega \right)$, $F_A \in \Omega^2 \left( \text{Ad}P \right)$ is the curvature of $A$, and $d_A : \Omega^0 \left( \text{Ad}P \right) \rightarrow \Omega^1 \left( \text{Ad}P \right)$ is the induced covariant derivative. We refer to the section $\Phi$ as the \textit{Higgs field} for this monopole. 

We obtain the \textit{Calabi-Yau instanton equations} for a connection $A$ on $P$ by setting $\Phi=0$ in \eqref{CYmonopoleI}: 
\begin{align} \label{CYinstantonI}
F_{A} \wedge \omega^2 = 0&  &F_{A} \wedge \mathrm{Re} \Omega= 0 
\end{align}
Note that if a monopole $(A, \Phi)$ has $d_A \Phi = 0$, then $A$ is also a (Calabi-Yau) \textit{instanton}, i.e. a solution of \eqref{CYinstantonI}, but the existence of a non-trivial parallel section $\Phi$ implies that the connection $A$ must be reducible in this case.  

In terms of the complex geometry, the first condition of \eqref{CYinstantonI} says that $F_A$ is a primitive Lie algebra-valued two-form, while the second condition says it is of type $\left(1,1 \right)$. Furthermore, it is not hard to prove that instantons minimize the Yang-Mills energy functional $\mathcal{YM}(A):= \int_M \lvert F_A \rvert^2$ on the space of connections on $P$, where we take point-wise norms with respect to some ad-invariant metric on the Lie algebra of $G$. Hence, on the special unitary frame bundle $SU(E)$ of some hermitian vector bundle $E$ over $M$ with trivial determinant bundle, a Calabi-Yau instanton is also referred to as a Hermitian Yang-Mills (HYM) connection in the literature. 

When $G$ is abelian, \eqref{CYmonopoleI} and  \eqref{CYinstantonI} are linear equations, and the moduli-space of their solutions are well-understood: if $G=U(1)$ for example, any two-form on $M$ which is an instanton in the sense of \eqref{CYinstantonI} is harmonic, with the converse holding when $\left(M, \omega, \Omega \right)$ is compact with full holonomy $SU(3)$. Even when $M$ is non-compact, every $U(1)$-bundle carries a unique Calabi-Yau instanton with decaying curvature when $\left( \omega, \Omega \right)$ is asymptotically conical with full holonomy $SU(3)$ by \cite[Theorem 5.12]{FoscoloALC}. For non-abelian gauge groups, one usually seeks a description of the gauge theory starting with the next simplest case of rank one groups: in particular, without loss of generality\footnote{gauge group $SO(3)$ always lifts to $SU(2)$ in our invariant setting, see Proposition \ref{so3prop} of the appendix.}, we will always take the gauge group to be $SU(2)$ in this article. 

The Calabi-Yau monopole equations were first studied in the $SU(2)^2$-invariant setting in \cite{Oliveira2015}, for the asymptotically conical metric of Stenzel \cite{stenzel1993} on the cotangent bundle of $S^3$. There is a one-parameter family of invariant monopoles for this metric, with a single explicit instanton \cite[Theorem 2]{Oliveira2015} appearing at the boundary of this family when the Higgs field vanishes.  In this article, we will independently verify this claim using new proofs, as well as proving that the explicit instanton actually lies in a one-parameter family of invariant instantons for this metric. We also describe the invariant gauge theory for all the other known examples of $SU(2)^2$-invariant AC Calabi-Yau metrics, namely the metric of Candelas and de la Ossa \cite{CANDELAS1990246} on the small resolution of the conifold $\mathcal{O}(-1)\oplus \mathcal{O}(-1)$ over $\mathbb{CP}^1$, and the metric of Calabi \cite{calabi}, later generalised to a one-parameter family by Pando-Zayas and Tseytlin \cite{PandoZayas}, on the canonical bundle $\mathcal{O}(-2,-2)$ of $\mathbb{CP}^1 \times \mathbb{CP}^1$. Here, the action of $SU(2)^2$ on these 3-folds is with \textit{co-homogeniety one} i.e. the generic $SU(2)^2$-orbit is co-dimension one. 

To understand the various components of these gauge-theoretic moduli-spaces, we must first discuss fixing the asymptotic behaviour of solutions. A natural condition on a solution of \eqref{CYmonopoleI} on an asymptotically conical metric is that it converges at the conical end to some model solution $\left(A_\infty, \Phi_\infty \right)$ on the cone, pulled back from the link. Concretely, up to double-cover, the metrics on $T^* S^3$, $\mathcal{O}(-1)\oplus \mathcal{O}(-1)$, and $\mathcal{O}(-2,-2)$ all share the same asymptotic cone with link $S^2 \times S^3$, and we have the following potential invariant model solutions: either we have a flat connection with a trivial Higgs field, or we have the unique non-flat invariant instanton pulled back from $S^2 \times S^3$, which we denote $A^\mathrm{can}$, with a possibly non-trivial parallel Higgs field\footnote{these monopoles $\left( A^\mathrm{can} , \Phi_m \right)$ pulled back from the link actually come in a one-parameter family, parametrised by the \textit{mass} $m = | \Phi_m |>0$. This is explained in more detail in \cite{Oliveira2015}.}. 

On these asymptotically conical metrics, we find four distinct possibilities for any invariant irreducible solution $\left(A, \Phi \right)$ to the monopole equations: (i) the curvature does not decay quadratically, i.e. $t^2 |F_A|$ is unbounded as $t\rightarrow \infty$, where $t$ is the radial parameter of the cone, and we take norms with respect to the cone metric, (ii)  $\left(A, \Phi \right)$ is an  invariant monopole which is asymptotic to $A^\mathrm{can}$ with a non-trivial Higgs field as $t\rightarrow \infty$, (iii)  $\Phi =0$, $A$ is an invariant instanton which is asymptotic to $A^\mathrm{can}$ as $t\rightarrow \infty$, (iv) $\Phi =0$, $A$ is invariant instanton which is asymptotic to a flat connection as $t\rightarrow \infty$\footnote{one can also show that for (ii), (iii), solutions have \textit{exactly} quadratic curvature decay, while for (iv), solutions have curvature decaying \textit{faster} than quadratically, and moreover, have curvature bounded in $L^2$-norm.}. 

We shall restrict to cases (ii)-(iv) by only considering invariant solutions with quadratic curvature decay. In general, this is a natural assumption to make for solutions on asymptotically conical metrics, since solutions on the cone converging to some model solution have curvature decaying (at least) as a two-form on link of the cone. As far as the author is aware, this article is the first situation for which we have a complete description of this moduli-space for the invariant co-homogeneity one gauge theory. Also, although we were unable to prove this in full generality, we conjecture that situation (i) does not actually arise, i.e. any invariant solution to the monopole equations on $T^* S^3$, $\mathcal{O}(-1)\oplus \mathcal{O}(-1)$, and $\mathcal{O}(-2,-2)$ without quadratic curvature decay must blow up in finite time. 

We now summarise our main results. For the metric of Stenzel, there is a single $SU(2)$-bundle admitting irreducible invariant connections, which we denote $P_\mathrm{Id}$, and we find a one-parameter family of instantons, and a one-parameter family of monopoles:  
\begin{maintheorem} \label{stenzelthm0} In a neighbourhood of $S^3 \subset T^* S^3$, up to gauge, invariant solutions to the monopole equations are in a two-parameter family $\left(S, \Phi\right)_{\xi,\chi}$, containing a one-parameter family of invariant instantons with $\chi =0$. Moreover $\left(S, \Phi\right)_{\xi,\chi}$ extends over all of $T^* S^3$ when: 
\begin{enumerate}[\normalfont(i)]
\item $\xi \in \left(-1,1 \right), \chi =0 $, as an irreducible instanton asymptotic to $A^\mathrm{can}$ at infinity, \item $\xi = \pm 1, \chi =0 $, as a flat connection, \item $\xi=0, \chi \in \left(0,\infty \right) $, as an irreducible monopole asymptotic to $A^\mathrm{can}$ with a non-trivial parallel Higgs field at infinity. 
\end{enumerate}
Otherwise, $\left(S, \Phi\right)_{\xi,\chi}$ cannot extend over $T^* S^3$ with quadratically decaying curvature.  
\end{maintheorem}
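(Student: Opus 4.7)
The plan is to reduce the $SU(2)^2$-invariant monopole equations on the Stenzel metric to a nonlinear ODE system in the radial variable $t$ (with $t=0$ the singular orbit $S^3$), and then combine a local expansion at $S^3$ with a global phase-plane analysis to identify exactly which solutions extend to all of $T^* S^3$ with quadratically decaying curvature.

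First, I would fix a temporal gauge on $P_\mathrm{Id}$ in which the radial component of the connection vanishes, and use the $SU(2)^2$-equivariance to express the connection $S$ and Higgs field $\Phi$ as equivariant maps from the principal orbits into $\mathfrak{su}(2)$-valued forms, reducing them to a small collection of scalar functions of $t$. Substituting into \eqref{CYmonopoleI}, together with the explicit form of $\omega$ and $\mathrm{Re}\,\Omega$ for the Stenzel metric, yields a closed first-order autonomous ODE system. The instanton equations \eqref{CYinstantonI} cut out the invariant sub-system $\Phi \equiv 0$, which is preserved by the flow. The isotropy representation at $t=0$ forces certain scalars to vanish to prescribed orders; a Picard iteration (or indicial polynomial argument) applied to the ODE system then identifies smooth local solutions with a two-dimensional space parametrised by the leading coefficients $(\xi, \chi)$, whose intersection with $\chi=0$ is the instanton sub-family.

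For the global classification, I would combine phase-plane analysis with a conserved or monotone quantity. The co-homogeneity one Calabi-Yau instanton system should admit a first integral, reflecting the fact that the Yang-Mills energy becomes a boundary term in this symmetric setting; using its level sets I would show that for $\chi=0$ the flow has flat critical points corresponding exactly to $\xi = \pm 1$, that for $\xi \in (-1,1)$ every trajectory remains in a compact region of the phase space and asymptotes to the critical orbit corresponding to $A^\mathrm{can}$ (matching case (i)), and that for $|\xi| > 1$ the trajectory leaves every compact set in finite $t$, forcing the curvature to blow up and ruling out quadratic decay. For the monopole branch $\xi = 0, \chi > 0$, I would linearise the full ODE about the asymptotic data $(A^\mathrm{can}, \Phi_m)$ at infinity, identify the stable manifold of this critical set, and use a shooting argument from $t=0$ together with the conserved quantity to show that the trajectories launched from $(0,\chi)$ with $\chi > 0$ land on this stable manifold, producing case (iii).

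The main obstacle will be the complementary statement: showing that every $(\xi,\chi)$ outside the union of (i)--(iii) fails to extend globally. Local existence and the indicial analysis are essentially routine, and (i), (ii) follow from energy considerations, but ruling out generic $(\xi, \chi)$ with both coordinates nonzero requires a transversality argument. I would proceed by showing that the stable manifold of the asymptotic critical set, intersected with the two-parameter local family near $t=0$, is a one-dimensional subset agreeing precisely with the curves in (i)--(iii); equivalently, that switching on a nonzero Higgs field along a $\xi \neq 0$ trajectory drives a combination of the scalar functions out of the sub-level set in which the conservation law keeps solutions bounded, so that finite-time blow-up is forced. This coupling between the monopole and instanton sectors is what makes the global step delicate, and is where I would expect most of the genuine work to lie.
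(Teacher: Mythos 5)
Your local analysis (equivariant reduction in a temporal gauge, singular initial value problem with the indicial roots producing the two-parameter family $(\xi,\chi)$) matches the paper's Proposition \ref{localmonopolesmoothing} and is sound. The gap is in the global step. You hang the entire classification on a first integral of the reduced ODE system, justified only by the remark that ``the Yang--Mills energy becomes a boundary term.'' That observation gives a Chern--Simons functional whose $t$-dependent gradient flow is the instanton equation, but it does not produce a conserved quantity: in the variables $a_\pm = a_1\pm a_2$, $a_0$, $\phi$ the coefficients of the reduced system depend explicitly on $t$ through the Stenzel metric functions $\lambda,\mu,v_0,v_3$, and no first integral is exhibited (nor does one of the autonomous type exist, since the three coefficient functions are independent as functions of $t$). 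Since your separation of $|\xi|<1$ from $|\xi|>1$, your compactness claim in case (i), and your ``sub-level set'' blow-up mechanism all invoke level sets of this hypothetical integral, the argument as written does not close. What actually works (and what the paper does in Theorem \ref{stenzelthm}) is to construct explicit forward-invariant regions: first the cubes $(0,1)^3$ and $(1,\infty)^3$, then the sharper regions $\mathcal{S}_0$, $\mathcal{S}_\infty$ cut out by the paraboloids $a_0=a_+a_-$, $a_+=a_0a_-$, $a_-=a_0a_+$, inside which all three coordinates are strictly monotone; monotonicity together with the asymptotics of the metric coefficients then forces convergence to $(0,0,0)=A^{\mathrm{can}}$ in $\mathcal{S}_0$ and unboundedness in $\mathcal{S}_\infty$, and the local power series places $S_\xi$ in the correct region according to the sign of $\xi^2-1$.

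The second, and harder, gap is the mixed case $\xi\neq 0$, $\chi\neq 0$, which you defer to a ``transversality argument'' about the stable manifold of the asymptotic critical set; you correctly identify this as where the work lies, but you supply no mechanism. The paper's Proposition \ref{thmmonopoleglobalB} handles it without any transversality: after using the discrete symmetries to place the local solution in the forward-invariant set $\lbrace a_0>0,\ a_+>0,\ a_->0,\ \phi>0\rbrace$, a maximum-principle-type lemma shows $\phi$ is strictly increasing there (an interior critical point of $\phi$ with $\phi>0$ is a strict minimum), so $\phi\geq\bar\phi>0$ eventually, and the differential inequality $\dot a_-\geq\bigl(2\phi-o(1)\bigr)a_-$ integrates to exponential growth of $a_-$, contradicting quadratic curvature decay via Lemma \ref{quadraticdecay}. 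Note that what is proved, and all that is needed, is unboundedness as $t\to\infty$, not the finite-time blow-up you assert; the latter is only conjectured in general. For case (iii) the paper likewise avoids stable-manifold and shooting machinery: the symmetry forces $a_0\equiv a_-\equiv 0$ when $\xi=0$, the system collapses to two equations in $(a_+,\phi)$, and direct monotonicity gives $a_+\to 0$, $\phi\to\phi_\chi>0$ for $\chi>0$ and exponential growth of $a_+$ for $\chi<0$.
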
 
See Proposition \ref{localmonopolesmoothing} for a proof of the local statement, Theorem \ref{stenzelthm} parts (i), (ii), and Proposition \ref{thmmonopoleglobalB} for (iii). The existence of the one-parameter family of monopoles $\left(S, \Phi\right)_{\chi}:= \left(S, \Phi\right)_{0,\chi}$, $\chi \in \left(0,\infty \right) $, and the instanton $S_0 := \left(S, \Phi\right)_{0,0}$ was already established in \cite{Oliveira2015}, which considered only local solutions $\left(S, \Phi\right)_{\xi,\chi}$ with $\xi=0$: we fix a gap in the proof of \cite[Theorem 1]{Oliveira2015} by showing these are all the invariant monopoles with quadratic curvature decay. We also note here that there is a (non-equivariant) isometric involution of $T^*S^3$, arising from the map exchanging the factors of $SU(2)$ in each $SU(2)^2$-orbit, which sends $\left(S, \Phi\right)_{\xi,\chi} \mapsto \left(S, \Phi\right)_{-\xi,\chi}$.

For the small resolution $\mathcal{O}(-1)\oplus \mathcal{O}(-1)$, there are two $SU(2)$-bundles admitting invariant irreducible connections, denoted $P_{0,\mathrm{Id}}$ and $P_{1, \mathbf{0}}$, and these are equivariantly isomorphic over the complement $\mathcal{O}(-1)\oplus \mathcal{O}(-1) \setminus \mathbb{CP}^1$. We find that each bundle carries a one-parameter family of instantons $R_\epsilon$, $R'_{\epsilon'}$, respectively, and the family $R'_{\epsilon'}$ contains an invariant abelian instanton $R'_0$: 
\begin{maintheorem} \label{candelasthm0} In a neighbourhood of $\mathbb{CP}^1 \subset \mathcal{O}(-1)\oplus \mathcal{O}(-1)$, invariant instantons are in two one-parameter families $R_\epsilon$ and $R'_{\epsilon'}$, $\epsilon' \in \left[ 0 ,\infty \right)$, up to gauge. Moreover $R_\epsilon$, $R'_{\epsilon'}$ extends over all of $\mathcal{O}(-1)\oplus \mathcal{O}(-1)$ when:
\begin{enumerate}[\normalfont(i)]
\item  $\epsilon \in \left(0, \infty \right)$, as an irreducible instanton asymptotic to $A^\mathrm{can}$ at infinity,
\item $\epsilon' \in \left[ 0, 1 \right)$, as an instanton asymptotic to $A^\mathrm{can}$ at infinity, which is abelian if $\epsilon'=0$ and irreducible otherwise,
\item $\epsilon=0$ or $\epsilon'=1$, as a flat connection.
\end{enumerate} 
Otherwise, $R_\epsilon$, $R'_{\epsilon'}$ cannot extend over $\mathcal{O}(-1)\oplus \mathcal{O}(-1)$ with quadratically decaying curvature.
\end{maintheorem}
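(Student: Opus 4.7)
My plan parallels the approach used for the Stenzel metric in Theorem~\ref{stenzelthm0}, now adapted to the Candelas--de la Ossa metric on $\mathcal{O}(-1)\oplus\mathcal{O}(-1)$. The proof breaks naturally into (a) classifying $SU(2)^2$-equivariant principal $SU(2)$-bundles; (b) reducing \eqref{CYinstantonI} to an ODE system in the radial parameter; (c) analyzing the local initial value problem at the singular orbit $\CP^1$; (d) proving long-time existence versus finite-time blow-up; and (e) identifying the asymptotic model connection at infinity.

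For (a), the singular orbit $\CP^1$ has a strictly larger stabilizer in $SU(2)^2$ than the principal orbit, and equivariant extensions of the invariant bundle across $\CP^1$ are parametrized by conjugacy classes of homomorphisms from this stabilizer into $SU(2)$. Enumerating these and discarding those admitting only reducible invariant connections should leave precisely the two bundles $P_{0,\mathrm{Id}}$ and $P_{1,\mathbf{0}}$; on the complement of $\CP^1$ both become the unique invariant bundle over the cylindrical region, which later lets their asymptotic connections be compared on the link $S^2\times S^3$.

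For (b) and (c), I would fix an $SU(2)^2$-invariant coframe in which the invariant Calabi--Yau structure has a standard form, and write a connection as a tuple of radial functions. The conditions $F_A\wedge\omega^2=0$ and $F_A^{(0,2)}=0$ then translate into a first-order ODE system. Smoothness across the singular orbit at $t=0$ imposes boundary conditions that fix all but one free parameter on each bundle, yielding the families $R_\epsilon$ on $P_{0,\mathrm{Id}}$ and $R'_{\epsilon'}$ on $P_{1,\mathbf{0}}$ with $\epsilon,\epsilon'\geq 0$. The abelian instanton $R'_0$ should correspond to a reduction of the connection to a $U(1)$-subbundle preserved by the stabilizer. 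Local solvability is essentially a Picard iteration after resolving the singular coefficient at $t=0$ via a suitable change of variable.

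The heart of the argument is (d) and (e). For $\epsilon\in(0,\infty)$ and $\epsilon'\in[0,1)$, I would prove global existence by exhibiting a monotone or coercive quantity along the flow that keeps the radial connection coefficients bounded and forces convergence to the profile of $A^\mathrm{can}$ as $t\to\infty$; the degenerate endpoints $\epsilon=0$ and $\epsilon'=1$ can then be checked by direct substitution to give flat connections. I expect the main obstacle to be the blow-up claim for $\epsilon'\in(1,\infty)$: I would try to produce a Lyapunov-type quantity whose growth along the ODE flow forces a coefficient to diverge at finite $t$, exploiting that $A^\mathrm{can}$ sits at the critical value $\epsilon'=1$ separating two qualitatively different regimes of the family $R'_{\epsilon'}$. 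A subsidiary difficulty is confirming that the asymptotic limit produced in (i) and (ii) really is $A^\mathrm{can}$ rather than some other invariant instanton on the link, which I would resolve by matching discrete topological invariants of the bundles $P_{0,\mathrm{Id}}$, $P_{1,\mathbf{0}}$ restricted to the link.
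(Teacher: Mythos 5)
Your plan follows the same architecture as the paper: bundle extensions are classified in Appendix \ref{sectioncohobundles} (giving exactly $P_{0,\mathrm{Id}}$ and $P_{1,\mathbf{0}}$), the equations reduce to the ODE system \eqref{instA2}, local solutions come from singular-IVP theory (Propositions \ref{localresol1}, \ref{localresol6}), and the global dichotomy is settled by dynamical arguments. Your ``monotone or coercive quantity'' is realised in the paper as a pair of forward-invariant regions $\mathcal{R}_0$ and $\mathcal{R}_\infty$ in the $(a_0,a_2)$-plane together with monotonicity of $a_2$ and the integral identity \eqref{integralformula}; solutions entering $\mathcal{R}_0$ converge to the critical point $(0,0)$ and solutions entering $\mathcal{R}_\infty$ are unbounded. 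Note that unboundedness already contradicts quadratic curvature decay by Lemma \ref{quadraticdecay}, so you do not need finite-time blow-up for $\epsilon'>1$ (that stronger statement is only obtained later, via a rescaling argument, as a remark after Theorem \ref{candelasthm1instanton}). Two smaller corrections: the local parameter $\epsilon$ for $R_\epsilon$ is the coefficient of $t^2$ in $a_0$ and is \emph{not} normalised to be nonnegative by gauge — the residual gauge symmetry \eqref{symmetrygauge} only flips the signs of $a_1,a_2$ — so the case $\epsilon<0$ genuinely exists locally and is precisely what must be excluded by the unboundedness argument.

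The one step in your plan that would fail is the identification of the asymptotic limit by ``matching discrete topological invariants of the bundles restricted to the link.'' Both $P_{0,\mathrm{Id}}$ and $P_{1,\mathbf{0}}$ restrict to the \emph{same} invariant bundle $P_1$ over the link $S^2\times S^3$, and all candidate $t$-invariant limits — $A_1^\flat$, $A_2^\flat$ and $A^\mathrm{can}$ — are invariant connections on that one bundle, so no topological invariant can distinguish which of them a given solution approaches. The identification of the limit as $A^\mathrm{can}$ must come from the dynamics: inside $\mathcal{R}_0$ one shows $a_2$ is decreasing with limit $\hat a_2\in[0,1)$, the integral formula forces $a_0\to-\hat a_2^2$, and then integrating $\dot a_2=-\tfrac{3}{2\lambda}a_2(a_0+1)$ gives $a_2\sim Ct^{\frac{3}{2}(\hat a_2^2-1)}\to 0$, hence $(a_0,a_2)\to(0,0)=A^\mathrm{can}$. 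Without some such argument your step (e) is a genuine gap, since a priori the solution could converge to the flat critical point $(-1,1)$ on the boundary of $\mathcal{R}_0$'s closure.
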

See Propositions \ref{localresol1}, \ref{localresol6} for a proof of the local statement, Theorems \ref{candelasthm1}, \ref{candelasthm2} for parts (i)-(iii). 

We can also show that, in the limit $\epsilon \rightarrow \infty$, i.e. as the curvature of the invariant family $R_\epsilon$ blows up on the calibrated co-dimension four $\mathbb{CP}^1$, we get the expected bubbling and removable-singularity phenomena:
\begin{maintheorem} \label{candelasthm00} Let $R_\epsilon$ be the one-parameter family of invariant instantons and $R'_0$ the invariant abelian instanton extending over $\mathcal{O}(-1)\oplus \mathcal{O}(-1)$. Then, in the limit $\epsilon \rightarrow \infty$: 
\begin{enumerate}[\normalfont(i)] 
\item Up to an appropriate rescaling, $R_\epsilon$ bubbles off a family of anti-self-dual connections along $\mathbb{CP}^1 \subset \mathcal{O}(-1)\oplus \mathcal{O}(-1)$.
\item Without this rescaling, $R_\epsilon$ converges uniformly to $R'_0$ on compact subsets of $\mathcal{O}(-1)\oplus \mathcal{O}(-1) \setminus \mathbb{CP}^1$. 
\end{enumerate}
\end{maintheorem}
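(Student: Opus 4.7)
The plan is to split the argument into the two bubbling statements, working with the ODE description of the invariant family $R_\epsilon$ established in the proof of Theorem \ref{candelasthm0}, in which $R_\epsilon$ arises as the solution to a system of ODEs in the radial cohomogeneity-one parameter $t$ (with $t=0$ corresponding to the singular orbit $\CP^1$), whose initial condition at $t=0$ is parametrised by $\epsilon$ and whose initial derivative diverges as $\epsilon \to \infty$, so that the curvature concentrates along $\CP^1$.

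For part (ii), I would first establish uniform bounds in $\epsilon$ for the ODE solution $R_\epsilon$ on any compact subset $K \subset \mathcal{O}(-1)\oplus\mathcal{O}(-1) \setminus \CP^1$, via continuous dependence of the ODE on initial data together with the common asymptotic condition at infinity $R_\epsilon \to A^{\mathrm{can}}$. This yields a subsequential limit $R_\infty$, which is an $SU(2)^2$-invariant instanton on the complement of $\CP^1$, still asymptotic to $A^{\mathrm{can}}$. Since the curvature of $R_\epsilon$ concentrates at $\CP^1$ as $\epsilon\to\infty$, the limit $R_\infty$ cannot extend smoothly across the singular orbit. By the local classification in Theorem \ref{candelasthm0}, the only invariant instanton asymptotic to $A^{\mathrm{can}}$ that is smooth on the complement of $\CP^1$ but fails to extend across it must coincide with the abelian invariant instanton $R'_0$. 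Uniqueness of the limit then upgrades the subsequential convergence to full uniform convergence on compact subsets.

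For part (i), introduce a rescaled radial parameter $\tilde{t} = \lambda(\epsilon)\, t$ with $\lambda(\epsilon)\to\infty$, chosen so that the pointwise rescaled curvature of $R_\epsilon$ remains uniformly bounded in $\epsilon$ on bounded $\tilde{t}$-intervals. Under this rescaling, the Calabi-Yau metric, K\"ahler form $\omega$, and holomorphic volume form $\Omega$, on a tubular neighbourhood of $\CP^1$, converge to the product Calabi-Yau structure on $\CP^1\times\C^2$ (the flat structure on the normal bundle). Since $\CP^1$ is calibrated of real codimension four, the Calabi-Yau instanton equations restricted to each normal fibre $\C^2\cong\R^4$ reduce to the anti-self-dual Yang-Mills equations in four dimensions. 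Passing to the limit in the rescaled ODE then produces an $SU(2)^2$-invariant ASD connection on $\R^4$; by the classification of invariant ASD connections, this must be (a fixed scaling of) a basic BPST instanton, and the transitive $SU(2)^2$-action on $\CP^1$ extends this single bubble to a well-defined family of ASD connections parametrised by $\CP^1$.

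The main technical obstacle is identifying the correct rescaling rate $\lambda(\epsilon)$ from the explicit ODE solutions and showing that the rescaled equations converge to the ASD system on $\R^4$ with errors vanishing as $\epsilon\to\infty$, uniformly on bounded intervals of $\tilde{t}$. This requires careful tracking of how each component of the Calabi-Yau structure and the connection matrix scale under $t\mapsto\tilde{t}$, and matching the initial data at the singular orbit with a BPST ansatz on $\R^4$ to uniquely identify the bubble. The softer step is verifying that the subsequential limit in (ii) is indeed $R'_0$ rather than some other invariant configuration, which should follow directly from the list of invariant instantons given by Theorem \ref{candelasthm0}.
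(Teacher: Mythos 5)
Your outline of part (i) follows essentially the same route as the paper: rescale the fibre direction at a rate $\delta(\epsilon)\sim\epsilon^{-1/2}$, observe that the rescaled Calabi--Yau structure degenerates to the flat structure on the normal fibres over $\CP^1$, and identify the limit of the rescaled ODE with the unique non-flat $SU(2)^2$-invariant anti-self-dual connection on $\R^4$, lifted to the fibres of $\mathcal{O}(-1)\oplus\mathcal{O}(-1)$ using the invariant connection $u^1$ on the fibration. The work you defer --- finding the rate and proving convergence of the rescaled equations --- is carried out in the paper by recasting the rescaled system as a singular initial value problem depending continuously on $\delta$ down to $\delta=0$, where it is explicitly solvable by $(1+\kappa t^2)^{-1}$, and matching $\kappa=\delta^2\epsilon/2$ against the local power series of $R_\epsilon$; this forces $\delta(\epsilon)=\sqrt{2\epsilon^{-1}}$.

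Part (ii) as you have set it up has a genuine gap, in two places. First, the uniform bounds on compact subsets do not come from continuous dependence on initial data --- the initial data at $t=0$ diverges with $\epsilon$ --- but from the fact that all the $R_\epsilon$ lie in the bounded forward-invariant set $\mathcal{R}_0$; and the existence of an actual (not merely subsequential) pointwise limit comes from monotonicity in $\epsilon$ supplied by the forward-comparison lemma. Second, and more seriously, your identification of the limit with $R'_0$ does not go through. Theorem \ref{candelasthm0} classifies invariant instantons that extend over $\CP^1$; it says nothing about invariant solutions of \eqref{instA2} defined only on $(0,\infty)$, which form a two-parameter family, so ``the only invariant instanton asymptotic to $A^{\mathrm{can}}$ that is smooth on the complement but fails to extend'' is not a characterisation you are entitled to --- and in fact $R'_0$ \emph{does} extend smoothly over $\CP^1$, just on the other bundle $P_{1,\mathbf{0}}$. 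The missing step is to prove that the limit is abelian, i.e.\ that $a_2^\epsilon\to 0$ uniformly on compact subsets of $(0,\infty)$; knowing only that the curvature concentrates at $\CP^1$ does not determine the limit away from $\CP^1$. This is exactly where the paper uses part (i) quantitatively: if $\inf_\epsilon a_2^\epsilon(t^*)=L>0$, then monotone decrease of $a_2^\epsilon$ in $t$ gives $a_2^\epsilon(\sqrt{2\epsilon^{-1}}\,T)\geq L$ for all large $\epsilon$, contradicting convergence of the rescaled solution to $(1+T^2)^{-1}<L$ for $T$ large. Once $a_2\to 0$, the integral formula \eqref{integralformula} upgrades this to convergence of $a_0$, and Remark \ref{remarkabelian} identifies the unique bounded abelian solution on $(0,\infty)$ as $R'_0$.
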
 
See Theorem \ref{candelasthm1instanton} for proofs and a more precise statement of these results. The proof of Theorem \ref{candelasthm00} is more involved than for a similar co-homogeneity one bubbling theorem for instantons found in \cite[Theorem 2]{goncalog2}: everything was explicit in that case, whereas we must genuinely prove (i) here to obtain the relevant compactification result (ii).    

There are countably many bundles over $\mathcal{O}(-2,-2)$ admitting irreducible invariant connections, which we denote $P_{1-l,l}$ for $l \in \mathbb{Z}$. The number $l \in \mathbb{Z}$ can be understood topologically by associating a rank two complex vector bundle to $P_{1-l,l}$ via the standard representation: this associated bundle splits into a direct sum of line bundles pulled back from $\mathcal{O}(\pm(1-l),\pm l) \ra \CP^1 \times \CP^1$. Each bundle $P_{1-l,l}$ carries a one-parameter family of instantons $Q^l_{\alpha_l}$ similar to the family $R'_{\epsilon'}$ of Theorem \ref{candelasthm0}: $Q^l_{0}$ is abelian, $Q^l_{\alpha_l}$ is asymptotic to $A^\mathrm{can}$ at infinity when the parameter $\alpha_l\geq 0$ is less than some finite critical value $\alpha^\mathrm{crit}_l$, and the asymptotic behaviour of this family jumps to a flat connection at the critical value. However, there is a new phenomenon on $\mathcal{O}(-2,-2)$, as the instantons $Q^l_{\alpha^\mathrm{crit}_l}$ are not themselves flat when $l\neq 0,1$: they are rigid in the moduli-space of invariant, irreducible instantons with these asymptotics.
\begin{maintheorem} \label{pandozayasthm0} In a neighbourhood of $\CP^1 \times \CP^1 \subset \mathcal{O}(-2,-2)$, invariant instantons are in countably many one-parameter families $Q^l_{\alpha_l}$, $l\in \mathbb{Z}$, $\alpha_l \in \left[0, \infty \right)$, up to gauge. Moreover, $Q^l_{\alpha_l}$ extends over all of $\mathcal{O}(-2,-2)$ when: 
\begin{enumerate}[\normalfont(i)]
\item$\alpha_l \in \left[ 0, \alpha^\mathrm{crit}_l \right)$ for some $\alpha^\mathrm{crit}_l \in \left( 0, \infty \right)$, as an instanton asymptotic to $A^\mathrm{can}$ at infinity, which is abelian if $\alpha_l=0$ and irreducible otherwise, \item $l = 0,1$, $\alpha_l = \alpha^\mathrm{crit}_l$ as a flat connection, \item  $l \neq 0,1$, $\alpha_l = \alpha^\mathrm{crit}_l$ as an irreducible instanton asymptotic to a flat connection at infinity. 
\end{enumerate}
Otherwise, $Q^l_{\alpha_l}$ cannot extend over $\mathcal{O}(-2,-2)$ with quadratically decaying curvature.   
\end{maintheorem}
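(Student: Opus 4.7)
The plan is to analyse each bundle $P_{1-l,l}$ separately via the $SU(2)^2$-equivariant reduction used throughout the paper. For fixed $l \in \mathbb{Z}$, the first step is to write down the general invariant connection ansatz, using the weights $\pm(1-l), \pm l$ of the associated complex rank-two vector bundle decomposition, and reduce the two instanton equations $F_A \wedge \omega^2 = 0$ and $F_A \wedge \mathrm{Re}\,\Omega = 0$ to a nonlinear ODE system in the radial parameter $t \in (0, \infty)$. Imposing smoothness of the extension across the zero section $\CP^1 \times \CP^1$ at $t=0$ gives collapsing-orbit boundary conditions; a power series / Picard iteration argument of the type used for Proposition \ref{localresol1} should then produce, for each $l$, a one-parameter family $Q^l_{\alpha_l}$ of local smooth invariant instantons parametrised by the single free constant $\alpha_l \in [0,\infty)$, with $\alpha_l = 0$ corresponding to the explicit abelian instanton built from the natural invariant harmonic two-form on $P_{1-l,l}$.

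To globalise, the next step is to study the long-time behaviour of the ODE flow as $t \to \infty$. The only $SU(2)^2$-invariant asymptotic model instantons on the cone with quadratic curvature decay are $A^\mathrm{can}$ and the invariant flat connection, and these correspond to two distinct invariant sets of the rescaled ODE at infinity. A shooting and continuous-dependence argument in $\alpha_l$ should establish a dichotomy: for small $\alpha_l$ the trajectory converges to the $A^\mathrm{can}$-asymptotic stable manifold, giving case (i), while for large $\alpha_l$ the trajectory develops a finite-time singularity, ruling out quadratic curvature decay. Defining $\alpha^\mathrm{crit}_l$ as the supremum of $\alpha_l$ for which case (i) holds, a standard topological argument should show $\alpha^\mathrm{crit}_l \in (0, \infty)$ and that the critical trajectory $Q^l_{\alpha^\mathrm{crit}_l}$ exists globally and is asymptotic to a flat connection at infinity.

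The main obstacle is distinguishing cases (ii) and (iii), which is the genuinely new phenomenon not present in Theorems \ref{stenzelthm0} or \ref{candelasthm0}. The key observation should be that the existence of a global invariant flat extension on $P_{1-l,l}$ depends on the topological type of the bundle: a globally flat invariant connection compatible with the ansatz is available precisely when $l = 0, 1$, so the critical solution degenerates to a flat connection in those two cases while necessarily remaining non-flat and irreducible otherwise. The rigidity statement implicit in (iii) would follow from $\alpha^\mathrm{crit}_l$ being an isolated point in the parameter space, which is automatic from its definition as the endpoint of the case (i) interval together with the local uniqueness provided by Picard-Lindel\"of. Finally, finite-time blow-up for $\alpha_l > \alpha^\mathrm{crit}_l$ should be handled by identifying a monotone quantity along the ODE flow --- typically a suitable convex combination of the connection components with a uniform lower bound on its second derivative --- forcing divergence of the curvature norm at some finite radius and thereby closing off the \emph{otherwise} clause.
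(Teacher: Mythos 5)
Your overall strategy (power series at the singular orbit, then a shooting argument in $\alpha_l$ using invariant regions of the reduced ODE) matches the paper's, and your mechanism for separating cases (ii) and (iii) --- that the constant flat solution of the ODE is compatible with the boundary condition $a_0(0)=1-2l$ only for $l=0,1$ --- is essentially the right one. However, there are two genuine gaps in the global part of your argument for $l\neq 0,1$. The first concerns your assertion that ``for large $\alpha_l$ the trajectory develops a finite-time singularity,'' which you attribute to shooting and continuous dependence. Continuous dependence gives you nothing here: for $l\neq 0,1$ every local solution leaves the same initial point $(a_0,a_2)(0)=(1-2l,0)$ on the boundary of the intermediate region $\mathcal{R}_1=\lbrace 0<a_2<1,\ a_0<-1\rbrace$, inside which both $a_0$ and $a_2$ are increasing, and it is not at all clear a priori that \emph{any} value of $\alpha_l$ pushes the trajectory out through $a_2=1$ into the unbounded regime rather than out through $a_0=-1$ into the basin of $A^{\mathrm{can}}$. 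The paper has to work for this (Proposition \ref{propscaling}): it rescales the Calabi--Yau structure along the fibre, identifies the limiting ODE whose solution $a_2^0=\kappa_l t^{l-1}$ is explicitly unbounded, and transfers this back to show that sufficiently large $\alpha_l$ reach $\mathcal{R}_\infty$ in finite time. Some quantitative input of this kind is unavoidable, and your proposal supplies none; note also that what is actually needed (and proved) is unboundedness, which already kills quadratic curvature decay, not finite-time blow-up.

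The second gap is your claim that rigidity of the critical trajectory is ``automatic from its definition as the endpoint of the case (i) interval together with the local uniqueness provided by Picard--Lindel\"of.'' Local uniqueness only says distinct parameters give distinct trajectories; it does not preclude a whole closed interval of parameters whose trajectories all remain in $\mathcal{R}_1$ for all time and all converge to the flat connection $(-1,1)$, in which case there would be no single well-defined $\alpha_l^{\mathrm{crit}}$ and the one-parameter family in the statement would contain a continuum of critical solutions. The paper closes this with an improved comparison lemma: for two solutions ordered at some initial time, the difference $a_2^{\alpha}-a_2^{\alpha'}$ is \emph{strictly increasing} while both remain in $\mathcal{R}_1$, so two distinct trajectories cannot share the limit $(-1,1)$. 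You need an argument of this strength. A smaller omission in the same vein: showing that a trajectory trapped in $\mathcal{R}_1$ for all time actually converges to the flat connection, rather than to some other point of the closure, requires the integral identity forcing the limit onto the curve $a_0=-a_2^2$, which meets $\overline{\mathcal{R}_1}$ only at $(-1,1)$; your ``standard topological argument'' elides this step.
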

See Proposition \ref{localresol4} for a proof of the local statement, and Theorems \ref{pandozayasthm1}, \ref{pandozayasthm2} for parts (i)-(iii). See also the end of \S \ref{bubbling} for a further discussion of the behaviour of instantons on $\mathcal{O}(-2,-2)$. 

In the final result, the proof of which can be found in Proposition \ref{thmmonopoleglobalA}, we show that Theorems \ref{stenzelthm0} - \ref{pandozayasthm0} fully describe the moduli-space of the $SU(2)^2$-invariant Calabi-Yau gauge theory: 
\begin{maintheorem} \label{monopoletheorem0}
There are no irreducible, invariant monopoles on $\mathcal{O}(-2,-2)$ or $\mathcal{O}(-1) \oplus \mathcal{O}(-1)$ with quadratically decaying curvature.
\end{maintheorem}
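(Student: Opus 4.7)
The plan is to combine the cohomogeneity one ODE reduction with an analysis of boundary conditions at the singular orbit and the asymptotic behaviour at infinity. Exactly as in the proofs of Theorems \ref{candelasthm0} and \ref{pandozayasthm0}, an $SU(2)^2$-invariant monopole on $\mathcal{O}(-1)\oplus\mathcal{O}(-1)$ or $\mathcal{O}(-2,-2)$ reduces to a coupled ODE system in the radial coordinate $t$ for finitely many real functions parameterising the invariant connection form and the invariant Higgs field $\Phi$. My first step would be to study the initial conditions at the singular orbits $\CP^1\subset \mathcal{O}(-1)\oplus\mathcal{O}(-1)$ and $\CP^1\times\CP^1\subset\mathcal{O}(-2,-2)$, both of real codimension four, in contrast to the codimension three singular orbit of Stenzel's 3-fold. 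For each admissible bundle $P_{0,\mathrm{Id}}$, $P_{1,\mathbf{0}}$, $P_{1-l,l}$, a case-by-case analysis of the isotropy representation on the fibre of $\mathrm{Ad}\,P$ at the singular orbit should show that invariant smooth Higgs fields are either zero or central; since a non-trivial central value forces reducibility of $A$, the irreducibility hypothesis yields $\Phi = 0$ on the singular orbit.

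Next, I use the asymptotic behaviour. Quadratic curvature decay restricts any invariant irreducible monopole to be asymptotic to $(A^{\mathrm{can}}, \Phi_\infty)$ with $|\Phi_\infty| > 0$ parallel over the link, as in case (ii) of the classification in the overview. Applying $d_A$ to the second monopole equation $F_A \wedge \mathrm{Re}\,\Omega = * d_A\Phi$ and using the Bianchi identity $d_A F_A = 0$ together with $d\,\mathrm{Re}\,\Omega = 0$ gives $\Delta_A\Phi = 0$, and hence the subharmonicity $\tfrac{1}{2}\Delta|\Phi|^2 = -|d_A\Phi|^2 \leq 0$. Combined with the initial condition $\Phi = 0$ on the singular orbit and the boundary condition $|\Phi|\to|\Phi_\infty| > 0$ at infinity, this forces $|\Phi|^2$ to be monotone non-decreasing along the radial direction. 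I would then use the explicit coupled ODE system, together with a suitable first integral or monotonicity structure analogous to those identified in the instanton classifications, to rule out such a monotone solution globally, obtaining the desired contradiction.

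The main obstacle is the last ODE step: subharmonicity alone is compatible with $|\Phi|^2$ monotonically increasing from zero to $|\Phi_\infty|^2 > 0$, so the contradiction must be extracted from finer features of the reduced system. A secondary difficulty is uniformity across the countably many bundles $P_{1-l,l}$, $l\in\Z$, which suggests looking for a bundle-independent obstruction — for instance, a vanishing topological monopole charge for these bundles, matching the fact that the analogous integral for Stenzel's 3-fold is precisely what allows the one-parameter family of Theorem \ref{stenzelthm0}(iii) to exist — rather than a purely case-by-case analysis.
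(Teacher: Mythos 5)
There are two genuine problems with your proposal. First, the claim that irreducibility forces $\Phi=0$ on the singular orbit is false for most of the relevant bundles. Invariance already forces $\Phi=\phi E_1$ to be valued in a fixed $\mathfrak{u}(1)\subset\mathfrak{su}(2)$ everywhere (Proposition \ref{principalhiggs}), and a non-zero value of $\phi$ at $t=0$ does not make $A$ reducible --- only a non-trivial \emph{parallel} Higgs field does. Concretely, the boundary conditions (Propositions \ref{propmonboundary}, \ref{propadboundary3}) give $\phi(0)=0$ only on $P_{0,\mathrm{Id}}$; on $P_{1,\mathbf{0}}$ and on every $P_{1-l,l}$ the local two-parameter families of Propositions \ref{localresol6} and \ref{localresol4} have $\phi(0)=\delta'$, respectively $\phi(0)=\beta_l$, as a free parameter, with the connection irreducible. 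So the "initial condition $\Phi=0$ on the singular orbit" that your second paragraph relies on is simply not available in general.

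Second, and more seriously, the step that actually produces the contradiction is missing, as you yourself acknowledge: subharmonicity of $|\Phi|^2$ (which the paper records in Remark \ref{remarkmaximumprinciple}) is perfectly consistent with $\phi$ increasing monotonically to a positive limit --- indeed that is exactly what happens for the genuine monopoles on $T^*S^3$ --- so no amount of maximum-principle reasoning can rule out case (ii) here. The mechanism the paper uses is different and specific to the type $\mathcal{I}$ hypo-structures: one checks that (after applying the discrete symmetries \eqref{symmetrygaugemonopole}, \eqref{symmetrymonopole} to place each local solution appropriately) the region $\mathcal{R}^+_\infty=\lbrace a_1>0>a_2,\ \phi>0\rbrace$ is forward-invariant for \eqref{dynamicODEmonopoleA}; there $\phi$ is strictly increasing, hence bounded below by $\bar\phi=\phi(t^*)>0$, and since $\tfrac{3}{2\lambda}(a_0\pm1)\to0$ (using boundedness of $a_0$) while $\tfrac{u_1\pm u_0}{\mu}\to1$, the coupling terms $-2\tfrac{u_1-u_0}{\mu}a_2\phi$ and $-2\tfrac{u_1+u_0}{\mu}a_1\phi$ dominate and give $\dot a_1-\dot a_2>\bigl(2\bar\phi-\epsilon(2\bar\phi+1)\bigr)(a_1-a_2)$ for large $t$, forcing exponential growth of $a_1-a_2$. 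This contradicts the boundedness of $a_1,a_2$ required by quadratic curvature decay (Lemma \ref{quadraticdecay}). Your suggested "topological monopole charge" obstruction is not what is needed; the obstruction is dynamical, coming from the sign structure of the Higgs coupling in the reduced ODE, not from a vanishing integral.
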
 

\subsection{Plan of paper}
For the rest of the introduction, we summarise the structure of this article. 

Throughout the following, if a manifold $M$ has a co-homogeneity one action by Lie group $K$, with exactly one exceptional isotropy subgroup $H'$, and generic isotropy subgroup $H$, we will denote the sequence $H \subset H' \subseteq K$ as the \textit{group diagram} of $M$. We will refer to the generic $K$-orbit $K/H$ as the \textit{principal} orbit, the orbit $K/H'$ as the \textit{singular} orbit, and the union of all generic $K$-orbits as the \textit{space of principal orbits}. In order to fix conventions, we start with a preliminary introduction to the geometry of co-homogeneity one Calabi Yau metrics in \S \ref{sectionprelim}, in the case $K=SU(2)^2$, and $H$ is either the diagonal subgroup $\triangle U(1)$ or $\triangle U(1) \times \mathbb{Z}_2$, i.e. we describe the Calabi-Yau metrics on $\mathcal{O}(-1)\oplus \mathcal{O}(-1)$, $T^* S^3$, and $\mathcal{O}(-2,-2)$. 
 
We proceed with the main goal of the article in \S \ref{sectiongauge}: we consider the space of connections, Higgs fields, and $SU(2)$-bundles over these manifolds that are invariant under the $SU(2)^2$-action, and write down the monopole equations in this invariant setting. We describe the gauge theory on the complement of the singular orbit by pulling back invariant bundles over the principal orbit in \S \ref{invmonopole}, giving us some ODE system for our connection and Higgs field. Invariant bundles over the principal orbit are classified by an integer, but only one of these bundles, denoted $P_1$, admits irreducible connections. We write down the ODE system for this bundle explicitly in Proposition \ref{propgen}. We also briefly mention the reducible solutions to these equations in \S \ref{abeliansection}, which are explicit.

We cannot generically expect to find explicit solutions in the irreducible case, but by imposing that the bundle data extends to the singular orbit, we can describe the space of solutions to the ODEs near the singular orbit using a power-series. In \S \ref{sectionlocalsolutions}, we will find that these local solutions to the monopole equations are always in a two-parameter family for each extension of the bundle $P_1$ to the singular orbit, and we can obtain a local one-parameter family of instantons by setting one of these two parameters to zero. 

The discussion of boundary conditions for extending the invariant bundle data to the singular orbit is relegated to Appendix \ref{sectioncohobundles}. Using the analysis of Eschenburg-Wang \cite{Eschenburg2000} on invariant tensors, which can be adapted to (adjoint-valued) forms, these are just representation-theoretic computations. 

We dedicate the remaining sections to finding a qualitative description of the behaviour of the local solutions in \S \ref{sectionlocalsolutions} as we move away from the singular orbit. In \S \ref{odessection1}, using the existence of invariant sets for these ODE systems, we determine the asymptotic behaviour of the local instanton solutions to obtain Theorems \ref{candelasthm0}, \ref{pandozayasthm0}, and parts (i),(ii) of Theorem \ref{stenzelthm0}. To prove existence of the critical value of the parameter $\alpha_l$ in Theorem \ref{pandozayasthm0} when $l \neq 0,1$, we must also employ a rescaling argument along the fibres of $\mathcal{O}(-2,-2)$, and we prove uniqueness via some comparison results allowing us to compare solutions away from the singular orbit for different values of $\alpha_l$.

We continue discussing rescaling arguments in \S \ref{bubbling}. To show Theorem \ref{candelasthm00}, we can consider an adiabatic limit in which we shrink the metric on $\mathcal{O}(-1)\oplus \mathcal{O}(-1)$ along the fibre. We prove that in this limit, as $\epsilon \rightarrow \infty$, a rescaling of the one-parameter family of solutions $R_\epsilon$ to the ODEs converges to the standard anti-self-dual connection on $\C^2$, and use this result to prove the convergence of the solution $R_\epsilon$ as $\epsilon \rightarrow \infty$. The general picture is that the solution curve $R_\epsilon$ breaks into two pieces in this limit, the first being the anti-self-dual connection, which is only traversed in non-zero time if we rescale, and the second being an abelian instanton $R'_0$. 

We also include an extended remark on bubbling phenomena for instantons on $\mathcal{O}(-2,-2)$: we consider a limit in which the metric is close to the simplest (non-trivial) example of an asymptotically locally Euclidean (ALE) fibration: a copy of the Eguchi-Hansion metric on the total space of the co-tangent bundle of $\CP^1$, fibred over the standard metric on $\CP^1$. As one might expect, in this limit, we find that the families of instantons on $\mathcal{O}(-2,-2)$, suitably rescaled, are close to corresponding families of anti-self-dual connections for the Eguchi-Hanson metric. Although this result was ultimately unnecessary for proving the main theorems of this article, they provide a way to understand the moduli-space over $\mathcal{O}(-2,-2)$ in terms of the moduli-spaces of anti-self-dual connections constructed by Nakajima in \cite{nakajima}.   

Finally, in \S \ref{odessection2}, we analyse the behaviour of the full system of the monopole equations away from the singular orbit to prove Theorem \ref{monopoletheorem0}, and the final part of Theorem \ref{stenzelthm0}. We show that, aside from the one-parameter family of monopoles already found in \cite{Oliveira2015} and the instantons described in the previous sections, any other member of the local two-parameter families of monopoles from \S \ref{sectionlocalsolutions} cannot have quadratically decaying curvature.  
\subsection*{Acknowledgements} The author would like to thank Jason Lotay, Gon\c calo Oliveira, and Matt Turner for all their helpful comments and discussions. A special thanks goes to the author's  supervisor Lorenzo Foscolo, without whom this work would not have been possible, and to the Royal Society, who funded this work through a studentship supported by the Research Fellows Enhancement Award 2017 RGF$\backslash$EA$\backslash$180171. 
\section{Preliminaries} \label{sectionprelim} 
\subsection{Calabi-Yau structures, cones, and co-homogeneity one manifolds} \label{sectiondefine}
Before considering Calabi-Yau structures in (real) dimension $6$, let us recall some general definitions from \cite{conti2005} in dimension $5$. We let $N$ be a real $5$-dimensional manifold equipped with an $SU(2)$-reduction of the frame bundle: this gives a unique Riemannian metric and orientation on $N$ compatible with such a reduction, via the inclusion $SU(2) \subset SO(5)$. An $SU(2)$-structure on $N$ is equivalent to a triple of 2-forms $\left( \omega_1 , \omega_2 , \omega_3 \right)$ and a nowhere-vanishing 1-form $\eta$ such that: 
\begin{itemize}
\item[1.] $\omega_i \wedge \omega_j = \delta_{ij} v$, with $v$ fixed 4-form s.t. $v \wedge \eta$ is nowhere-vanishing: i.e. $v$ is a volume form on the distribution $ \mathcal{H} := \ker \eta$.   
\item[2.]$X \lrcorner \omega_1 = Y\lrcorner \omega_2 \Rightarrow \omega_3 (X, Y) \geq 0$, i.e. $\left( \omega_1 , \omega_2 , \omega_3 \right)$ is an oriented basis of $\Exterior^+ \left( \mathcal{H} \right)$ in the splitting $\Exterior^2 \left( \mathcal{H} \right) = \Exterior^+ \left( \mathcal{H} \right) \oplus \Exterior^- \left( \mathcal{H} \right)$, with respect to the induced Riemannian metric on $ \mathcal{H}$, and volume form $v$.
\end{itemize}
We will take the quadruple $\left( \eta, \omega_i \right)$ satisfying the above as defining an $SU(2)$-structure. If we take $t \in I$ as parametrizing some interval $I \subset \mathbb{R}$, then $\left( \eta, \omega_i \right)$ can be used to define an $SU(3)$-structure $\left( \omega , \Omega \right)$ on $N \times I$:
\begin{align} \label{CYdefine}
\omega = dt \wedge \eta + \omega_1& &\Omega = \left( dt +i \eta \right) \wedge \left( \omega_2 + i \omega_3 \right)
   \end{align}
Requiring the $SU(3)$-structure be torsion-free, i.e. that $\left( \omega , \Omega \right)$ be closed on $N \times I$, gives (on $N$): 
\begin{align} \label{hypo}
d \omega_1 = 0& &d( \omega_3 \wedge \eta ) = 0& &d(\omega_2 \wedge \eta ) = 0 
\end{align}
Along with the evolution equations:
\begin{align} \label{dynamic}
d \eta = \partial_t \omega_1& &d\omega_2 = - \partial_t (\omega_3 \wedge \eta )& &d\omega_3 = \partial_t ( \omega_2 \wedge \eta ) 
\end{align}
The right-hand side of the evolution equations vanishes if the $SU(2)$-structure $\left( \eta, \omega_i \right)$ on $N$ is fixed, but if we instead allow it to vary with $t$, then a one-parameter family of $SU(2)$-structures $\left( \eta, \omega_i \right)_t$ satisfying \eqref{dynamic}, with $\left( \eta, \omega_i \right)_{t}$ initially satisfying \eqref{hypo}, will also define a torsion-free $SU(3)$-structure on $N \times I$. Conversely, if $N$ can be embedded as an oriented hyper-surface in a 6-manifold $M$, then any $SU(3)$-structure on $M$ gives rise to a one-parameter family of $SU(2)$-structures on $N$ for some tubular neighbourhood $M^* \cong N \times I$ of $N \subset M$ (see \cite{conti2005}), and requiring that the $SU(3)$-structure be torsion-free gives \eqref{hypo}, \eqref{dynamic}. 

We will refer to a torsion-free $SU(3)$-structure as a \textit{Calabi-Yau structure} on $M$, an $SU(2)$-structure satisfying \eqref{hypo} as a \textit{hypo-structure} on $N$, and equations \eqref{dynamic} as the \textit{hypo-evolution equations}.

Note that $N \times I$ is foliated into parallel hyper-surfaces when equipped with the metric $g$ compatible with this $SU(3)$-structure, i.e. we have $g = dt^2 + g_t$ for some $t$-dependent metric $g_t$ on $N$. Equivalently, there exists a geodesic on $N \times I$ that meets every hyper-surface in the foliation perpendicularly.

Putting aside completeness of the resulting metrics for a moment, an important example of the above procedure is the Riemannian cone $C(N)$ over $N$. As a smooth manifold, this is just $ \mathbb{R}_{>0} \times N$, and if we identify $N \subset C(N)$ with the hypersurface $ \lbrace 1 \rbrace \times N$ at $t=1$, a fixed $SU(2)$-structure $\left( \eta^{se}, \omega_i^{se}\right)$ on $N$ defines the following 1-parameter family $\left( \eta, \omega_i \right)_t$ of $SU(2)$-structures: 
\begin{align} \label{sesu2}
&\eta = t \eta^{se}  &\omega_i = t^2 \omega_i^{se} 
\end{align}
As in \eqref{CYdefine}, this family defines the \textit{conical $SU(3)$-structure}  $\left( \omega_C , \Omega_C \right)$ on $\mathbb{R}_{>0} \times N$: 
\begin{align} \label{CYcone}
\omega_C = t dt \wedge \eta^{se} + t^2 \omega^{se}_1& &\Omega_C = t^2 \left( \omega^{se}_2 + i \omega^{se}_3 \right) \wedge \left( dt +i t \eta^{se} \right)
   \end{align}
which is Calabi-Yau iff \eqref{sesu2} satisfies equations \eqref{hypo}, \eqref{dynamic}. In this case, satisfying \eqref{hypo}, \eqref{dynamic} is equivalent to the following structure equations on $N$:
\begin{align} \label{sasaki}
&d \eta^{se} = 2 \omega_1^{se}& &d\omega_2^{se} = -3 \omega_3^{se} \wedge \eta^{se}&  &d\omega_3^{se} = 3 \omega_2^{se} \wedge \eta^{se}    
\end{align}
We refer to an $SU(2)$-structure $\left( \eta^{se}, \omega_i^{se}\right)$ satisfying \eqref{sasaki} as being \textit{Sasaki-Einstein}: one can show that such an $SU(2)$-structure induces a Sasaki-Einstein metric $g^{se}$ on $N$, or other words, the Calabi-Yau metric $g_C$ compatible with  $\left( \omega_C , \Omega_C \right)$ on  $\mathbb{R}_{>0} \times N$ is a metric cone $g_C = dt^2 + t^2 g^{se}$ over $g^{se}$. 
  
Another class of examples for this construction arise when we have a smooth, isometric action by some compact Lie group $K$ on $M$, such that there is a $K$-orbit with co-dimension one. These are the \textit{co-homogeneity one} Riemannian manifolds, and it is not difficult to show that the $K$-orbits foliate (a dense open subset of) $M$ into parallel hypersurfaces, and that the quotient space $M / K$ is one-dimensional, c.f. \cite{alekseevsky1993}. These parallel hyper-surfaces can all be written as the homogeneous space $K/H$, where $H$ denotes the principal (i.e. generic) isotropy subgroup of the $K$-action, and the evolution equations \eqref{dynamic} for some $K$-invariant forms $\left( \eta, \omega_i \right)$ on $K/H$ become a finite-dimensional system of ODEs, which can be explicitly solved in some cases. 

Such a situation arises for the three known distinct examples of complete asymptotically conical co-homogeneity one Calabi-Yau 3-folds in the literature, each of which has the form $M = SU(2)^2 \times_{H'} V$ for some singular isotropy subgroup $H' \subset SU(2)^2$, and $H'$-representation $V$:
\begin{enumerate}
\item $\mathcal{O}(-1)\oplus \mathcal{O}(-1)$ over $\mathbb{CP}^1$, with a metric obtained by Candelas and de la Ossa in \cite{CANDELAS1990246}, also known as the \textit{small resolution of the conifold}. The metric is unique up to rescaling by a constant factor, and as a co-homogeneity one manifold we have the diagram $ \triangle U(1) \subset U(1) \times SU(2) \subset SU(2)^2$, where $\triangle U(1)$ is the diagonal $U(1)$ subgroup. The $U(1) \times SU(2)$ representation is given by the following: viewing $v \in V$ as a quaternion, and $q \in SU(2)$ as a unit quaternion, then $(e^{i\theta},q).v = qve^{-i\theta}$. By applying the outer automorphism exchanging the factors of $SU(2) \subset SU(2)^2$, we can get another co-homogeneity one metric from the small resolution, with singular isotropy group $U(1) \times SU(2) \subset SU(2)^2$, but this metric is distinct only up to equivariant isometries.   
\item $T^* S^3$ over $S^3$, with a metric also considered in \cite{CANDELAS1990246} and found independently by Stenzel in \cite{stenzel1993}. This is also referred to as the \textit{smoothing of the conifold} and again, this metric is unique up to overall scale. The group diagram is $ \triangle U(1) \subset \triangle SU(2) \subset SU(2)^2$, and we have as a $\triangle SU(2)$ representation $V \cong \mathfrak{su}(2)$, i.e. $SU(2)$ acts via the adjoint representation. As a smooth manifold, it is diffeomorphic to $\mathbb{R}^3 \times S^3$, the only rank $3$ vector-bundle over $S^3$ up to diffeomorphism. 
\item $\mathcal{O}(-2,-2)$, the total space of the canonical bundle over $\mathbb{CP}^1 \times \mathbb{CP}^1$, with a metric found by Calabi in \cite{calabi} (unique up to overall scaling), which was later generalised to a one-parameter family of metrics by Pando-Zayas and Tseytlin in \cite{PandoZayas}. This parameter represents the relative volume of each $\mathbb{CP}^1$ as the zero-section of $\mathcal{O}(-2,-2)$, and Calabi's construction considers the case when these two volumes are equal. The group diagram is $K_{2,-2} \subset U(1)^2 \subset SU(2)^2$, where $K_{2,-2}$ is the kernel of the map $U(1)^2 \rightarrow U(1)$ given by $(e^{i\theta_1}, e^{i\theta_2}) \mapsto e^{2i\theta_1 - 2i\theta_2}$,  and as a $U(1)^2$-representation we have $V \cong \mathbb{C}_{2,-2}$, i.e. for complex number $V \ni v$, $(e^{i\theta_1}, e^{i\theta_2}).v = e^{2i(\theta_1 -\theta_2)}v$. Note that there is a (non-unique) isomorphism $K_{2,-2} \cong \triangle U(1)\times \mathbb{Z}_2 \subset U(1)^2$, where we define $\triangle U(1)\times \mathbb{Z}_2 \subset U(1)^2$ as the (internal) direct product of the diagonal subgroup $\triangle U(1)$ and the $\mathbb{Z}_2$-subgroup generated by $(e^{2i \pi}, e^{i \pi})$, by sending $K_{2,-2} \ni (e^{i\theta_1}, e^{i\theta_2}) \mapsto (e^{i\theta_1}, e^{i\theta_1}).(e^{2i \pi}, e^{i (\theta_2 -\theta_1)}) \in  \triangle U(1)\times \mathbb{Z}_2$.  
\end{enumerate}
The asymptotic model for the geometry of these spaces (up to $\mathbb{Z}_2$-cover) is the unique co-homogeneity one Calabi-Yau metric cone over $SU(2)^2 / \triangle U(1)\cong S^2 \times S^3$, referred to as the \textit{conifold} in \cite{CANDELAS1990246}. In the co-homogeneity one setting, there is an obvious diffeomorphism identifying the space of principal orbits with the smooth manifold underlying the conifold, and pulling back any of these asymptotically conical metrics to a metric on the conifold via this diffeomorphism, by \cite{CANDELAS1990246}, \cite{PandoZayas}, we have $|  i^*g - g_C | \rightarrow 0$ as $t\rightarrow \infty$, where $t$ denotes the radial parameter on the cone, $i^*g$ denotes the pulled-back metric, and we take norms with respect to the conical metric $g_C$.

\subsection{Invariant Calabi-Yau structures on the space of principal orbits.}
\label{sectionCY}

In order to have a uniform set-up for the gauge theory in later sections, we will recall the construction of these co-homogeneity one Riemannian metrics on $\mathcal{O}(-1)\oplus \mathcal{O}(-1)$, $T^* S^3$, and $\mathcal{O}(-2,-2)$. They appear as solutions to the hypo-equations \eqref{hypo} and evolution equations \eqref{dynamic} on the space of principal orbits $S^2 \times S^3 = SU(2)^2 / \triangle U(1)$, which extend to the singular orbits in the complete cases. 

Let us begin by fixing an explicit basis $E_1, E_2, E_3$ for $\mathfrak{su}(2)$, given by the matrices: 
\begin{align*}
E_1 := \begin{pmatrix} 
i & 0 \\
0 & -i 
\end{pmatrix} \quad
E_2 := \begin{pmatrix} 
0 & 1 \\
-1 & 0 
\end{pmatrix} \quad
E_3 := \begin{pmatrix} 
0 & i \\
i & 0 
\end{pmatrix}
\end{align*}
so that $\left[ E_i, E_j\right] = 2E_k $ for cyclic permutations of $\left (1 2 3 \right)$, and the action of $U(1)$ on $SU(2)$ is generated by $E_1$. Clearly, we can identify the span of $E_2, E_3$ under the adjoint action of $U(1)$ with $\mathbb{C}_2$, where $\mathbb{C}_n$ denotes $n^{th}$ tensor power of the standard representation of $U(1)$ on $\mathbb{C}$. 
 
We will also fix a basis for the left-invariant vector fields of $SU(2)^2$: 
\begin{align*}
U^1 &:= (E_1, 0) &V^1 &:= (E_2, 0) &W^1 &:= (E_3, 0) \\
U^2 &:= (0, E_1) &V^2 &:= (0, E_2) &W^2 &:= (0, E_3)
\end{align*}
and denote $U^\pm := U^1 \pm U^2$, where $U^+$ generates the diagonal subgroup $\triangle U(1)$. 

Let $\mathfrak{m}$ denote the complement of $\mathfrak{u}(1)$ in $\mathfrak{su}(2) \oplus \mathfrak{su}(2)$, where $\mathfrak{u}(1)$ is the span of $U^+$. We have the ad-invariant splitting as $\triangle U(1)$-representations: 
\begin{align*}
\mathfrak{su}(2) \oplus \mathfrak{su}(2) = \mathfrak{u}(1) \oplus \mathfrak{m} := \langle U^+ \rangle \oplus \langle U^-, V^1, W^1, V^2, W^2 \rangle \cong \mathbb{R} \oplus \left( \mathbb{R} \oplus \mathbb{C}_{2} \oplus \mathbb{C}_{2} \right)
\end{align*}
\begin{remark} \label{remarkz2}
Assume $\mathbb{Z}_2 \subset SU(2)^2$ is a subgroup of the flow generated by the vector field $U^-$. The adjoint action of $\mathbb{Z}_2$ on $\mathfrak{m} $ is trivial, and so the results of this section will also apply to $ SU(2)^2 / \triangle U(1) \times \mathbb{Z}_2$.
\end{remark}  
With this notation, we define the \textit{standard} invariant Sasaki-Einstein structure $\left( \eta^{se}, \omega_i^{se} \right)$ on $SU(2)^2 / \triangle U(1)$ as:
\begin{gather} \label{standardse} 
 \begin{aligned} 
\eta^{se} &:= \tfrac{4}{3} u^- & \omega_1^{se} &:= - \tfrac{2}{3}(v^1 \wedge w^1 - v^2 \wedge w^2 )  \\
\omega_2^{se} &:= \tfrac{2}{3}(v^1 \wedge v^2 + w^1 \wedge w^2 ) & \omega_3^{se} &:=  \tfrac{2}{3} (v^1 \wedge w^2 - w^1 \wedge v^2 ) \\   
 \end{aligned} 
 \end{gather}
It is easy to check that $\left( \eta^{se}, \omega_i^{se}\right)$ satisfies the Sasaki-Einstein structure equations \eqref{sasaki}. The corresponding Calabi-Yau cone  $C \left( SU(2)^2 / \triangle U(1) \right)$ has the $SU(3)$-structure $\left( \omega_C , \Omega_C \right)$, as in \eqref{CYcone}, and we refer to this cone as the \textit{conifold}\footnote{Note that any invariant Sasaki-Einstein structure on $SU(2)^2 / \triangle U(1)$ can be obtained from \eqref{standardse} by rotating the plane spanned by $\left( \omega_2^{se}, \omega_3^{se} \right)$. However, since any of two of these structures induce the same Sasaki-Einstein metric $g^{se}$, we will make this particular choice without loss of generality.}.    

Furthermore, it is not hard to show that the space of invariant two-forms on $SU(2)^2 / \triangle U(1)$ is four-dimensional, and spanned by $\omega_0^{se}, \omega_1^{se},\omega_2^{se},\omega_3^{se}$, where we define: 
\begin{align}
\omega_0^{se} &:= \tfrac{2}{3}(v^1 \wedge w^1 + v^2 \wedge w^2 )
\end{align} 
By using this basis of invariant two-forms and the invariant one-form $\eta^{se}$, we have the following description of the space of hypo-structures:
\begin{prop}[\cite{FoscolonK}] Up to transformations by isometries with respect to the induced metric, any invariant family of hypo-structures $\left(\eta, \omega_1, \omega_2,\omega_3 \right)_t$ on $SU(2)^2 / \triangle U(1)$ can be written:
\begin{gather} \label{hypoGEN}
\begin{aligned}
&\eta = \lambda \eta^{se}& &\omega_1 = u_0 \omega_0^{se} + u_1 \omega_1^{se}& &\omega_2 =  \mu \omega_2^{se}&  &\omega_3 = v_0 \omega_0^{se} + v_3 \omega_3^{se} 
\end{aligned}
\end{gather}
where $\lambda, u_0, u_1, v_0, v_3$ are real-valued functions depending on $t \in \mathbb{R}_{>0}$, with $\mu^2 := - u_0^2 + u_1^2 = - v_0^2 + v_3^2$, and $v_0 u_0 = 0$.
\end{prop}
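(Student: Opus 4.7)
The plan is to classify invariant $1$- and $2$-forms on the principal orbit $SU(2)^2/\triangle U(1)$ representation-theoretically, translate the hypo conditions into an algebraic statement in a Minkowski-signature inner-product space, and then use the available gauge symmetries to bring an arbitrary invariant hypo-structure into the claimed normal form.

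Decomposing $\mathfrak{m} \cong \mathbb{R} \oplus \mathbb{C}_2 \oplus \mathbb{C}_2$ as a $\triangle U(1)$-representation, the $\triangle U(1)$-invariants of $\mathfrak{m}^*$ form a one-dimensional real subspace spanned by $u^-$, forcing $\eta = \lambda \eta^{se}$ for some function $\lambda$. For invariant $2$-forms, a short calculation shows that the trivial isotypic component of $\Lambda^2 \mathfrak{m}^*$ is four-dimensional, with basis given by $v^j \wedge w^j$ ($j=1,2$), contributed by the two copies of $\Lambda^2 \mathbb{C}_2$, together with the real and imaginary parts of $\bar z^1 \wedge z^2$ (with $z^j := v^j + i w^j$), contributed by the invariant part of $\mathbb{C}_2 \otimes \mathbb{C}_2$; equivalently, it is spanned by $\omega_0^{se}, \omega_1^{se}, \omega_2^{se}, \omega_3^{se}$. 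Consequently each $\omega_i$ has an expansion $\omega_i = a_i \omega_0^{se} + b_i \omega_1^{se} + c_i \omega_2^{se} + d_i \omega_3^{se}$.

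A direct computation then shows that the wedge pairing on this four-dimensional space is diagonal, with $\omega_0^{se} \wedge \omega_0^{se}$ a positive multiple of a fixed top form $v_0$ on $\mathcal{H} = \ker \eta^{se}$ and each $\omega_j^{se} \wedge \omega_j^{se}$ ($j=1,2,3$) the opposite multiple; that is, the wedge pairing has Minkowski signature $(1,3)$. The $SU(2)$-structure conditions $\omega_i \wedge \omega_j = \delta_{ij} v$ then say precisely that the three vectors $(a_i, b_i, c_i, d_i) \in \mathbb{R}^{1,3}$ are pairwise orthogonal and have a common, necessarily negative, norm, which I will call $-\mu^2$.

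To normalize, I would use two gauge freedoms preserving the induced metric: (a) the hyperkähler $SO(3)$-rotation of the triple $(\omega_1, \omega_2, \omega_3)$, and (b) the $U(1)$-rotation of the plane spanned by $(\omega_2^{se}, \omega_3^{se})$, which reflects the ambiguity in the reference Sasaki--Einstein structure noted in the footnote after \eqref{standardse}. First, apply (b) so that the one-dimensional orthogonal complement $W^\perp$ of $W := \operatorname{span}(\omega_1, \omega_2, \omega_3)$ in $\mathbb{R}^{1,3}$ has vanishing $\omega_2^{se}$-component; equivalently, $\omega_2^{se} \in W$. Next, use (a) to align $\omega_2$ with $\omega_2^{se}$, giving $\omega_2 = \mu \omega_2^{se}$, and then use the residual $SO(2) \subset SO(3)$ fixing $\omega_2$ to place $\omega_1$ into $\operatorname{span}(\omega_0^{se}, \omega_1^{se})$, giving $\omega_1 = u_0 \omega_0^{se} + u_1 \omega_1^{se}$. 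Minkowski-orthogonality of $\omega_3$ to $\omega_1, \omega_2$ then forces $\omega_3 = \alpha(u_1 \omega_0^{se} + u_0 \omega_1^{se}) + v_3 \omega_3^{se}$ for some $\alpha, v_3$; matching against $\omega_3 = v_0 \omega_0^{se} + v_3 \omega_3^{se}$ gives $v_0 = \alpha u_1$ and the constraint $\alpha u_0 = 0$, i.e.\ $v_0 u_0 = 0$, while the equal-norm conditions yield $\mu^2 = u_1^2 - u_0^2 = v_3^2 - v_0^2$.

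The main obstacle I expect is not the representation theory or the linear algebra but the bookkeeping around the gauge groups: specifically, checking that the $U(1)$-rotation of the reference Sasaki--Einstein basis really is an isometric symmetry of the induced metric (not just of $g^{se}$), and that it is independent of the hyperkähler $SO(3)$, so that together they reduce the seven-parameter space of invariant triples satisfying the hypo conditions to exactly the three-parameter normal form claimed.
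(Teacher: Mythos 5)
First, note that the paper does not prove this proposition---it is cited from \cite{FoscolonK}---so there is no in-paper proof to compare against; I am assessing your argument on its own. Your representation-theoretic setup is correct and is certainly the right framework: the invariant $1$-forms are spanned by $u^-$, the invariant $2$-forms form the four-dimensional space $\langle \omega_0^{se},\omega_1^{se},\omega_2^{se},\omega_3^{se}\rangle$, and the wedge pairing on $\Exterior^2\mathcal{H}$ restricted to this space is diagonal of signature $(1,3)$, so the algebraic $SU(2)$-structure conditions say exactly that $(\omega_1,\omega_2,\omega_3)$ is $\mu$ times an orthonormal frame of a negative-definite $3$-plane in $\R^{1,3}$.

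The genuine gap is that your argument never uses the hypo conditions \eqref{hypo}, and the conclusion is false without them. At the end of your normalization you reach $\omega_3=\alpha(u_1\omega_0^{se}+u_0\omega_1^{se})+v_3\omega_3^{se}$ and then obtain ``the constraint $\alpha u_0=0$'' by \emph{matching against} the target normal form---that is circular, and for a generic invariant $SU(2)$-structure one really does have $\alpha u_0\neq 0$ (equivalently, the equal-norm condition gives $v_3^2-v_0^2=\mu^2-\alpha^2u_0^2$, so $v_3^2-v_0^2=\mu^2$ forces, rather than follows from, $\alpha u_0=0$). The missing input is differential: since $d\omega_0^{se}=d\omega_1^{se}=0$ while $d\omega_2^{se}=-3\omega_3^{se}\wedge\eta^{se}$ and $d\omega_3^{se}=3\omega_2^{se}\wedge\eta^{se}$ are linearly independent, the condition $d\omega_1=0$ forces $\omega_1\in\langle\omega_0^{se},\omega_1^{se}\rangle$ from the outset; and $d(\omega_i\wedge\eta)=-2\lambda\,\omega_i\wedge\omega_1^{se}$ (the $d\omega_i\wedge\eta$ term vanishes as it contains $\eta^{se}\wedge\eta^{se}$) forces $\omega_2,\omega_3$ to have no $\omega_1^{se}$-component. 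Orthogonality to $\omega_1$ then gives $u_0\cdot(\omega_0^{se}\text{-component of }\omega_{2,3})=0$, which is where $u_0v_0=0$ actually comes from. A related problem with your gauge bookkeeping: the full hyperk\"ahler $SO(3)$ is \emph{not} an admissible symmetry here, because \eqref{hypo} distinguishes $\omega_1$ (closed) from $\omega_2,\omega_3$ (for which $\omega_i\wedge\eta$ is closed); your residual $SO(2)$ rotating $(\omega_1,\omega_3)$ would generically destroy the hypo property. Once the two differential constraints above are imposed first, only the hypo-preserving $SO(2)$ rotating $(\omega_2,\omega_3)$ together with the rotation of the reference plane $\langle\omega_2^{se},\omega_3^{se}\rangle$ is needed, and your Minkowski linear algebra then closes the argument correctly.
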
 
Clearly, at least one of $v_0$ or $u_0$ must vanish: if  $v_0$ vanishes, we will refer to this family as a \textit{hypo-structure of type} $\mathcal{I}$, while if $u_0$ vanishes, we will refer to this family as a \textit{hypo-structure of type} $\mathcal{II}$. We will write these two situations explicitly below, along with corresponding hypo-evolution equations \eqref{dynamic}:
\begin{enumerate}
\item Type $\mathcal{I}$: 
\begin{align}  \label{hypoA}
&\eta = \lambda \eta^{se}& &\omega_1 = u_0 \omega_0^{se} + u_1 \omega_1^{se}&
 &\omega_2 =  \mu \omega_2^{se}&  &\omega_3 = \mu \omega_3^{se} &
\end{align}
The corresponding hypo-evolution equations are: 
\begin{align} \label{hypoAevolution}
\partial_t u_0 = 0&   &\partial_t u_1 = 2 \lambda&   &\partial_t (\lambda \mu ) = 3 \mu 
\end{align}
\item Type $\mathcal{II}$:
\begin{align} \label{hypoB}
&\eta = \lambda \eta^{se}& &\omega_1 = \mu \omega_1^{se}& 
 &\omega_2 =  \mu \omega_2^{se}&  &\omega_3 = v_0 \omega_0^{se} + v_3 \omega_3^{se} &
\end{align}
The corresponding hypo-evolution equations are: 
\begin{align} \label{hypoBevolution}
\partial_t \mu = 2 \lambda& &\partial_t (\mu \lambda) = 3 v_3& &\partial_t (\lambda v_3 ) = 3 \mu & &\partial_t (\lambda v_0 ) = 0 
\end{align}
\end{enumerate}
If both $v_0, u_0$ vanish, then $u_1 = v_3 = \mu$, and clearly $\lambda = t, \mu = t^2 $ is a solution to the resulting evolution equations:
\begin{align*}
\partial_t \mu = 2 \lambda& &\partial_t (\mu \lambda) = 3 \mu
\end{align*}
which gives rise to the conical Calabi-Yau structure $\left(\omega_C, \Omega_C \right)$ of the conifold. 
\begin{remark} $\left( \omega, \mathrm{Re} \Omega \right)$ represent cohomology classes of $M^* := \mathbb{R}_{>0} \times SU(2)^2 / \triangle U(1) $, and the conserved quantities $u_0$, $ -\lambda v_0$ appearing in \eqref{hypoAevolution}, \eqref{hypoBevolution} are the coefficients of $\left( \left[ \omega \right], \left[ \Real \Omega  \right] \right) \in H^2 \left( M^* \right) \times H^3 \left( M^* \right) \cong \mathbb{R}^2$ with respect to the basis $\omega_0^{se}$, $\omega_0^{se} \wedge \eta^{se}$.   
\end{remark}
For each of the families, one can write down the corresponding invariant Calabi-Yau metric $g= dt^2 + g_t$ explicitly on the space of principal orbits, cf. \cite[Prop.2.16]{FoscolonK}:    
\begin{enumerate} 
\item Type $\mathcal{I}$: 
\begin{align} \label{hypoAmetric}
g = dt^2 + \lambda^2 (\eta^{se})^2 + \tfrac{2}{3} (u_1 - u_0) \left( (v^1)^2 + (w^1)^2 \right) + \tfrac{2}{3} (u_1 + u_0) \left( (v^2)^2 + (w^2)^2 \right) 
\end{align} 
\item Type $\mathcal{II}$:
\begin{align} \label{hypoBmetric}
g = dt^2 + \lambda^2 (\eta^{se})^2 + \tfrac{4}{3} (v_3 - v_0) \left( (v^-)^2 + (w^-)^2 \right) + \tfrac{4}{3} (v_3 + v_0) \left( (v^+)^2 + (w^+)^2 \right) 
\end{align}
where $v^\pm$ is the 1-form dual to tangent vector $V_\pm = V_1 \pm V_2$, respectively $w^+$ is the dual to $W_\pm = W_1 \pm W_2$.
\end{enumerate}
 
With this description in hand, the problem of finding invariant Calabi-Yau metrics on the space of principal orbits is reduced to finding solutions to the evolution equations \eqref{hypoAevolution} or \eqref{hypoBevolution}. We can write the complete Calabi-Yau metrics as solutions extending to the singular orbits at $t=0$, c.f. \cite[Thm.2.27]{FoscolonK}:
\begin{lemma}[{\cite{PandoZayas}},{\cite{CANDELAS1990246}}] \label{CYstructurelemma} Up to transformations by isometries with respect to the induced metric, the space of $SU(2)^2$-invariant Calabi-Yau structures $\left(\omega, \Omega \right)$ on $M$ can be identified with: 
\begin{enumerate}[\normalfont(i)]
\item For $M = \mathcal{O}(-2,-2)$, the open convex cone $ \lbrace \left(U_0, U_1\right) \in \mathbb{R}^2 \mid U_1 > | U_0 | \geq 0 \rbrace$. 
\item For $M = \mathcal{O}(-1)\oplus \mathcal{O}(-1)$, the ray $ \lbrace \left(U_0, U_1\right) \in \mathbb{R}^2 \mid U_1 = - U_0   < 0 \rbrace$.
\end{enumerate}
These invariant Calabi-Yau structures induce a hypo-structure of type $\mathcal{I}$ on the principal orbits, with $ \left(U_0, U_1\right):= \left(u_0(0), u_1(0)\right)$, and:
\begin{align} \label{CYstructure}
&\mu^2 = u_1^2 - U_0^2 & &\lambda^2 = \frac{u_1^3-3 U_0^2 u_1 + U_1 (3U_0^2 - U_1^2)}{u_1^2 -U_0^2}&
\end{align}
\end{lemma}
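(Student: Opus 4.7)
The plan is to integrate the type $\mathcal{I}$ hypo-evolution equations \eqref{hypoAevolution} using a first integral, and then determine which initial data at the singular orbit give rise to smooth complete Calabi-Yau structures on $M$.

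The starting observation is that, for both $\mathcal{O}(-2,-2)$ and $\mathcal{O}(-1)\oplus\mathcal{O}(-1)$, the induced invariant hypo-structure on the principal orbits must be of type $\mathcal{I}$. Indeed, in type $\mathcal{II}$ one has $\omega_1 = \mu\omega_1^{se}$, and since the $\eta^{se}$-direction collapses onto the singular orbit we would need $\mu(0)=0$; but this would force the restriction of $\omega_1$ to vanish on the singular orbit, which is incompatible with the singular orbit being a Kähler submanifold (a $\CP^1$ or $\CP^1 \times \CP^1$). In type $\mathcal{I}$ instead, $\omega_1 = u_0 \omega_0^{se} + u_1 \omega_1^{se}$ pulls back to a Kähler form on the singular orbit, as required.

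With type $\mathcal{I}$ established, the first evolution equation $\partial_t u_0 = 0$ immediately gives $u_0(t) \equiv U_0$, and the constraint $\mu^2 = u_1^2 - u_0^2$ yields $\mu^2 = u_1^2 - U_0^2$. For the formula for $\lambda^2$, I would construct a first integral as follows. Using $\partial_t(\lambda\mu) = 3\mu$, we compute
\begin{equation*}
\partial_t(\lambda^2 \mu^2) = 2\lambda\mu \cdot \partial_t(\lambda\mu) = 6\lambda\mu^2.
\end{equation*}
On the other hand, using $\partial_t u_1 = 2\lambda$ and the constraint on $\mu^2$,
\begin{equation*}
\partial_t\bigl(u_1^3 - 3U_0^2 u_1\bigr) = 3(u_1^2 - U_0^2)\partial_t u_1 = 3\mu^2 \cdot 2\lambda = 6\lambda\mu^2,
\end{equation*}
so $E := \lambda^2 \mu^2 - u_1^3 + 3U_0^2 u_1$ is a conserved quantity along the flow. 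Evaluating at $t=0$, where $\lambda(0) = 0$ (the $\eta^{se}$-direction collapses onto the singular orbit in both cases) and $u_1(0) = U_1$, gives $E = U_1(3U_0^2 - U_1^2)$. Rearranging produces the stated formula for $\lambda^2$.

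To identify the admissible range of initial data, I would perform a case-by-case analysis using the metric formulae \eqref{hypoAmetric}. For $M = \mathcal{O}(-2,-2)$, the singular orbit $\CP^1 \times \CP^1$ is tangent to both pairs of directions $V^j, W^j$, so the coefficients $\tfrac{2}{3}(u_1 \pm u_0)$ must remain strictly positive at $t=0$, giving the open condition $U_1 > |U_0|$; only the $\eta^{se}$-direction collapses, and a standard application of the Eschenburg--Wang \cite{Eschenburg2000} boundary analysis shows that linear vanishing of $\lambda$ at $t=0$ yields a smooth metric across the $T^2$-fibre direction. For $M = \mathcal{O}(-1)\oplus\mathcal{O}(-1)$, the singular orbit $\CP^1$ is tangent only to $V^1,W^1$, with normal directions $U^-, V^2, W^2$ filling out an $\mathbb{R}^4$; thus we require $U_1 + U_0 = 0$ (so that the $(v^2)^2 + (w^2)^2$ coefficient vanishes, collapsing into the normal bundle along with $\lambda$) together with $U_1 - U_0 \neq 0$ (to keep the tangential $\CP^1$ non-degenerate), which collapses the parameter space to the stated ray. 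Existence of a global solution for each $(U_0, U_1)$ in these ranges then follows from the explicit constructions of \cite{CANDELAS1990246} and \cite{PandoZayas}, where $U_0$ is interpreted as parametrising the difference of Kähler volumes of the two $\CP^1$ factors in the first case (with $U_0 = 0$ recovering Calabi's symmetric metric).

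The main obstacle is the smoothness analysis at the singular orbit: one must verify not only that the relevant metric coefficients vanish at $t=0$, but also that the invariant symmetric two-tensor extends smoothly in normal coordinates. This is a routine but delicate equivariant computation — matching the collapse rates of the one-form and two-form coefficients against the quadratic behaviour of the invariant extension of the normal bundle — and it is what ultimately pins down the stated ranges rather than a larger open set of naive initial data.
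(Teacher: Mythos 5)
The paper does not actually prove this lemma --- it is quoted from \cite{CANDELAS1990246}, \cite{PandoZayas} (c.f.\ Foscolo's notes), so there is no in-paper argument to compare against. Your derivation of \eqref{CYstructure} is correct and is surely the intended one: the identity $\partial_t(\lambda^2\mu^2)=6\lambda\mu^2=\partial_t(u_1^3-3U_0^2u_1)$ follows directly from \eqref{hypoAevolution} and the constraint $\mu^2=u_1^2-U_0^2$, and evaluating the conserved quantity at $t=0$ with $\lambda(0)=0$, $u_1(0)=U_1$ gives exactly the stated formula. I checked it against the expansions the paper uses later ($\lambda=3t+O(t^3)$, $u_1=U_1+O(t^2)$ on $\mathcal{O}(-2,-2)$; $\lambda=\tfrac32 t+O(t^3)$, $u_1=1+\tfrac32 t^2+O(t^4)$, $\mu=\sqrt3\,t+O(t^3)$ for $(U_0,U_1)=(-1,1)$ on $\mathcal{O}(-1)\oplus\mathcal{O}(-1)$) and everything is consistent. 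The identification of the parameter ranges from the positivity/vanishing pattern of the metric coefficients $\tfrac23(u_1\pm u_0)$ in \eqref{hypoAmetric} at the two singular orbits is also right, modulo the smooth-closing rates which you correctly defer to the Eschenburg--Wang analysis and the original references.

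The one genuine gap is your exclusion of type $\mathcal{II}$ structures, which is part of the lemma's conclusion and whose stated justification does not work. You claim that collapsing the $\eta^{se}$-direction forces $\mu(0)=0$; it does not --- in type $\mathcal{II}$ the evolution equation $\partial_t\mu=2\lambda$ together with $\lambda(0)=0$ only gives $\dot\mu(0)=0$, so $\omega_1=\mu\,\omega_1^{se}$ can perfectly well restrict to a nondegenerate form on a codimension-two singular orbit. The correct mechanism is the conservation law $\partial_t(\lambda v_0)=0$ from \eqref{hypoBevolution}: since $\lambda(0)=0$ at the singular orbits of $\mathcal{O}(-2,-2)$ and $\mathcal{O}(-1)\oplus\mathcal{O}(-1)$ (in contrast to $T^*S^3$, where $\lambda(0)\neq 0$), boundedness of $v_0$ forces $v_0\equiv 0$, which collapses type $\mathcal{II}$ to the degenerate intersection with type $\mathcal{I}$. (Equivalently, in the paper's language, $-\lambda v_0$ is the coefficient of $[\mathrm{Re}\,\Omega]\in H^3(M)$, which vanishes for these two manifolds.) For the codimension-four orbit one can alternatively observe that the type $\mathcal{II}$ metric \eqref{hypoBmetric} is diagonal in the basis $v^\pm,w^\pm$ and so cannot degenerate exactly on the span of $v^2,w^2$ unless $v_3(0)=v_0(0)=0$. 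With this repaired, the argument is complete.
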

There are a few comments to be made about the parameters $\left(U_0, U_1\right)$ appearing in Lemma \ref{CYstructurelemma}: firstly, the point $U_0=U_1=0$ is clearly identified with the conifold $u_1 = \mu = t^2$, $\lambda = t$. Secondly, the interior of the cone $ \lbrace \left(U_0, U_1\right) \in \mathbb{R}^2 \mid U_1 > | U_0 | \geq 0 \rbrace$ can be identified with the K\"{a}hler cone of $\mathcal{O}(-2,-2)$, i.e. the convex cone generated by the K\"{a}hler classes of the two copies of $\mathbb{CP}^1 \subset \mathcal{O}(-2,-2)$, and it is not hard to see that multiplicative rescalings of the cone are equivalent to constant rescalings of the metric. Furthermore, the diffeomorphism arising from exchanging the two copies of $\mathbb{CP}^1$ acts on this cone via reflection $U_0 \rightarrow -U_0$, and the Calabi construction in \cite{calabi} produces exactly the metrics in the subset fixed by this action.    

Calabi-Yau structures on the cone boundary $U_1 = \pm U_0$ (excluding the origin) are not quite the same as those found on the boundary of the K\"{a}hler cone of $\mathcal{O}\left( -2, -2 \right)$ however, which generically have $\mathbb{Z}_2$-quotient singularities. Rather, they are a (smooth) branched double-covering\footnote{these quotient singularities do not appear in our set-up, as we only define $\lambda, \mu, u_0, u_1$ at the identity coset on the principal orbit.}: up to exchanging the factors of $\mathbb{CP}^1$, this boundary gives the Calabi-Yau structure on $\mathcal{O}(-1)\oplus \mathcal{O}(-1)$ over $\mathbb{CP}^1 = SU(2)^2/U(1)\times SU(2)$. In the rest of this article, for ease of notation, we will fix the scaling convention for this metric to be $\left(U_0, U_1\right) = \left( -1, 1 \right)$.
 
Finally, for the Calabi-Yau structure on $T^* S^3$, we give the explicit solutions to \eqref{hypoBevolution} extending to the singular orbit $S^3$: 
\begin{lemma}[\cite{stenzel1993}] Up to scale, and transformations by isometries with respect to the induced metric, there is a unique $SU(2)^2$-invariant Calabi-Yau structure on $T^* S^3$. It induces a hypo-structure of type $\mathcal{II}$ on the principal orbits, with:
\begin{gather} \label{hypoBsol}
\begin{aligned}
&\lambda = \left(\frac{2}{3}\right)^{\frac{1}{3}} \frac{\sinh 3s }{ \left( \sinh 3s \cosh 3s -3 s \right)^{\frac{1}{3}}} & & \mu = \left(\frac{2}{3}\right)^{\frac{2}{3}}\left( \sinh 3s \cosh 3s -3 s \right)^{\frac{1}{3}}& \\
 & v_0 = - \left(\frac{2}{3}\right)^{\frac{2}{3}} \frac{ \left( \sinh 3s \cosh 3s -3 s \right)^{\frac{1}{3}}}{\sinh 3s } & & v_3 = \left(\frac{2}{3}\right)^{\frac{2}{3}} \frac{ \left( \sinh 3s \cosh 3s -3 s \right)^{\frac{1}{3}}}{\tanh 3s }&
\end{aligned}
\end{gather}
for $s \in \left[ 0, \infty \right) $, where $s(t) := \int_0^t \lambda^{-1}(\hat{t}) d\hat{t}$
\end{lemma}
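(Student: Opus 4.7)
The plan is to reduce the hypo-evolution system \eqref{hypoBevolution} to a \emph{linear} system via a change of dependent variables, pin down the integration constants from smoothness at the singular orbit $S^3$ together with the algebraic constraint $\mu^2 = v_3^2 - v_0^2$, and finally verify that the resulting profile extends smoothly to all of $T^*S^3$.

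First, introduce the coordinate $s = s(t)$ with $ds/dt = 1/\lambda$, and set
\begin{align*}
A := \lambda\mu, \qquad B := \lambda v_3, \qquad C := \lambda v_0.
\end{align*}
A direct computation using $\partial_s = \lambda\, \partial_t$ converts \eqref{hypoBevolution} into the linear system $A' = 3B$, $B' = 3\lambda\mu = 3A$, and $C' = 0$, where $' = \partial_s$. Hence $C$ is constant along the flow and $A, B$ are linear combinations of $\sinh(3s)$ and $\cosh(3s)$.

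Next, impose the boundary conditions at the singular orbit. For the Calabi-Yau structure to extend across $S^3 = SU(2)^2/\triangle SU(2)$, the $S^2$-fibres of the projection $SU(2)^2/\triangle U(1) \to S^3$ must collapse at $s = 0$, which forces $\mu(0) = 0$, hence $A(0) = 0$. This pins down $A(s) = a\sinh(3s)$ and $B(s) = a\cosh(3s)$ for a single constant $a > 0$. Multiplying the type $\mathcal{II}$ constraint $\mu^2 = v_3^2 - v_0^2$ by $\lambda^2$ yields $A^2 = B^2 - C^2$, so $C^2 = a^2$; since the sign of $v_0$ is flipped by the isometric involution swapping the two $SU(2)$ factors of $SU(2)^2$, one may fix $C = -a$ without loss of generality.

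Finally, recover the profile functions by combining $\mu' = 2\lambda^2$ with $A = \lambda\mu$: this gives $(\mu^3)' = 3\mu^2\mu' = 6\lambda^2\mu^2 = 6A^2 = 6a^2\sinh^2(3s)$, which integrates with $\mu(0) = 0$ to $\mu^3 = a^2(\sinh 3s \cosh 3s - 3s)$; then $\lambda = A/\mu$, $v_3 = B/\lambda$, $v_0 = C/\lambda$ can be read off, and the choice $a = 2/3$ fixes the overall scale so as to match \eqref{hypoBsol} exactly. The main obstacle is verifying that this local solution does define a smooth Calabi-Yau structure on the whole of $T^*S^3$: using $\sinh 3s \cosh 3s - 3s = 18 s^3 + O(s^5)$, one computes the Taylor expansions $\lambda \sim 1$, $\mu \sim t$, $v_3 - v_0 \sim 4/3$, $v_3 + v_0 \sim 3 t^2$ at $s=0$, and checks that the metric \eqref{hypoBmetric} degenerates in the correct way, namely to the round metric on the singular orbit $S^3$ together with a flat Euclidean $\mathbb{R}^3$-factor along the normal fibre $\mathfrak{su}(2)$. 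Uniqueness up to scale and isometry is then immediate: the only free choices in the argument above are the scaling constant $a > 0$ and the sign of $C$, corresponding precisely to homothetic rescaling and to the equivariant involution exchanging the two copies of $SU(2)$.
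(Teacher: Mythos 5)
The paper offers no proof of this lemma --- it is quoted from Stenzel \cite{stenzel1993} (see also \cite{CANDELAS1990246}) --- so your derivation fills in an argument rather than diverging from one. Your route is the standard and correct one: passing to the variable $s$ with $\partial_s=\lambda\partial_t$ and setting $A=\lambda\mu$, $B=\lambda v_3$, $C=\lambda v_0$ does linearise \eqref{hypoBevolution} to $A'=3B$, $B'=3A$, $C'=0$; the constraint $A^2=B^2-C^2$ is preserved by this flow, so imposing it once is enough; $A(0)=0$ gives $A=a\sinh 3s$, and $(\mu^3)'=6A^2$ with $\mu(0)=0$ integrates to exactly $\mu^3=a^2(\sinh 3s\cosh 3s-3s)$, reproducing \eqref{hypoBsol} with $a=2/3$. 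I verified the algebra and the consistency of using the first evolution equation only to recover $\mu$; this all checks out.

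Three points, one of which is a genuine gap. First, the uniqueness claim in the lemma is over \emph{all} invariant Calabi--Yau structures, so you must also rule out hypo-structures of type $\mathcal{I}$ extending over $S^3$; you assume type $\mathcal{II}$ from the outset. The missing argument is one line: in \eqref{hypoAmetric} the coefficients of $(v^1)^2+(w^1)^2$ and $(v^2)^2+(w^2)^2$ give the same value $\tfrac{4}{3}u_1$ on both $V^+$ and $V^-$, so the metric cannot collapse the fibre directions $\langle V^+,W^+\rangle$ of $\triangle SU(2)/\triangle U(1)$ while remaining non-degenerate on the tangent space of $S^3$, except in the degenerate conical case. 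Second, the isometry you invoke to fix the sign of $C$ is misidentified: the involution exchanging the two $SU(2)$ factors fixes $v^\pm$ up to sign in a way that flips $v_3$ (and $\eta$, $\omega_1$), not $v_0$; the relevant isometry is instead right translation by $(1,\operatorname{diag}(i,-i))$, which exchanges $V^\pm$, $W^\pm$ --- or, more simply, the sign $C=-a$ is \emph{forced} by requiring $v_3+v_0\to 0$ so that the correct fibre sphere collapses. Third, a small numerical slip in the smoothness check: $\partial_t\mu=2\lambda$ with $\lambda(0)=1$ gives $\mu=2t+O(t^3)$, not $\mu\sim t$ (this does not affect the conclusion, since $\mu$ does not enter \eqref{hypoBmetric} directly).
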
 
With this description of the underlying Calabi-Yau geometry out of the way, we now return to describing the gauge theory. 
\section{Calabi-Yau Gauge Theory} \label{sectiongauge}
Consider again the monopole equations \eqref{CYmonopoleI}, for a connection $A$ and $\Phi \in \Omega^0 \left( \mathrm{Ad} P \right)$ on some principal bundle $P$ over a Calabi-Yau $3$-fold $\left( M, \omega, \Omega \right)$. As it is more convenient for our purposes, we can rewrite \eqref{CYmonopoleI} as: 
\begin{subequations} \label{CYmonopole}
\begin{align} 
F_{A} \wedge \omega^2 &= 0  \label{CYmonopole1} \\ 
F_{A} \wedge \text{Im}\Omega &= - \frac{1}{2} d_A \Phi \wedge \omega^2 \label{CYmonopole2}
\end{align}
\end{subequations} 
Let us assume we are in the general set-up of \S \ref{sectiondefine}: we let $N \subset M$ be a (real) hyper-surface, and we suppose that $N$ foliates $M$ into parallel hyper-surfaces, up to working on a tubular neighbourhood $N \times I \subseteq M$ for some $I \subseteq \mathbb{R}$. As in \eqref{CYdefine}, we can write the Calabi-Yau structure $\left( \omega, \Omega \right)$ on $M$ in terms of a one-parameter family of hypo-structures $\left( \eta, \omega_i \right)_t$ on $N$. 

In this neighbourhood, we may always write $P \rightarrow M$ as the pull-back of some bundle on $N$, and we can view any $\Phi \in \Omega^0 \left( \mathrm{Ad} P \right)$ as a one-parameter family of sections $\Phi_t$ over $N$. We can also split the connection $A = A_t + \gamma_t dt$, where $A_t$ is a one-parameter family of connections over $N$, and $\gamma_t \in \Omega^0 \left( \text{ad}P \right)$ is a one-parameter family of sections of the adjoint bundle. 

Via a gauge transformation, we can always choose to set $ \gamma_t = 0$: i.e. for each $t \in I$ take $g_t \in G$ such that $\gamma_t + g_t^{-1}(\partial_t g_t)=0$. We will refer to this choice of gauge as the \textit{temporal gauge}, and the curvature of $A= A_t$ in this gauge is given by $F_A = F_{A_t} - \partial_t A_t \wedge dt $, where $\partial_t A_t \in \Omega^1 \left( \text{ad}P \right)$ denotes the limit as $\epsilon \to 0$ of $\frac{1}{\epsilon} \left( A_t - A_{t+ \epsilon} \right)$. Since the space of connections on a given bundle is affine, $\partial_t A_t$ is a genuine one-parameter family of adjoint-valued one-forms on $N$. 
 
With this said, using \eqref{CYdefine} and the temporal gauge on a tubular neighbourhood of $N$, \eqref{CYmonopole} takes the form: 
\begin{subequations} \label{mono}
\begin{align} 
F_{A_t} \wedge \omega_2 \wedge \eta + \frac{1}{2} d_{A_t} \Phi \wedge \omega_1^2 &=0  \label{mono1}\\
F_{A_t} \wedge \omega_1 \wedge \eta + \frac{1}{2} \partial_t A_t \wedge \omega_1^2 &= 0 \label{mono2} \\ 
F_{A_t} \wedge \omega_3 + \partial_t A_t \wedge \omega_2 \wedge \eta &= d_{A_t} \Phi \wedge \omega_1 \wedge \eta - \frac{1}{2} \partial_t \Phi \omega_1^2 \label{mono3}
\end{align}
\end{subequations}
We refer to the equation \eqref{mono1} as the \textit{static} monopole equation, and \eqref{mono2}, \eqref{mono3}, as the monopole \textit{evolution} equations, where \eqref{mono2} is just the condition \eqref{CYmonopole1}, and the other two arise from \eqref{CYmonopole2}. Furthermore, it is not difficult to compute that the static equation \eqref{mono1} is preserved by the evolution equations. Similarly, in the case $\Phi = 0$, we will refer to the respective equations as the static and dynamic instanton equations.
\begin{remark} A solution of \eqref{mono} with $\Phi=0$ is equivalent to a solution $A_t$ of the $t$-dependent flow:
\begin{align*}
\ast \left( F_{A_t} \wedge \omega_1 \right) = - \partial_t A_t 
\end{align*}
with initial conditions $A_{t=t_0}$ satisfying \eqref{mono1}. By fixing a choice of reference connection $A_0$, this flow can be written as the gradient flow for a Chern-Simons functional $CS_{\omega_1}: \mathcal{A} \times I \rightarrow \mathbb{R} $:  
\begin{align*}
CS_{\omega_1} (A_0 + a, t) := \frac{1}{2} \int_N \mathrm{tr} \left( a \wedge \left( 2 F_{A_0} + d_{A_0} a + \frac{2}{3} a \wedge a \right) \right) \wedge \omega_1(t) 
\end{align*}
\end{remark} 
\subsection{Invariant Monopole and Instanton ODEs} \label{invmonopole}

Away from the singular orbit, the general set-up of \eqref{mono} clearly applies to the co-homogeneity one metrics on $\mathcal{O}\left( -2, -2 \right)$, $T^* S^3$, and $\mathcal{O}\left( -1\right) \oplus \mathcal{O}\left( -1\right)$. We will also suppose that the bundle, connection, and Higgs field are invariant under the $SU(2)^2$-action, so that \eqref{mono} is a system of ODEs for the invariant connection and Higgs field on $SU(2)^2/H$, where the relevant principal isotropy subgroup $H$ is given by $H = K_{2,-2}$ or $H = \triangle U(1)$.     

Recall from \cite{wang1958} that we can write such invariant bundles as $SU(2)^2 \times_H G \ra SU(2)^2/H$ for some compact gauge group $G$ and homomorphism $ \lambda: H \rightarrow G$. These bundles are referred to as $SU(2)^2$\textit{-homogeneous}. Recall also that an invariant connection on this bundle can be written as an $H$-equivariant linear map $ A : \mathfrak{su}(2) \oplus \mathfrak{su}(2) \rightarrow \mathfrak{g}$, such that $\left. A \right|_\mathfrak{h} = d\lambda$. Here, $\mathfrak{g}$, $\mathfrak{h} \cong \mathfrak{u}(1)$ denotes the lie algebra of $G$, $H \subset SU(2)^2$, and $d\lambda$ is the image of the canonical connection on $SU(2)^2 \ra SU(2)^2/H$ under $\lambda$. 
 
If $H = K_{2,-2}$, $G=SU(2)$ then the defining homomorphism $K_{2,-2} \rightarrow SU(2)$ of the homogeneous bundle is classified by a pair $\left(n,j\right)\in \mathbb{Z} \times \mathbb{Z}_2$. Using the isomorphism $K_{2,-2} \cong \triangle U(1) \times \mathbb{Z}_2 \subset U(1)^2$, we can write these as: 
\begin{align} \label{homomorphism}
(e^{i\theta}, e^{i\theta}).(e^{2i\pi}, e^{i\pi}) \mapsto \begin{pmatrix} 
-1 & 0 \\
0 & -1
\end{pmatrix}^j \begin{pmatrix} 
e^{in\theta} & 0 \\
0 & e^{-in\theta} 
\end{pmatrix}  
\end{align}
for some $\left(n,j\right)\in \mathbb{Z} \times \mathbb{Z}_2$, and similarly (with $j=0$) for every homomorphism $\triangle U(1) \rightarrow SU(2)$. We will denote the corresponding homogeneous $SU(2)$-bundles over $SU(2)^2 / H$ as $P_{n,j}$, $P_n$ respectively, although since the action of $\mathbb{Z}_2$ in \eqref{homomorphism} is trivial on the Lie algebra of the gauge group $SU(2)$, for the following section, it will suffice just to consider $P_n$. 
 
The canonical connection on $P_n$ appears as $n E_1 \otimes u^+$, and the space of invariant connections can be identified as an affine space for intertwiners of $\triangle U(1)$-representations given by left-invariant one-forms on $SU(2)^2 / \triangle U(1)$ and the composition of \eqref{homomorphism} with the adjoint action on $\mathfrak{su}(2)$. We summarise the results in the following proposition, and compute curvatures:        
\begin{prop} \label{principalconn} $SU(2)^2$-invariant connections $A$ on $P_n$, and corresponding curvatures $F_A$, are of the following form:
\begin{enumerate}[\normalfont(i)]
\item If $n=0$, then for some $a_1 , a_2, a_3 \in \mathbb{R}$,
 \begin{align} \label{principal0}
A = a_1 E_1 \otimes u^- + a_2 E_2 \otimes u^- + a_3 E_3 \otimes u^-& &F_A = \tfrac{3}{2} (a_1 E_1 + a_2 E_2 + a_3 E_3) \otimes \omega_1^{se}
\end{align} 
\item Otherwise, for some $a_0, a_1, a_2, b_1, b_2 \in \mathbb{R}$, where $a_1 = a_2 = b_1 = b_2 = 0$ if $n \neq 1$:  
 \begin{equation}
 \begin{split}  
A &= a_1 ( E_2 \otimes v^1 + E_3 \otimes w^1 )+ b_1 ( E_3 \otimes v^1 - E_2 \otimes w^1 ) \\ &+ a_2 ( E_2 \otimes v^2 + E_3 \otimes w^2 )+ b_2 ( E_3 \otimes v^2 - E_2 \otimes w^2 ) + a_0 E_1 \otimes u^- + n E_1 \otimes u^+ \\
& \\
 F_A &=   3 ( a_1 a_2 + b_1 b_2 ) E_1  \otimes \omega^{se}_3 + 3( a_1 b_2 - b_1 a_2 )E_1  \otimes \omega^{se}_2  \\ 
 &+ \tfrac{3}{2} \left( a_1^2 + b_1^2 + a_2^2 + b_2^2  - n \right)  E_1 \otimes \omega_0^{se} +  \tfrac{3}{2} \left( - a_1^2 - b_1^2  + a_2^2 + b_2^2 + a_0 \right) E_1 \otimes \omega_1^{se} \\
 &+ \tfrac{3}{2} (a_0-1) \left( a_1 \left( E_2  \otimes w^1  -  E_3  \otimes v^1 \right) +  b_1 \left(  E_2  \otimes v^1  +  E_3  \otimes w^1 \right) \right) \wedge  \eta^{se}  \\
 &+ \tfrac{3}{2} (a_0+1) \left( a_2 \left( E_2  \otimes w^2  -  E_3  \otimes v^2 \right) +  b_2 \left(  E_2  \otimes v^2  +  E_3  \otimes w^2 \right) \right) \wedge  \eta^{se} \\
 \end{split}\label{principal1}
\end{equation}
\end{enumerate}
\end{prop}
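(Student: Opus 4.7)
The plan is to apply Wang's classification of invariant connections on reductive homogeneous bundles. After the preparatory remarks, an invariant connection on $P_n$ corresponds to an $\mathrm{Ad}(H)$-equivariant linear map $\Lambda : \mathfrak{m} \to \mathfrak{su}(2)$, where $\mathfrak{m} = \langle U^-, V^1, W^1, V^2, W^2 \rangle$ is the reductive complement and $H = \triangle U(1)$ acts on $\mathfrak{su}(2)$ via $\mathrm{Ad}\circ \lambda_n$. Under this action, the target splits as $\mathfrak{su}(2) \cong \mathbb{R}\langle E_1\rangle \oplus \mathbb{C}_{2n}$, while the source decomposes as $\mathfrak{m} \cong \mathbb{R}\langle U^-\rangle \oplus \mathbb{C}_2 \oplus \mathbb{C}_2$ with the two copies of $\mathbb{C}_2$ spanned by $\{V^1,W^1\}$ and $\{V^2,W^2\}$. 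By Schur's lemma, intertwiners must match weights, and the full invariant connection is obtained by adding the reference term $nE_1\otimes u^+$ arising from $d\lambda_n$ on $\mathfrak{h}$.

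First I would treat each case by listing the intertwiners. If $n = 0$ the target is trivial, so only the $\mathbb{R}\langle U^-\rangle$ summand of $\mathfrak{m}$ contributes, giving three real parameters $a_1, a_2, a_3$ for the map into $\mathfrak{su}(2) = \langle E_1, E_2, E_3\rangle$ and yielding \eqref{principal0}. If $n \neq 0, \pm 1$ the weight-$2$ summands of $\mathfrak{m}$ cannot map non-trivially into the weight-$2n$ piece $\mathbb{C}_{2n}\subset \mathfrak{su}(2)$, so only the trivial-to-trivial piece $a_0 E_1 \otimes u^-$ survives. For $n=1$ (and, up to basis change, $n=-1$), each weight-2 summand contributes a two-real-parameter family of intertwiners into $\mathbb{C}_2$; choosing the basis so that $E_2\otimes v^i + E_3\otimes w^i$ and $E_3\otimes v^i - E_2 \otimes w^i$ are the two real-linearly-independent intertwiners produces the four parameters $a_1, b_1, a_2, b_2$ in \eqref{principal1}.

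For the curvature I would use Wang's formula $F_A(X,Y) = [\Lambda X, \Lambda Y] - \Lambda([X,Y]_\mathfrak{m}) - d\lambda_n([X,Y]_\mathfrak{h})$ for $X, Y \in \mathfrak{m}$, reading off brackets from $[E_i, E_j] = 2E_k$ and the fact that the two copies of $\mathfrak{su}(2)$ commute. The $\mathfrak{h}$-component of $[V^i, W^i]$ is $\pm U^+$, which produces the $nE_1$ contributions making up the $\omega_0^{se}$ coefficient, and the remaining brackets $[U^\pm, V^i]$, $[U^\pm, W^i]$, $[V^1, V^2]$, $[V^1, W^2]$ etc. are all explicit in the basis $\{U^-, V^i, W^i\}$. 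Substituting the formulas for $\Lambda$, collecting quadratic terms in $(a_i, b_i, a_0)$, and rewriting in the basis $\{\omega_0^{se}, \omega_1^{se}, \omega_2^{se}, \omega_3^{se}, v^i\wedge \eta^{se}, w^i \wedge \eta^{se}\}$ gives the stated formulas. The main obstacle is purely organizational: the curvature computation involves a long list of brackets and the ambiguity of sign conventions in re-expressing $v^i\wedge w^j$, $v^1 \wedge v^2$, $w^1 \wedge w^2$ as linear combinations of $\omega_0^{se}, \ldots, \omega_3^{se}$ (using the explicit formulas \eqref{standardse}). Once this dictionary is fixed, matching the six groups of terms (three coefficients of weight-$0$ two-forms $\omega_0^{se}, \omega_1^{se}$ and the $\omega_2^{se}, \omega_3^{se}$ terms, then two $\eta^{se}$-wedged pieces) against the right-hand side of \eqref{principal1} is a direct verification.
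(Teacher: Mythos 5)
Your proposal is correct and follows essentially the same route as the paper: Wang's theorem reduces the classification to $\triangle U(1)$-intertwiners $\mathfrak{m}\to\mathfrak{su}(2)$ with the same weight decompositions $\mathbb{R}\oplus\mathbb{C}_2\oplus\mathbb{C}_2 \to \mathbb{R}\oplus\mathbb{C}_{2n}$, and the curvature is then a direct bracket computation (the paper phrases this via the Maurer--Cartan formula $F_A = dA + \tfrac{1}{2}[A,A]$ on explicit invariant forms, which is equivalent to the Wang curvature formula you quote). Your aside that $n=-1$ also admits nontrivial intertwiners up to basis change is a reasonable refinement the paper handles implicitly via the identification $P_n\cong P_{-n}$.
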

\begin{proof} As mentioned previously, the canonical connection appears as $ n E_1 \otimes u^+$, the derivative of the map given by \eqref{homomorphism}. As $\triangle U(1)$-representations, we have the following splitting of $\mathfrak{su}(2)$, the Lie algebra of the gauge group $SU(2)$, and $\mathfrak{m}$, the space of left-invariant 1-forms on $SU(2)^2/ \triangle U(1)$:
\begin{align*}
\mathfrak{su}(2) = \langle E_1 \rangle \oplus \langle E_2, E_3 \rangle \cong \mathbb{R} \oplus \mathbb{C}_{2n}& &\mathfrak{m}= \langle u^- \rangle \oplus \langle v^1, w^1 \rangle \oplus \langle v^2, w^2 \rangle  \cong \mathbb{R} \oplus \mathbb{C}_{2} \oplus \mathbb{C}_{2}  
 \end{align*} 
For any invariant connection $A$, by \cite[Thm.A]{wang1958}, we have that $\left. A \right|_\mathfrak{m} $ is an element in the vector space of $\triangle U(1)$-intertwiners $\mathfrak{m} \rightarrow \mathfrak{su}(2)$. If $n \neq 0,1$, this is space is spanned by $E_1 \otimes u^-$, while if $n=0$ it is spanned by $E_1 \otimes u^-, E_2 \otimes u^-, E_3 \otimes u^-$. For $n=1$, define: 
\begin{gather} \label{tensors}
\begin{aligned} 
I_1 := E_2 \otimes v^1 + E_3 \otimes w^1& &J_1:= E_3 \otimes v^1 - E_2 \otimes w^1 \\
I_2:=E_2 \otimes v^2 + E_3 \otimes w^2& &J_2 := E_3 \otimes v^2 - E_2 \otimes w^2   
\end{aligned}
\end{gather}
where $I_i, J_i$ respectively correspond to the identity map and multiplication by imaginary number $i$ between $\triangle U(1)$-representations $\mathbb{C}_2 \rightarrow \mathbb{C}_2$. Clearly, the space of $\triangle U(1)$-intertwiners is spanned by $E_1 \otimes u^-, I_1, J_1, I_2, J_2$, and curvatures can then be computed from the Maurer-Cartan formula $F_A = dA + \frac{1}{2} \left[ A, A \right]$. Explicitly, we compute the following derivatives: 
\begin{align*} 
d I_1 = 2 J_1 \wedge \left( u^+ + u^- \right) & & d J_1 = -2 I_1 \wedge \left( u^+ + u^- \right)& &d (E_1 \otimes u^+) =  - E_1 \otimes (v^1 \wedge w^1 + v^2 \wedge w^2 )  \\
d I_2 = 2 J_2 \wedge \left( u^+ - u^- \right)& &d J_2 = -2 I_2 \wedge \left( u^+ - u^- \right)& &d (E_1 \otimes u^-) =  - E_1 \otimes (v^1 \wedge w^1 - v^2 \wedge w^2 ) 
\end{align*}
and the following commutators: 
\begin{align*} 
\left[ I_i, I_i \right] = 2 E_1 \otimes v^i \wedge w^i & & \left[ J_i, J_i \right] = 2 E_1 \otimes v^i \wedge w^i & &\left[ I_i, J_i \right] = 0
\end{align*}
for $i = 1,2$. The mixed terms are given by: 
\begin{align*} 
\left[ I_1, I_2 \right] = \left[ J_1, J_2 \right] =  2 E_1 \otimes \left( v^1 \wedge w^2 - w^1 \wedge v^2 \right)& &\left[ J_1, I_2 \right] = -\left[ I_1, J_2 \right] = 2 E_1 \otimes \left( v^1 \wedge v^2 + w^1 \wedge w^2 \right)
\end{align*}
and finally:  
\begin{align*} 
\left[ I_i,  E_1 \otimes u^\pm  \right]= - 2 J_i \wedge u^\pm & &\left[ J_i,  E_1 \otimes u^\pm  \right]= 2 I_i \wedge u^\pm
\end{align*}
For an invariant connection on $P_0$, the expression for the curvatures follows immediately. Otherwise, the connection is of the form $A= a_1 I_1 + b_1 J_1 + a_2 I_2 + b_2 J_2 + n E_1 \otimes u^+ + a_0 E_1 \otimes u^-$, for some $a_0, a_1, a_2, b_1, b_2 \in \mathbb{R}$ where $a_1 = a_2 = b_1 = b_2 = 0$ when $n \neq 1$, so applying the above in the Maurer-Cartan formula and comparing with the expression for the standard Sasaki-Einstein structure \eqref{standardse} on $SU(2)^2 / \triangle U(1)$ gives the result.  
\end{proof}
In a similar way, we can classify $SU(2)^2$-invariant sections of the adjoint bundle: an $SU(2)^2$-invariant section of $\mathrm{Ad}P_n$ appears as an element of the Lie algebra $\mathfrak{su}(2)$ invariant under the $\triangle U(1)$ action. This understood, the following proposition is immediate:
\begin{prop} \label{principalhiggs} $SU(2)^2$-invariant sections of $\mathrm{Ad}P_n$ are of the form $\Phi = \phi_1 E_1 + \phi_2 E_2 + \phi_3 E_3$ for some $\phi_1, \phi_2, \phi_3 \in \mathbb{R}$, where $\phi_2 = \phi_3 = 0 $ if $n \neq 0$.
\end{prop}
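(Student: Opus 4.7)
The plan is to invoke the same Wang-type classification already used in the proof of Proposition \ref{principalconn}: since $\mathrm{Ad}P_n = SU(2)^2 \times_{\triangle U(1)} \mathfrak{su}(2)$, where $\triangle U(1)$ acts on the fiber via the composition of the defining homomorphism $\lambda_n : \triangle U(1) \to SU(2)$ with the adjoint representation, the space of $SU(2)^2$-invariant sections is canonically identified with the subspace of $\triangle U(1)$-fixed vectors in $\mathfrak{su}(2)$.

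So the task reduces to computing $\mathfrak{su}(2)^{\triangle U(1)}$ for this action. First I would recall from the proof of Proposition \ref{principalconn} that under $\triangle U(1)$ acting through $\lambda_n$ composed with $\mathrm{Ad}$, the Lie algebra $\mathfrak{su}(2)$ splits as
\begin{align*}
\mathfrak{su}(2) = \langle E_1 \rangle \oplus \langle E_2, E_3 \rangle \cong \mathbb{R} \oplus \mathbb{C}_{2n},
\end{align*}
since $E_1$ generates the image of $\lambda_n$ (it commutes with itself) and $E_2, E_3$ span the orthogonal complement, on which the adjoint action of $\mathrm{diag}(e^{in\theta}, e^{-in\theta})$ rotates by angle $2n\theta$.

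Second, I would read off the invariants from this decomposition. The trivial summand $\langle E_1 \rangle$ is fixed for every $n$, contributing the coefficient $\phi_1$. The summand $\mathbb{C}_{2n}$ carries no trivial sub-representation unless $n = 0$: when $n \neq 0$ the rotation by $2n\theta$ has no nonzero fixed vectors, forcing $\phi_2 = \phi_3 = 0$, while when $n = 0$ the action is trivial and both $E_2$ and $E_3$ survive. This gives exactly the claimed form $\Phi = \phi_1 E_1 + \phi_2 E_2 + \phi_3 E_3$ with the stated constraints.

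There is no real obstacle here; the only thing to be a little careful about is that the $\triangle U(1)$-action on the fiber $\mathfrak{su}(2)$ is the one induced by $\lambda_n$, not some other embedding, so the isotropy weights on $E_2, E_3$ are $\pm 2n$ rather than $\pm 2$. Once this is noted, the classification is immediate from Schur's lemma applied to the isotypic decomposition.
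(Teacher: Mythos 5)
Your proposal is correct and follows exactly the paper's route: the paper identifies $SU(2)^2$-invariant sections of $\mathrm{Ad}P_n$ with $\triangle U(1)$-fixed vectors in $\mathfrak{su}(2)$ (acted on via $\lambda_n$ composed with the adjoint) and declares the result immediate from the splitting $\mathfrak{su}(2) \cong \mathbb{R} \oplus \mathbb{C}_{2n}$. Your extra care about the weights being $\pm 2n$ rather than $\pm 2$ is exactly the right point to flag, and nothing is missing.
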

There are some useful facts about the bundle data of Propositions \ref{principalconn} and \ref{principalhiggs} that should be noted before continuing: firstly, it is clear that the bundles $P_n$ admit only reducible invariant connections when $n \neq 1$, and when $n=1$, from the explicit expressions for curvature, we see that the invariant connection is reducible iff either $a_0 = 1, a_2 = b_2 =0$, or $a_0 = -1, a_1 = b_1 =0$, or $a_1=b_1=a_2=b_2 = 0$.  

Secondly, on $P_n \rightarrow SU(2)^2 / \triangle U(1)$, there is an invariant gauge transformation generated by the vector field $E_1$ on the fibre, which acts by rotation on the plane spanned by $E_2, E_3$, and leaves $E_1$ fixed. In the notation of Propositions \ref{principalconn}, \ref{principalhiggs} for $n \neq 0$, this acts as a rotation $\left( a_1 + i b_1,  a_2 + i b_2 \right) \mapsto \left( e^{i \theta}(a_1 + i b_1), e^{i \theta} ( a_2 + i b_2 ) \right)$ by some common angle $\theta$, and acts trivially on $\left( a_0, \phi_1 \right)$.
 
Using Propositions \ref{principalconn}, \ref{principalhiggs}, we can now write down \eqref{mono} on $\mathcal{O}\left( -2, -2 \right)$, $T^* S^3$, and $\mathcal{O}\left( -1\right) \oplus \mathcal{O}\left( -1\right)$ as ODE systems for the coefficients appearing in these two propositions. Since we are primarily  interested in finding non-abelian solutions to \eqref{mono}, it will suffice to consider the case for $n=1$\footnote{See \S \ref{abeliansection} for explicit abelian solutions in the case $n=1$: the solutions for $n\neq1$ are similar.}:  
\begin{prop} \label{propgen} On $P_1 \rightarrow \mathbb{R}_{>0} \times SU(2)^2 / \triangle U(1)$ with Calabi-Yau structure \eqref{hypoGEN}, invariant monopoles $\left(A, \Phi \right)$ can be written, up to gauge, as: 
\begin{align*}
A = a_1 ( E_2 \otimes v^1 + E_3 \otimes w^1 ) + a_2 ( E_2 \otimes v^2 + E_3 \otimes w^2 ) + a_0 E_1 \otimes u^- + E_1 \otimes u^+& &\Phi = \phi E_1
\end{align*}
with $\left( a_0, a_1, a_2, \phi \right)$ real-valued functions satisfying the following ODE system: 
\begin{gather}  \label{dynamicODE}
 \begin{aligned} 
\dot{a_0} &= \frac{4 \lambda}{ \mu^2} \left( ( a_1^2 +  a_2^2 - 1 )u_0  - ( a_0 - a_1^2 +  a_2^2 ) u_1 \right)  \\
\dot{a_1} &= \frac{3}{2 \lambda \mu }\left(  (a_0 -1 ) a_1 v_3 -  (a_0+1) a_2 v_0 \right)
 - 2 \frac{u_1-u_0}{\mu} a_2 \phi  \\
 \dot{a}_2 &= \frac{3}{2 \lambda \mu} \left( (a_0-1) a_1 v_0- (a_0+1) a_2 v_3 \right) - 2 \frac{u_1 + u_0}{ \mu } a_1 \phi \\
 \dot{\phi} &= \frac{3}{\mu^2} \left( \left(a_1^2+a_2^2 -1\right) v_0 - 2 a_1 a_2 v_3 \right)  \\
\end{aligned} 
\end{gather}
\end{prop}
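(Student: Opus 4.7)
The plan is a direct calculation: after a gauge reduction, substitute Propositions \ref{principalconn}, \ref{principalhiggs} and the hypo-structure \eqref{hypoGEN} into the monopole system \eqref{mono}, then extract the ODEs by matching coefficients in a basis of invariant forms on the principal orbit.

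\textbf{Gauge reduction.} Proposition \ref{principalconn} with $n=1$ gives five real coefficients $(a_0, a_1, b_1, a_2, b_2)$, and Proposition \ref{principalhiggs} gives $\Phi = \phi E_1$. The residual $SU(2)^2$-invariant $U(1)$-gauge freedom, generated by $E_1$ in the fibre, rotates the pairs $(a_1, b_1)$ and $(a_2, b_2)$ by a common phase while fixing $(a_0, \phi)$; I first use this to set $b_1 = 0$. The $E_1$-component of the static equation \eqref{mono1} then reduces to $\mu \lambda\, a_1 b_2 = 0$, since only the $\omega^{se}_2$-part of $F_A$ (with coefficient $a_1 b_2 - b_1 a_2$) contributes when wedged with $\omega_2 \wedge \eta$ (the products $\omega^{se}_i \wedge \omega^{se}_2$ vanish for $i = 0, 1, 3$). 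On the irreducible branch $a_1 \neq 0$, this forces $b_2 = 0$, and the $U(1)$-equivariance of the full system ensures that $\{b_1 = b_2 = 0\}$ is preserved under the evolution.

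\textbf{Matching coefficients.} With $b_1 = b_2 = 0$ imposed, the curvature formula in Proposition \ref{principalconn} simplifies to
\begin{align*}
F_A &= \tfrac{3}{2}(a_1^2 + a_2^2 - 1)\, E_1 \otimes \omega_0^{se} + \tfrac{3}{2}(a_0 - a_1^2 + a_2^2)\, E_1 \otimes \omega_1^{se} + 3 a_1 a_2\, E_1 \otimes \omega_3^{se} \\
&\quad - \tfrac{3}{2}(a_0 - 1) a_1\, J_1 \wedge \eta^{se} - \tfrac{3}{2}(a_0 + 1) a_2\, J_2 \wedge \eta^{se},
\end{align*}
while in temporal gauge $\partial_t A = \dot{a}_0\, E_1 \otimes u^- + \dot{a}_1 I_1 + \dot{a}_2 I_2$, and $d_A \Phi = \dot{\phi}\, dt \otimes E_1 - 2\phi (a_1 J_1 + a_2 J_2)$ using the $\mathfrak{su}(2)$-commutators. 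Substituting these together with \eqref{hypoGEN} into \eqref{mono2} and \eqref{mono3} and expanding in the $\triangle U(1)$-invariant basis of 4-forms on the principal orbit, each equation decomposes into independent scalar identities. The $E_1$-component of \eqref{mono2} along $\omega^{se}_0 \wedge \eta^{se}$ gives the equation for $\dot{a}_0$; the $J_i \wedge \eta^{se}$-components of \eqref{mono2} give those for $\dot{a}_1, \dot{a}_2$; and the $E_1$-projection of \eqref{mono3} gives the equation for $\dot{\phi}$. Simplifying using the identities $\mu^2 = u_1^2 - u_0^2 = v_3^2 - v_0^2$ and $v_0 u_0 = 0$ from \eqref{hypoGEN} produces the four ODEs of \eqref{dynamicODE} in the unified form stated, valid for hypo-structures of both types $\mathcal{I}$ and $\mathcal{II}$.

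\textbf{Main obstacle.} The calculation is mechanical but bookkeeping-heavy: signs in $\omega^{se}_i \wedge \omega^{se}_j$ must be tracked carefully (noting that $\omega^{se}_0$ lies in the opposite self-duality eigenspace on $\mathcal{H}$ from the $SU(2)$-triple $\omega^{se}_{1,2,3}$), and the cross-terms between the $E_1$-valued and $J_i$-valued pieces of $F_A$ must be correctly matched against the contributions from $\partial_t A$ and $d_A \Phi$. The most delicate conceptual point is the $b_2 \equiv 0$ reduction in the first step, which relies on coupling the invariant $U(1)$-gauge choice with the static equation rather than on gauge freedom alone; once this is in place, the rest of the argument is a routine coefficient expansion.
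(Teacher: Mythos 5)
Your strategy is essentially the paper's: substitute Propositions \ref{principalconn}, \ref{principalhiggs} into \eqref{mono}, use the static equation together with the invariant $U(1)$-rotation to kill $b_1,b_2$, and match coefficients in the invariant basis. Your intermediate formulas for $F_A$, $\partial_t A_t$ and $d_{A_t}\Phi$ on the locus $b_1=b_2=0$ are correct, and your observation that this locus is preserved by the flow is the right justification for the reduction (the paper writes down all six ODEs first and then gauge-fixes; the orderings are equivalent). However, there is one step that fails as written. Your attribution ``the $J_i\wedge\eta^{se}$-components of \eqref{mono2} give those for $\dot a_1,\dot a_2$'' is wrong: \eqref{mono2} is the primitivity condition $F_A\wedge\omega^2=0$, and in this invariant setting $\partial_t A_t\wedge\omega_1^2$ annihilates the $I_1,I_2$-components of $\partial_t A_t$ (since $\omega_1^2\propto v^1\wedge w^1\wedge v^2\wedge w^2$ contains no $u^-$), while $F_{A_t}\wedge\omega_1\wedge\eta$ annihilates the $E_2,E_3$-valued part of $F_{A_t}$ (which is proportional to $\eta^{se}$). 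So \eqref{mono2} reduces to a single scalar equation, the one for $\dot a_0$, and carries no information about $\dot a_1,\dot a_2$. Those two equations come from the $E_2,E_3$-components of \eqref{mono3}, where $\partial_t A_t\wedge\omega_2\wedge\eta$ is matched against the $J_i\wedge\eta^{se}$-part of $F_{A_t}\wedge\omega_3$ and against $d_{A_t}\Phi\wedge\omega_1\wedge\eta$; the latter is precisely where the coupling terms $-2\tfrac{u_1\mp u_0}{\mu}a_{2,1}\phi$ originate. As written, your scheme would leave you with no equations for $a_1,a_2$ and an overdetermined reading of \eqref{mono3}.

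A more minor point: your $b_2=0$ reduction invokes ``the irreducible branch $a_1\neq 0$'', but irreducibility does not force $a_1\neq 0$ (by the discussion after Proposition \ref{principalhiggs}, $a_1=b_1=0$ with $a_0\neq -1$ and $(a_2,b_2)\neq(0,0)$ is still irreducible). You should either handle that case with the residual rotation acting on $(a_2,b_2)$ alone, or argue as the paper effectively does: the static condition $a_1b_2-a_2b_1=0$ together with the real-linearity of the system for $(z_1,z_2)=(a_1+ib_1,a_2+ib_2)$ forces a common, $t$-independent phase, which a single constant gauge rotation removes.
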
 
\begin{proof}
We use Propositions \ref{principalconn} and \ref{principalhiggs} in the monopole equations \eqref{mono}: we use the temporal gauge to put the connection into the form $A_t = a_1 I_1 + b_1 J_1 + a_2 I_2 + b_2 J_2 + a_0 E_1 \otimes u^- + n E_1 \otimes u^+ $, where $I_1, I_2, J_1, J_2$ are as in \eqref{tensors}. Then $d_{A_t} \Phi = \left[ A, \Phi \right] = \phi \left[ A, E_1 \right] = 2 \phi \left( - a_1 J_1 + b_1 I_1 - a_2 J_2 + b_2 I_2 \right)$. This implies $d_{A_t} \Phi \wedge \omega_1^2$ vanishes, so the static equation \eqref{mono1} is just the single condition $a_1 b_2 - b_1 a_2 =0$. Equation \eqref{mono2} also only has a single component, giving: 
\begin{align*}
\dot{a_0} &= \frac{4 \lambda}{ \mu^2} \left( ( a_1^2 + b_1^2 +  a_2^2 +  b_2^2 - 1 )u_0  - ( a_0 - a_1^2  - b_1^2 +  a_2^2 +  b_2^2 ) u_1 \right) 
\end{align*}
Splitting \eqref{mono3} into $E_1, E_2, E_3$ components, the $E_1$ component gives: 
\begin{align*}
\dot{\phi} &= \frac{3}{\mu^2} \left( \left(a_1^2+b_1^2+a_2^2 +b_2^2 -1\right) v_0 - 2 \left( a_1 a_2 + b_1 b_2 \right) v_3 \right) 
\end{align*}
Meanwhile the $E_2, E_3$ components together give: 
\begin{align*}
\dot{a_1}&= \frac{3}{2 \lambda \mu }\left(  (a_0 -1 ) a_1 v_3 -  (a_0+1) a_2 v_0 \right)
 - 2 \frac{u_1-u_0}{\mu} a_2 \phi \\
\dot{b_1}&= \frac{3}{2 \lambda \mu }\left(  (a_0 -1 ) b_1 v_3 -  (a_0+1) b_2 v_0 \right) 
 - 2 \frac{u_1-u_0}{\mu} b_2 \phi  \\
 \dot{a}_2&= \frac{3}{2 \lambda \mu} \left( (a_0-1) a_1 v_0- (a_0+1) a_2 v_3 \right) - 2 \frac{u_1 + u_0}{ \mu } a_1 \phi \\ \dot{b}_2 &= \frac{3}{2 \lambda \mu} \left( (a_0-1) b_1 v_0- (a_0+1) b_2 v_3 \right) - 2 \frac{u_1 + u_0}{ \mu } b_1 \phi   
\end{align*}
We can now use the invariant gauge-transformation generated by $E_1$ to simplify this ODE system, which appears as the symmetry of the equations. Using the static condition $a_1 b_2 - a_2 b_1=0$, we will use this symmetry to set $b_1= b_2=0$, thus giving the ODEs in the form stated. 
\end{proof}
We note here that \eqref{dynamicODE} displays some further discrete symmetries: 
\begin{prop} The following involution is a discrete symmetry of \eqref{dynamicODE}:
\begin{align} \label{symmetrygauge}
\left( a_0, a_1, a_2, \phi \right) \mapsto \left( a_0, -a_1, -a_2, \phi \right)
\end{align}
Specialising to the case of \eqref{dynamicODE} with $u_0 = 0$, we have an additional symmetry: 
\begin{align} \label{symmetrymetric}
\left( a_0, a_1, a_2, \phi \right) \mapsto \left( -a_0, a_2, a_1, \phi \right)
\end{align}
\end{prop}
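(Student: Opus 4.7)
The plan is to verify both involutions by direct substitution into \eqref{dynamicODE}, while identifying their geometric origin to make the check conceptually transparent. Both reduce to short term-by-term inspections, with no analytic input.

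The first involution $(a_0, a_1, a_2, \phi) \mapsto (a_0, -a_1, -a_2, \phi)$ should come from a residual gauge freedom left after the partial gauge fixing $b_1 = b_2 = 0$ made in Proposition \ref{propgen}. The remark preceding that proposition records an invariant gauge symmetry generated by $E_1$ acting on $(a_1 + i b_1, a_2 + i b_2)$ by multiplication by a common phase $e^{i\theta}$ and fixing $(a_0, \phi)$; setting $\theta = \pi$ within the slice $b_1 = b_2 = 0$ produces exactly the stated involution. Since \eqref{dynamicODE} is derived from gauge-invariant equations on this slice, the symmetry is automatic. Alternatively, one checks directly that the right-hand sides of $\dot{a_0}$ and $\dot{\phi}$ are polynomials in the even monomials $a_1^2, a_2^2, a_1 a_2$ and hence invariant, while every term in $\dot{a_1}$ and $\dot{a_2}$ is linear in $(a_1, a_2)$ and flips sign in agreement with the left-hand sides.

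The second involution $(a_0, a_1, a_2, \phi) \mapsto (-a_0, a_2, a_1, \phi)$ should come from the (non-equivariant) isometry exchanging the two $SU(2)$-factors of $SU(2)^2$, mentioned for $T^*S^3$ in the introduction. This swap sends $u^- \mapsto -u^-$ and interchanges the pairs $(v^1, w^1)$ and $(v^2, w^2)$, which in the coefficients of Proposition \ref{principalconn} is exactly $(a_0, a_1, a_2) \mapsto (-a_0, a_2, a_1)$. Consulting \eqref{hypoA}, \eqref{hypoB}, \eqref{hypoAmetric}, and \eqref{hypoBmetric}, this swap preserves the underlying hypo-structure precisely when $u_0 = 0$: the type $\mathcal{II}$ metric is manifestly symmetric in the two factors, while the type $\mathcal{I}$ metric is not unless $u_0 = 0$.

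To confirm this directly, I would check each of the four equations under the simultaneous map $a_0 \mapsto -a_0$, $a_1 \leftrightarrow a_2$ with $u_0 = 0$. The prefactors $(u_1 \mp u_0)/\mu$ in $\dot{a_1}$ and $\dot{a_2}$ coincide, so the Higgs terms exchange; the first bracket in $\dot{a_1}$ maps to $-[(a_0+1)a_2 v_3 - (a_0-1)a_1 v_0]$, which is the right-hand side of $\dot{a_2}$; $\dot{\phi}$ is invariant since $a_1^2 + a_2^2$ and $a_1 a_2$ are symmetric in $a_1, a_2$; and in $\dot{a_0}$ the combination $a_0 - a_1^2 + a_2^2$ appearing in the $u_1$ term flips sign under the involution, matching the sign change of the left-hand side. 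The only bookkeeping subtlety I foresee is tracking this last sign and confirming that the argument genuinely fails when $u_0 \neq 0$; otherwise the proof reduces to a one-line calculation per equation.
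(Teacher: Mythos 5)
Your proposal is correct and matches the paper's proof: the paper likewise leaves the term-by-term verification to the reader and identifies \eqref{symmetrygauge} as the residual $\theta=\pi$ gauge rotation of the $\langle E_2,E_3\rangle$-plane and \eqref{symmetrymetric} as the exchange of the two $SU(2)$ factors (which is only a symmetry of \eqref{dynamicODE} itself when $u_0=0$, otherwise requiring $u_0\mapsto -u_0$). Your explicit sign-tracking in $\dot{a}_0$ and the bracket exchange between $\dot{a}_1$ and $\dot{a}_2$ are accurate.
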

\begin{remark} If one is also free to vary the Calabi-Yau structure, \eqref{symmetrymetric} becomes a symmetry of the full system \eqref{dynamicODE} with $u_0 \mapsto - u_0$.
\end{remark}
\begin{proof}
One can easily check that the symmetries of this proposition are indeed symmetries of the ODE systems in question. We comment instead on the origin of such symmetries: \eqref{symmetrygauge} is a residual symmetry from the invariant gauge transformation that we used to set $b_1=b_2=0$: it is simply the rotation by angle $\pi$ of the plane spanned by $E_2, E_3$. Meanwhile, \eqref{symmetrymetric} is the symmetry arising from interchanging the two factors of $SU(2)$ on the principal orbits: this explains why one must alter the Calabi-Yau structure to see it as a symmetry of \eqref{dynamicODE}. 
\end{proof}

 We also recall that a natural condition on solutions to the monopole equations on asymptotically conical CY 3-folds is to require quadratically decaying curvature. In terms our ODE system \eqref{dynamicODE}, this requirement takes the following form:

\begin{lemma} \label{quadraticdecay} An invariant solution $\left( A, \Phi \right)$ to the monopole equations determined by a solution $\left( a_0, a_1, a_2, \phi \right)$ to \eqref{dynamicODE} has  quadratically decaying curvature if and only if $a_0, a_1, a_2, t a_1 \phi, t a_2 \phi$ are bounded.    
\end{lemma}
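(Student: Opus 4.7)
My plan is to compute $|F_A|^2$ pointwise in the temporal gauge as a function of $(a_0, a_1, a_2, \dot{a}_0, \dot{a}_1, \dot{a}_2)$, use the AC asymptotics of the invariant metric to reduce the quadratic curvature decay condition to a list of boundedness statements on these functions, and then use the ODE system \eqref{dynamicODE} to trade the conditions on the derivatives $\dot{a}_i$ for the stated algebraic conditions involving $\phi$.

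\textbf{Step 1: Decompose the curvature.} In the temporal gauge, $F_A = F_{A_t} - \partial_t A_t \wedge dt$, and the two summands are pointwise orthogonal with respect to $g = dt^2 + g_t$, so $|F_A|^2 = |F_{A_t}|^2 + |\partial_t A_t|^2$. Using Proposition \ref{principalconn}(ii) with $b_1 = b_2 = 0$ from Proposition \ref{propgen}, $F_{A_t}$ is a linear combination of the pointwise-linearly-independent invariant $\mathrm{Ad}$-valued $2$-forms $E_1 \otimes \omega_j^{se}$ ($j \in \{0,1,3\}$) and $(E_2 \otimes w^i - E_3 \otimes v^i) \wedge \eta^{se}$ ($i = 1,2$), with coefficients built from $a_1 a_2$, $a_1^2 + a_2^2 - 1$, $-a_1^2 + a_2^2 + a_0$, and $(a_0 \mp 1) a_i$; while $\partial_t A_t = \dot{a}_0 E_1 \otimes u^- + \dot{a}_1 (E_2 \otimes v^1 + E_3 \otimes w^1) + \dot{a}_2 (E_2 \otimes v^2 + E_3 \otimes w^2)$.

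\textbf{Step 2: Norm scaling.} From the invariant metrics \eqref{hypoAmetric}--\eqref{hypoBmetric}, every left-invariant $1$-form on the principal orbit has pointwise norm $\sim t^{-1}$ as $t \to \infty$ (since $\lambda \sim t$, $\mu \sim t^2$, and each of $u_1 \pm u_0$, $v_3 \pm v_0$ is $\sim t^2$ under the AC assumption), so every invariant spatial $2$-form has pointwise norm $\sim t^{-2}$, and every $2$-form of the form \emph{(spatial $1$-form)}\,$\wedge\, dt$ has norm $\sim t^{-1}$. The pointwise linear independence of the forms in Step 1 then implies that $t^2 |F_{A_t}|$ is bounded iff $a_0, a_1, a_2$ are all bounded, and $t^2 |\partial_t A_t \wedge dt|$ is bounded iff each of $t\dot{a}_0, t\dot{a}_1, t\dot{a}_2$ is bounded.

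\textbf{Step 3: Reducing derivatives via the ODEs.} Substituting into \eqref{dynamicODE} the AC limits $(u_1 \mp u_0)/\mu \to 1$, $v_3/\mu \to 1$, $\lambda/t \to 1$, and $v_0/\mu \to 0$ (which hold for all three geometries in \S\ref{sectionCY}), one finds as $t \to \infty$:
\begin{align*}
t\dot{a}_0 &\sim -4 \left(a_0 - a_1^2 + a_2^2 \right), \\
t\dot{a}_1 &\sim \tfrac{3}{2}(a_0 - 1) a_1 - 2 t a_2 \phi, \\
t\dot{a}_2 &\sim -\tfrac{3}{2}(a_0 + 1) a_2 - 2 t a_1 \phi,
\end{align*}
with errors vanishing at infinity. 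Given the spatial boundedness from Step 2, $t\dot{a}_0$ is automatically bounded, while $t\dot{a}_1$ and $t\dot{a}_2$ are bounded iff $t a_2 \phi$ and $t a_1 \phi$ are bounded respectively, which gives the lemma.

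The only point requiring care, rather than any deep obstacle, is uniformly handling the two hypo-structure types: Type $\mathcal{I}$ (with $u_0$ a nonzero constant, $v_0 \equiv 0$, $v_3 = \mu$, as for Candelas and Pando-Zayas) and Type $\mathcal{II}$ (with $u_0 \equiv 0$, $u_1 = \mu$, and $v_0, v_3$ nontrivial, as for Stenzel). Fortunately the subleading contributions (e.g.\ $v_0 \sim t^{-1}$ in Type $\mathcal{II}$) decay rapidly enough not to affect the leading-order asymptotics used above.
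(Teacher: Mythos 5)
Your proposal is correct and follows essentially the same route as the paper: temporal-gauge decomposition $F_A = F_{A_t} - \partial_t A_t \wedge dt$, the conical scaling of invariant forms to reduce quadratic decay to boundedness of $a_0, a_1, a_2, t\dot a_0, t\dot a_1, t\dot a_2$, and comparison with the asymptotic (conifold) ODE system to trade the derivative bounds for $t a_1\phi$, $t a_2\phi$ bounded. The only point you gloss over, which the paper makes explicit, is the converse deduction in Step 2 that boundedness of the curvature coefficients $a_1^2+a_2^2-1$ and $-a_1^2+a_2^2+a_0$ (rather than of $a_0,a_1,a_2$ directly) forces $a_1,a_2$ and then $a_0$ to be bounded, via $\lvert -a_1^2+a_2^2+a_0\rvert \geq \lvert a_0\rvert - \lvert a_1^2-a_2^2\rvert$ — an easy but necessary step.
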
 
\begin{proof}
Using the expression for curvature $F_{A} = F_{A_t} - \partial_t A_t \wedge dt$ in the temporal gauge, the explicit expressions for $F_{A_t}$, $A_t$, given in \eqref{principal0}, and the scaling of $k$-forms on the cone, it is clear that $t^2 |F_A |$ is bounded if $a_0, a_1, a_2, t\dot{a}_0, t\dot{a}_1, t\dot{a}_2$ are bounded. The converse is clear for $t\dot{a}_0, t\dot{a}_1, t\dot{a}_2$, and note that $t^2 |F_{A_t}|$ is bounded only if $a_1^2 +  a_2^2 - 1$, and $-a_1^2 + a_2^2 + a_0$ are: the first of these implies $a_1, a_2$ must be bounded, and since $|-a_1^2 + a_2^2 + a_0|\geq |a_0| - |a_1^2 - a_2^2|$ this implies $a_0$ must be bounded also.  

Up to terms decaying faster than $O(t^{-1})$, as $t\rightarrow \infty$, the ODE system  \eqref{dynamicODE} is asymptotic to \eqref{dynamicODE} on the conifold: 
\begin{gather} \label{instCone}
\begin{aligned} 
\dot{a}_0 &= - \frac{4}{t} \left(  a_0 - a_1^2 + a_{2}^2 \right) \\
\dot{a}_1 &=  \frac{3}{2t} \left(  a_0 - 1  \right) a_1 - 2 a_2 \phi \\
\dot{a}_2 &= - \frac{3}{2t} \left(  a_0 + 1  \right)  a_2  - 2 a_1 \phi \\
\dot{\phi} &=  - \frac{6}{t^2} a_1 a_2
\end{aligned} 
\end{gather} 
and comparing the expressions for $t\dot{a}_0, t\dot{a}_1, t\dot{a}_2$ gives the statement of the lemma. 
\end{proof}
\begin{remark} One can show that any solution $\left( a_0, a_1, a_2, \phi \right)$ to \eqref{dynamicODE} converging in $C^0$ as $t \rightarrow \infty$ must converge to a $t$-invariant solution of \eqref{instCone}: $\left( 1,1,0,0 \right), \left( -1,0,1,0 \right)$, or $\left( 0,0,0,m \right)$, $m \in \R$, up to gauge i.e. $A_1^\flat$, $A_2^\flat$, or $A^\mathrm{can}$ with parallel Higgs field $\Phi_m = m E_1$. In the following sections, we will also see that any bounded solution to \eqref{dynamicODE} extending over the singular orbit at $t=0$ must converge to one of these solutions as $t \rightarrow \infty$.
\end{remark} 

\subsection{Reducible Solutions} \label{abeliansection}
Before conducting an analysis of the full system \eqref{dynamicODE}, we will briefly say something about the reducible case, i.e. if we consider abelian or flat connections. Firstly, note that the trivial flat connection $A^\flat$ on $P_1 \rightarrow SU(2)^2 / \triangle U(1) \cong S^2 \times S^3$ appears in two distinct $SU(2)^2$-invariant gauge-equivalence classes\footnote{although these represent the same connection up to non-equivariant gauge, at least on $S^2 \times S^3$.}, which can be represented by:
\begin{align}
A_1^\flat:= E_1 \otimes u^1 + E_2 \otimes v^1 + E_3 \otimes w^1 & & A_2^\flat:= E_1 \otimes u^2 + E_2 \otimes v^2 + E_3 \otimes w^2
\end{align}
i.e. in terms of Proposition \ref{principalconn}, we have $a_0 = 1, a_1 =1,  b_1 = a_2 = b_2=0$, or $a_0 = -1, a_2 =1,  b_1 = a_1 = b_2=0$ respectively. These are clearly just lifts of the standard Maurer-Cartan form on $SU(2)$ to $P_1$, and $A_1^\flat, A_2^\flat$ are exchanged via non-equivariant diffeomorphism obtained via exchanging the factors of $SU(2)$ in $SU(2)^2 / \triangle U(1)$. 

Secondly, note that if both $a_1=a_2=0$ then the connection is abelian, and we can solve \eqref{dynamicODE} explicitly on the space of principal orbits: 

\begin{align}  \label{abelian0}
 a_0(t) = \frac{C - 2 u_0 u_1}{\mu^2}& &\phi = -3 I(t) 
\end{align}
where $\dot{I}(t) =  \tfrac{v_0}{\mu^2}$, and $C$ is a constant of integration. 

Using the results of Appendix \ref{sectioncohobundles}, we see that generic solution \eqref{abelian0} can extend over the singular orbits $S^2$, or $S^2 \times S^2$ only if\footnote{the converse will also hold for a suitable choice of bundle extension.} $C=2 u_0 u_1(0)$, and can never extend over the singular orbit $S^3$. 

\begin{remark} \label{remarkabelian} For later reference, we note that the generic abelian solution  \eqref{abelian0} on $\mathcal{O}(-1) \oplus \mathcal{O}(-1) \setminus \mathbb{CP}^1$ is also unbounded near $\mathbb{CP}^1$ unless $C=2 u_0 u_1(0)$. 
\end{remark}
\subsection{Local Solutions} \label{sectionlocalsolutions}
We now consider the full system \eqref{dynamicODE}. Unlike with the reducible case, in general this will not have explicit solutions, and instead, we will analyse the qualitative behaviour of solutions as they move away from the singular orbit. To determine their behaviour near the singular orbit, we will apply the theory of singular initial value problems of the form \cite[Thm.4.7]{FoscolonK}:  $t \dot{y} = M_{-1}(y) + M(t,y)$, where $M(t,y) t^{-1}$, $M_{-1}(y)$ are smooth functions of their arguments. To have unique solution near $t=0$, we require that $M_{-1}(y_0) = 0$ at initial value $y(0)=y_0$, and that the linearisation $d_{y_0}M_{-1}$ has no positive integer eigenvalues. This theory, combined with the boundary conditions found in Appendix \ref{sectioncohobundles}, will allow us to construct local solutions to \eqref{dynamicODE} extending over the singular orbits at $t=0$.

In all cases, we will find that solutions to the monopole equations are in a local two-parameter family for each bundle extending $P_1$ over the singular orbit, with the vanishing of the second parameter corresponding to the vanishing of the Higgs field $\phi$, and thus a local one-parameter family of instantons. 
 
First of all, here is a countable family of bundles $P_{1-l,l}$, $l \in \mathbb{Z}$ extending $P_1$ over the singular orbit $S^2 \times S^2$. However, we can reduce our computations to the case $l>0$ by the diffeomorphism exchanging the factors of $SU(2)$ in the $SU(2)^2$-orbits on the total space of the bundle, since this map sends $P_{1-l,l} \mapsto P_{l,1-l}$. As this map acts on the underlying Calabi-Yau structure by sending the constant $u_0 \mapsto -u_0$, the monopole ODEs \eqref{dynamicODE} also transform, but solutions of the transformed system are equivalent to solutions of \eqref{dynamicODE} under the symmetry \eqref{symmetrymetric}: 
 \begin{prop} \label{localresol4} In a neighbourhood of the singular orbit of $P_{1-l,l} \rightarrow\mathcal{O}(-2,-2)$ local solutions to \eqref{dynamicODE} are in a two-parameter family $\left(Q^l, \Theta^l \right)_{\alpha_l, \beta_l}:= \left( a_0, a_1, a_2, \phi \right)_{\alpha_l, \beta_l}$ for each $l \in \mathbb{Z}$. For $l>0$, these solutions satisfy:
 \begin{align*} 
&a_0 = 1-2l + O(t^2)&  &a_1 = - \frac{1}{l} \beta_l \alpha_l \sqrt{\frac{U_1 - U_0}{U_1 +U_0}} t^l  + O(t^{l+2})& &a_2 = \alpha_l t^{l-1} + O(t^{l+1})& &\phi =  \beta_l + O(t^2)&
 \end{align*} 
\end{prop}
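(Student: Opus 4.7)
The plan is to combine the boundary-condition analysis of Appendix \ref{sectioncohobundles} with the singular initial value problem theorem of \cite[Thm.4.7]{FoscolonK}. First I would use the appendix to identify the bundle extensions. On the singular orbit $\CP^1 \times \CP^1 = SU(2)^2 / U(1)^2$, homogeneous $SU(2)$-bundles extending $P_1$ correspond to homomorphisms $U(1)^2 \to SU(2)$, indexed by an integer $l \in \Z$, producing the family $P_{1-l,l}$. The appendix also records the forced boundary values at $t=0$: $a_0(0) = 1-2l$, with $a_1$ and $a_2$ vanishing at orders $t^{l}$ and $t^{l-1}$ respectively (for $l \geq 1$). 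Because the interchange of the two $SU(2)$ factors sends $l \mapsto 1-l$ and acts on the Calabi-Yau data by $U_0 \mapsto -U_0$, combining this with the symmetry \eqref{symmetrymetric} of the ODE system reduces the proof to $l \geq 1$.

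Next I would expand the Calabi-Yau structure near $t = 0$. From Lemma \ref{CYstructurelemma} and the evolution equations \eqref{hypoAevolution} we get $u_0 \equiv U_0$, $u_1(t) = U_1 + O(t^2)$, $\mu(t) = \mu_0 + O(t^2)$ with $\mu_0 := \sqrt{U_1^2 - U_0^2}$, and $\lambda(t) = 3t + O(t^3)$. Note that for a type $\mathcal{I}$ hypo-structure $v_0 = 0$ and $v_3 = \mu$, so \eqref{dynamicODE} takes a simpler form near $t=0$.

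Then I would introduce the rescaled variables $\tilde{a}_1 := t^{-l} a_1$, $\tilde{a}_2 := t^{1-l} a_2$, absorbing the leading vanishing predicted by the boundary conditions. In the variables $y := (a_0, \tilde{a}_1, \tilde{a}_2, \phi)$, a direct computation recasts \eqref{dynamicODE} in the form
\begin{equation*}
t\,\dot{y} = M_{-1}(y) + t\,M_0(t,y),
\end{equation*}
where $M_{-1}, M_0$ are smooth. The key algebraic step is to determine the zero set of $M_{-1}$. The $a_0$- and $\phi$-components vanish identically to leading order (so $a_0$ and $\phi$ are free initial parameters, giving $\alpha_l$ and $\beta_l$). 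The $\tilde{a}_2$-component gives a condition fixing $a_0(0) = 1-2l$ (through the coefficient $-\tfrac{3(a_0+1)}{2\lambda}$ at order $t^{-1}$). The $\tilde{a}_1$-component forces a balance between the dominant $\tfrac{3(a_0-1)a_1}{2\lambda}$ contribution and the forcing term $-2\tfrac{u_1 - u_0}{\mu} a_2 \phi$, yielding precisely the relation $\tilde{a}_1(0) = -\tfrac{1}{l}\beta_l\alpha_l\sqrt{(U_1 - U_0)/(U_1 + U_0)}$.

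Finally I would invoke \cite[Thm.4.7]{FoscolonK}: compute the linearization $d_{y_0} M_{-1}$ at the above two-parameter family of zeros and verify that no positive integer eigenvalue appears. The main obstacle is this indicial computation — the $4 \times 4$ linearization couples all four components through $l$-dependent entries, and one must confirm that the eigenvalues (two of which are forced to be $0$ along the zero manifold) avoid $\Z_{>0}$. Once that is checked, the theorem yields a unique smooth solution for each choice of $(\alpha_l, \beta_l)$, and undoing the rescaling recovers the claimed expansions of $a_0$, $a_1$, $a_2$, $\phi$ with the stated orders of correction, completing the proof.
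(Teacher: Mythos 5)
Your proposal follows essentially the same route as the paper: reduce to $l>0$ via the factor-swapping symmetry, use the Appendix boundary conditions to set $a_0(0)=1-2l$ and to justify the substitutions $a_1=t^lX_1$, $a_2=t^{l-1}X_2$, read off the balance $lX_1(0)+X_2(0)\phi(0)\sqrt{(U_1-U_0)/(U_1+U_0)}=0$ from the leading $t^{-1}$ terms, and apply the singular-IVP theorem of \cite[Thm.4.7]{FoscolonK}. The only step you leave open, the indicial computation, works out exactly as you anticipate: the paper finds $\det\left(k\,\mathrm{Id}-d_{y_0}M_{-1}\right)=(k+2l)k^3>0$ for $k>0$, so no positive integer eigenvalues occur and uniqueness follows.
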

\begin{proof} We write \eqref{dynamicODE} for a Calabi-Yau structure of type $\mathcal{I}$: 
\begin{gather} \label{ivp0} 
 \begin{aligned} 
\dot{a_0} &= \frac{4 \lambda}{ \mu^2} \left( ( a_1^2 +  a_2^2 - 1 )u_0  - ( a_0 - a_1^2 +  a_2^2 ) u_1 \right)  \\
\dot{\phi} &= -\frac{6}{\mu} a_1 a_2   \\
\dot{a}_1 &= \frac{3}{2 \lambda  }  (a_0 -1 ) a_1 - 2 \frac{u_1-u_0}{\mu} a_2 \phi  \\
 \dot{a}_2 &= -\frac{3}{2 \lambda } (a_0+1) a_2 - 2 \frac{u_1 + u_0}{ \mu } a_1 \phi 
\end{aligned} 
\end{gather}
We consider solutions to \eqref{ivp0} with this Calabi-Yau structure given by \eqref{CYstructure}, for any $U_1, U_0$ with $U_1 > | U_0 | \geq 0$. The power-series of $\lambda$, $u_1$, $\mu$ near $t=0$ are given by:
\begin{align*}
\lambda(t)= 3 t + O(t^3)& &u_1 = U_1 + O(t^2)& & \mu = \sqrt{U_1^2 - U_0^2} + O(t^2) 
\end{align*}
Although we cannot apply \cite[Thm.4.7]{FoscolonK} directly, we can use the boundary conditions for extending to the singular orbit to re-write this system in the correct form:
 
First, assume $l>0$. Using Proposition \ref{propconnboundary4}, we can define smooth functions $X_1, X_2$ such that $a_1 = t^l X_1$, $a_2 = t^{l-1} X_2$, and \eqref{ivp0} becomes: 
\begin{align*} 
\dot{a_0} &= O(t) \\
\dot{\phi} &= O(t^{2l-1})   \\
\dot{X}_1 &= \frac{1}{t}  \left( \frac{1}{2} \left( a_0 -1 - 2l \right)X_1 - 2 X_2 \phi  \sqrt{\frac{U_1-U_0}{U_1+U_0}} \right) + O(t)   \\
 \dot{X}_2 &= - \frac{1}{2t} \left( a_0 -1 + 2l \right)X_2 + O(t)
\end{align*}
Since the extension conditions also require $a_0(0) = 1-2l$, once we fix $\alpha_l := X_2(0)$, $ \delta_l:= \phi(0)$ such that $lX_1 (0) + X_2 (0) \phi(0)\sqrt{\frac{U_1-U_0}{U_1+U_0}} = 0$, then $y(t)=\left( a_0,X_1, X_2,\phi \right)$ satisfies a singular initial value problem with linearisation
\begin{align*}
d_{y_0} M_{-1} =  \begin{pmatrix} 
0 & 0 & 0 & 0 \\
0 & 0 & 0 &  0 \\
\frac{1}{2} X_1(0) & -2\sqrt{\frac{U_1-U_0}{U_1+U_0}} \alpha_l & -2l & -2\sqrt{\frac{U_1-U_0}{U_1+U_0}} \delta_l  \\
 -\frac{1}{2} \alpha_l & 0 & 0 & 0 
\end{pmatrix}
\end{align*}
at initial value $y_0 = \left( 1-2l, - \tfrac{1}{l} \alpha_l \beta_l \sqrt{\tfrac{U_1-U_0}{U_1+U_0}}, \alpha_l, \beta_l \right)$. This has a unique solution once we fix $y_0$, since $\det \left( k \mathrm{Id} - d_{y_0} M_{-1} \right)= \left(k+2l \right) k^3>0$ for $k>0$.

To recover the local solutions extending for $l\leq 0$ from these solutions, we can send $U_0 \mapsto -U_0$ and apply the transformation \eqref{symmetrymetric}. It is easy to verify from Proposition \ref{propconnboundary4} that these solutions extend to $P_{l,1-l}$.   
\end{proof}
\begin{remark} By setting $\beta_l =0$ in $\left(Q^l, \Theta^l \right)_{\alpha_l, \beta_l}$, we obtain a local one-parameter family of instantons i.e. solutions to \eqref{dynamicODE} with $\phi =0$, and for $l>0$ these solutions have: 
\begin{align} \label{localresol5}
a_0 - a^{\mathrm{ab}}_0 = -\frac{6 \alpha_l^2 }{l (U_0 + U_1)}t^{2l} + O(t^{2l+2})
\end{align}
where $a^{\mathrm{ab}}_0$ denotes the abelian solution to \eqref{dynamicODE} extending over the singular orbit of $P_{1-l,l}$. 

Moreover, when $l=1$, these solutions have: 
 \begin{align} \label{localresol3} 
a_0 = - 1 - \frac{6}{U_1+U_0} (\alpha_1^2 -1) t^2 + O(t^4)& &a_2 = \alpha_1 + \frac{3}{2 (U_1+U_0)} \alpha_1 ( \alpha_1^2 -1) t^2 + O(t^4)
\end{align}
\end{remark}
As their proofs are similar, we will state the results for solutions extending over singular orbits $S^2$ and $S^3$ without proof. The correct re-parametrisations, corresponding initial values $y_0$, and  linearisations $d_{y_0} M_{-1}$ as in \cite[Thm.4.7]{FoscolonK} can be found in Appendix \ref{singIVPs}. 
  
The bundle $P_1$ extends uniquely over $S^3$, and we denote this extension $P_\mathrm{Id}$: 
\begin{prop} \label{localmonopolesmoothing} In the neighbourhood of the singular orbit, solutions to \eqref{dynamicODE} on $P_\mathrm{Id} \rightarrow T^* S^3$ are in a two-parameter family $\left( S, \Phi \right)_{\xi,\chi} := \left(a_0,a_+,a_-,\phi \right)_{\xi,\chi}$,  where $a_+ = a_1 + a_2$, $a_- = a_1 - a_2$. These solutions satisfy: 
 \begin{align*}  
&a_0 = \xi + O(t^2)&  &a_+ = 1 + \left( \frac{9}{8} (\xi^2 - 1) - \chi \right) t^2  + O(t^4)& &a_- = \xi + O(t^2)& &\phi =  \chi t + O(t^3)&
 \end{align*}
\end{prop}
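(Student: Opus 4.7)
The proof follows the same template as Proposition \ref{localresol4}. First, I specialise \eqref{dynamicODE} (with $b_1 = b_2 = 0$) to the Stenzel Calabi-Yau structure \eqref{hypoBsol}, and substitute the change of variables $a_\pm = a_1 \pm a_2$. Expanding $(\lambda, \mu, v_0, v_3)$ around the singular orbit gives $\lambda = 1 + O(t^2)$, $\mu = 2 t + O(t^3)$, $v_0 = -\tfrac{2}{3} + \tfrac{3}{5}t^2 + O(t^4)$ and $v_3 = \tfrac{2}{3} + \tfrac{12}{5} t^2 + O(t^4)$, so $v_3 + v_0 = 3 t^2 + O(t^4)$ while $v_3 - v_0 = \tfrac{4}{3} + O(t^2)$. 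In these variables the $a_+$-equation is regular at $t = 0$, whereas the $a_0$-, $a_-$- and $\phi$-equations carry coefficients diverging like $1/t$ and $1/t^2$. Matching orders at $t = 0$ to cancel these singularities forces $a_+(0)^2 = 1$ together with $a_0(0) = a_+(0) a_-(0) = a_-(0) a_+(0)$; by the residual gauge symmetry \eqref{symmetrygauge} I may take $a_+(0) = +1$, which leaves the free parameters $\xi := a_0(0) = a_-(0)$ and $\chi := \dot\phi(0)$ with $\phi(0) = 0$.

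I then introduce the rescaled variables $\eta_+ := (a_+ - 1)/t^2$, $\zeta := (a_- - a_0)/t^2$ and $\psi := \phi/t$. In $y = (a_0, \eta_+, \zeta, \psi)$, the system takes the singular initial value form $t \dot y = M_{-1}(y) + M(t, y)$ of \cite[Thm.4.7]{FoscolonK}, with $M(t,y)/t$ smooth and
\begin{align*}
M_{-1}(y) = \bigl( 0,\;\; \tfrac{9}{4}(a_0^2 - 1) - 2 \eta_+ - 2 \psi,\;\; -a_0 \eta_+ - 5\zeta + 2 a_0 \psi,\;\; \tfrac{9}{8}(a_0^2 - 1) - \eta_+ - \psi \bigr).
\end{align*}
The zero set $\{M_{-1}(y) = 0\}$ is a smooth two-dimensional submanifold on which $(a_0, \psi) = (\xi, \chi)$ are free coordinates, with $\eta_+(0) = \tfrac{9}{8}(\xi^2 - 1) - \chi$ and $\zeta(0)$ then determined. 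The linearisation $d_{y_0} M_{-1}$ at a point $y_0$ of this set has eigenvalues $\{0, 0, -3, -5\}$ (the two zero eigenvalues corresponding to tangent directions along the zero set), none of which are positive integers; hence \cite[Thm.4.7]{FoscolonK} produces a unique smooth solution $y(t)$ for each $(\xi, \chi) \in \R^2$.

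The stated expansions at order $t^2$ are then immediate from the definitions of the rescaled variables: $a_0 = \xi + O(t^2)$, $a_- = a_0 + \zeta t^2 = \xi + O(t^2)$, $\phi = t\psi = \chi t + O(t^3)$, and
\begin{align*}
a_+ = 1 + \eta_+ t^2 = 1 + \eta_+(0) t^2 + O(t^4) = 1 + \bigl(\tfrac{9}{8}(\xi^2 - 1) - \chi\bigr) t^2 + O(t^4),
\end{align*}
matching the claim. The main obstacle lies in identifying the correct rescalings -- in particular that $(a_+ - 1)$ must vanish like $t^2$ rather than $t$ -- and computing the Stenzel coefficients to high enough order that the reformulated system genuinely fits the hypotheses of \cite[Thm.4.7]{FoscolonK}; once this is done, the verification that $d_{y_0} M_{-1}$ has the right spectrum is a routine eigenvalue computation.
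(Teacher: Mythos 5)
Your proposal takes essentially the same route as the paper's proof (Appendix B): recast the system as a singular initial value problem in rescaled variables and apply \cite[Thm.4.7]{FoscolonK} after checking that $d_{y_0}M_{-1}$ has no positive integer eigenvalues. Your Stenzel expansions, your $M_{-1}$, and your spectrum $\{0,0,-3,-5\}$ are all correct and consistent with the paper's version, which keeps $a_-$ itself as a variable rather than $(a_--a_0)/t^2$ and therefore finds $\det(k\,\mathrm{Id}-d_{y_0}M_{-1})=(k+3)^2k^2$; the extra $-5$ in your spectrum is exactly the shift by $-2$ of the $-3$ eigenvalue caused by your additional division by $t^2$, and is harmless. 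The one point you leave unresolved --- that $a_+-1$ (and, in your variables, $a_--a_0$) vanishes to second order and that $\phi(0)=0$, which you describe as ``the main obstacle'' --- should not be hunted for in the ODE alone: it is supplied by the parity and initial-value conditions of Propositions \ref{propconnboundary2} and \ref{propmonboundary2} ($a_0,a_1,a_2$ even, $\phi$ odd, $a_1(0)+a_2(0)=1$, $a_1(0)-a_2(0)=a_0(0)$), which encode smooth extension of the invariant bundle data over the singular orbit $S^3$ and are precisely what the paper cites to justify the ansatz before invoking the singular IVP theorem. Your order-matching argument recovers the correct leading conditions, but without the parity input it does not by itself rule out solutions with, say, $a_+-1=O(t)$, so the appeal to the appendix is genuinely needed to make the rescaling legitimate.
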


\begin{remark} By setting $\chi =0$ in $\left( S, \Phi \right)_{\xi,\chi}$, we obtain a local one-parameter family of instantons i.e. solutions to \eqref{dynamicODE} with $\phi =0$, and we give some additional terms in the resulting power-series: 
\begin{align} \label{localsmoothing} 
a_0 = \xi + \frac{9}{10} \xi(-1 + \xi^2) t^2 + O(t^4)& 
&a_+ = 1 + \frac{9}{8} (- 1 + \xi^2) t^2  + O(t^4)& 
&a_- = \xi + \frac{27}{40} \xi ( - 1 + \xi^2 ) t^2 + O(t^4)
 \end{align}
\end{remark} 
Finally, the bundle $P_1$ extends in exactly two ways over $S^2 = SU(2)^2/ U(1) \times SU(2)$, and we denote these possible extensions $P_{0,\mathrm{Id}}$ and $P_{1,\mathbf{0}}$:
\begin{prop} \label{localresol1}  In the neighbourhood of the singular orbit of $P_{0,\mathrm{Id}}\rightarrow\mathcal{O}(-1) \oplus \mathcal{O}(-1)$, solutions to \eqref{dynamicODE} are in a two-parameter family $\left(R, \Psi \right)_{\epsilon, \delta}:=\left( a_0, a_1, a_2, \phi\right)_{\epsilon, \delta}$, with: 
\begin{align*} 
& a_0 = - 1 + \epsilon t^2 + O (t^4) &  & a_1 =  - \frac{\delta}{\sqrt{3}} t^2 + O (t^4) & & a_2 = 1 - \frac{1}{2} \epsilon t^2 + O (t^4) & & \phi = \delta t^2 + O (t^4) & 
\end{align*}
\end{prop}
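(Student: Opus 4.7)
The plan is to mirror the scheme used for Proposition \ref{localresol4} and cast \eqref{dynamicODE} near $t=0$ as a singular initial value problem of the form $t\dot y = M_{-1}(y) + tM(t,y)$, then apply \cite[Thm.4.7]{FoscolonK}. First I would specialize Lemma \ref{CYstructurelemma} to $(U_0,U_1)=(-1,1)$, the convention fixed in the article for $\mathcal{O}(-1)\oplus\mathcal{O}(-1)$. This forces $u_0=-1$ identically, and integrating the hypo-evolution equations \eqref{hypoAevolution} together with the formulas in \eqref{CYstructure} yields the short expansions
\[
u_1 = 1 + \tfrac{3}{2}t^2 + O(t^4), \qquad \mu = \sqrt{3}\,t + O(t^3), \qquad \lambda = \tfrac{3}{2}\,t + O(t^3),
\]
so that the singular coefficient $\lambda/\mu^2$ in \eqref{dynamicODE} blows up like $1/(2t)$ at the singular orbit.

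Next I would invoke the analysis in Appendix \ref{sectioncohobundles} for invariant bundle data on $P_{0,\mathrm{Id}}\to\mathcal{O}(-1)\oplus\mathcal{O}(-1)$. This should force $a_0(0)=-1$ and $a_2(0)=1$ (matching the flat connection $A_2^\flat$ at the singular $\mathbb{CP}^1$), with $a_1$ and $\phi$ vanishing to order $t^2$. Motivated by this, I substitute
\[
a_0 = -1 + t^2 X_0, \quad a_2 = 1 + t^2 X_2, \quad a_1 = t^2 X_1, \quad \phi = t^2 X_3,
\]
and rewrite \eqref{dynamicODE} for $y=(X_0,X_1,X_2,X_3)$. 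The key point is that the apparent $1/t$ singularity in $\dot a_0$ cancels because $a_1^2+a_2^2-1$ and $a_0-a_1^2+a_2^2$ both vanish at $t=0$, producing a singular IVP $t\dot y = M_{-1}(y) + tM(t,y)$ with $M_{-1}$ linear.

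A short direct computation shows that $M_{-1}$ decouples into a $(X_0,X_2)$-block and an $(X_1,X_3)$-block, each a rank-one linear map. Its kernel is cut out by the two relations
\[
X_2(0) + \tfrac{1}{2}X_0(0) = 0, \qquad X_3(0) + \sqrt{3}\,X_1(0) = 0,
\]
so the variety $M_{-1}(y_0)=0$ is the two-parameter family parametrized by $\epsilon := X_0(0)$ and $\delta := X_3(0)$. Computing the spectrum of $dM_{-1}$, each block has trace $-6$ and vanishing determinant, giving eigenvalues $\{0,0,-6,-6\}$; in particular there are no positive integer eigenvalues, so the hypotheses of \cite[Thm.4.7]{FoscolonK} hold and yield a unique smooth local solution for each $(\epsilon,\delta)$. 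Translating back to $(a_0,a_1,a_2,\phi)$ reproduces the claimed power series.

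The main obstacle is not conceptual but organizational: confirming that the correct degree of vanishing of $a_1$ and $\phi$ at the singular orbit comes out of the appendix's bundle-theoretic analysis, and verifying by hand that the potentially singular terms in \eqref{dynamicODE} (notably the $(u_1+u_0)/\mu$ coefficient in the $a_2$ equation, which is regular only because $u_1(0)+u_0=0$, and the $\lambda/\mu^2$ coefficient in the $a_0$ equation) combine with the vanishing of the relevant algebraic brackets to produce a genuinely regular system after the rescaling. Once the substitutions are in place, the eigenvalue check and the application of \cite[Thm.4.7]{FoscolonK} are routine.
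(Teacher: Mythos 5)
Your proposal is correct and follows essentially the same route as the paper's proof in Appendix B: the identical substitution $a_0=-1+t^2X_0$, $a_1=t^2X_1$, $a_2=1+t^2X_2$, $\phi=t^2\psi$ justified by the extension conditions of Propositions \ref{propconnboundary3} and \ref{propadboundary3}, followed by recasting \eqref{ivp0} as a singular IVP and checking that the spectrum $\{0,0,-6,-6\}$ contains no positive integers so that \cite[Thm.4.7]{FoscolonK} applies, with the kernel relations $X_2(0)=-\tfrac12 X_0(0)$ and $\psi(0)=-\sqrt{3}\,X_1(0)$ producing exactly the two-parameter family $(\epsilon,\delta)$. Your claimed block structure $(X_0,X_2)\oplus(X_1,\psi)$ is indeed what a direct expansion of \eqref{ivp0} gives (the matrix printed in the paper couples $(X_0,\psi)$ and $(X_1,X_2)$ instead, which looks like a transcription slip), but both versions have characteristic polynomial $k^2(k+6)^2$ and the same kernel conditions, so the conclusion is unaffected.
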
 
\begin{prop} \label{localresol6}  In a neighbourhood of the singular orbit of $P_{1, \mathbf{0}}\rightarrow \mathcal{O}(-1) \oplus \mathcal{O}(-1)$, solutions to \eqref{dynamicODE} are in a two-parameter family $\left(R', \Psi' \right)_{\epsilon', \delta'}:=\left( a_0, a_1, a_2, \phi\right)_{\epsilon', \delta'} $, with: 
\begin{align*}
& a_0 = 1 + O (t^2) &  & a_1 = \epsilon' + O (t^2) & & a_2 = O (t^2) & & \phi = \delta' + O (t^2) & 
\end{align*}
\end{prop}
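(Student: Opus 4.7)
The plan is to follow the strategy of Proposition \ref{localresol4}: I will rewrite \eqref{dynamicODE} for the type $\mathcal{I}$ Calabi-Yau structure on $\mathcal{O}(-1) \oplus \mathcal{O}(-1)$ in the standard singular IVP form $t\dot{y} = M_{-1}(y) + M(t,y)$ with $M(t,y)/t$ smooth at $t=0$, and apply \cite[Thm.4.7]{FoscolonK}. With the scaling convention $(U_0, U_1) = (-1, 1)$, the formulas \eqref{CYstructure} together with $\dot{u}_1 = 2\lambda$ force the expansions $u_1 = 1 + \tfrac{3}{2}t^2 + O(t^4)$, $\lambda = \tfrac{3}{2}\, t + O(t^3)$, $\mu = \sqrt{3}\,t + O(t^3)$ near the singular orbit; in particular $u_1 + u_0 = O(t^2)$ while $u_1 - u_0 = 2 + O(t^2)$. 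According to the extension analysis in Appendix \ref{sectioncohobundles}, smoothness at the singular $\CP^1$ through the bundle $P_{1,\mathbf{0}}$ imposes $a_0(0) = 1$ and $a_2(0) = 0$, leaving $\epsilon' := a_1(0)$ and $\delta' := \phi(0)$ free.

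I would then multiply each equation of \eqref{dynamicODE} by $t$ (setting $v_0 = 0$, $v_3 = \mu$ as required in type $\mathcal{I}$) and check that every $t^{-1}$-singularity is absorbed at the initial value $y_0 := (1, \epsilon', 0, \delta')$. Explicitly: the factor $\tfrac{4\lambda}{\mu^2}\bigl(u_0(a_1^2+a_2^2-1) - u_1(a_0 - a_1^2 + a_2^2)\bigr)$ is $O(t^{-1})$ times a combination that vanishes at $y_0$; $\tfrac{3}{2\lambda}(a_0-1) a_1$ vanishes because $a_0(0)=1$; both $\tfrac{3}{2\lambda}(a_0+1) a_2$ and $a_1 a_2/\mu$ vanish because $a_2(0)=0$; and the coefficient $(u_1+u_0)/\mu$ appearing in the $\dot{a}_2$ equation is itself $O(t)$ thanks to the cancellation $u_1 + u_0 = O(t^2)$. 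This verifies $M_{-1}(y_0)=0$, and $t^2$-analyticity of the metric coefficients ensures $M(t,y)/t$ is smooth at $t=0$.

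The final step is to compute $d_{y_0} M_{-1}$: a direct calculation shows that, in the reordered basis $(a_0, a_2, a_1, \phi)$, this matrix is block lower-triangular with upper diagonal block $-2\,\mathrm{Id}_2$ (from $a_0, a_2$) and lower diagonal block equal to zero (from $a_1, \phi$). Its characteristic polynomial is $k^2(k+2)^2$ and its eigenvalues are $\{0, 0, -2, -2\}$, none of which is a positive integer. Hence \cite[Thm.4.7]{FoscolonK} yields a unique smooth solution for each $(\epsilon', \delta') \in \R^2$, giving the stated two-parameter family; the leading-order expansions $a_0 = 1 + O(t^2)$, $a_1 = \epsilon' + O(t^2)$, $a_2 = O(t^2)$, $\phi = \delta' + O(t^2)$ then follow from uniqueness of the smooth solution together with the $t^2$-analyticity of the rewritten system.

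The main subtlety is matching the singular behaviour of \eqref{dynamicODE} at $t=0$ with the correct bundle extension data. Specifically, the fact that $u_1 + u_0 = O(t^2)$ rather than $O(1)$ on the boundary ray $U_1 = -U_0$ of the K\"{a}hler cone is precisely what makes the singular system well-posed at $y_0$ with $a_2(0) = 0$, and is the analytic counterpart of the degeneration of the Calabi-Yau metric at the exceptional $\CP^1$ of the small resolution. This cancellation must be tracked carefully both in the extension analysis of Appendix \ref{sectioncohobundles} and in the verification above that all $t^{-1}$-terms are absorbed; once it is in place, everything else is a routine specialisation of the template used in the proof of Proposition \ref{localresol4}.
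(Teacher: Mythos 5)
Your proposal is correct and takes essentially the same route as the paper's proof in Appendix \ref{singIVPs}: impose the boundary conditions $a_0(0)=1$, $a_2(0)=0$ from Proposition \ref{propconnboundary3}, recast \eqref{dynamicODE} as a singular initial value problem at $y_0=(1,\epsilon',0,\delta')$ with no change of variables needed, and verify that $d_{y_0}M_{-1}$ has characteristic polynomial $k^2(k+2)^2$, hence no positive integer eigenvalues, before invoking \cite[Thm.4.7]{FoscolonK}. Your block-triangular description of the linearisation in the reordered basis agrees with the matrix the paper writes out, and the one singular term you do not list explicitly, $-2\tfrac{u_1-u_0}{\mu}a_2\phi$ in the $\dot{a}_1$ equation, is absorbed for the same reason as the others since $a_2(0)=0$.
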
 
\begin{remark} On $P_{1, \mathbf{0}}$, we have $a_2 = - \frac{\sqrt{3}}{4} \epsilon' \delta' t^2 + O(t^4)$ once we fix $\epsilon'$, $\delta'$. 
\end{remark}

Having computed these two-parameter families of local solutions to the monopole equations \eqref{dynamicODE}, by uniqueness, we see that the following one-parameter families are the local solutions to the instanton equations, i.e. \eqref{dynamicODE} with $\phi =0$: 
\begin{align*}
S_\xi := \left( S, \Phi \right)_{\xi,0} &  & R_\epsilon := \left(R, \Psi \right)_{\epsilon, 0}& & {R'}_{\epsilon'} := \left(R' , \Psi' \right)_{\epsilon', 0}& &Q^l_{\alpha_l} :=\left(Q^l, \Theta^l \right)_{\alpha_l, 0} 
\end{align*}
For later reference, we have already computed some additional terms in the power-series of $S_\xi$, $Q^l_{\alpha_l}$, in \eqref{localsmoothing}, \eqref{localresol5}, \eqref{localresol3}. For the analysis of the family $R'_{\epsilon'}$, it will be more useful to first apply the transformation \eqref{symmetrymetric}, and then compute higher-order terms with respect to \eqref{dynamicODE} with $u_0 \mapsto -u_0$. To explain why, observe that the instanton equations for a hypo-structure of type $\mathcal{I}$, i.e. \eqref{dynamicODE} with $\phi, v_0$ vanishing, has at least one of $a_1$ or $a_2$ vanishing identically, and if both vanish we have the abelian solution. From the boundary conditions of Propositions \ref{propconnboundary}, \ref{propconnboundary3}, which of $a_1$ or $a_2$ must necessarily vanish will depend on how we extend the bundle $P_1$ to the singular orbit: we have $a_1$ vanishing for $P_{0,\mathrm{Id}}$ and $P_{1-l,l}$ for $l>0$, while $a_2$ vanishes for $P_{1,\mathbf{0}}$ and $P_{1-l,l}$ for $l \leq 0$. 

However, we can always reduce our analysis to a single ODE system with, say, $a_1$ vanishing identically by applying \eqref{symmetrymetric} to \eqref{dynamicODE} and mapping $u_0 \mapsto -u_0$: this is the same as pulling back these equations by the diffeomorphism exchanging the factors of $SU(2)$ in the $SU(2)^2$-orbits on the total space of the bundle. This has been previously explained for the solutions $Q^l_{\alpha_l}$, and we can apply the same reasoning to the family $R'_{\epsilon'}$: the caveat here is  that if we exchange the factors on the singular orbit $S^2$, then the bundle $P_1$ and the Calabi-Yau structure on the principal orbits now extends over $S^2 = SU(2)^2/SU(2) \times U(1)$ rather than our convention $S^2 =SU(2)^2/ U(1) \times SU(2)$. 
 
With this explained, let us denote $P_{\mathbf{0},1}$ the bundle obtained from $P_{1,\mathbf{0}}$ by exchanging the factors of $SU(2)^2$, and pull back the local one-parameter family of invariant instantons $R'_{\epsilon'}$ on $P_{1,\mathbf{0}}$ to a local one-parameter family of invariant instantons on $P_{\mathbf{0},1}$. Corollary \ref{propconnboundary4} ensures that these solutions actually extend to $SU(2)^2/SU(2) \times U(1)$, and for later reference, we compute some higher order terms in the power-series: 
\begin{lemma}
In a neighbourhood of the singular orbit, solutions to \eqref{dynamicODE} on $P_{\mathbf{0}, 1}$ with $\phi = 0$ are in a one-parameter-family, pulled back via \eqref{symmetrymetric} from the one-parameter family $R'_{\epsilon'}$. 
 \begin{align} \label{localresol2}
a_0 = - 1 - \frac{3}{4} \left(\epsilon'^2-1\right) t^2 + O(t^4)& &a_2 = \epsilon' + \frac{3}{8} \epsilon' \left(\epsilon'^2-1\right) t^2 + O(t^4) 
 \end{align}
 \end{lemma}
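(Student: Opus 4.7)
The plan is to reduce the classification on $P_{\mathbf{0},1}$ to the already-established classification on $P_{1,\mathbf{0}}$ via the factor-exchange symmetry, and then perform an order-by-order expansion of the instanton ODE to extract the first correction in the power-series. The relevant symmetry is the one noted in the remark after \eqref{symmetrymetric}: the involution $(a_0, a_1, a_2, \phi) \mapsto (-a_0, a_2, a_1, \phi)$ combined with $u_0 \mapsto -u_0$ preserves the full system \eqref{dynamicODE} for a type $\mathcal{I}$ hypo-structure. Geometrically, this is the diffeomorphism interchanging the two $SU(2)$-factors of the principal orbit, and by the very definition of $P_{\mathbf{0},1}$, it pulls back $P_{1,\mathbf{0}}$ to $P_{\mathbf{0},1}$.

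As a first step, we combine this symmetry with Proposition \ref{localresol6} (with $\delta' = 0$) to conclude that local solutions on $P_{\mathbf{0},1}$ with $\phi \equiv 0$ are in bijective correspondence with the one-parameter family $R'_{\epsilon'}$. The key observation is that on $R'_{\epsilon'}$ one has $a_2 \equiv 0$: indeed, with $\phi \equiv 0$ the $a_2$-equation reduces to $\dot{a}_2 = -\tfrac{3}{2\lambda}(a_0+1)a_2$, which is linear in $a_2$, so the initial condition $a_2(0) = 0$ propagates. Hence the image of $R'_{\epsilon'}$ under the involution satisfies $a_1 \equiv 0$ and $a_2(0) = \epsilon'$, and Corollary \ref{propconnboundary4} verifies that these are precisely the local solutions extending smoothly to the singular orbit of $P_{\mathbf{0},1}$; the one-parameter-family structure is then immediate from the bijection.

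For the coefficients, we work on $P_{1,\mathbf{0}}$ with the scaling $(U_0, U_1) = (-1, 1)$ and apply the involution at the end. From \eqref{hypoAevolution} and smoothness at the singular orbit, a short recursion gives $\lambda = \tfrac{3}{2} t + O(t^3)$ and $u_1 = 1 + \tfrac{3}{2} t^2 + O(t^4)$, hence $\mu^2 = u_1^2 - 1 = 3 t^2 + O(t^4)$. Substituting the even ansatz $a_0 = 1 + B_2 t^2 + O(t^4)$, $a_1 = \epsilon' + A_2 t^2 + O(t^4)$ (with $a_2 \equiv 0$, $\phi \equiv 0$) into \eqref{dynamicODE}, the leading $t^{-1}$ pieces of $\dot a_0$ and $\dot a_1$ cancel identically, and matching coefficients at order $t$ yields $B_2 = \tfrac{3}{4}(\epsilon'^2 - 1)$ and $A_2 = \tfrac{3}{8}\epsilon'(\epsilon'^2 - 1)$. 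Applying the involution sends $a_0 \mapsto -a_0$ and swaps $a_1 \leftrightarrow a_2$, which produces exactly \eqref{localresol2}.

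The main obstacle is really just bookkeeping: one must track the sign of $u_0$ under the factor exchange (since \eqref{symmetrymetric} is only a symmetry of \eqref{dynamicODE} modulo $u_0 \mapsto -u_0$), and check that Corollary \ref{propconnboundary4} correctly maps the boundary conditions of $P_{1,\mathbf{0}}$ to those of $P_{\mathbf{0},1}$. Once these conventions are fixed, the coefficient matching is a routine calculation, made painless by the fact that the leading singular terms of the $a_0$- and $a_1$-equations automatically cancel on the invariant subset $\{a_2 = 0,\ \phi = 0\}$.
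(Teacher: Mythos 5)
Your proposal is correct and follows essentially the paper's own route: the paper likewise obtains the family on $P_{\mathbf{0},1}$ by pulling back $R'_{\epsilon'}$ through the factor-exchange involution \eqref{symmetrymetric} (with $u_0 \mapsto -u_0$), checking extendibility via Corollary \ref{propconnboundary4}, and matching $t^2$-coefficients in \eqref{ivp0}; your values $B_2=\tfrac{3}{4}(\epsilon'^2-1)$ and $A_2=\tfrac{3}{8}\epsilon'(\epsilon'^2-1)$ reproduce \eqref{localresol2}. The one loose point is the claim that ``the initial condition $a_2(0)=0$ propagates'' by linearity, since near $t=0$ the equation degenerates to $\dot a_2\sim -\tfrac{2}{t}a_2$ and an initial value at the singular time does not by itself single out a solution; the vanishing $a_2\equiv 0$ is better secured by the static constraint $\dot\phi=-\tfrac{6}{\mu}a_1a_2\equiv 0$ (which forces $a_1a_2\equiv 0$ with $a_1(0)=\epsilon'\neq 0$) or by the uniqueness statement of Proposition \ref{localresol6}.
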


\section{ODE Analysis} \label{odessection}
\subsection{Solutions to the Instanton Equations} \label{odessection1}
Using the description of solutions to \eqref{dynamicODE} near the singular orbit, we will now describe the qualitative behaviour of the solutions as we move away from this orbit. We will focus first on the case $\phi$ vanishes i.e. instantons: in this case, the requirement of quadratic curvature decay in Lemma \ref{quadraticdecay} is equivalent to considering bounded solutions. 
 
We will start with the smoothing $T^* S^3$. A single explicit solution to \eqref{dynamicODE} on the smoothing was found in \cite[Theorem 2]{Oliveira2015}: 
 \begin{align} \label{explicit}
a_0= \phi= 0& &a_1 = a_2 = \frac{1}{2}\sqrt{\frac{4}{3 \lambda (v_3 - v_0)}}
 \end{align}
given locally by the power-series $S_\xi$ in \eqref{localsmoothing} with $\xi = 0$. We now show that this instanton actually lies in a one-parameter family: 
\begin{theorem} \label{stenzelthm} Invariant instantons with quadratic curvature decay on $P_\mathrm{Id} \rightarrow T^* S^3$ are in a one-parameter family $S_\xi$, $-1 \leq \xi \leq 1$, up to gauge. Moreover: 
\begin{enumerate}[\normalfont(i)]
\item The isometry exchanging the factors of $SU(2)$ on the principal orbits of $T^*S^3$ sends $S_\xi \mapsto S_{-\xi}$, with explicit fixed point $S_0$ given by \eqref{explicit}.
\item $S_{1} = A_1^b$, $S_{-1} = A_2^b$, and $S_{\xi}$, $-1< \xi <1$ are irreducible with $\lim_{t \to \infty} S_{\xi} (t) = A^{\mathrm{can}}$.
\end{enumerate}
\end{theorem}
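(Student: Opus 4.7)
The plan is to work with the instanton ODEs \eqref{dynamicODE} (with $\phi=0$) in the variables $(a_0, a_+, a_-)$, where $a_\pm = a_1 \pm a_2$, specialised to the type $\mathcal{II}$ Stenzel hypo-structure \eqref{hypoBsol} (so $u_0 = 0$). In these variables the system reads
\begin{align*}
\dot a_0 &= -\tfrac{4\lambda}{\mu}\bigl(a_0 - a_+ a_-\bigr), \\
\dot a_+ &= \tfrac{3(v_3+v_0)}{2\lambda\mu}\bigl(a_0 a_- - a_+\bigr), \\
\dot a_- &= \tfrac{3(v_3-v_0)}{2\lambda\mu}\bigl(a_0 a_+ - a_-\bigr),
\end{align*}
with initial conditions $(\xi, 1, \xi)$ coming from the local family of Proposition \ref{localmonopolesmoothing} at $\chi = 0$. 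From \eqref{hypoBsol} one verifies $v_3 + v_0 > 0$ and $v_3 - v_0 > 0$ for all $s > 0$. In the cone limit the system is autonomous in $\log t$, with fixed points $(0,0,0)$, $(\pm1, 1, \pm1)$, and their images under the residual gauge symmetry \eqref{symmetrygauge}, corresponding respectively to $A^{\mathrm{can}}$, $A^\flat_1$, $A^\flat_2$.

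For (i), the metric symmetry \eqref{symmetrymetric} acts in these variables as $(a_0, a_+, a_-) \mapsto (-a_0, a_+, -a_-)$, and is induced by the non-equivariant isometry swapping the two $SU(2)$-factors; it manifestly sends the initial data for $S_\xi$ to that of $S_{-\xi}$. Its fixed locus $\{a_0 = a_- = 0\}$ is invariant under the flow, and on this locus the system collapses to a single ODE for $a_+$ whose solution extending to the singular orbit is the explicit \eqref{explicit}. For (ii), the constant triples $(1,1,1)$ and $(-1,1,-1)$ annihilate the right-hand sides of the full $t$-dependent system and match the expansions \eqref{localsmoothing} at $\xi = \pm 1$; comparison with \eqref{principal1} identifies them with $A^\flat_1, A^\flat_2$.

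The substantive content is to show that for $-1 < \xi < 1$ the solution extends to all $t \geq 0$ with bounded $(a_0, a_+, a_-)$ and converges to $(0,0,0)$, while for $|\xi| > 1$ no global solution with quadratic curvature decay exists. The strategy for global existence is to exhibit a forward-invariant region in $(a_0, a_+, a_-)$-space, bounded by algebraic surfaces through the five fixed points and containing the initial-data arc $\{(\xi, 1, \xi) : |\xi| \leq 1\}$: a natural candidate is a ``pyramid'' defined by inequalities such as $|a_0| \leq a_+ \leq 1$ and $|a_-| \leq a_+$, whose boundary one checks is never crossed outward by the flow using the signs of $v_3 \pm v_0$ and of $a_0 - a_+ a_-$ on each face. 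Combined with Lemma \ref{quadraticdecay}, this gives global existence and quadratic curvature decay for $|\xi| \leq 1$. To conclude convergence to $A^{\mathrm{can}}$, I would recast the system in $\tau = \log t$ and use that the $\omega$-limit set lies among the fixed points of the autonomous cone system \eqref{instCone}; the strict interior condition $|\xi| < 1$ together with continuous dependence on initial data and the position of the stable/unstable manifolds of the flat fixed points rules out convergence to $A^\flat_i$, leaving only $A^{\mathrm{can}}$. For $|\xi| > 1$, the $\dot a_0$ equation forces $|a_0|$ to grow monotonically away from the initial orbit, and coupling to the $\dot a_\pm$ equations then produces blow-up in finite $t$, contradicting Lemma \ref{quadraticdecay}.

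The main obstacle will be producing the precise invariant region and verifying its boundary behaviour; a crude box $\{|a_0|, |a_\pm| \leq 1\}$ is not invariant, so the region must be carved by curved surfaces adapted to the nullcline $\{a_0 = a_+ a_-\}$ of $\dot a_0$. The secondary difficulty is the asymptotic step: ruling out solutions that stay bounded but fail to converge (e.g.\ accumulate on a non-isolated limit set) requires either a Lyapunov function along the cone flow or a dimension-counting argument on stable manifolds of the flat fixed points, using the linearisation of \eqref{instCone} at $A^\flat_i$.
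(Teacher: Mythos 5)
Your setup coincides with the paper's: the same change of variables $(a_0,a_+,a_-)$, the same system \eqref{instB2} with positive coefficient functions $f_0,f_\pm$, the same five critical points, the same treatment of part (i) via the fixed locus $\{a_0=a_-=0\}$ of \eqref{symmetrymetric}, and the same identification of the endpoints $\xi=\pm1$ with the flat connections. However, the two load-bearing steps are left as acknowledged gaps, and where you do commit to a candidate you commit to the wrong one. The flat-faced pyramid $|a_0|\leq a_+\leq 1$, $|a_-|\leq a_+$ is not what is needed: the correct forward-invariant region is
\[
\mathcal{S}_0 = \lbrace\, 0 < a_+ a_- < a_0 < 1,\ 0 < a_0 a_- < a_+ < 1,\ 0 < a_0 a_+ < a_- < 1 \,\rbrace,
\]
i.e.\ the region inside the open unit cube cut out by the three paraboloids $a_0=a_+a_-$, $a_+=a_0a_-$, $a_-=a_0a_+$ (you correctly guessed that the nullclines must appear, but did not carry it out). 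Its invariance has to be established in two stages -- first showing $(0,\infty)^3$, $(0,1)^3$ and $(1,\infty)^3$ are forward-invariant, then computing $\nabla h\cdot(\dot a_0,\dot a_+,\dot a_-)$ on each paraboloid $h=0$ and excluding the pairwise intersections of the paraboloids, which lie in the boundary of the cube. One then checks from the expansion \eqref{localsmoothing} that $a_0-a_+a_-$, $a_+-a_0a_-$, $a_--a_0a_+$ all have a definite sign of order $t^2$ (or $O(1)$ for the middle one) determined by $\xi(\xi^2-1)$ and $1-\xi^2$, placing $S_\xi$ in $\mathcal{S}_0$ for $0<\xi<1$ and in the analogous region $\mathcal{S}_\infty\subset(1,\infty)^3$ for $\xi>1$. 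Without this explicit region and the boundary computation there is no proof of global existence, and your claim that for $|\xi|>1$ ``$|a_0|$ grows monotonically'' is itself a consequence of the invariance of $\mathcal{S}_\infty$, not an input to it.

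The asymptotic step is also handled quite differently, and more cheaply, than your stable-manifold proposal. Inside $\mathcal{S}_0$ all three coordinates are strictly decreasing and bounded below, so they converge; reparametrising by the variable $s$ of \eqref{hypoBsol}, the coefficients $\lambda f_0,\lambda f_\pm$ tend to positive constants, so if any of $a_+a_--a_0$ etc.\ failed to tend to zero one could integrate a one-sided bound on the corresponding derivative and contradict boundedness. Hence the limit is a critical point in $\overline{\mathcal{S}_0}$, and strict monotonic decrease excludes $(1,1,1)$, leaving $(0,0,0)=A^{\mathrm{can}}$. No Lyapunov function or linearisation at the flat fixed points is needed, and the non-autonomy of the system (which would complicate a genuine $\omega$-limit-set argument for \eqref{instCone}) never enters. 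Similarly, for $|\xi|>1$ one does not need finite-time blow-up: monotone increase in $\mathcal{S}_\infty$ plus the same reparametrised estimate shows at least one coordinate is unbounded, which already violates Lemma \ref{quadraticdecay}. In short, the skeleton of your argument is the paper's, but the proof of the theorem is precisely the content you have deferred.
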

\begin{proof}[Proof of Theorem \ref{stenzelthm}]
We will prove that the local solutions $S_\xi$ given by the power-series \eqref{localsmoothing} near the singular orbit exist for all time if $|\xi | \leq 1 $ and are otherwise unbounded.
 
 First, we formulate \eqref{dynamicODE} with $\phi=0$ in terms of $a_+= a_1 + a_2$, $a_- = a_1 - a_2$:
 \begin{align} \label{instB2}
\dot{a}_0 = f_0 ( a_+ a_- - a_0 )&  &\dot{a}_+ = f_+(  a_0 a_- - a_+ )& &\dot{a}_- = f_-( a_0 a_+ - a_-) 
 \end{align} 
where we define: 
\begin{align*}
&f_0 := \frac{4\lambda}{\mu}& &f_+ := \frac{3 (v_3 + v_0)}{2 \lambda \mu}& &f_- := \frac{3 (v_3 - v_0)}{2 \lambda \mu}&
\end{align*}   
As the functions $f_0$, $f_+$, $f_-$ are all strictly positive on $\left(0,\infty \right)$, the following lemma is immediate: 
\begin{lemma} \label{criticalpoints} Critical points of \eqref{instB2} for $t \in \left(0, \infty \right)$ are given by the following triples $\left(a_0, a_+, a_- \right)$:
\begin{align*}
\left(1,1,1 \right)& &\left(1,-1,-1 \right)& &\left(-1,1,-1 \right)& &\left(-1,-1,1 \right)& &\left(0,0,0 \right)
\end{align*}
\end{lemma}
\begin{proof}
This follows by a simple computation: note that these critical points are just the canonical connection $A^\mathrm{can}$ and the flat connections $A_1^\flat$, $A_2^\flat$ under the symmetries \eqref{symmetrygauge} and \eqref{symmetrymetric}. 
\end{proof}
We will define a subset $\mathcal{S} \subset \mathbb{R}^n$ to be \textit{forward-invariant} for an ODE system $\dot{\mathbf{x}}=\mathbf{F}\left(\mathbf{x},t\right)$ if a solution $\mathbf{x}(t)$ contained in $\mathcal{S}$ at some non-singular initial time $t^*$, must remain in $\mathcal{S}$ for all forward time $t \geq t^*$ for which the solution exists.  
\begin{lemma} The following sets in $\mathbb{R}^3$ are forward-invariant for \eqref{instB2}:
\begin{align*}
\left(0,\infty\right)^3& &\left(0,1\right)^3& &\left(1,\infty\right)^3
\end{align*}
\end{lemma} 
\begin{proof} 
\begin{enumerate}[(i)]
\item We bound a solutions $\left( a_0, a_+, a_- \right) $ lying in the quadrant $\left(0,\infty\right)^3$ with boundary $a_0=0$, $a_+=0$, and $a_- = 0$. We can exclude the axes at intersections of these planes by local uniqueness to ODEs, since \eqref{instB2} has three families of solutions given by setting any two of $\left( a_0, a_+, a_- \right)$ to be identically zero. 

At $a_0 = 0, a_+ \geq 0, a_- \geq 0$, $\dot{a}_0 = f_0 a_+ a_- \geq 0$, with equality iff $a_+ =0$ or $a_- = 0$. Since a solution cannot hit any of the axes, this implies both are zero if $\dot{a}_0 = 0$, but since $\left(0,0,0\right)$ is a critical point, by uniqueness one cannot have this situation either, and hence the inequality is strict. This implies a solution with $a_0 > 0, a_+ \geq 0, a_- \geq 0$ for some non-zero time cannot leave this region at $a_0 = 0, a_+ \geq 0, a_- \geq 0$. 

One obtains the same result for $a_+$ and $a_-$ by repeating the proof with permuted subscripts $ 0, + , - $.     
\item We show that the boundary of the unit cube also bounds solutions lying inside it. By the symmetry of permuting $ 0, + , - $, and the previous result, it will be enough to show this for the top face of the cube: i.e. prove that a solution with $1>a_0 > 0, 1 \geq a_+ > 0, 1 \geq a_- > 0$ cannot leave this region via $a_0 = 1, 1 \geq a_+ > 0, 1 \geq a_- > 0$. We have, at $a_0 = 1,  1 \geq a_+ > 0, 1 \geq a_- > 0$, $\dot{a}_0 = f_0 \left ( a_+ a_- - 1 \right) \leq 0$ , with equality iff both $a_+ = a_- = 1$. However since $\left(1,1,1\right)$ is a critical point for \eqref{instB2}, this cannot be the case, hence the inequality is strict, and we cannot have a solution with $1>a_0 > 0, 1 \geq a_+ > 0, 1 \geq a_- > 0$ leaving this region at $a_0 = 1, 1 \geq a_+ > 0, 1 \geq a_- > 0$, arguing as before. 
\item The proof that the quadrant $\left(1,\infty\right)^3$ bounded by the planes $a_0=1$, $a_+=1$, and $a_- = 1$ goes almost exactly as for the previous part of the lemma: at $a_0 = 1,   a_+ \geq 1 , a_- \geq 1 $, $\dot{a}_0 = f_0 \left ( a_+ a_- - 1 \right) \geq 0$ with equality iff both $a_+ = a_- = 1$, hence the inequality is strict, and we cannot have a solution leaving this region via $a_0 = 1,   a_+ \geq 1 , a_- \geq 1 $.  
\end{enumerate} 
\end{proof}
Having established these results, we can immediately see from the local solutions $S_\xi$ in  \eqref{localsmoothing} for some sufficiently small non-zero time $\left(a_0, a_+, a_-\right)_\xi \in (0,1)^3$ for $0<\xi<1$, and $\left(a_0, a_+, a_-\right)_\xi \in (1,\infty )^3$ for $1<\xi$, so we have a rough bound on the behaviour of our solutions as $t\rightarrow \infty$. However, we can use the previous lemma to show an improved statement: 
\begin{lemma} The following sets in $\mathbb{R}^3$ are forward-invariant  for \eqref{instB2}:
\begin{enumerate}[\normalfont(i)]
\item $\mathcal{S}_\infty := \lbrace \left(a_0, a_+, a_-\right) \in \mathbb{R}^3 \mid a_+ a_- > a_0 >1, a_0 a_- > a_+ >1, a_0 a_+ > a_- > 1 \rbrace$
\item  $\mathcal{S}_0 := \lbrace \left(a_0, a_+, a_-\right) \in \mathbb{R}^3 \mid  0 < a_+ a_- < a_0 <1, 0 <  a_0 a_- < a_+ < 1, 0 < a_0 a_+ < a_- < 1 \rbrace$.
\end{enumerate} 
\end{lemma}
\begin{proof}
Given an ODE system $\dot{\mathbf{x}}=\mathbf{F}\left(\mathbf{x},t\right)$ in $\mathbb{R}^n$, if one has a hypersurface $h(\mathbf{x})=0$ such that $\nabla h \cdot \mathbf{F} \left(\mathbf{x},t\right) > 0$, where $\nabla$ is the gradient of $h$, and $``\cdot"$ denotes the standard dot product on $\mathbb{R}^n$, then for all time for which a smooth solution $\mathbf{x}(t)$ exists, it can only cross hypersurface $h(\mathbf{x})=0$ in the same direction as $\nabla h$. 
   
In the case of \eqref{instB2}, we use the hypersurfaces $\lbrace \left(a_0, a_+, a_-\right) \in \mathbb{R}^3 \mid a_0 = a_+ a_- \rbrace$, $\lbrace  \left(a_0, a_+, a_-\right) \in \mathbb{R}^3 \mid  a_+ = a_0 a_- \rbrace$, and $\lbrace  \left(a_0, a_+, a_-\right) \in \mathbb{R}^3 \mid  a_- = a_+ a_0 \rbrace$: 
\begin{enumerate}[(i)]
\item $\mathcal{S}_\infty$ is the region in $\left(1,\infty\right)^3 $ bounded by these three paraboloids, with triple intersection at $\left( 1, 1, 1 \right)$, and intersecting pairwise along three line segments in $\mathbb{R}^3$. We can exclude the intersections: note that $\lbrace \left(a_0, a_+, a_-\right) \in \left[ 1,\infty\right)^3 \mid a_+ = a_0 a_-,  a_- = a_0 a_+ \rbrace = \lbrace \left(a_0, a_+, a_-\right) \in \left[ 1,\infty\right)^3 \mid a_- = a_+, a_0 =1 \rbrace$ which lies in the boundary of $\left(1 , \infty \right)^3$ so using the previous lemma, and the symmetry of permuting $0,+,-$, it will be enough to prove that a solution contained in $\mathcal{S}_\infty$, at some initial time, cannot leave via $\lbrace \left(a_0, a_+, a_-\right) \in \left(0,\infty\right)^3 \mid a_0 = a_+ a_- \rbrace$. We calculate for $h= a_+ a_- - a_0$, with $a_0 > 1, a_+ > 1, a_- >1$: 
\begin{align*}
\left. \nabla h \cdot \left( \dot{a}_0, \dot{a}_+, \dot{a}_- \right) \right|_{h=0} &= \left( -1, a_-, a_+ \right) \cdot \left. \left( f_0 ( a_+ a_- - a_0 ) , f_+ ( a_0 a_- - a_+ ) , f_- ( a_0 a_+ - a_- ) \right)\right|_{a_0 = a_+ a_-} \\ 
&= a_+ a_- \left( f_+ ( {a_-}^2 -1 ) + f_- ( {a_+}^2 -1 ) \right) > 0 
\end{align*}
Repeating the proof with indices $0,+,-$ permuted gives the result for surfaces defined by $a_0 a_- - a_+ =0$ and $a_0 a_+ - a_- =0$ respectively.
\item $\mathcal{S}_0$ is also bounded by these three paraboloids, but in $\left(0,1\right)^3$, $\nabla h$ (as we have defined it) points outward. As for the intersections, we can again exclude them, as before for $\left(a_0, a_+, a_-\right) \in \left(0,\infty\right)^3$, but also for $\lbrace \left(a_0, a_+, a_-\right) \in \left[0,1\right]^3 \mid a_+ = a_0 a_-,  a_- = a_0 a_+ \rbrace = \lbrace \left(a_0, a_+, a_-\right) \in \left[0,1\right]^3 \mid a_+ = a_-, a_0 = 1 \rbrace \cup \lbrace \left(a_0, a_+, a_-\right) \in \left[0,1\right]^3 \mid a_+ = a_- = 0 \rbrace $, which lies in the boundary of the unit cube. Now the calculation is exactly the same as the previous part of the lemma, with $0<a_0 < 1, 0< a_+ < 1, 0 <a_- <1$ and $h= a_+ a_- - a_0$, giving $\left. \nabla h \cdot \left( \dot{a}_0, \dot{a}_+, \dot{a}_- \right) \right|_{h=0} < 0$.
\end{enumerate} 
\end{proof}
Note that solutions $ \left(a_0, a_+, a_-\right)$ to \eqref{instB2} lying inside $\mathcal{S}_0$, $\mathcal{S}_\infty$ have $a_0, a_+, a_-$ monotonic in $t$. We can then use this fact to determine their  asymptotic behaviour: 
\begin{lemma} A solution $ \left(a_0, a_+, a_-\right)$ to \eqref{instB2} lying inside $\mathcal{S}_0$ at some time $t^*>0$, exists for all forward time $t\geq t^*$, and is asymptotic as $t \rightarrow \infty$ to $\left( 0,0,0\right)$. A solution  $ \left(a_0, a_+, a_-\right)$ lying inside $\mathcal{S}_\infty$ at some time $t^*$ cannot be bounded for all $t\geq t^*$.    
\end{lemma}
\begin{proof}
We begin by looking at solutions lying in $\mathcal{S}_0$. Forward-time existence and boundedness of these solutions follows from the boundedness of $\mathcal{S}_0$, and since $a_0, a_+, a_-$ are all (strictly) monotonically decreasing in $\mathcal{S}_0$, the solution $ \left(a_0, a_+, a_-\right)$ must have a limit lying in the closure. To determine that limit, we reparameterize \eqref{instB2} in terms of the variable $s$, as in the explicit solutions given by \eqref{hypoBsol}: 
 \begin{gather} \label{instB3}
 \begin{aligned} 
\dot{a}_0 &= \frac{4\lambda^2}{\mu} ( a_+ a_- - a_0 )  \\
\dot{a}_+ &= \frac{3 (v_3 + v_0)}{2 \mu}(  a_0 a_- - a_+ ) \\ 
\dot{a}_- &= \frac{3 (v_3 - v_0)}{2 \mu} ( a_0 a_+ - a_-) 
 \end{aligned}
 \end{gather}
In particular, by using \eqref{hypoBsol}, one can check that $\lambda f_{0}\rightarrow C_{0}>0$ as $s\rightarrow \infty$ for some strictly positive constant $C_{0}$, and similarly $\lambda f_{\pm}\rightarrow C_{\pm}>0$. If a solution $ \left(a_0, a_+, a_-\right) $ to \eqref{instB3} lying in $\mathcal{S}_0$ does not have $a_+ a_- - a_0 \rightarrow 0$ as $s \rightarrow \infty$, then we get a contradiction: otherwise for $s$ sufficiently large we can bound $\dot{a}_0$ above, away from $0$. Said more explicitly, if we do not have $a_+ a_- - a_0 \rightarrow 0$, then we do not have $\dot{a}_0 \rightarrow 0$, so for some constant $C^*_0 <0$, there exists $s^* \gg 0$ such that $\dot{a}_0(s) < C^*_0$ for all $s\geq s^*$. Integrating this inequality would give the contradiction $a_0 \rightarrow - \infty $ as $s\rightarrow \infty$, thus we must have $a_+ a_- - a_0 \rightarrow 0$ as $s\rightarrow \infty$.   

One then repeats this argument for $a_\pm (s)$, to obtain that a solution in $\mathcal{S}_0$ must tend to a critical point of this system in the closure of $\mathcal{S}_0$ as $s\rightarrow \infty$: either $(0,0,0)$, or $(1,1,1)$ by Lemma \ref{criticalpoints}. Since $a_0, a_+, a_-$ are all strictly decreasing, we must have $\left(a_0, a_+, a_-\right) \rightarrow \left( 0,0,0\right)$. 
 
Now we deal with solutions $ \left(a_0, a_+, a_-\right)$ to \eqref{instB2} lying in $\mathcal{S}_\infty$. These have $a_0, a_+, a_-$ strictly increasing as long as the solution exists, so again, if a solution is bounded and exists for all time, it must have limit lying in the closure of $\mathcal{S}_\infty$.  Let us assume this is the case and derive a contradiction: since the right-hand side of \eqref{instB3} has a limit as $s \rightarrow \infty$, this implies that $\left( \dot{a}_0, \dot{a}_+, \dot{a}_- \right)$ must also have a limit. Since $\lambda f_{0}\rightarrow C_{0}>0$ we have, for a fixed constant $C_0^*>0$, some $S> 0$ such that for all $s>S$: 
 \begin{align*} 
\dot{a}_0 > C^*_0 ( a_+ a_- - a_0 ) 
\end{align*}
and likewise for $\dot{a}_\pm$. As such, a bounded solution existing for all time cannot have simultaneously $\dot{a}_0, \dot{a}_+, \dot{a}_-  \rightarrow 0$ as $s \rightarrow \infty$, since this would require $ \left(a_0, a_+, a_-\right) \rightarrow (1,1,1)$, which is impossible by the monotonicity of  $a_0, a_+, a_-$. Therefore, at least one of $\dot{a}_0, \dot{a}_+, \dot{a}_- $ must be bounded below away from $0$ for $s$ sufficiently large, and hence the corresponding $a_0, a_+, a_- $ must be unbounded above as $s\rightarrow \infty$.  
\end{proof}
We can now conclude the proof of Theorem \ref{stenzelthm}: the first point is clear by applying the symmetry outlined in \eqref{symmetrymetric} to the local power-series of $\left(a_0, a_+, a_-\right)_\xi$, i.e. \eqref{localsmoothing}, and noting that the fixed point $\xi =0$ is the explicit solution \eqref{explicit}. For the rest, by using \eqref{localsmoothing}, one finds the flat connection $\left(a_0, a_+, a_-\right)_1 = \left(1, 1, 1\right)$ is a critical point, and:
\begin{align*}
a_0 - a_- a_+ &= -\frac{9}{10} \left( \xi^2 -1 \right) \xi t^2 + O(t^4) \\
a_+ - a_0 a_- &= 1 - \xi^2  - \frac{45-63 \xi^2}{40} \left( \xi^2 -1 \right) t^2 + O(t^4) \\
a_- - a_+ a_0 &= -\frac{27}{20} \left( \xi^2 -1 \right) \xi t^2 + O(t^4) 
\end{align*}
In particular, for non-zero $t$ sufficiently small, and $0<\xi<1$, we have $\left(a_0, a_+, a_-\right)_\xi (t) \in \mathcal{S}_0$, while for $1<\xi$ we have $\left(a_0, a_+, a_-\right)_\xi (t) \in \mathcal{S}_\infty$. Using the symmetry \eqref{symmetrymetric} for $\xi<0$, Theorem \ref{stenzelthm} follows. 
\end{proof}
On $\mathcal{O}(-1) \oplus \mathcal{O}(-1)$ and $\mathcal{O}(-2,-2)$ there are multiple ways of extending the invariant bundle $P_1$ to the singular orbit. The local solutions on each extension exhibit a slightly different behaviour:
\begin{theorem} \label{candelasthm1} Invariant instantons with quadratic curvature decay on $P_{0,\mathrm{Id}} \rightarrow \mathcal{O}(-1) \oplus \mathcal{O}(-1)$ are in a one-parameter family $R_\epsilon$, $\epsilon \geq 0$, up to gauge. Moreover:
\begin{enumerate}[\normalfont(i)]
\item  $R_{0} = A_2^\flat$, and $R_\epsilon$ are irreducible for $\epsilon > 0$. 
\item  $\lim_{t \to \infty} R_\epsilon (t) = A^\mathrm{can}$ for $\epsilon > 0$.
\end{enumerate}
\end{theorem}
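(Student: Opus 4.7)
The plan is to analyse the reduced ODE system for $R_\epsilon$, exploiting the forced vanishing of $a_1$ together with a monotonicity coming from the singular-orbit boundary conditions. First, Proposition \ref{localresol1} with $\delta = 0$ shows that the local instanton family has $a_1 = O(t^4)$ at the singular orbit; since $a_1$ satisfies the linear homogeneous equation $\dot{a}_1 = \tfrac{3}{2\lambda}(a_0-1) a_1$ obtained from \eqref{dynamicODE} by setting $\phi = v_0 = 0$ (recall $v_3 = \mu$ for a type $\mathcal{I}$ hypo-structure), uniqueness forces $a_1 \equiv 0$ along $R_\epsilon$. Fixing $u_0 = -1$ from Lemma \ref{CYstructurelemma}, I am left with the planar non-autonomous system
\begin{align*}
\dot{a}_0 &= \frac{4\lambda}{\mu^2}\bigl(1 - a_2^2 - (a_0 + a_2^2) u_1\bigr), & \dot{a}_2 &= -\frac{3 (a_0+1)}{2\lambda}\, a_2,
\end{align*}
with local expansion $a_0 = -1 + \epsilon t^2 + O(t^4)$, $a_2 = 1 - \tfrac{\epsilon}{2} t^2 + O(t^4)$. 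The case $\epsilon = 0$ gives the constant critical point $(-1, 1)$, which is the flat connection $A_2^\flat$.

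Next I would establish forward invariance of the region $\mathcal{R} := \{(a_0, a_2) : -1 < a_0 < M,\ 0 < a_2 < 1\}$ for some large constant $M$. The boundary checks are immediate: at $a_0 = -1$ one has $\dot{a}_0 = \tfrac{4\lambda(1+u_1)}{\mu^2}(1-a_2^2) \geq 0$; at $a_2 = 1$, $\dot{a}_2 = -\tfrac{3(a_0+1)}{2\lambda} \leq 0$; the line $a_2 = 0$ is invariant, so by local uniqueness $a_2 > 0$ for all $t>0$; and for $a_0$ sufficiently large the $-\tfrac{4\lambda u_1}{\mu^2} a_0$ term of $\dot{a}_0$ dominates and pushes the solution back. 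The local expansion shows $R_\epsilon$ enters $\mathcal{R}$ for small $t > 0$ when $\epsilon > 0$, so by forward invariance $R_\epsilon$ remains in $\mathcal{R}$ and extends to all $t \in (0, \infty)$. Quadratic curvature decay then follows from Lemma \ref{quadraticdecay}, and irreducibility of $R_\epsilon$ for $\epsilon > 0$ is a consequence of the curvature formula in Proposition \ref{principalconn}: since $(a_0+1)a_2 > 0$ for small $t > 0$, the $E_2, E_3$ components of $F_A$ do not vanish.

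The key observation for convergence is that $a_2$ is strictly monotone decreasing inside $\mathcal{R}$, so it has a limit $a_2^\infty \in [0, 1)$. I would rule out $a_2^\infty = c > 0$ by contradiction. Using the AC asymptotics $\lambda \sim t$, $\mu \sim t^2$, $u_1 \sim t^2$, the leading behaviour of $\dot{a}_0$ is $-\tfrac{4}{t}(a_0 + a_2^2)$; if $a_0 + a_2^2$ were bounded away from $0$ in either sign for large $t$, then $a_0$ would be logarithmically unbounded, contradicting $R_\epsilon \subset \mathcal{R}$. Hence $a_0 + a_2^2 \to 0$, so $a_0 \to -c^2$ and $a_0 + 1 \to 1 - c^2 > 0$; but then $\int^\infty \tfrac{a_0+1}{\lambda}\, dt = \infty$, and the second equation forces $a_2 \to 0$, contradicting $c > 0$. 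Thus $a_2^\infty = 0$, and the same asymptotic balance then forces $a_0 \to 0$, proving $R_\epsilon \to A^{\mathrm{can}}$. Uniqueness of the family is inherited from Proposition \ref{localresol1}.

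The hard part will be turning the asymptotic estimates of the third paragraph into a fully rigorous argument: the non-autonomous coefficients must be controlled uniformly and remainder terms handled carefully. A clean implementation is probably to reparametrise time by the cone radial variable of the asymptotic Sasaki-Einstein cone, so that the rescaled system converges to the autonomous cone equations \eqref{instCone}; a LaSalle-type invariance principle then identifies the $\omega$-limit with a critical point, which inside $\mathcal{R}$ can only be $A^{\mathrm{can}}$.
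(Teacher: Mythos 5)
Your reduction is essentially the paper's: the instanton equations on $P_{0,\mathrm{Id}}$ collapse to the planar system \eqref{instA2} with $a_1\equiv 0$, one exhibits a forward-invariant rectangle containing the trajectory for $\epsilon>0$, and convergence to $A^{\mathrm{can}}$ follows from monotonicity of $a_2$ together with the asymptotic balance $a_0\to-\hat a_2^2$ and the resulting decay of $a_2$. The rigorous version of your third paragraph is exactly what the paper does, except that no reparametrisation or LaSalle principle is needed: one integrates $\tfrac{d}{dt}(a_0\mu^2)=-4\lambda\left(a_2^2(u_1-u_0)+u_0\right)$ directly to get $a_0\to-\hat a_2^2$, and then integrates the (exactly linear) $a_2$-equation to get $a_2\sim Ct^{\frac{3}{2}(\hat a_2^2-1)}\to 0$ since $\hat a_2<1$. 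Your invariant-region boundary checks, the identification $R_0=A_2^\flat$, the vanishing of $a_1$, and the irreducibility argument are all sound.

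There is, however, one genuine gap: you never dispose of $\epsilon<0$. Proposition \ref{localresol1} produces a local instanton $R_\epsilon$ for \emph{every} $\epsilon\in\mathbb{R}$, so the classification claim that invariant instantons with quadratic curvature decay form the family $R_\epsilon$, $\epsilon\geq 0$, requires showing that for $\epsilon<0$ the local solution cannot be completed with quadratically decaying curvature; by Lemma \ref{quadraticdecay} this amounts to showing $(a_0,a_2)_\epsilon$ is unbounded. Your region $\mathcal{R}$ says nothing about these trajectories, which for small $t>0$ satisfy $a_0<-1$, $a_2>1$. The paper handles this with a second forward-invariant region $\mathcal{R}_\infty=\lbrace a_0<-1,\ a_2>1\rbrace$: there $a_2$ is strictly increasing, and a bounded solution would have to converge to a point of the curve $a_0=-a_2^2$ with $a_2$ bounded away from $1$, so that eventually $a_0<C<-1$ and the inequality $\dot a_2>-\tfrac{3}{2\lambda}(C+1)a_2$ forces polynomial growth of $a_2$, a contradiction. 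Without this (or an equivalent) argument you have proved existence and asymptotics of the family for $\epsilon\geq 0$, but not that it exhausts the moduli space, which is part of the statement.
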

\begin{theorem} \label{candelasthm2} Invariant instantons with quadratic curvature decay on $P_{1, \mathbf{0}} \rightarrow \mathcal{O}(-1) \oplus \mathcal{O}(-1)$ are in a one-parameter family $R'_{\epsilon'}$, $0\leq \epsilon' \leq 1$, up to gauge. Moreover:
\begin{enumerate}[\normalfont(i)]
\item  $R'_{0}$ is abelian, $R'_{1}=A_1^\flat$, and $R'_{\epsilon'}$ are irreducible for $0 < \epsilon' < 1$. 
\item  $\lim_{t \to \infty} R'_{\epsilon'}(t) = A^{\mathrm{can}}$ for $0 \leq \epsilon' < 1$.
\end{enumerate}
\end{theorem}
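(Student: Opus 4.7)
The strategy is to adapt the forward-invariant region argument from the proof of Theorem \ref{stenzelthm} to the essentially planar system that arises here. By the remark after Proposition \ref{localresol6} the instanton family $R'_{\epsilon'}$ on $P_{1,\mathbf{0}}$ has $a_2 \equiv 0$, so I would first apply the symmetry \eqref{symmetrymetric} together with $u_0 \mapsto -u_0$ to work on the pulled-back bundle $P_{\mathbf{0},1}$, where instead $a_1 \equiv 0$ and the local expansion \eqref{localresol2} gives boundary data $a_0(0) = -1$, $a_2(0) = \epsilon'$. Setting $X = 1 + a_0$ and $Y = a_2$, with $u_0 = 1$ and $\mu^2 = (u_1-1)(u_1+1)$, the instanton ODEs \eqref{dynamicODE} reduce to
\[
\dot{X} = \frac{4\lambda}{u_1+1}\bigl(1-Y^2\bigr) - \frac{4\lambda u_1}{\mu^2}\, X, \qquad \dot{Y} = -\frac{3X}{2\lambda}\, Y,
\]
with initial data $X(0) = 0$, $Y(0) = \epsilon'$. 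Note $u_1 > 1$ for $t > 0$ since $U_1 = 1$ and $\dot{u}_1 = 2\lambda > 0$ by \eqref{hypoAevolution}.

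Next, I would introduce the two candidate forward-invariant regions
\[
\mathcal{A} := \{0 < X < 1,\ 0 < Y < 1\}, \qquad \mathcal{B} := \{X < 0,\ Y > 1\},
\]
and verify invariance by checking the flow on each boundary face. The only delicate check is $\dot{X}$ at $X = 1$: writing $\dot{X}\big|_{X=1} = \tfrac{4\lambda}{u_1+1}\bigl[(1-Y^2) - \tfrac{u_1}{u_1-1}\bigr]$, the bracket is negative since $\tfrac{u_1}{u_1-1} > 1 \geq 1-Y^2$ for $t > 0$; the remaining boundary computations ($\dot{X}>0$ on $\{X=0\}$ and $\dot{Y}<0$ on $\{Y=1\}$) are immediate. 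From \eqref{localresol2}, $X(t) = \tfrac{3}{4}(1-\epsilon'^2)t^2 + O(t^4)$, so the solution enters $\mathcal{A}$ for $0 < \epsilon' < 1$ and enters $\mathcal{B}$ for $\epsilon' > 1$. Confinement to $\mathcal{A}$ gives global existence and boundedness, hence quadratic curvature decay via Lemma \ref{quadraticdecay}.

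The asymptotic identification in $\mathcal{A}$ relies on the fact that $\dot Y < 0$ there, so $Y \to Y_\infty \in [0, \epsilon') \subset [0, 1)$. Rescaling by $s = \log t$, the system is asymptotically autonomous with limit $\partial_s X = 4(1 - Y^2 - X)$, $\partial_s Y = -\tfrac{3}{2}XY$, whose only equilibria in $\overline{\mathcal{A}}$ are the stable node $(1,0)$ (eigenvalues $-4,-\tfrac{3}{2}$) and the saddle $(0,1)$. By the Markus--Thieme theorem for asymptotically autonomous systems, the $\omega$-limit set is compact, connected, and invariant under the autonomous limit; sitting in $\{Y = Y_\infty\}$, invariance forces $Y_\infty = 0$ and then the limit is $(1,0)$, which after un-swapping is exactly $A^{\mathrm{can}}$. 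For $\epsilon' > 1$, a similar $\omega$-limit analysis in $\mathcal{B}$ shows that if $Y$ were bounded then $(X, Y) \to (0, Y_\infty)$ with $Y_\infty > 1$, forcing the contradiction $\partial_s X \to 4(1 - Y_\infty^2) < 0$ while $X \to 0$; hence $Y$ is unbounded and quadratic curvature decay fails by Lemma \ref{quadraticdecay}. The boundary cases are explicit: $\epsilon' = 0$ is the abelian solution $a_0 = -2/(u_1+1)$ of \S\ref{abeliansection}, asymptotic to $A^{\mathrm{can}}$, and $\epsilon' = 1$ is the constant solution $(a_0, a_2) = (-1, 1)$, which un-swaps to $A_1^\flat$; irreducibility for $0 < \epsilon' < 1$ is then clear since $a_1 \not\equiv 0$ and $a_0$ is not identically $\pm 1$. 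I expect the main obstacle to be the $\omega$-limit step, which in principle must exclude accumulation at the saddle $(0,1)$; the saving grace is strict monotonicity of $Y$ on $\mathcal{A}$, which forces $Y_\infty < \epsilon' < 1$ and rules out the saddle automatically.
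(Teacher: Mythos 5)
Your argument is correct, and its skeleton is the same as the paper's proof of Theorems \ref{candelasthm1}--\ref{pandozayasthm1}: pull back by the factor-swapping involution \eqref{symmetrymetric} so that $a_1\equiv 0$ and the local data becomes \eqref{localresol2}, reduce to the planar system \eqref{instA2}, trap the trajectory in a forward-invariant region selected by the sign of the first nontrivial term of the power series, and then identify the limit. Your box $\mathcal{A}$ is a slightly smaller forward-invariant set than the paper's $\mathcal{R}_0=\lbrace -1<a_0<1,\ 0<a_2<1\rbrace$ (your check at $X=1$, i.e. $a_0=0$, plays the role of the paper's check at $a_0=1$), and $\mathcal{B}$ is exactly the paper's $\mathcal{R}_\infty$; do note explicitly that the face $\lbrace Y=0\rbrace$ is excluded by uniqueness against the abelian solution, as in Lemma \ref{absorbinglemmaA}(i). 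The genuinely different step is the asymptotic identification. The paper integrates \eqref{differentialformula} to obtain the integral formula \eqref{integralformula}, deduces $a_0\to -\hat a_2^2$ from the AC asymptotics of the hypo-structure, and then integrates the $a_2$-equation directly to get $a_2\sim Ct^{\frac{3}{2}(\hat a_2^2-1)}\to 0$; you instead pass to $s=\log t$, observe the system is asymptotically autonomous with limit $\partial_s X=4(1-Y^2-X)$, $\partial_s Y=-\tfrac{3}{2}XY$, and invoke Markus--Thieme, using monotonicity of $Y$ to exclude the saddle $(0,1)$. Both are valid here: your route is shorter and more conceptual (it packages the "limit lies on $a_0=-a_2^2$" computation into invariance of the $\omega$-limit set), at the cost of importing an external theorem whose hypotheses (locally uniform convergence of the rescaled coefficients) you should verify from the AC expansion of $\lambda,u_1,\mu$; the paper's route is self-contained and additionally produces the explicit polynomial decay rate of $a_2$, which is what underlies the sharper claim elsewhere in the paper that these instantons have exactly quadratic curvature decay. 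In the unbounded case your $\omega$-limit contradiction implicitly assumes $X$ stays bounded, but that is harmless since unboundedness of $a_0$ already violates Lemma \ref{quadraticdecay}.
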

For insantons over $\mathcal{O}(-2,-2)$, we also split the statement of the theorem into two cases. The first case is similar to the situation of Theorem \ref{candelasthm2}:
\begin{theorem} \label{pandozayasthm1} Invariant instantons with quadratic curvature decay on $P_{1-l,l} \rightarrow \mathcal{O}(-2,-2)$ with $l= 0, 1$, are in a one-parameter family $Q^l_{\alpha_l}$,$0 \leq \alpha_l  \leq 1$, up to gauge. Moreover:
\begin{enumerate}[\normalfont(i)]
\item  $Q^l_{0}$ is abelian, $Q^0_{1}=A_1^\flat$, $Q^1_{1}=A_2^\flat$, and $Q^l_{\alpha_l}$ are irreducible for $0<\alpha_l<1$.  
\item  $\lim_{t \to \infty} Q^l_{\alpha_l}(t) = A^{\mathrm{can}}$ for $0 \leq \alpha_l  < 1$. 
\end{enumerate}
\end{theorem}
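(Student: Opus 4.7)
The plan is to mirror the proof of Theorem \ref{stenzelthm}: reduce the instanton ODE to a two-dimensional autonomous-like system, identify forward-invariant regions via boundary-sign computations, and extract global existence and the asymptotic limit using monotonicity.

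First, combining the symmetry \eqref{symmetrymetric} with the reflection $U_0 \mapsto -U_0$, which identifies the local families on $P_{1-l,l}$ and $P_{l,1-l}$ as explained after Proposition \ref{localresol4}, I reduce the analysis to the case $l = 1$. The expansion in Proposition \ref{localresol4} then forces $a_1 \equiv 0$ along any invariant instanton extending over the singular orbit, since the leading $t^l$ term of $a_1$ vanishes once $\beta_1 = 0$. The system \eqref{dynamicODE} reduces to the planar system
\begin{align*}
\dot{a}_0 &= \frac{4\lambda}{\mu^2}\bigl((a_2^2-1)u_0 - (a_0 + a_2^2)u_1\bigr), \\
\dot{a}_2 &= -\frac{3}{2\lambda}(a_0+1)a_2,
\end{align*}
with initial data $(a_0, a_2)(0) = (-1, \alpha_1)$; by Lemma \ref{quadraticdecay} and the cone system \eqref{instCone}, the only possible asymptotic states are $A^{\mathrm{can}} = (0,0)$ and $A_2^\flat = (-1, 1)$.

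The endpoints are immediate: for $\alpha_1 = 0$ the solution collapses to the abelian branch of \S\ref{abeliansection}, and for $\alpha_1 = 1$ the constant pair $(a_0, a_2) \equiv (-1, 1)$ solves the planar ODE and, by uniqueness at the singular initial value problem, agrees with $Q^1_1$ and extends globally as $A_2^\flat$. For $\alpha_1 \in (0,1)$ the second-order terms in \eqref{localresol3}, together with $U_1 + U_0 > 0$, show that the local trajectory enters the rectangle $R = \{-1 < a_0 < 0,\ 0 < a_2 < 1\}$ for small $t > 0$; for $\alpha_1 > 1$, the same expansion with opposite signs places the trajectory in $\widetilde R = \{a_0 < -1,\ a_2 > 1\}$. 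Inside $R$, the strict monotonicity $\dot{a}_2 < 0$ and boundedness give global forward existence; reparameterising $\tau = \log t$ to make the asymptotic system \eqref{instCone} autonomous and noting that $(0,0)$ is the unique attracting equilibrium compatible with $R$ yields $\lim_{t\to\infty} Q^1_{\alpha_1} = A^{\mathrm{can}}$. Inside $\widetilde R$, the same reparameterisation combined with a lower bound on the right-hand side (analogous to the $\mathcal{S}_\infty$ argument in Theorem \ref{stenzelthm}) forces $a_2 \to \infty$, so quadratic curvature decay fails by Lemma \ref{quadraticdecay}.

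The main obstacle is verifying that $R$ is forward-invariant along the boundary $\{a_0 = 0,\ 0 < a_2 < 1\}$ when $U_0 < 0$: on this segment $\dot{a}_0 = \frac{4\lambda}{\mu^2}\bigl(a_2^2(u_0 - u_1) - u_0\bigr)$, and while $u_0 - u_1 < 0$ always, the term $-u_0 > 0$ dominates when $a_2$ is small, so the naive rectangle is not invariant near the corner $(0,0)$. I would resolve this by replacing the top edge of $R$ with a bent curve $\{a_0 + \kappa a_2^2 = 0\}$ for a suitable $\kappa > 0$, chosen using the uniform bound $|u_0|/u_1 \leq |U_0|/U_1 < 1$ so that the time-dependent vector field points strictly inward along the bent boundary for all $t$; the trajectory then stays in this slightly smaller region and the convergence analysis proceeds as above.
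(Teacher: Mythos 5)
Your overall strategy --- reduce to the planar system \eqref{instA2}, trap the trajectory for $0<\alpha_1<1$ in a bounded forward-invariant region, and use monotonicity of $a_2$ together with the asymptotics of the metric coefficients to force convergence to $A^{\mathrm{can}}$ --- is exactly the paper's, and your treatment of the endpoints and of $\alpha_1>1$ via the region $\{a_0<-1,\ a_2>1\}$ matches Lemma \ref{absorbinglemmaA}. The gap is in the region you choose for $0<\alpha_1<1$. You correctly diagnose that the rectangle $\{-1<a_0<0,\ 0<a_2<1\}$ fails to be invariant along $a_0=0$, but the proposed repair by the bent boundary $\{a_0+\kappa a_2^2=0\}$ cannot work uniformly over the family of metrics: on $h:=a_0+\kappa a_2^2=0$ one computes
$\dot h = -\tfrac{4\lambda}{\mu^2}\bigl(a_2^2((1-\kappa)u_1-u_0)+u_0\bigr)-\tfrac{3\kappa}{\lambda}a_2^2(1-\kappa a_2^2)$,
and when $u_0<0$ the first term tends to $-\tfrac{4\lambda}{\mu^2}u_0>0$ as $a_2\to 0$ while the second tends to $0$, so the field points out of $\{h<0\}$ near the corner for every $\kappa>0$. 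This is not a technicality: once $a_2\to 0$ the integral formula \eqref{integralformula} gives the refined asymptotic $a_0(t)\sim -2u_0\,t^{-2}$, which is positive for $u_0<0$, so the trajectory genuinely enters $\{a_0>0\}$ and no forward-invariant region contained in $\{a_0\le 0\}$ can hold it. Both signs of $U_0$ do occur here, since the metrics on $\mathcal{O}(-2,-2)$ form the cone $U_1>|U_0|\ge 0$ and your own reduction of $l=0$ to $l=1$ flips the sign of $U_0$.

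The repair is to enlarge the region rather than bend it: the paper uses $\mathcal{R}_0=\{-1<a_0<1,\ 0<a_2<1\}$, whose right-hand wall $a_0=1$ gives $\dot a_0=-\tfrac{4\lambda}{\mu^2}\bigl(a_2^2(u_1-u_0)+u_1+u_0\bigr)<0$ unambiguously, because $u_1\pm u_0>0$ for all $t>0$. Nothing in the convergence argument needs $a_0<0$: monotone decrease of $a_2$ in $\mathcal{R}_0$ gives a limit $\hat a_2\in[0,1)$, the integral formula forces $a_0\to-\hat a_2^2$, and integrating $\dot a_2=-\tfrac{3}{2\lambda}(a_0+1)a_2$ then yields $a_2\sim Ct^{\frac{3}{2}(\hat a_2^2-1)}\to 0$, hence $(a_0,a_2)\to(0,0)=A^{\mathrm{can}}$. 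I would also replace your appeal to ``$(0,0)$ is the unique attracting equilibrium'' of the $\tau=\log t$ system by this explicit argument: the system is only asymptotically autonomous, and confinement to a compact region does not by itself give convergence to an equilibrium.
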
 
The second case exhibits a new phenomenon: now, the instantons appearing at the boundary of the moduli-space of instantons with asymptotics $A^\mathrm{can}$ are not themselves flat, but are asymptotic to the flat connection:  
\begin{theorem} \label{pandozayasthm2} Invariant instantons with quadratic curvature decay on $P_{1-l,l}\rightarrow \mathcal{O}(-2,-2)$ with $l \neq 0, 1$, are in a one-parameter family $Q^l_{\alpha_l}$, $0 \leq \alpha_l  \leq \alpha^\mathrm{crit}_l$ for some $\alpha^\mathrm{crit}_l>0$, up to gauge. Moreover:
\begin{enumerate}[\normalfont(i)]
\item  $Q^l_{0}$ are abelian, and $Q^l_{\alpha_l}$ are irreducible for $0 < \alpha_l  \leq \alpha^\mathrm{crit}_l$.  
\item  $\lim_{t \to \infty} Q^l_{\alpha_l}(t) = A^{\mathrm{can}}$ for $0 \leq \alpha_l  < \alpha^\mathrm{crit}_l$, $\lim_{t \to \infty} Q^l_{\alpha^\mathrm{crit}_l}(t) = A_1^\flat$ for $l<0$, and $\lim_{t \to \infty} Q^l_{\alpha^\mathrm{crit}_l}(t) = A_2^\flat$ for $l>1$. 
\end{enumerate}
\end{theorem}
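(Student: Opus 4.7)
The plan is a shooting argument in the parameter $\alpha_l$. First reduce to a two-dimensional ODE system: using the symmetry \eqref{symmetrymetric} combined with $U_0 \mapsto -U_0$, it suffices to assume $l>1$, so $a_0(0)=1-2l<-1$. For any instanton ($\phi\equiv 0$) the equation $\dot{\phi}=-6a_1a_2/\mu$ forces $a_1a_2\equiv 0$, and the boundary conditions of Proposition \ref{localresol4} (with $\beta_l=0$) combined with uniqueness give $a_1\equiv 0$. The remaining planar system for $(a_0,a_2)$ from \eqref{dynamicODE} is
\begin{align*}
\dot{a}_0 &= \frac{4\lambda}{\mu^2}\bigl((a_2^2-1)u_0-(a_0+a_2^2)u_1\bigr), & \dot{a}_2 &= -\frac{3}{2\lambda}(a_0+1)a_2,
\end{align*}
with initial data from $Q^l_{\alpha_l}$ parameterised by $\alpha_l\geq 0$.

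Define $\mathcal{A}_c\subset[0,\infty)$ as the set of $\alpha_l$ for which $Q^l_{\alpha_l}$ exists globally with $(a_0,a_2)\to(0,0)$ (i.e.\ converges to $A^{\mathrm{can}}$), and $\mathcal{A}_\infty$ as the set for which $|a_2|$ becomes unbounded in finite time or at infinity. Both sets are open by continuous dependence of ODEs on initial conditions. The set $\mathcal{A}_c$ is non-empty: for $\alpha_l=0$, the explicit abelian solution of \S\ref{abeliansection} satisfies $a_0=(2u_0 u_1(0)-2u_0u_1)/\mu^2\to 0$, and a trapping-region argument near $(0,0)$ for the cone-asymptotic system \eqref{instCone} shows an interval $[0,\epsilon)\subset\mathcal{A}_c$. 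The main obstacle is showing $\mathcal{A}_\infty\neq\emptyset$; here I would employ the rescaling argument along the fibres of $\mathcal{O}(-2,-2)$ mentioned in the introduction. Specifically, set $\tau=\alpha_l^{1/(l-1)}t$ and $\tilde{a}_2=\alpha_l^{-1}a_2$, so the initial data for $(a_0,\tilde{a}_2)$ become order one; as $\alpha_l\to\infty$, on bounded $\tau$-intervals the rescaled equations converge to the ASD equations on the Euclidean normal bundle $\mathbb{C}^2$ of $\CP^1\times\CP^1$, whose invariant solutions are known explicitly via ADHM/Nakajima and cannot remain bounded in $\tau$ with the required initial profile. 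Undoing the rescaling shows $|a_2|$ escapes every fixed bound on the original domain for large $\alpha_l$.

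Set $\alpha_l^{\mathrm{crit}}:=\sup\mathcal{A}_c\in(0,\infty)$; it belongs to neither open set. The solution $Q^l_{\alpha_l^{\mathrm{crit}}}$ thus exists globally and remains bounded, but does not converge to $A^{\mathrm{can}}$. A LaSalle-type estimate for the cone-asymptotic system (which is gradient-like in $s=\log t$) forces any bounded global solution to accumulate on a $t$-independent critical point of \eqref{instCone}. With $a_1\equiv 0$ and $a_2>0$ preserved by the sign of $\dot{a}_2$, the only options are $(0,0)=A^{\mathrm{can}}$ and $(-1,1)=A_2^\flat$, so $Q^l_{\alpha_l^{\mathrm{crit}}}\to A_2^\flat$ for $l>1$; the case $l<0$ gives $A_1^\flat$ after undoing the symmetry. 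Uniqueness of $\alpha_l^{\mathrm{crit}}$ and the description $\mathcal{A}_c=[0,\alpha_l^{\mathrm{crit}})$ follow from a comparison principle: writing the linearisation of $a_2$ in $\alpha_l$ and applying Gronwall on the common domain of existence shows $\alpha_l\mapsto a_2(t;\alpha_l)$ is strictly increasing for each $t>0$, so $\mathcal{A}_c$ is downward-closed and $\mathcal{A}_\infty$ is upward-closed in $[0,\infty)$. The hardest step remains the rescaling argument: matching the invariant cohomogeneity-one ODE near the singular orbit with the limiting ASD problem on $\mathbb{C}^2$, and extracting the blow-up from the topological bound on the limiting instanton number.
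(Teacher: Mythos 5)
Your overall strategy is the same as the paper's: reduce to the planar system in $(a_0,a_2)$, run a connectedness/shooting argument in $\alpha_l$, use a fibre rescaling to produce blow-up for large $\alpha_l$, and use comparison to organise the parameter space. However, there are two genuine gaps.

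First, the rescaling step. The fibre of $\mathcal{O}(-2,-2)\to\CP^1\times\CP^1$ is $\mathbb{C}$, not $\mathbb{C}^2$, so the limit you describe — ASD connections on the Euclidean normal bundle $\mathbb{C}^2$, controlled by ADHM/Nakajima and a ``topological bound on the instanton number'' — is the wrong model (the four-manifold that appears in the adiabatic limit of $\mathcal{O}(-2,-2)$ is Eguchi--Hanson, and its invariant ASD moduli space \emph{does} contain globally bounded solutions, so no topological obstruction of the kind you invoke is available; you would have to identify precisely which limiting solution your initial profile selects). The argument that actually closes this step (Proposition \ref{propscaling}) is far more elementary: after the time rescaling $\delta^{1-l}=\alpha_l$ the system \eqref{instAscale1} degenerates at $\delta=0$ to an explicitly solvable problem with solution $a^0_0\equiv 1-2l$, $a^0_2=\kappa_l t^{l-1}$, which is polynomially unbounded for $l>1$; continuity in $\delta$ then puts the true solution into $\mathcal{R}_\infty$ at a finite rescaled time. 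Your extra amplitude rescaling $\tilde a_2=\alpha_l^{-1}a_2$ is also off: the time rescaling alone already normalises the leading coefficient of $a_2$ to one, and dividing by $\alpha_l$ again sends the profile to zero. Relatedly, your set $\mathcal{A}_\infty$ as defined (unboundedness, possibly only at $t=\infty$) is not obviously open; one should instead use the manifestly open condition of entering the forward-invariant region $\mathcal{R}_\infty$ in finite time.

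Second, uniqueness of $\alpha_l^{\mathrm{crit}}$. Setting $\alpha_l^{\mathrm{crit}}=\sup\mathcal{A}_c$ only shows the complement of $\mathcal{A}_c\cup\mathcal{A}_\infty$ is a closed interval; the theorem requires it to be a single point, otherwise there would be a continuum of instantons asymptotic to $A_2^\flat$. Pointwise monotonicity of $\alpha_l\mapsto a_2(t;\alpha_l)$ does not rule out two distinct parameters both giving solutions that remain in $\mathcal{R}_1$ and converge to $(-1,1)$, since the difference $a_2^{\alpha}-a_2^{\alpha'}$ could a priori decay to zero. What is needed is the strengthened (``improved'') comparison: while both solutions lie in $\mathcal{R}_1$ the difference $a_2^{\alpha}-a_2^{\alpha'}$ is strictly \emph{increasing}, hence bounded below away from zero, contradicting a common limit. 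Note also that a one-variable Gronwall estimate on $a_2$ alone cannot deliver even the basic comparison, because $\dot a_2$ is coupled to $a_0$; the comparison must be run simultaneously on $(a_0,a_2)$ with opposite orderings in the two components, as in Lemma \ref{forwardcomparison}.
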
 
\begin{proof}[Proof of Theorems \ref{candelasthm1}, \ref{candelasthm2}, \ref{pandozayasthm1}] Most of what is required to prove these theorems boils down to studying the qualitative behaviour of a single ODE system. We study solutions to \eqref{dynamicODE} with $\phi=0$, $a_1=0$: 
 \begin{align} \label{instA2} 
\dot{a}_0 =  - \frac{4 \lambda}{\mu^2} \left( a_2^2 ( u_1 - u_0) +  a_0 u_1 + u_0 \right)& &\dot{a}_2 = - \frac{3}{2\lambda} a_2 \left(a_0+1 \right) 
 \end{align}
where we have a generic family of Calabi-Yau structures defined by hypo-structures of type $\mathcal{I}$, so that $u_0, u_1, \mu, \lambda$ are non-degenerate solutions to hypo-evolution equations \eqref{hypoAevolution}. We consider forward-invariant sets for this system, see also Fig. \ref{instA2figure} below:
 \begin{lemma} \label{absorbinglemmaA} The following sets in $\mathbb{R}^2$ are forward-invariant under \eqref{instA2}:
 \begin{enumerate}[\normalfont(i)]
 \item Half-planes $\lbrace \left( a_0, a_2 \right) \in \mathbb{R}^2 \mid \pm a_2 > 0 \rbrace$
 \item $\mathcal{R}_\infty := \lbrace \left( a_0, a_2 \right) \in \mathbb{R}^2 \mid  a_0 <-1, 1 < a_2  \rbrace$
 \item $\mathcal{R}_0 := \lbrace \left( a_0, a_2 \right) \in \mathbb{R}^2 \mid  -1< a_0 <1, 0 < a_2 < 1  \rbrace$
 \end{enumerate}
\end{lemma}
 \begin{proof}
 \begin{enumerate}[(i)] 
 \item Since there is always a non-trivial abelian solution $\left(a_0, 0 \right)$ to \eqref{instA2}, by uniqueness a solution hitting $a_2 = 0$ at some time $t^*>0$ must be there for all time $t>0$. Furthermore, since the symmetry \eqref{symmetrygauge} exchanges the upper/lower-half planes, we can reduce to the case of $a_2>0$ in what follows. 
 \item In the following, we will split the upper-half plane into four quadrants centred around the critical point $\left(-1,1\right)$, and look at the sign of $\dot{a}_0$ along $a_0 = -1$ and $\dot{a}_2$ along $a_2 = 1$.  
 
Since $\lambda > 0$ for all $t>0$, and $a_2>0$ by assumption, the sign of $\dot{a}_2$ is the same as that of $-(a_0+1)$, and the sign of $\dot{a}_0$ is the same as that of $-\left( ( a_2^2 -1 )(u_1 - u_0) + (a_0 + 1) u_1 \right) $. Then $\dot{a}_2 > 0$ for all $a_0 < -1$. Since $\lambda > 0$, solutions to the hypo equations \eqref{hypoAevolution} must have $u_1 \pm u_0$ strictly increasing. In addition we must have $\mu = \sqrt{u_1^2 - u_0^2}>0$ for all time $t>0$, so $u_1 \pm u_0$ must be strictly positive for all time $t>0$, and hence also $u_1$. Thus at $a_0 = -1$, we have that $\dot{a}_0 <0$ iff $a_2 > 1$. Thus a solution in $\mathcal{R}_\infty$ at some initial time $t^*>0$, cannot leave via either of its boundaries $a_0 =  -1$ or $a_2=1$, and since the intersection $\left(-1,1\right)$ is a critical point, the solution must remain in $\mathcal{R}_\infty$ for all time $t>t^*$.  
 \item As shown in the first part of the lemma, no solution can hit $a_2=0$, the bottom of $\mathcal{R}_0$, unless it is contained in $a_2 = 0$ for all time. From the proof of the second part of the lemma, we see that a solution in $\mathcal{R}_0$ cannot exit $\mathcal{R}_0$ via. the top $a_2=1, a_0>-1$ , or the side $a_0 = -1, a_2 \leq 1$. All that remains to show is that the side $a_0=1$, $1>a_2 > 0$ is bounding. This follows from the fact that $u_1 \pm u_0$ must be strictly positive, since at $a_0 = 1$, $\dot{a}_0 = - \tfrac{4 \lambda}{\mu^2} (( a_2^2 ( u_1 - u_0) + u_1 + u_0) < 0$. 
 \end{enumerate}
 \end{proof}
These sets determine the behaviour of solutions lying inside them: 
 \begin{lemma} A solution $ \left(a_0, a_2 \right)$ to \eqref{instA2} lying inside $\mathcal{R}_0$ at initial time $t^*>0$ exists for all forward time $t\geq t^*$, and is asymptotic as $t \rightarrow \infty$ to $\left( 0,0\right)$. A solution lying inside $\mathcal{R}_\infty$ at initial time $t^*>0$ cannot be bounded for all $t\geq t^*$.     
\end{lemma}
\begin{proof}
For the bounded set $\mathcal{R}_0$, it is clear that solutions exist for all time, but it remains to prove their asymptotic behaviour. Since $\dot{a}_2 < 0 $ in $\mathcal{R}_0$, $a_2$ is strictly decreasing, and as it is bounded below, $a_2$ must have a limit $\hat{a}_2 \in \left[0, 1 \right)$ as $t \rightarrow \infty$. To get a limit for $a_0$, notice that the first equation for the ODEs \eqref{instA2}, together with hypo evolution equations \eqref{hypoAevolution}, gives:
\begin{align} \label{differentialformula}
\frac{d}{dt} \left( a_0 \mu^2 \right) = - 4 \lambda \left( a_2^2 (u_1 - u_0) + u_0 \right) 
\end{align}
Written in integral form on the interval $t\geq t^*$, this is the equation: 
 \begin{align} \label{integralformula}
 a_0 (t) = -\frac{1}{\mu^2} \left( \left( \int^t_{t^*} 4 \lambda \left( a_2^2 (u_1 - u_0) + u_0 \right) \right) + a_0 (t^*) \mu^2 (t^*) \right) 
\end{align}
Since the hypo-structure $\lambda, u_1, u_0, \mu$ is asymptotically conical as a function of $t$ and $a_0$ bounded, as  $t\rightarrow \infty$ \eqref{integralformula} gives:
\begin{align*}
 a_0(t) \sim - \frac{1}{t^4}  \int_{T}^t 4 t \left( \hat{a}_2^2 t^2 + (\hat{a}_2^2 - 1 ) u_0 \right)  \sim  - \hat{a}_2^2  - \frac{2 u_0 \left( \hat{a}_2^2 -1 \right)}{t^2} + O(t^{-4}) \sim  - \hat{a}_2^2  
\end{align*}
for some $T \geq t^*$ sufficiently large. Hence we also have a limit $a_0 \rightarrow - \hat{a}_2^2$ as $t \rightarrow \infty$. Since $a_2>0$, integrating the second equation of \eqref{instA2} gives us, as $t \rightarrow \infty$:
\begin{align*}
a_2(t) = a_2(T) \exp \left( - \int_{T}^t \frac{3}{2\lambda} ( a_0 + 1 ) \right) \sim a_2(T) \exp \left( ( \hat{a}_2^2 - 1 ) \int_{T}^t \frac{3}{2 t} \right) = C t^{ \frac{3}{2}( \hat{a}_2^2 - 1 )}   
\end{align*}
where $C$ is some constant of integration. As $\hat{a}_2 < 1$, this implies $a_2 \rightarrow 0$, and thus also $a_0 \rightarrow 0$.  
 
Now we come to solutions lying in $\mathcal{R}_\infty$. Since $\mathcal{R}_\infty$ is forward-invariant, and a solution lying in $\mathcal{R}_\infty$ has $\dot{a}_2 > 0$ for all finite $t$, the statement for finite $t$ follows directly from the previous lemmas. All that is left is to prove that if a solution exists for all time in $\mathcal{R}_\infty$, then it cannot be bounded. We will assume that it is, and derive a contradiction: 

If a solution is bounded, then since $a_2$ is strictly increasing in $\mathcal{R}_\infty$, $a_2$ must have a limit as $t \rightarrow \infty$, and as before, the integral formula \eqref{integralformula}, and the boundedness of $a_0$ gives that $\left(a_0, a_2 \right)$ must have a limit lying on the curve $a_0 = - a_2^2$. Since $a_2$ is strictly increasing, we can bound $a_2$ away from $1$, thus for some $t$ large enough, we can also bound $a_0$ away from $-1$. Call this bound $C$, i.e. there exists $T$, such that for $t>T$ we have $a_0 < C < -1$. Then we also have that: 
\begin{align*}
\dot{a}_2 > - \frac{3}{2\lambda} a_2 (C+1)
\end{align*}
So by integrating this inequality, we get: 
\begin{align*}
a_2 (t) \geq a_2(T) \exp \left( - \frac{3}{2\lambda}(C+1) \int_T^t \frac{1}{\lambda}  \right)
\end{align*}
but the right-hand side grows to $O(t^{- \frac{3 (C+1)}{2}})$ as $t\rightarrow \infty$, hence we have a contradiction. 
 \end{proof}
We now conclude the proof of Theorem \ref{candelasthm1}, by applying our analysis above to the local power-series $R_\epsilon$ of Proposition \ref{localresol1} with $\delta =0$, so that $\left(a_0, a_2 \right)_\epsilon \in \mathcal{R}_0$ for $\epsilon > 0 $ while $\left(a_0, a_2 \right)_\epsilon \in \mathcal{R}_\infty$ for $\epsilon < 0 $ at sufficiently small non-zero time. Taking $\epsilon =0$ gives the flat connection $\left(a_0, a_2 \right)_0 =\left( -1,1\right)$, which is a critical point of \eqref{instA2}. 
 
Theorems \ref{candelasthm2}, and \ref{pandozayasthm1}, also follow from what has been said. In the first case, in order to apply the results of the previous lemmas, one must first pull-back the Calabi-Yau structure via the involution $u_0 \mapsto -u_0$ by exchanging the factors of $SU(2)$ on the principal orbits, which pulls back the local solutions to solutions of the form \eqref{localresol2}. These invariant instantons extend on the singular orbit $SU(2)^2 / SU(2) \times U(1)$ rather than $SU(2)^2 / U(1) \times SU(2)$ as is our convention, but one can fix this by again applying the involution lifted to the total space of the principal bundle i.e. \eqref{symmetrymetric}. Similarly for the latter case, to consider $l=0$, one considers the local solutions on $P_{0,1}$ for the original Calabi-Yau structure pulled-back via the involution, and then applies the involution again on the total space of $P_{0,1}$ to get the result on $P_{1,0}$.  

With this in mind, we can apply our analysis to the local power-series \eqref{localresol2} and \eqref{localresol3}. We see that these situations are basically the same in terms of the gauge theory: up to invariant gauge transformation \eqref{symmetrygauge}, for some sufficiently small $t^* > 0$, for $1 >\epsilon' > 0$ (resp. $1> \alpha_1 >0$) we have $\left(a_0, a_2 \right)_{\epsilon'}(t^*) \in \mathcal{R}_0$ (resp. $\left(a_0, a_2 \right)_{\alpha_1}$), while for $\epsilon'>1$, we have $\left(a_0, a_2 \right)_{\epsilon'}(t^*) \in \mathcal{R}_\infty$ (resp. $\left(a_0, a_2 \right)_{\alpha_1}(t^*)$). We also see that $\left(a_0, a_2 \right)_{0}(0)= \left(-1,0 \right)$, hence by uniqueness $\left(a_0, a_2 \right)_0$ must correspond to the abelian solution to \eqref{instA2}, and $\left(a_0, a_2 \right)_{1}(0)=\left( -1,1\right)$. 
\end{proof}  
The proof for the remaining case of Theorem \ref{pandozayasthm2} requires slightly more care: 
\begin{proof}[Proof of Theorem \ref{pandozayasthm2}] We are again studying solutions to the ODE \eqref{instA2}. Looking at the local power-series solutions in Proposition \ref{localresol4}, we see that they do not initially lie in the sets $\mathcal{R}_0$ or $\mathcal{R}_\infty$ covered in our previous analysis. However, we will show that the only possibilities are that such solutions either enter $\mathcal{R}_0$ or $\mathcal{R}_\infty$ in finite time, or are otherwise asymptotic to the flat connection $A_2^\flat$: 
\begin{lemma} \label{r0lemma} Let $\mathcal{R}_1 := \lbrace \left( a_0, a_2 \right) \in \mathbb{R}^2 \mid 1 > a_2 > 0, a_0 < -1 \rbrace$. A solution $\left( a_0, a_2 \right)$ to \eqref{instA2} lying in $\mathcal{R}_1$ at initial time $t^*>0$ can remain in $\mathcal{R}_1$ for all forward time $t\geq t^*$ only if it is asymptotic to $\left( -1, 1 \right)$ as $t\rightarrow \infty$, and must otherwise enter one of $\mathcal{R}_0$, $\mathcal{R}_\infty$ in finite time.   
\end{lemma}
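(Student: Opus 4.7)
The approach is to combine a boundary analysis on $\partial \mathcal{R}_1$ with the integral identity \eqref{differentialformula} used earlier for $\mathcal{R}_0$, adapted to this different sign regime. The key observation is that $\dot a_2 > 0$ throughout $\mathcal{R}_1$, so $a_2$ is monotonic; this, together with asymptotic control of $a_0$ via an integral formula, is enough to pin down the limit.

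First I would show that any exit from $\mathcal{R}_1$ occurs in finite time and enters $\mathcal{R}_0$ or $\mathcal{R}_\infty$. On $\mathcal{R}_1$ we have $a_0 + 1 < 0$ and $a_2 > 0$, hence $\dot a_2 = -\frac{3}{2\lambda} a_2 (a_0 + 1) > 0$, so $a_2$ strictly increases. The bottom boundary $\{a_2 = 0\}$ is ruled out by local uniqueness, since $a_2 \equiv 0$ is itself a solution of \eqref{instA2}. For the top boundary $\{a_0 = -1,\ 0 < a_2 < 1\}$, note that $\partial_t u_0 = 0$ by \eqref{hypoAevolution} and $u_1$ is strictly increasing from $U_1 > |U_0|$, so $u_1 > u_0$ for all $t > 0$; therefore
$$\dot a_0\big|_{a_0 = -1} \;=\; -\tfrac{4\lambda}{\mu^2}\,(a_2^2 - 1)(u_1 - u_0) \;>\; 0,$$
and a crossing enters $\mathcal{R}_0$. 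On the right boundary $\{a_2 = 1,\ a_0 < -1\}$, $\dot a_2 > 0$ forces entry into $\mathcal{R}_\infty$. Finite-time blowup inside $\mathcal{R}_1$ is ruled out because $a_2 \in (0,1)$ is a priori bounded and the $a_0$-equation is linear in $a_0$ with coefficients bounded on any compact $t$-interval.

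Second I would treat solutions that remain in $\mathcal{R}_1$ for all forward time $t \geq t^*$. Monotonicity and boundedness of $a_2$ give $a_2 \to \hat a_2 \in (0, 1]$. Integrating \eqref{differentialformula},
$$a_0(t)\,\mu^2(t) \;=\; a_0(t^*)\,\mu^2(t^*) \;-\; \int_{t^*}^{t} 4\lambda\bigl(a_2^2 (u_1 - u_0) + u_0\bigr)\, ds,$$
and feeding in the AC asymptotics $\lambda \sim t$, $\mu \sim t^2$, $u_1 \sim t^2$, $u_0 \to U_0$, together with $a_2^2 \to \hat a_2^2$, yields the leading behavior $a_0(t)\,\mu^2(t) \sim -\hat a_2^2\, t^4$, so $a_0 \to -\hat a_2^2$ as $t \to \infty$. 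Since $a_0 < -1$ throughout, this forces $\hat a_2^2 \geq 1$, hence $\hat a_2 = 1$ and $a_0 \to -1$. Thus the solution is asymptotic to the flat critical point $(-1, 1)$, which is the lift of $A_2^\flat$ (and yields the flat asymptote in Theorem \ref{pandozayasthm2}(iii) after applying the involution \eqref{symmetrymetric} for $l < 0$).

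The main technical care is in the asymptotic integration: one must use the genuine convergence $a_2 \to \hat a_2$ (not merely the bound $a_2 < 1$) to extract the precise leading coefficient $-\hat a_2^2$, since it is this value, compared with $-1$, that rules out $\hat a_2 < 1$. Everything else is bookkeeping parallel to the $\mathcal{R}_0$ analysis preceding this lemma.
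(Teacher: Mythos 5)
Your proof is correct and follows essentially the same route as the paper: monotonicity of $a_2$ (and $a_0$) in $\mathcal{R}_1$, a sign check of $\dot a_0$ and $\dot a_2$ on the two relevant open boundary segments to show any exit lands in $\mathcal{R}_0$ or $\mathcal{R}_\infty$, and the integral identity \eqref{differentialformula} to force the limit onto the curve $a_0=-a_2^2$, whose only point in the closure of $\mathcal{R}_1$ is $(-1,1)$. The only cosmetic gap is that you do not explicitly note that the corner $(-1,1)$ cannot be reached in finite time because it is a critical point (uniqueness), which the paper does mention; otherwise the arguments coincide.
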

\begin{proof} We have that, in $\mathcal{R}_1$: 
\begin{align*}
\dot{a}_0 =  - \frac{4 \lambda}{\mu^2} \left( (a_2^2 -1)( u_1 - u_0) +  (a_0 +1) u_1 \right)>0& &\dot{a}_2 = - \frac{3}{2\lambda} a_2 \left(a_0+1 \right) > 0
\end{align*}
Hence, a solution lying in $\mathcal{R}_1$ can only leave in finite time via the boundaries $\lbrace a_2 = 1, a_0 <-1 \rbrace$ or $ \lbrace a_0 = - 1, 0<a_2<-1 \rbrace$, since $\left( -1, 1 \right)$ is a critical point for \eqref{instA2}. Since $\dot{a}_2$ is strictly positive on the first boundary, and $\dot{a}_0$ is strictly positive on the second, this proves that if a solution leaves $\mathcal{R}_1$ in finite time, it must actually leave the boundary and end up in the regions $\mathcal{R}_\infty$, $\mathcal{R}_0$ respectively.

If a solution remains in $\mathcal{R}_1$ for all forward-time, then by monotonicity $\left(a_0, a_2\right) $ has a limit lying in the closure. The existence of a limit, combined with the integral formula \eqref{integralformula}, gives that $\left(a_0, a_2\right) $ must also tend to a point lying on the curve $a_0 = - a_2^2$, which only intersects the closure of $\mathcal{R}_1$ at $\left( -1, 1 \right)$. 
\end{proof}
\begin{figure}[h]
\centering
\begin{tikzpicture}[scale=1.3]
\draw[help lines, color=gray!30, dashed] (-1.9,-0.9) grid (1.9,1.9);
\draw[] (-2,0)--(2,0) node[below right]{$a_0$};
\draw[] (0,-1)--(0,2) node[above right]{$a_2$};
\draw[] (-1,0) -- (-1,-0.1) node[below]{\small{-1}};
\draw[] (1,0) -- (1,-0.1) node[below]{\small{1}};
\fill [fill=orange, opacity=0.3] (-2,0) rectangle (-1,1);

\fill [fill=red, opacity=0.3] (-2,1) rectangle (-1,2);

\fill [fill=teal, opacity=0.3] (-1,0) rectangle (1,1);

\draw[thick] (-2,1)  -- (-1,1) node[above right]{$A^\flat_2$} -- (1,1);
\draw[thick] (-2,0) -- (0,0) node[below right]{$A^\mathrm{can}$} -- (1,0);
\draw[thick] (-1,0) -- (-1,2);
\draw[thick] (1,0) -- (1,1);
\fill[fill=black] (-1,1) circle (1.2pt); 
\fill[fill=black] (0,0) circle (1.2pt); 
\node at (-0.5,0.5) {$\mathcal{R}_0$};
\node at (-1.5,0.5) {$\mathcal{R}_1$};
\node at (-1.5,1.5) {$\mathcal{R}_{\infty}$};
\end{tikzpicture}
\caption{Distinguished sets for \eqref{instA2}, and possible asymptotics: the flat connection $A^\flat_2$ at $\left(a_0, a_2 \right)=\left( -1, 1 \right)$ and $A^\mathrm{can}$ at $\left(a_0, a_2 \right)= \left( 0, 0 \right)$.}
\label{instA2figure}
\end{figure}
We must also prove a comparison lemma for two solutions to \eqref{instA2}, which will allow us to compare our power-series solutions away from the singular orbit at $t=0$:
\begin{lemma}[Forward-Comparison] \label{forwardcomparison} Let $\left( a_0, a_2 \right)$, $\left( \hat{a}_0,  \hat{a}_2 \right)$ be two solutions to \eqref{instA2}. If $ a_0( t^* ) < \hat{a}_0 ( t^* )$, $a_2( t^* ) > \hat{a}_2 ( t^* ) \geq 0$, at initial time $t^*>0$, then $a_0( t ) < \hat{a}_0 ( t )$, $a_2( t ) > \hat{a}_2 ( t )\geq 0$, for all forward time $t \geq t^*$ for which these solutions exist. 
\end{lemma}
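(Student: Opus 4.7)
The plan is the standard ``first crossing'' argument for comparison of coupled ODEs. I would set
\[
T := \inf\bigl\{\, t > t^* : a_0(t) \geq \hat{a}_0(t) \ \text{or}\ a_2(t) \leq \hat{a}_2(t) \,\bigr\}
\]
and show $T = \infty$ by contradiction. Before opening the cases, I would settle positivity once and for all: the $a_2$-equation in \eqref{instA2} is linear and homogeneous in $a_2$ with coefficient $-\tfrac{3}{2\lambda}(a_0+1)$, so $a_2(t^*) > 0$ propagates to $a_2(t) > 0$ throughout the interval of existence. Moreover, if $\hat{a}_2(t^*) = 0$ then by uniqueness $\hat{a}_2 \equiv 0$, which is consistent with the asserted inequality $a_2 > \hat{a}_2 \geq 0$.

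Now suppose $T < \infty$. By continuity at least one of the strict inequalities becomes equality at $t=T$; both cannot become equality simultaneously, for then uniqueness of ODE solutions would force the two trajectories to coincide on a neighborhood of $T$ and hence on $[t^*, T]$, contradicting their difference at $t^*$. I then split into the two crossing scenarios. If $a_2(T) = \hat{a}_2(T) > 0$ while $a_0(T) < \hat{a}_0(T)$, subtracting the $a_2$-equations gives
\[
(\dot{a}_2 - \dot{\hat{a}}_2)(T) \;=\; -\tfrac{3}{2\lambda(T)}\, a_2(T)\,\bigl(a_0(T) - \hat{a}_0(T)\bigr) \;>\; 0,
\]
contradicting that $a_2 - \hat{a}_2$ is non-negative on $[t^*, T)$ and zero at $T$. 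If instead $a_0(T) = \hat{a}_0(T)$ while $a_2(T) > \hat{a}_2(T) \geq 0$, subtracting the $a_0$-equations gives
\[
(\dot{a}_0 - \dot{\hat{a}}_0)(T) \;=\; -\tfrac{4\lambda(T)}{\mu^2(T)}\,\bigl(a_2^2(T) - \hat{a}_2^2(T)\bigr)\bigl(u_1(T) - u_0(T)\bigr),
\]
and invoking the positivity $u_1 - u_0 > 0$ on $(0,\infty)$ --- already recorded in the proof of Lemma \ref{absorbinglemmaA} as a consequence of the hypo-evolution equations \eqref{hypoAevolution} --- makes the right-hand side strictly negative, again contradicting the sign of $a_0 - \hat{a}_0$ just before $T$.

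I do not anticipate a genuine obstacle: the system is quasi-monotone in the relevant sense, with $\dot{a}_0$ monotone decreasing in $a_2^2$ (using $u_1 - u_0 > 0$) and $\dot{a}_2$ monotone decreasing in $a_0$ (using $a_2 > 0$), so the first-crossing argument closes cleanly in both cases. The only mildly delicate points are the degenerate case $\hat{a}_2(t^*) = 0$, which is dispatched by the uniqueness remark at the outset, and the requirement that one stay away from the singular orbit at $t=0$; the latter is built into the hypothesis $t^* > 0$, since $\lambda$, $\mu$, and $u_1 \pm u_0$ are smooth and strictly positive on $(0, \infty)$, so the coefficients appearing in both computations above are finite and of the right sign at $t = T$.
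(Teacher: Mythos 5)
Your proposal is correct and follows essentially the same route as the paper: a first-crossing argument at time $T$, exclusion of simultaneous equality by uniqueness of ODE solutions, and a sign computation of $\dot{a}_0-\dot{\hat{a}}_0$ (respectively $\dot{a}_2-\dot{\hat{a}}_2$) at the crossing using $u_1-u_0>0$ (respectively $a_2>0$). The preliminary remarks on preservation of the sign of $a_2$ and the degenerate case $\hat{a}_2\equiv 0$ match what the paper establishes in part (i) of Lemma \ref{absorbinglemmaA}.
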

\begin{proof}
Let $t>t^*>0$ be the first time for which the condition $ a_0 < \hat{a}_0$, $a_2 > \hat{a}_2 $ fails. By uniqueness of solutions to ODEs, we cannot have both $a_0( t ) = \hat{a}_0 ( t )$ and $a_2( t ) = \hat{a}_2 ( t )$, hence we must have exactly one of these. In the first case, at $t$: 
\begin{align*}
\dot{a}_0-\dot{\hat{a}}_0 = - \frac{4 \lambda}{\mu^2} ((  a_2^2  - \hat{a}^2_2 )  (u_1 - u_0) )<0 
\end{align*}
but this implies $a_0( t^{**} )-\hat{a}_0 ( t^{**} ) > 0 $ for some $t^*<t^{**}<t$, which contradicts $t$ being the first time the condition fails. In the second case, at $t$:  
\begin{align*}
\dot{a}_2 - \dot{\hat{a}}_2 = - \frac{3 }{2 \lambda} ((  a_0-  \hat{a}_0 ) a_2  >0 
\end{align*}
but this implies $a_2( t^{**} )-\hat{a}_2 ( t^{**} ) <0 $ for some $t^*<t^{**}<t$, which is again a contradiction. 
\end{proof}
Another ingredient we will need is a slight improvement on the comparison lemma, restricted to solutions lying in $\mathcal{R}_1$:
\begin{lemma}[Improved Comparison] Let $\left( a_0, a_2 \right)$, $\left( \hat{a}_0,  \hat{a}_2 \right)$ be two solutions to \eqref{instA2}, with $ a_0( t^* ) < \hat{a}_0 ( t^* )$, $a_2( t^* ) > \hat{a}_2 ( t^* )\geq 0$, at some initial time $t^*>0$. Then $a_2 - \hat{a}_2$ is strictly increasing $\forall t \geq t^*$ for which $\left( a_0, a_2 \right)(t) \in \mathcal{R}_1$.  
\end{lemma}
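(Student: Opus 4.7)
The plan is to differentiate $a_2 - \hat a_2$ and show its derivative is strictly positive whenever $(a_0, a_2)(t) \in \mathcal{R}_1$, by combining the Forward-Comparison lemma with a simple algebraic identity that isolates the two sources of sign information.

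Using the second equation of \eqref{instA2} for both solutions, one has
\begin{equation*}
\dot a_2 - \dot{\hat a}_2 \;=\; -\frac{3}{2\lambda}\bigl(\,a_2(a_0+1) - \hat a_2(\hat a_0+1)\,\bigr).
\end{equation*}
The key algebraic step is to rewrite the bracketed expression as
\begin{equation*}
a_2(a_0+1) - \hat a_2(\hat a_0+1) \;=\; (a_0+1)(a_2 - \hat a_2) \;+\; \hat a_2\,(a_0 - \hat a_0),
\end{equation*}
so that the hypothesis ``$(a_0,a_2)(t) \in \mathcal{R}_1$'' and the monotone ordering from forward-comparison enter the two terms separately.

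By Lemma \ref{forwardcomparison}, for all $t \geq t^*$ on which both solutions exist one has $a_2(t) > \hat a_2(t) \geq 0$ and $a_0(t) < \hat a_0(t)$; hence $(a_2 - \hat a_2) > 0$ and $\hat a_2 (a_0 - \hat a_0) \leq 0$. When additionally $(a_0, a_2)(t) \in \mathcal{R}_1$ one has $a_0 + 1 < 0$, so $(a_0+1)(a_2 - \hat a_2) < 0$ strictly. Summing, the bracketed expression is strictly negative, and since $\lambda > 0$ this gives $\dot a_2 - \dot{\hat a}_2 > 0$, as claimed.

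There is no real obstacle beyond spotting the correct rewriting: once the bracket is split as above, the sign of the first term is forced by membership in $\mathcal{R}_1$, the sign of the second by forward-comparison, and the two contributions cannot cancel because the first is strictly negative while the second is merely non-positive. The role of the hypothesis $\hat a_2 \geq 0$ is precisely to prevent the second term from reversing sign.
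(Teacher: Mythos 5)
Your proof is correct and is essentially identical to the paper's: both split $a_2(a_0+1)-\hat a_2(\hat a_0+1)$ into the same two terms, use the forward-comparison lemma to control $\hat a_2(a_0-\hat a_0)\leq 0$, and use $a_0+1<0$ in $\mathcal{R}_1$ to make the other term strictly signed. No further comment is needed.
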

\begin{proof}
By the forward-comparison lemma, $ a_0 < \hat{a}_0$, $a_2> \hat{a}_2 \geq 0$ for all time  $t \geq t^*$, and by definition $\left( a_0 +1 \right)<0$ for all time $ t \geq t^*$ such that $\left( a_0, a_2 \right)(t) \in \mathcal{R}_1$. Rewriting $\dot{a}_2 - \dot{\hat{a}}_2$ using \eqref{instA2}: 
\begin{align*}
\dot{a}_2 - \dot{\hat{a}}_2 = \frac{3}{2\lambda} \left( \hat{a}_2 \left( \hat{a}_0 - a_0 \right) + \left( \hat{a}_2 - a_2 \right)\left( a_0 +1 \right) \right)>0   
\end{align*}
for all such $t$, and hence $a_2 - \hat{a}_2$ is strictly increasing in $t$ as claimed. 
\end{proof} 
With these out the way, we are almost ready to prove the theorem. First of all, it is clear that the one-parameter family $Q^l_{\alpha_l}$ of local solutions to the ODEs \eqref{instA2} given by Proposition \ref{localresol4} with $\beta_l = 0, \alpha_l>0$, are all contained in $\mathcal{R}_1$ for some $t^* >0$ sufficiently small, and up to gauge transformation \eqref{symmetrygauge} we can assume this one-parameter family has $\alpha_l\geq 0$. The local solution with $\alpha_l =0$ is clearly the abelian solution by uniqueness. 

If $\left(a_0, a_2 \right)_{\alpha} = \left( a_0^\alpha, a_2^\alpha \right)$ and $\left(a_0, a_2 \right)_{\alpha'}= ( a_0^{\alpha'}, a_2^{\alpha'} )$ are any two of these solutions, then near the singular orbit: 
 \begin{gather}
 \begin{aligned} 
a_0^\alpha - a_0^{\alpha'} &= -\frac{6}{l (U_1 + U_0)}\left( \alpha^2-\alpha'^2 \right)t^{2l} + O(t^{2l+2})  \\
a_2^\alpha - a_2^{\alpha'} &=\left( \alpha-\alpha' \right)t^{l-1} + O(t^{l+1}) 
 \end{aligned}
 \end{gather}
So, by the forward-comparison lemma, if $\left( a_0, a_2 \right)_{\alpha_l}$ hits the boundary of $\mathcal{R}_0$ in finite time (and thus enters it if $\alpha_l>0$) then so does $\left( a_0, a_2 \right)_{\alpha'_l}$ for all $0 \leq \alpha'_l \leq \alpha_l$. Similarly, if $\left( a_0, a_2 \right)_{\alpha_l}$ hits the boundary of $\mathcal{R}_\infty$ in finite time (and thus enters it), then so does $\left( a_0, a_2 \right)_{\alpha'_l}$, for all $ \alpha'_l \geq \alpha_l$. By continuous dependence on initial conditions for singular initial-value problems, these sets are disjoint open intervals in $\mathbb{R}_{\geq 0}$. Clearly, the set $\alpha_l \in \mathbb{R}_{\geq 0}$ for which $\left( a_0, a_2 \right)_{\alpha_l}$ hits the boundary of $\mathcal{R}_0$ in finite time is non-empty since it contains $0$, so to complete the theorem, we must prove:
\begin{enumerate}
\item There exists $\alpha_l>0$ such that $\left( a_0, a_2 \right)_{\alpha_l}$ enters $\mathcal{R}_\infty$ in finite time. 
\item There is at most one $\alpha_l$ such that $\left( a_0, a_2 \right)_{\alpha_l}$ remains in $\mathcal{R}_1$ for all time. 
\end{enumerate}
The latter statement follows directly from our improved forward-comparison lemma: if $\alpha > \alpha'$ then $a_0^\alpha ( t ) < a_0^{\alpha'} ( t ), a_2^\alpha( t ) > a_2^{\alpha'} ( t )$ for all $t>0$, and $\dot{a_2}^\alpha ( t ) - \dot{a}_2^{\alpha'}(t)>0$ when the solutions are in $\mathcal{R}_1$. However, if two distinct solutions lie in $\mathcal{R}_1$ for all time $t>0$, they must both be asymptotic as $t\rightarrow \infty$ to $\left( -1, 1 \right)$ which would be a contradiction as $a_2^\alpha - a_2^{\alpha'}$ can be bounded below away from $0$. 

The former statement can be proved via a rescaling argument, which we state below as a proposition:
\begin{prop} \label{propscaling} Fix $l>1$. Then $\exists \alpha_l >0, t^* >0$  such that $\left(a_0, a_2 \right)_{\alpha_l}(t^*) \in \mathcal{R}_\infty$, where $\left(a_0, a_2 \right)_{\alpha_l}$ is the one-parameter family of solutions $Q^l_{\alpha_l}$ to \eqref{instA2} near $t=0$ given in Proposition \ref{localresol4}.
\end{prop}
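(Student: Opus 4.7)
The plan is to use a rescaling in $t$ that absorbs the $\alpha_l$-dependence of the local initial data, reducing the problem to a limiting ODE in the regime $\alpha_l \to \infty$. Recall from Proposition \ref{localresol4} with $\beta_l = 0$ that the local solutions satisfy $a_0(t) = 1-2l + O(t^2)$ and $a_2(t) = \alpha_l t^{l-1} + O(t^{l+1})$. Setting
\[\tau := \alpha_l^{1/(l-1)} t, \qquad \epsilon := \alpha_l^{-2/(l-1)}, \qquad \tilde{a}_j(\tau) := a_j(\alpha_l^{-1/(l-1)} \tau),\]
the local expansion becomes $\tilde{a}_2(\tau) = \tau^{l-1} + O(\epsilon)$ and $\tilde{a}_0(\tau) = 1-2l + O(\epsilon)$, independent of $\alpha_l$ in the limit $\epsilon \to 0$.

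Substituting the expansions $\lambda = 3t + O(t^3)$, $u_i = U_i + O(t^2)$, and $\mu^2 = U_1^2 - U_0^2 + O(t^2)$ from the proof of Proposition \ref{localresol4} into \eqref{instA2}, I would rewrite it in the form
\begin{align*}
\frac{d \tilde{a}_2}{d\tau} &= -\frac{1}{2\tau}\tilde{a}_2(\tilde{a}_0 + 1) + \epsilon\, F_2(\tau, \tilde{a}_0, \tilde{a}_2; \epsilon),  \\
\frac{d \tilde{a}_0}{d\tau} &= \epsilon\, F_0(\tau, \tilde{a}_0, \tilde{a}_2; \epsilon),
\end{align*}
with $F_0, F_2$ smooth on the relevant range. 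The key point is that the $\tilde{a}_0$-equation carries an overall prefactor of $\epsilon$, so the $\epsilon = 0$ limiting system is simply $\tilde{a}_0 \equiv 1-2l$ together with $\frac{d\tilde{a}_2}{d\tau} = \frac{l-1}{\tau}\tilde{a}_2$, whose solution matching the initial data is $\tilde{a}_2(\tau) = \tau^{l-1}$. Since $1-2l < -1$ and $\tau^{l-1} > 1$ for $\tau > 1$, the explicit limit trajectory lies strictly inside the open set $\mathcal{R}_\infty$ for all $\tau > 1$.

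To upgrade this to the genuine solution for large but finite $\alpha_l$, I would perform the additional substitution $X_2 := \tilde{a}_2 / \tau^{l-1}$, after which the rescaled system takes the form $\tau \dot{y} = M_{-1}(y) + \tau M(\tau, y; \epsilon)$ of \cite[Thm.~4.7]{FoscolonK} with $y = (\tilde{a}_0, X_2)$ and initial value $y_0 = (1-2l, 1)$; the linearisation $d_{y_0}M_{-1}$ has zero as its only eigenvalue, so the hypotheses are met uniformly in $\epsilon$ near $\epsilon = 0$. This yields a solution on an $\epsilon$-independent interval $[0, \tau_0]$ with $\tau_0 > 1$ that depends continuously on $\epsilon$. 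Choosing any $\tau_* \in (1, \tau_0)$ and taking $\alpha_l$ sufficiently large, continuity gives $(\tilde{a}_0, \tilde{a}_2)_{\alpha_l}(\tau_*) \in \mathcal{R}_\infty$, and so $t^* := \alpha_l^{-1/(l-1)} \tau_*$ is the desired time.

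The main obstacle is ensuring that the parameter-dependent version of \cite[Thm.~4.7]{FoscolonK} truly supplies uniform existence on a $\tau$-interval independent of $\epsilon$ together with continuous dependence in sup norm. A careful inspection of the underlying Banach fixed-point argument should suffice, since the contraction constants depend only on $d_{y_0}M_{-1}$ (which is $\epsilon$-independent here) and on uniform bounds for $M(\tau, y; \epsilon)$ in a neighbourhood of $(y_0, 0)$ (automatic from smoothness). If an abstract parameterised version is not directly available, writing the rescaled solution as a fixed point in a weighted Banach space and tracking dependence on $\epsilon$ by hand is a straightforward backup.
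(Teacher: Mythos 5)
Your proposal is correct and follows essentially the same route as the paper: your rescaling $\tau=\alpha_l^{1/(l-1)}t$ with $\epsilon=\alpha_l^{-2/(l-1)}$ is exactly the paper's fibre rescaling with $\delta=\alpha_l^{-1/(l-1)}$ (so $\epsilon=\delta^2$, matching the prefactor in \eqref{instAscale1}), the explicit limit trajectory $(1-2l,\tau^{l-1})$ and the substitution $X_2=\tilde a_2/\tau^{l-1}$ reducing to a singular IVP with nilpotent linearisation are the same as \eqref{instAscale2}, and the conclusion via continuous dependence in $\delta$ is identical. The only cosmetic difference is that the paper handles uniform existence up to the target time $T>1$ by first reducing to the case that all solutions exist for all forward time (since otherwise they must already enter $\mathcal{R}_\infty$), whereas you extract an $\epsilon$-independent existence interval directly; both are fine.
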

\begin{proof}
As has been said previously, every local solution $\left(a_0, a_2 \right)_{\alpha_l}$ is contained in  the region $\mathcal{R}_1$ for small $t>0$. Since these solutions can only fail to exist for all forward time if they leave in finite time via $\mathcal{R}_\infty$, it suffices to consider the case that, for all $\alpha_l$, these solutions exist for all time. 

We start by rescaling the Calabi-Yau structure along the fibre $\mathbb{C}_{2,-2}$ of $\mathcal{O}(-2,-2)$, by defining, for some $\delta>0$:
\begin{align}
\lambda_\delta (t):= \frac{\lambda ( \delta t)}{\delta}& &\left( u_1 \right)_\delta ( t ) :=  u_1( \delta t ) 
\end{align}
Near the singular orbit $S^2 \times S^2$, we have the power-series expansions $\lambda = 3t + O(t^3)$, $u_1 = U_1 + O(t^2)$ for some fixed Calabi-Yau structure, and hence, for fixed $t$, $\lambda_\delta (t) \rightarrow 3t$, $\left( u_1 \right)_\delta ( t ) \rightarrow U_1$ as $\delta \rightarrow 0$\footnote{if we consider the the metric on $\mathcal{O}(-2,-2)$, rescaled by the diffeomorphism $t\mapsto \delta t$, this rescaling is the adiabatic limit as $\delta \rightarrow 0$ of the product of the rescaled metric on the fibres and the two copies of $\mathbb{CP}^1$ of fixed volume. See \S \ref{bubbling} for a similar discussion.}. 
 
In terms of the rescaled Calabi-Yau structure, the instanton equations \eqref{instA2} for $\left(a_0^\delta, a_2^\delta \right) (t) = \left(a_0, a_2 \right)(\delta t) $ become the family of ODEs, parametrized by $\delta$: 
 \begin{align}\label{instAscale1} 
\dot{a}^\delta_0 =  - \frac{4 \delta^2 \lambda_\delta}{\left( u_1 \right)_\delta^2 - u_0^2} \left( a_2^2 ( \left(u_1\right)_\delta - u_0) +  a_0 \left(u_1\right)_\delta + u_0 \right)& &\dot{a}^\delta_2 = - \frac{3}{2\lambda_\delta} a_2 \left(a_0+1 \right)
 \end{align}
One can always rescale the one-parameter family of (local) solutions $\left(a_0, a_2 \right)_{\alpha_l}$ to \eqref{instA2} to obtain solutions to \eqref{instAscale1} for fixed $\delta>0$, but one can show that there is a one-parameter family of (local) solutions extending to the singular orbit for any $\delta \geq 0$. To verify this claim, we apply the boundary conditions \ref{propconnboundary} for extending an invariant connection to the singular orbit, which allows us to write $a^\delta_2 = t^{l-1} X_2^\delta$ for some smooth $X_2^\delta$. The ODEs \eqref{instAscale1} can now be written as the singular IVP: 
 \begin{align} \label{instAscale2} 
\dot{a}^\delta_0 =  O(t)& &\dot{X}^\delta_2 = - \frac{a_0 +2l -1}{2t} X^\delta_2 +O(t)  
 \end{align}
which, for every $\delta \geq 0$, has a one-parameter family of solutions by fixing $X^\delta_2(0)$ as some constant $\kappa_l$. These solutions are determined by the local power-series: 
 \begin{align}
a^{\delta}_0 = 1 -2l + O( t^2)& &a^{\delta}_2 = \kappa_l t^{l-1} + O(t^{l+1})
\end{align}
and by comparing the two power-series, it is clear that the rescaled solutions $\left(a_0, a_2 \right)_{\alpha_l}(\delta t)$ to \eqref{instAscale1} for any $\delta>0$ have $\kappa_l = \alpha_l \delta^{l-1}$. 

Meanwhile, for $\delta=0$, \eqref{instAscale1} can be solved explicitly: 
\begin{align}
a^{0}_0 = 1 -2l& &a^{0}_2 = \kappa_l t^{l-1}
\end{align}
We can always fix $\kappa_l =1$ for this solution by a further rescaling of $t$, so as $\delta \rightarrow 0$, a solution $\left(a_0^\delta, a_2^\delta \right)$ to \eqref{instAscale1} has $\left(a_0^\delta, a_2^\delta \right) (t) \rightarrow \left( 1-2l, t^{l-1} \right)$. By assumption, for all $\delta$ these solutions exist for all time, and therefore we can always find $T>1$, $\delta \ll 1$, such that $\left(a_0^\delta, a_2^\delta \right) (T) \in \mathcal{R}_\infty$. If we set $\delta (\alpha_l)$ such that $\delta^{1-l} = \alpha_l$, and take $t^*=T \delta$, then the solution $\left(a_0, a_2 \right)_{\alpha_l}$ to the instanton equations \eqref{instA2} can be rescaled to a solution of \eqref{instAscale1}, so it must satisfy $\left(a_0, a_2 \right)_{\alpha_l} (t^*) \in    \mathcal{R}_\infty$ for some $\alpha_l$ sufficiently large.
\end{proof}
This concludes the proof of Theorem \ref{pandozayasthm2}, in the case $l>1$. As before, one can consider the case $l<0$ in the same way, by first considering solutions for the pulled-back Calabi-Yau structure by exchanging the factors of $SU(2)$ on the underlying manifold, and then applying this diffeomorphism again on the total space of the principal bundle.   
\end{proof}
\subsection{Bubbling} \label{bubbling}

Having described the one-parameter family $R_\epsilon$ of solutions to \eqref{dynamicODE} on $\mathcal{O}(-1) \oplus \mathcal{O}(-1)$ in Theorem \ref{candelasthm1}, a natural question would be to ask about the behaviour of solutions as $\epsilon \rightarrow \infty $. We will show that there is a familiar bubbling phenomenon in this setting: after a suitable rescaling of the metric, the one-parameter family of Calabi-Yau  instantons $R_\epsilon$ converge as $\epsilon \rightarrow \infty$ to an anti-self-dual connection along the co-dimension four calibrated singular orbit $S^2= \mathbb{CP}^1 \subset \mathcal{O}(-1) \oplus \mathcal{O}(-1)$. We use this result to obtain the expected removable-singularity statement, which says that as $R_\epsilon$ bubbles off this anti-self-dual connection, if we do not perform this rescaling, it will uniformly converge on compact subsets of $\mathcal{O}(-1) \oplus \mathcal{O}(-1)\setminus \mathbb{CP}^1$ to the instanton $R'_0$ of Theorem \ref{candelasthm2}, which extends smoothly over $\mathbb{CP}^1$. Recall that the abelian instanton $R'_0$ is determined by the unique solution to the ODE \eqref{instA2} on $\left[ 0, \infty \right)$ with $a_2=0$, which has explicit form  \eqref{abelian0} with $C= -2 $, $u_0 = -1, u_1(0) = 1$.
 
Let us first discuss this rescaling in detail: as $\mathcal{O}(-1) \oplus \mathcal{O}(-1)$ has the structure of a vector bundle, fibre-wise multiplication equips it with an natural $SU(2)^2$-equivariant action of $\mathbb{R}_{>0}$. Let $s_\delta$ denote the corresponding $\mathbb{R}$-action for some $\delta>0$, i.e. the map fixing the singular orbit and sending $t \mapsto \delta t$ on the space of principal orbits. Pulling back the Riemannian metric $g$ on $\mathcal{O}(-1) \oplus \mathcal{O}(-1)$ as given in \eqref{hypoAmetric} by $s_\delta$:
\begin{align} \label{adiabaticmetric}
s^*_\delta g = \delta^2  \left( dt^2 +  \lambda_\delta^2 \left( \eta^{se} \right)^2 + \tfrac{2}{3} \left( u_1 + u_0 \right)_\delta \left( (v^2)^2 + (w^2)^2 \right) \right) + \tfrac{2}{3} \left( u_1 - u_0 \right)_\delta \left( (v^1)^2 + (w^1)^2 \right)
\end{align} 
where $\lambda_\delta$, $\left( u_1 + u_0 \right)_\delta$, and $\left( u_1 - u_0 \right)_\delta$ are defined as:
\begin{align} \label{hyporescaled}
\lambda_\delta (t):= \frac{\lambda ( \delta t)}{\delta}& &\left( u_1 + u_0 \right)_\delta ( t ) := \frac{\left( u_1 + u_0 \right) ( \delta t )}{ \delta^2 }& &\left( u_1 - u_0 \right)_\delta ( t ) := \left( u_1 - u_0 \right) ( \delta t )
\end{align}
We will refer to the limit $\delta \rightarrow 0$ as the \textit{adiabatic limit}, and recall that here, $u_0 = -1$, and that $\lambda(t)= \frac{3}{2}t + O(t^3)$, $u_1 = 1 + \frac{3}{2}t^2 + O(t^4) $ near $t=0$, so $\lambda_\delta$, $\left( u_1 \pm u_0 \right)_\delta$ have well-defined point-wise limits as $\delta \rightarrow 0$.
 
Near the adiabatic limit, restricted to any finite distance from the singular orbit, $s^*_\delta g$ is approximated by the metric $\delta^2 g_{F} + g_{B}$ for some $\delta$ sufficiently small, where $g_{F}$ denotes a lift of the Euclidian metric on the fibre $\mathbb{R}^4$ and $g_{B}$ denotes a round metric on the base $S^2$. Here, the lift of the Euclidian metric on $\mathbb{R}^4$ to the fibres identifies $\tfrac{3}{2} \eta^{se} =   u^1 - u^2 $, $v^2$, $w^2$ with the standard orthonormal basis of one-forms on $S^3 \subset \mathbb{R}^4$, and $v^1, w^1$ as an orthonormal basis of one-forms for the singular orbit $S^2$, viewed upstairs on $SU(2)^2 \rightarrow SU(2)^2/ U(1) \times SU(2)$. 

One can always obtain a solution to the Calabi-Yau instanton equation \eqref{CYinstantonI} on the flat Calabi-Yau 3-fold $\mathbb{C}^3$ by pulling-back any anti-self-dual connection on $\mathbb{C}^2$ to $\mathbb{C}^3 = \mathbb{C}^2 \times \mathbb{C}$, so at least at the level of tangent spaces, if we pull back some Calabi-Yau instanton by $s_{\delta}$ on some sufficiently large neighbourhood of $\mathbb{CP}^1 \subset \mathcal{O}(-1) \oplus \mathcal{O}(-1)$ in the adiabatic limit, there should appear an anti-self-dual connection pulled back from the fibre. However, the fibre bundle $\mathcal{O}(-1) \oplus \mathcal{O}(-1)$ is non-trivial, so to make this a global statement, one must first choose a connection on this bundle: we will see in the next lemma that this connection will be forced onto us by the symmetries of the problem. 

   \begin{lemma} Up to gauge and rescaling, there is a unique non-flat $SU(2)^2$-invariant anti-self-dual connection ${A}^\mathrm{asd}$ on $\mathbb{R}^4$. ${A}^\mathrm{asd}$ has a unique lift $\bar{A}^\mathrm{asd}$ to the fibres of $\mathcal{O}(-1) \oplus \mathcal{O}(-1)$ as an $SU(2)^2$-invariant connection on $P_{0,\mathrm{Id}}$: 
\begin{align}
\bar{A}^\mathrm{asd} := \frac{1}{1+ t^2} \left( E_1  \otimes(u^2 -u^1 ) +  E_2 \otimes v^2 + E_3 \otimes w^2 \right) +  E_1 \otimes u^1 
\end{align}  
\end{lemma}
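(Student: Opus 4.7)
The plan is to first classify $SU(2)^2$-invariant anti-self-dual connections on $\mathbb{R}^4$ by reducing to a cohomogeneity-one ODE problem, and then identify the resulting BPST instanton with an invariant connection on $P_{0,\mathrm{Id}}\to\mathcal{O}(-1)\oplus\mathcal{O}(-1)$ by matching their restrictions to a fibre over $\mathbb{CP}^1$.

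First, I would identify $\mathbb{R}^4 \cong \mathbb{H}$ with the cohomogeneity-one $SU(2)^2$-manifold under the double-cover action $(q_1,q_2)\cdot q = q_1 q q_2^{-1}$, so that the singular orbit is the origin, with isotropy $\triangle SU(2) \subset SU(2)^2$, and the principal orbits are round spheres $S^3 = SU(2)^2/\triangle SU(2)$. By Wang's theorem, $SU(2)^2$-homogeneous $SU(2)$-bundles over $S^3$ are classified by homomorphisms $\triangle SU(2)\to SU(2)$, i.e. (up to conjugation) the trivial homomorphism and the identity. The space of invariant connections on the non-trivial bundle is affine over $\mathrm{Hom}_{\triangle SU(2)}(\mathfrak{m},\mathfrak{su}(2))$, where $\mathfrak{m}$ is the $\triangle SU(2)$-complement of $\triangle\mathfrak{su}(2)$; as both $\mathfrak{m}$ and $\mathfrak{su}(2)$ are isomorphic to the adjoint representation of $\triangle SU(2)$, this Hom-space is one-dimensional, and invariant connections are parameterised by a single smooth function $f(t)$.

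Next, I would impose the anti-self-dual condition with respect to the flat Euclidean metric. Computing the curvature via the Maurer--Cartan formula as in Proposition \ref{principalconn} and decomposing into self-dual and anti-self-dual parts reduces the ASD equation to a first-order ODE for $f$ with a regular singular point at $t=0$. The unique smooth solution extending across the origin, up to the scale symmetry $t\mapsto \lambda t$, is $f(t)=1/(1+\lambda^{2}t^{2})$, recovering the standard BPST instanton $A^\mathrm{asd} = \mathrm{Im}(\bar{x}\,dx)/(1+|x|^{2})$. A parallel (easier) calculation on the trivial bundle forces the corresponding ODE to have only flat solutions, establishing the uniqueness of the non-flat invariant ASD connection up to gauge and rescaling.

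Finally, for the lift, I would compare the isotropy data of the BPST bundle on $\mathbb{R}^4$ with the two extensions $P_{0,\mathrm{Id}}$, $P_{1,\mathbf{0}}$ of $P_1$ across the singular orbit $\mathbb{CP}^1\subset\mathcal{O}(-1)\oplus\mathcal{O}(-1)$, using the boundary analysis of Appendix \ref{sectioncohobundles}: the restriction of $P_{0,\mathrm{Id}}$ to a fibre is precisely the bundle supporting the BPST instanton, while $P_{1,\mathbf{0}}$ is not. Writing an ansatz in the form of Proposition \ref{principalconn}(ii) and fixing the coefficients $a_{0}, a_{1}, a_{2}$ so that the fibrewise restriction agrees with $A^\mathrm{asd}$ yields the stated formula. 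Uniqueness of the lift follows because an $SU(2)^{2}$-invariant connection on $P_{0,\mathrm{Id}}$ is entirely specified by its radial profile, and this profile is forced by the requirement of restricting to $A^\mathrm{asd}$ on each fibre. The main obstacle is bookkeeping: the fibrewise $SU(2)^{2}$-action on $\mathbb{R}^4$ and the cohomogeneity-one $SU(2)^{2}$-action on $\mathcal{O}(-1)\oplus\mathcal{O}(-1)$ induce different natural invariant frames, so matching the quaternionic expression for $A^\mathrm{asd}$ against the Sasaki--Einstein basis $u^{i}, v^{i}, w^{i}$ requires careful tracking of conventions to extract the explicit coefficients appearing in $\bar{A}^\mathrm{asd}$.
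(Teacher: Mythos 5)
Your classification of the invariant ASD connections on $\mathbb{R}^4$ is essentially sound and is a legitimate variant of the paper's argument: the paper works with the free left $SU(2)$-action on $\mathbb{H}$ (group diagram $\lbrace 1 \rbrace \subset SU(2) \subseteq SU(2)$), obtains three functions $\alpha_i$ satisfying $t\dot{\alpha}_i = -2\alpha_i + 2\alpha_j\alpha_k$, and then imposes the extra symmetry $\alpha_1=\alpha_2=\alpha_3$ (noting it is automatic for solutions extending over the origin), whereas you impose the full $\triangle SU(2)$ principal isotropy from the outset and reduce to one function by Schur's lemma. Either way one lands on $\alpha=(1+\kappa t^2)^{-1}$ and the BPST instanton. (A minor slip: under $(q_1,q_2)\cdot q = q_1 q q_2^{-1}$ the isotropy of the origin is all of $SU(2)^2$; $\triangle SU(2)$ is the principal isotropy.)

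The genuine gap is in the lift. You define the lift implicitly as ``the invariant connection on $P_{0,\mathrm{Id}}$ whose restriction to each fibre is $A^\mathrm{asd}$'' and assert that this forces the radial profile. But restriction to a fibre only sees the components of the connection along the fibre directions, i.e.\ the coefficients $a_0, a_2, b_2$ in the notation of Proposition \ref{principalconn}; it is blind to the coefficients $a_1, b_1$ of $E_2\otimes v^1 + E_3\otimes w^1$ and $E_3\otimes v^1 - E_2\otimes w^1$, which live along the base. For instance, adding $t^2\left(E_2\otimes v^1 + E_3\otimes w^1\right)$ to $\bar{A}^\mathrm{asd}$ yields another $SU(2)^2$-invariant connection on $P_{0,\mathrm{Id}}$ (it still satisfies the boundary conditions of Proposition \ref{propconnboundary3}) with the same fibrewise restriction, so uniqueness fails for your notion of lift. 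The missing ingredient is the mechanism that makes ``lift'' well defined on a non-trivial fibration: the paper realises $\mathcal{O}(-1)\oplus\mathcal{O}(-1)$ as the $U(1)$-quotient of $S^3\times\mathbb{R}^4$, observes that the pull-back of $A^\mathrm{asd}-A^\mathrm{can}$ is $U(1)$-invariant but not horizontal, and uses the unique invariant connection $E_1\otimes u^1$ on the Hopf bundle $S^3\to S^2$ to project onto the semi-basic component. This horizontal projection is what singles out the lift, forces $a_1=b_1=0$, and produces the correction term $-\left(\tfrac{1}{1+t^2}-1\right)E_1\otimes u^1$ in the stated formula --- precisely the piece that fibrewise matching alone cannot account for.
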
 
\begin{proof} We first explain how to view the $SU(2)^2$-invariant bundle $P_{0,\mathrm{Id}}$ over $\mathcal{O}(-1) \oplus \mathcal{O}(-1)$ as a bundle over the fibres: there is an obvious $SU(2)^2$-equivariant $U(1)$-action on $S^3 \times \mathbb{R}^4$, viewed as $SU(2) \times \mathbb{H}$, where $SU(2)^2$ acts on the left and $U(1)$ on the right, and this $U(1)$-action induces the quotient map $q: S^3 \times \mathbb{R}^4 \rightarrow \mathcal{O}(-1) \oplus \mathcal{O}(-1)$. By definition of $P_{0,\mathrm{Id}}$, its pull-back via $q$ is also the pull-back of an $SU(2)$-invariant bundle over $\mathbb{R}^4$, via the projection $\pi: S^3 \times \mathbb{R}^4 \rightarrow \mathbb{R}^4$ onto the second factor. Here, we view $\mathbb{R}^4$ as a co-homogeneity one manifold with group diagram $\lbrace 1 \rbrace \subset SU(2) \subseteq SU(2)$, and define the $SU(2)$-invariant $SU(2)$-bundle over $\mathbb{R}^4$ by the homomorphism $\mathrm{Id}: SU(2) \rightarrow SU(2)$, i.e. the singular isotropy group $SU(2)$ acts via the identity homomorphism on the fibre $SU(2)$. 

The canonical connection of $P_{0,\mathrm{Id}}$ over the singular orbit $S^2 = SU(2)^2/U(1) \times SU(2)$ is just the flat Maurer-Cartan form $A_2^\flat$, and this clearly pulls back via $q$ over $S^3 \times \mathbb{R}^4$ as the canonical connection  (pulled-back via $\pi$) on the singular orbit $\lbrace 0 \rbrace = SU(2)/SU(2)$ of the $SU(2)$-invariant bundle over $\mathbb{R}^4$. Using this choice of reference connection, a connection defined on $q^*P_{0,\mathrm{Id}}$ over $S^3 \times \mathbb{R}^4$ descends to $\mathcal{O}(-1) \oplus \mathcal{O}(-1)$ if and only if the corresponding adjoint-valued one-form is basic with respect to the $U(1)$-action. Furthermore, the one-form $u^1$ is the unique $SU(2)$-invariant connection on the principal $U(1)$-bundle $S^3 \rightarrow S^2$, and this induces connection on the associated vector bundle $\mathcal{O}(-1) \oplus \mathcal{O}(-1) \rightarrow S^2$. We can use this connection to project any (adjoint-valued) one-form on $S^3 \times \mathbb{R}^4$ to its semi-basic component, and thus uniquely lift any $U(1) \times SU(2)$-invariant $SU(2)$-connection over $\mathbb{R}^4$ to a connection over $\mathcal{O}(-1) \oplus \mathcal{O}(-1)$. 
 
With this understood, we now describe the invariant anti-self-dual connection ${A}^\mathrm{asd}$ over $\mathbb{R}^4$. Since the $SU(2)$-invariant bundle is trivial restricted to the principal orbit $S^3 \subset \mathbb{R}^4$, up to gauge transformation, we can always put any $SU(2)$-invariant connection $A$ on this bundle into the form: 
\begin{align*}
A = \alpha_1 E_1 \otimes u^2 + \alpha_2  E_2 \otimes v^2 + \alpha_3  E_3  \otimes w^2
\end{align*}
for some $\alpha_i(t)$ satisfying $\alpha_i(0)=1$, so that this connection extends to the singular orbit as the canonical connection.
 
The basis of self-dual forms (up to cyclically permuting  $ u^2,  v^2,  w^2$) for the Euclidean metric\footnote{the slightly non-standard orientation convention here arises from the disparity between orientation conventions for \eqref{adiabaticmetric} and the $SU(2)$-action.}  can be written as $t dt\wedge u^2 - t^2 v^2 \wedge w^2$, so the $SU(2)$-invariant anti-self-dual equations can be written as the ODE system: 
\begin{align*}
t \dot{\alpha_i} = -2 \alpha_i + 2 \alpha_j \alpha_k  
\end{align*}
where $ijk$ are cyclic permutations of $\left( 1 \, 2 \, 3 \right)$.  Imposing the additional $SU(2)$-symmetry\footnote{one can show, however, that the additional $SU(2)$-symmetry arises a-posteriori as a consequence of the general solution to the $SU(2)$-invariant anti-self-dual equations extending to the singular orbit.}, we set $\alpha_1 = \alpha_2 = \alpha_3$, giving: 
\begin{align}  \label{ASD3}
t \dot{\alpha} = -2 \alpha + 2 \alpha^2  
\end{align} 
which has the explicit solution $\alpha= \left(1+\kappa t^2\right)^{-1}$ for any $\kappa \in \mathbb{R}$, and it is not hard to see that these are the only solutions to \eqref{ASD3} extending over the singular orbit. Clearly, this exists for all $t\geq 0$ if and only if $\kappa \geq 0$, and when $\kappa>0$ we can always fix the solution to have $\kappa=1$ by rescaling $t$. This defines the connection:
\begin{align} \label{asd}
A^\mathrm{asd} := \frac{1}{1+ t^2} \left( E_1 \otimes u^2 +  E_2 \otimes v^2 + E_3 \otimes w^2 \right) 
\end{align}  
over $\mathbb{R}^4$. The extra symmetry ensures, implicitly, that $A^\mathrm{asd}$ is $U(1)\times SU(2)$-invariant, so it can be uniquely lifted to the connection $\bar{A}^\mathrm{asd}= A^\mathrm{asd} - \left( \frac{1}{1+ t^2}  - 1\right) E_1 \otimes u^1$ as previously explained.  
\end{proof}     
With these preliminaries out of the way, we can now state the main theorem of this section: 
\begin{theorem} \label{candelasthm1instanton} Let $\delta( \epsilon) = \sqrt{2 \epsilon^{-1}}$. Then, as $\epsilon \rightarrow \infty$:  
\begin{enumerate}[\normalfont(i)]
\item $s^*_\delta R_\epsilon (t) \rightarrow \bar{A}^{\mathrm{asd}} (t)$.  
\item $R_\epsilon(t) \rightarrow R'_{0}(t)$ uniformly on compact subsets of $\left( 0,\infty \right)$. 
\end{enumerate}  
\end{theorem}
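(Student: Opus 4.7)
The proof strategy is to treat parts (i) and (ii) sequentially, with (i) furnishing the bubble-scale input needed for (ii).

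For part (i), I would adapt the rescaling used in Proposition \ref{propscaling}. Setting $(a_0^\delta, a_2^\delta)(t) := (a_0, a_2)(\delta t)$ recasts \eqref{instA2} as a $\delta$-parametrised ODE in the rescaled Calabi--Yau data from \eqref{hyporescaled}. Using $\lambda = \tfrac{3}{2}t + O(t^3)$, $u_1 = 1 + \tfrac{3}{2}t^2 + O(t^4)$ near $t=0$ and $u_0 \equiv -1$, the pointwise limits $\lambda_\delta(t) \to \tfrac{3}{2}t$, $(u_1 + u_0)_\delta(t) \to \tfrac{3}{2}t^2$ and $(u_1 - u_0)_\delta(t) \to 2$ reduce the limiting ODE (after trivial reparametrisation of $t$) to the $SU(2)$-invariant anti-self-dual equation $t\dot\alpha = -2\alpha + 2\alpha^2$ from the lemma defining $\bar{A}^\mathrm{asd}$. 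The local expansion of $R_\epsilon$ from Proposition \ref{localresol1}, rewritten using $\delta^2 = 2/\epsilon$, gives initial data $(a_0^\delta, a_2^\delta) = (-1 + 2t^2 + O(t^4/\epsilon^2), 1 - t^2 + O(t^4/\epsilon^2))$ at small $t$, matching the initial data of $\bar{A}^\mathrm{asd}$ in the limit $\epsilon \to \infty$. Continuous dependence for singular IVPs \cite[Thm.4.7]{FoscolonK} with $\delta \in [0, \delta_0)$ as parameter then yields uniform $C^k$ convergence of $s_\delta^* R_\epsilon \to \bar{A}^\mathrm{asd}$ on compact subsets of $[0, \infty)$ in the rescaled variable.

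For part (ii), my plan uses a monotone-limit argument. By Proposition \ref{localresol1}, for $\epsilon_1 < \epsilon_2$ and small $t > 0$ we have $a_0^{\epsilon_1}(t) < a_0^{\epsilon_2}(t)$ and $a_2^{\epsilon_1}(t) > a_2^{\epsilon_2}(t)$; Lemma \ref{forwardcomparison} propagates these strict inequalities to all $t > 0$. Combined with forward-invariance of $\mathcal{R}_0$ (Lemma \ref{absorbinglemmaA}), we obtain bounded monotone pointwise limits $a_0^\infty(t) \leq 1$ and $a_2^\infty(t) \geq 0$, with convergence upgraded to $C^\infty$ on compacts of $(0, \infty)$ via standard ODE a priori estimates. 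The limit $R^\infty := (a_0^\infty, a_2^\infty)$ is a bounded solution of \eqref{instA2} on $(0, \infty)$, asymptotic to $A^\mathrm{can}$ at infinity by comparison. Integrating \eqref{differentialformula} from $t^* \to 0^+$ (where $a_0 \mu^2 \to 0$ by boundedness) yields
\[
a_0^\infty(t) - a_0^{R'_0}(t) = -\frac{4}{\mu(t)^2}\int_0^t \lambda(s)\, a_2^\infty(s)^2\,(u_1(s) + 1) \, ds,
\]
so the theorem reduces to showing $a_2^\infty \equiv 0$. For this I would invoke part (i): for each fixed $T > 0$, the $a_2$-coordinate of $s_\delta^* R_\epsilon(T)$ converges to that of $\bar{A}^\mathrm{asd}(T)$, which equals $1/(1+T^2)$ and tends to $0$ as $T \to \infty$. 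A diagonal argument would then give $a_2^\infty(t) \to 0$ as $t \to 0^+$; the linear ODE $\dot a_2^\infty = -\tfrac{3(a_0^\infty + 1)}{2\lambda}a_2^\infty$ with this vanishing boundary data then forces $a_2^\infty \equiv 0$, since any nontrivial solution would blow up like a negative power of $t$ near $t=0$ (using that the integral identity implies $a_0^\infty + 1$ is bounded away from $0$ near $t = 0$ once $a_2^\infty \to 0$).

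The main obstacle is rigorously justifying the diagonal argument. Part (i) only provides convergence of $s_\delta^* R_\epsilon \to \bar{A}^\mathrm{asd}$ on compact subsets of $[0, \infty)$ in rescaled time, whereas to conclude $a_2^\infty(t) \to 0$ at fixed un-rescaled $t > 0$ one must control $s_\delta^* R_\epsilon$ at rescaled times $T_\epsilon = t/\delta_\epsilon \to \infty$, outside every fixed compact set. Resolving this will likely require either a quantitative strengthening of (i) --- producing uniform convergence on intervals $[0, T(\delta)]$ for $T(\delta) \to \infty$ at a suitably slow rate --- or an independent analysis of an intermediate region bridging the bubble scale and the compact subsets appearing in the statement.
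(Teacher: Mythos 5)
Your treatment of part (i) is essentially the paper's argument: the same fibre rescaling as in Proposition \ref{propscaling}, the same identification $\kappa=\delta^{2}\epsilon/2$ from the local power series, and continuous dependence of the singular IVP on the parameter $\delta\in[0,\delta_0)$ with explicit limit $(1+t^2)^{-1}$. (The paper additionally performs the affine change of variable $a_0\mapsto\tfrac{1-a_0}{2}$ so that the limiting system is literally \eqref{ASD3}, but that is cosmetic.) Your setup for part (ii) also matches the paper up to the reduction: monotonicity in $\epsilon$ via the forward-comparison lemma, boundedness from $\mathcal{R}_0$, the integral identity \eqref{integralformula} to recover $a_0$ from $a_2$, and the reduction of the whole statement to $a_2^\infty\equiv 0$.

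However, the step you flag as "the main obstacle" is a genuine gap, and your two proposed remedies (a quantitative rate in (i), or an intermediate-region analysis) are both unnecessary and not carried out. The missing idea is much simpler: since $R_\epsilon$ stays in $\mathcal{R}_0$, the function $t\mapsto a_2^\epsilon(t)$ is \emph{strictly decreasing}, so a hypothetical positive infimum $L:=\inf_\epsilon a_2^\epsilon(t^*)>0$ at a fixed unrescaled time $t^*$ propagates \emph{backward}: $a_2^\epsilon(t)\geq a_2^\epsilon(t^*)\geq L$ for all $t\leq t^*$, uniformly in $\epsilon$. In particular this lower bound holds at the bubble-scale time $\sqrt{2\epsilon^{-1}}\,T$, which lies in $(0,t^*]$ once $\epsilon\geq 2(T/t^*)^2$. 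One then needs part (i) only at a \emph{single fixed} rescaled time $T$, chosen large enough that $(1+T^2)^{-1}<L$: for such $T$ and $\epsilon$ large, $|a_2^\epsilon(\sqrt{2\epsilon^{-1}}T)-(1+T^2)^{-1}|<\varepsilon$ contradicts $a_2^\epsilon(\sqrt{2\epsilon^{-1}}T)\geq L$ when $\varepsilon<L-(1+T^2)^{-1}$. No control of $s_\delta^*R_\epsilon$ at rescaled times tending to infinity, and no diagonalisation, is required. This gives $a_2^\epsilon(t^*)\to 0$ for every $t^*>0$ directly (uniformly on compacts by monotonicity in $t$), after which the identification of the limit with $R'_0$ follows from \eqref{differentialformula} and continuous dependence on initial conditions at a fixed interior time, as in your sketch; your separate argument via the linear ODE for $a_2^\infty$ near $t=0$ is then not needed.
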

\begin{proof}[Proof of Theorem \ref{candelasthm1instanton} (i)] 
We start by rewriting the instanton equations in terms of the rescaling \eqref{adiabaticmetric}, and the lift from an invariant connection on the fibre as in the previous lemma. If we define $\left( a^{\delta}_0,a_2^\delta \right)(t)= \left( \frac{1 - a_0}{2}, a_2 \right) ( \delta t ) $, and consider some invariant connection $A$ defined by $\left( a_0,a_2\right)(t)$, as in Proposition \ref{principalconn}, then:
\begin{align*}
s^*_\delta A (t) = a_2^{\delta}(t) \left( E_2 \otimes v^2 + E_3 \otimes w^2 \right) + a_0^{\delta} (t) E_1 \otimes (u^2 -u^1 ) +  E_1 \otimes u^1  
\end{align*}
 Written in this way, the instanton equations \eqref{instA2} for $\left( a_0,a_2\right)(t)$ becomes the following one-parameter family of ODEs for $\left( a^{\delta}_0,a_2^\delta \right)(t)$: 
\begin{align} \label{instA3} 
\dot{a}^\delta_0 =   \frac{2 \lambda_\delta}{\left( u_1 + u_0 \right)_\delta} \left( \left( a_2^{\delta} \right)^2 -  a_0^\delta \right) + \frac{2 \delta^2 \lambda_\delta}{\left( u_1 - u_0 \right)_\delta } \left(( 1 - {a_0}^\delta ) \right)& &{\dot{a}^\delta}_2 = - \frac{3}{\lambda_\delta} \left ( 1 - {a_0}^\delta \right) a_2^\delta
 \end{align}
\eqref{instA3} has a one-parameter family of solutions for each $\delta > 0$ by rescaling the family of solutions $R_\epsilon$ to \eqref{instA2}, and we now show that one still obtains a one-parameter family as $\delta \rightarrow 0$. Considering the boundary conditions Proposition \ref{propconnboundary3} for extending $\left( a^{\delta}_0, a_2^\delta \right)$ to $t=0$, we can write $a^{\delta}_0= 1 - t^2 X_0$, $a^{\delta}_2= 1 - t^2 X_2$ for some smooth functions $ X_0, X_2$, so that, in a neighbourhood of $t=0$, \eqref{instA3} becomes the well-defined initial-value problem:   
 \begin{align}
 t \dot{X}_0 =   2 \left( X_0 -  X_2 \right) + O(t^2)&
& t \dot{X}_2 =   2 \left( X_0 -  X_2 \right) + O(t^2) 
 \end{align}
hence, once we fix the parameter $\kappa:= X_0 (0) = X_2(0)$, the continuous dependence of \eqref{instA3} on $\delta$ gives existence of a sufficiently small open neighbourhood of $t=0$ such that, for each $\delta \geq 0$, solutions to \eqref{instA3} are in a one-parameter family. These are determined by the power-series:
\begin{align}
a^{\delta}_0 = 1 - \kappa t^2 + O(t^4)& &a^{\delta}_2 =1 - \kappa t^2 + O(t^4) 
\end{align}  
Comparing with the power-series in Proposition  \ref{localresol1} for $R_\epsilon$, we see that the rescaled solutions $\left( \frac{1 - a_0}{2}, a_2 \right)_\epsilon ( \delta t )$ for $\delta>0$ have $\kappa = \delta^2 \epsilon /2$, and so the family of solutions $\left( a^{\delta}_0, a^{\delta}_2\right)$ exist for all time if $\kappa \geq 0$, $\delta>0$ by Theorem \ref{candelasthm1}. 

By continuity, the solutions must also exist for all time for $\kappa \geq 0$ as $\delta \rightarrow 0$, but as the resulting equations:  
\begin{align} 
t \dot{a}^0_0 =   2 \left( \left( a_2^{0} \right)^2 -  a_0^0 \right)& &t {\dot{a}^0}_2 = 2 \left ( {a_0}^0 - 1 \right) a_2^0
\end{align}
have the explicit solutions $a^{0}_0 = a^{0}_2 =  \left(1 +  \kappa t^2\right)^{-1}$ as in \eqref{ASD3}, this is already guaranteed.  

We now set $\delta \left(\epsilon \right) = \sqrt{2 \kappa \epsilon^{-1}}$ for some given $\kappa>0$, which we can always fix to be $1$ by a further rescaling. By rescaling the family $R_\epsilon = \left(  a_0, a_2 \right)_\epsilon$ to the instanton equations \eqref{instA2}, we get a solution $ \left(  a_0^\delta, a_2^\delta \right)(t) = \left( \frac{1 - a_0}{2}, a_2 \right)_\epsilon ( \delta t )$ to \eqref{instA3}. As we have just shown, the solution has $a_0^\delta (t)$, $a_2^\delta(t) \rightarrow \left(1 + t^2\right)^{-1}$ as $\delta \rightarrow 0$, hence $s^*_{\delta} R_\epsilon (t) \rightarrow \bar{A}^\mathrm{asd} ( t )$.  
\end{proof}
\begin{remark} Since $\left( 1 +  \kappa t^2 \right)^{-1}$ blows up in finite time if $\kappa < 0$,  by keeping the freedom to vary $\kappa$, this rescaling argument can be used to show that local solutions of Theorem \ref{candelasthm1} with $\epsilon < 0$ are not only unbounded but blow up in finite time.
\end{remark}      
\begin{proof}[Proof of Theorem \ref{candelasthm1instanton} (ii)] Let  $\left(a_0, a_2\right)_\epsilon(t) = \left(a_0^\epsilon, a_2^\epsilon\right)(t)$ denote the one-parameter family of solutions to \eqref{instA2} corresponding to $R_\epsilon$. Using the local power-series in Proposition \ref{localresol1}, it follows that $a_0^\epsilon (t^*)$, $a_2^\epsilon (t^*)$ are both strictly monotonic (increasing and decreasing, respectively) functions of $\epsilon>0$ for any fixed $t^*>0$, since we can take some $0<t^{**} \leq t^*$ sufficiently small to compare the power-series for $\left(a_0, a_2\right)_\epsilon$ and $\left(a_0, a_2\right)_{\epsilon'}$ at $t^{**}$ for any pair $\epsilon, \epsilon'$ by their first few terms, and then use the forward-comparison lemma to recover monotonicity at $t^*$. Since $\left(a_0, a_2\right)_\epsilon(t)$ lies in the bounded set $\mathcal{R}_0$ for all $t>0$, $\epsilon>0$, this implies $\left(a_0, a_2\right)_\epsilon(t)$ converges point-wise on $\left( 0,\infty \right)$ as $\epsilon \rightarrow \infty$. 

Since $a_2^\epsilon(t)$ is strictly decreasing in $t$ for all $t>0$, $\epsilon>0$, if we assume that the pointwise-limit $\inf_{\epsilon} a_2^\epsilon (t^*)$ for any fixed $t^*>0$ is non-zero, then we can use the inequality $a_2^\epsilon (t) \geq a_2^\epsilon (t^*) \geq \inf_{\epsilon} a_2^\epsilon (t^*) \geq 0$ for all $\epsilon>0$, $t \leq t^*$ to uniformly bound $a_2^\epsilon$ away from zero on $\left( 0 , t^* \right)$ and derive a contradiction with the first part of the main theorem. Explicitly, by part (i), for any $\varepsilon>0$, $T > 0$, $\exists \epsilon(\varepsilon, T)$ such that $\forall \epsilon \geq \epsilon(\varepsilon, T)$, the rescaled solution $\left(a_0, a_2\right)_\epsilon (\sqrt{2\epsilon^{-1}} T )$ satisfies $| a_2^\epsilon (\sqrt{2\epsilon^{-1}} T ) - (1+T^2)^{-1} | < \varepsilon$. By the assumption $L:=\inf_{\epsilon} a_2^\epsilon (t^*)>0$, we can pick $\varepsilon$, $T$ such that $0<\varepsilon< L - (1+T^2)^{-1}$, and then apply our inequality to any $\epsilon \geq \max \lbrace \epsilon(\varepsilon,T), 2\left( \tfrac{T}{t^*} \right)^2 \rbrace $:
\begin{align*}
 \varepsilon < L - (1+T^2)^{-1} \leq | a_2^\epsilon (\sqrt{2\epsilon^{-1}} T )| - |(1+T^2)^{-1}|  \leq | a_2^\epsilon (\sqrt{2\epsilon^{-1}} T ) - (1+T^2)^{-1} |
\end{align*}
since $\sqrt{2\epsilon^{-1}} T \leq t^*$. However, this demonstrates the existence of an $\epsilon \geq \epsilon(\varepsilon,T)$ such that the inequality $| a_2^\epsilon (\sqrt{2\epsilon^{-1}} T ) - (1+T^2)^{-1} | < \varepsilon $ fails, and hence we have a contradiction.
 
This previous discussion implies that $a_2^\epsilon$ converges uniformly to zero on any compact interval contained in $\left(0, \infty \right)$, and by using \eqref{differentialformula} to express the derivative $\dot{a}_0^\epsilon$ purely in terms of $a_2^\epsilon$ (up to some fixed functions of $t$), we also get the uniform convergence of $a_0^\epsilon$ on this interval. Now consider the initial value problem defined by the ODE \eqref{instA2} with $\left( a_0, a_2 \right)(t^*) = \left( a_0, a_2 \right)_\epsilon(t^*)$ for some fixed initial time $t^*>0$: this has unique solution $\left( a_0, a_2 \right)_\epsilon(t)$ on $\left(0, \infty \right)$, and continuous dependence on initial conditions guarantees that the limit as $\epsilon \rightarrow \infty$ is the unique solution to \eqref{instA2} with $a_2 =0$, and $a_0 (t^*)=\sup_{\epsilon} a_0^\epsilon (t^*)$. Since this solution must be contained in the closure of $\mathcal{R}_0$, by Remark \ref{remarkabelian}, this must be identified with $R'_0$ on $\left(0, \infty \right)$, the unique solution to \eqref{instA2} bounded on $\left(0, \infty \right)$ with $a_2 =0$.      
\end{proof}
\subsubsection*{Bubbling on $\mathcal{O}(-2,-2)$}
In the rest of this section, we will discuss bubbling phenomena for instantons on $\mathcal{O}(-2,-2)$: we will see that the one-parameter families of Calabi-Yau instantons $Q^l_{\alpha_l}$ on $\mathcal{O}(-2,-2)$ can be understood, in an appropriate adiabatic limit, as anti-self-dual instantons for the Eguchi-Hanson metric on the cotangent bundle of $ \mathbb{CP}^1$, fibred along a co-dimension four calibrated sub-manifold $\mathbb{CP}^1 \subset \mathcal{O}(-2,-2)$. Although this discussion is unnecessary for understanding the main results of this article, we include a brief sketch of the details, omitting any explicit proofs.
  
First, recall that the Eguchi-Hanson metric is $SU(2) \times U(1)$-invariant, and can be written on $T^* \mathbb{CP}^1 \setminus \mathbb{CP}^1 \cong  \mathbb{R}_{>0} \times SU(2) / \, \mathbb{Z}_2$, up to scale, as:
\begin{align} \label{eh}
g_{\mathrm{eh}} = dt^2 + \varphi^2 \left( (1 - \varphi^{-4}) u^2 + v^2 + w^2 \right) 
\end{align}    
where $t \geq 0$ is the radial geodesic arc-length extending over $\mathbb{CP}^1$ at $t=0$, $u, v, w $ are the basis of left-invariant one-forms dual to $E_1$, $E_2$, $E_3$ on $SU(2)$, and $\varphi(t)$ is the unique solution to $\dot{\varphi}^2 = 1 - \varphi^{-4}$ on $\left[ 0 , \infty \right)$ with $\varphi(0) = 1$. 

One can show that, up to gauge, $SU(2)$-invariant anti-self-dual connections for the Eguchi-Hanson metric are in $l \in \mathbb{Z}_{>0}$ one-parameter families $A^{\mathrm{eh}}_{\kappa,l}$, $0 \leq \kappa \leq 1 $, and these are additionally $U(1)$-invariant. Written in the temporal gauge:
\begin{align} \label{connectioneh0}
A^{\mathrm{eh}}_{\kappa,l} := \alpha_0 E_1 \otimes u +  \alpha_2 \left( E_2 \otimes v +  E_3  \otimes w \right)
\end{align}
with coefficients $\alpha_0(t), \alpha_2(t)$, explicit up to a smooth change of variable $t\mapsto \varphi$:
\begin{align} \label{connectioneh}
\alpha_0 =  \frac{l}{ \varphi^2} \left( 1 + \kappa \left( \frac{\varphi^2 -1 }{ \varphi^2 +1 } \right)^l \right)\left( 1 - \kappa \left( \frac{\varphi^2 -1 }{ \varphi^2 +1 } \right)^l  \right)^{-1}& & \alpha_2 = \frac{2l}{ 1 - \kappa \left( \frac{\varphi^2 -1 }{ \varphi^2 +1 } \right)^l}  \sqrt{\frac{\kappa \left( \frac{\varphi^2 -1 }{ \varphi^2 +1 } \right)^l}{\varphi^4 -1 }}
\end{align}  
By considering the orbit of the one-parameter family $A^{\mathrm{eh}}_{\kappa,l}$, $0 \leq \kappa < 1$ under constant ($t$-independent) invariant gauge transformations, these invariant families can be understood in terms of the moduli-spaces of anti-self-dual connections for the Eguchi-Hanson metric constructed by Nakajima in \cite{nakajima}: he shows, under certain assumptions, that connected components of the moduli-space are themselves a copy of Eguchi-Hanson. In our invariant set-up, the parameter $\kappa$ corresponds to (a parametrisation of) the radial parameter $t$, and these gauge orbits correspond to the orbits of the co-homogeneity one action of $SU(2)$ on $T^* \mathbb{CP}^1$.  
 
Observe that the moduli-space of invariant anti-self-dual connections $A^{\mathrm{eh}}_{\kappa ,l}$ behaves similarly to the moduli-space of invariant Calabi-Yau instantons for the metrics on $\mathcal{O}(-2,-2)$ in Theorems \ref{pandozayasthm1}, \ref{pandozayasthm2}: $\kappa=0$ is the unique abelian solution for each $l$, and at $ \kappa = 1$ there is a transition in the asymptotic behaviour of $A^{\mathrm{eh}}_{\kappa ,l}$ from the canonical connection $\left( \alpha_0, \alpha_2 \right) = \left(0,0\right)$ to the flat connection $\left( \alpha_0, \alpha_2 \right) = \left(1,1\right)$. Any solutions with $\kappa$ greater than this critical value become unbounded (at least here, in finite time), and only when $l=1$ are these critical solutions $A^{\mathrm{eh}}_{1 ,l}$ themselves flat.

If we pick overall scale for the family of metrics on $\mathcal{O}(-2,-2)$ such that one of the copies of $\mathbb{CP}^1 \subset \mathcal{O}(-2,-2)$ has fixed volume $U_1 - U_0  = 1$, and consider the rescaling $s_\delta$ as in \eqref{adiabaticmetric}, \eqref{hyporescaled} with $\delta^2 = \tfrac{2}{3}\left(U_1 + U_0 \right)$, then one can show that by varying in this one-parameter family of metrics, $\tfrac{4}{3} \lambda_\delta \rightarrow \varphi \sqrt{ 1- \varphi^{-4}}$, $\tfrac{2}{3} \left( u_1 + u_0 \right)_\delta \rightarrow \varphi^2$, and $ \left( u_1 - u_0 \right)_\delta \rightarrow 1$ in the adiabatic limit as $\delta \rightarrow 0$. In other words, near this adiabatic limit, the metric $s^*_\delta g$ is approximated by a fibration $\delta^2 g_\mathrm{eh} + g_{B}$ of a rescaled Eguchi-Hanson metric over a round metric on the base $\mathbb{CP}^1$, for some $\delta$ sufficiently small.

 With this established, the similarity between the moduli-spaces of instantons on $\mathcal{O}(-2,-2)$ and anti-self-dual connections on Eguchi-Hanson is somewhat less mysterious: if one considers the one-parameter family of instantons $Q^l_{\alpha_l}$ for $\alpha_l = l \sqrt{\kappa} \delta^{1-l}$ for some fixed, $l>0$, $\kappa\geq 0$, and pulls back via this rescaling, it can furthermore be shown that $s^*_\delta Q^l_{\alpha_l}(t) \rightarrow A_\kappa^{\mathrm{eh}}(t)$ as $\delta \rightarrow 0$ along the fixed-volume copy of $\mathbb{CP}^1 \subset \mathcal{O}(-2,-2)$.  

\begin{remark} Since the explicit solution \eqref{connectioneh} blows up in finite time if $\kappa>1$, up to exchanging the copies of $\mathbb{CP}^1 \subset \mathcal{O}(-2,-2)$, one can use this rescaling argument to show that the instantons $Q^l_{\alpha_l}$ for metrics on $\mathcal{O}(-2,-2)$ with $U_1 \pm U_0$ sufficiently close to $0$ must also blow up in finite time if $\alpha_l$ is sufficiently large. 
\end{remark}
\subsection{Solutions to the Monopole Equations} \label{odessection2}

In this section, we analyse the qualitative behaviour of solutions to the monopole equations  \eqref{dynamicODE} with non-zero Higgs field $\Phi$ away from the singular orbit. Assuming that the connection is not an instanton, we first show that there are no solutions for $\mathcal{O}(-2,-2)$, or $\mathcal{O}(-1) \oplus \mathcal{O}(-1)$ with quadratic curvature decay:   
\begin{prop} \label{thmmonopoleglobalA} There are no irreducible invariant monopoles on $\mathcal{O}(-2,-2)$ or $\mathcal{O}(-1) \oplus \mathcal{O}(-1)$ with quadratic curvature decay.     
\end{prop}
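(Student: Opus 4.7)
The plan is to exploit a sign-preserving monotone quantity for solutions of \eqref{dynamicODE} under type $\mathcal{I}$ hypo-structures (which covers both manifolds), combined with an asymptotic analysis at the limiting critical point $(0,0,0,\phi_\infty)$. Throughout I restrict to the families $(Q^l, \Theta^l)_{\alpha_l, \beta_l}$ with $l > 0$; the cases $l \le 0$ then follow via the involution $u_0 \mapsto -u_0$ together with \eqref{symmetrymetric}, using the diffeomorphism exchanging the factors of $SU(2)$.

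First, for any solution of \eqref{dynamicODE} with $v_0 \equiv 0$, a direct computation yields
\begin{align*}
\frac{d}{dt}\bigl(a_1 a_2 \phi\bigr) + \frac{3}{\lambda}\bigl(a_1 a_2 \phi\bigr) = -\frac{2}{\mu}\Bigl[\phi^2\bigl((u_1-u_0)a_2^2 + (u_1+u_0)a_1^2\bigr) + 3 a_1^2 a_2^2\Bigr].
\end{align*}
Since $u_1 \pm u_0 > 0$ for $t > 0$ in both geometries, the right-hand side is non-positive, and strictly negative whenever $a_1, a_2, \phi$ are all non-zero. Thus with $E(t) := \exp \int 3/\lambda \, d\tau$, the function $E(t)\, a_1 a_2 \phi$ is strictly decreasing on any interval where $a_1 a_2 \phi \neq 0$. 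Inspecting the leading-order terms of the local power series in Propositions \ref{localresol1}, \ref{localresol6}, \ref{localresol4}, every local solution corresponding to an irreducible monopole (i.e.\ $\delta \neq 0$; or $\delta' \neq 0$ with $\epsilon' \neq 0$; or $\beta_l \neq 0$ with $\alpha_l \neq 0$) has $a_1 a_2 \phi < 0$ for small $t > 0$. Up to the discrete symmetry $(a_0, a_1, a_2, \phi) \mapsto (a_0, a_1, -a_2, -\phi)$ of the ODE system, I normalise so that $\phi(t) > 0$ near $t = 0$.

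Combining these observations, monotonicity forces $a_1 a_2 \phi < 0$ for all $t > 0$ on which the solution exists. Since $\dot\phi = -6 a_1 a_2/\mu$ and $a_1 a_2$ and $\phi$ have opposite signs, $\phi$ is strictly increasing wherever $\phi > 0$; hence $\phi(t) > 0$ and $a_1 a_2(t) < 0$ for every $t > 0$. Now assume the solution has quadratically decaying curvature. Boundedness of $a_1, a_2$ gives $|\dot\phi| \le C/\mu \sim C/t^2$, integrable at infinity, so $\phi$ is bounded and converges monotonically to some $\phi_\infty > 0$. The bound $|t a_i \phi| \le M$ then yields $a_1, a_2 = O(1/t) \to 0$, and integrating the identity
\begin{align*}
\bigl(a_0 \mu^2\bigr)' = 4\lambda\bigl[(a_1^2 + a_2^2 - 1) u_0 + (a_1^2 - a_2^2) u_1\bigr]
\end{align*}
shows $a_0 \to 0$, so the solution converges to the critical point $(0, 0, 0, \phi_\infty)$.

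Finally, I linearise in the diagonal variables $b_\pm := a_1 \pm a_2$ about this critical point; the exact equations are
\begin{align*}
\dot b_+ = -\bigl(2\phi + \tfrac{3}{2t}\bigr) b_+ + \tfrac{3 a_0}{2t} b_-, \qquad \dot b_- = \bigl(2\phi - \tfrac{3}{2t}\bigr) b_- + \tfrac{3 a_0}{2t} b_+,
\end{align*}
so that to leading order $b_+$ decays like $e^{-2\phi_\infty t}$ while $b_-$ lies in an unstable direction of growth rate $2\phi_\infty$. The only way for the solution to approach the critical point is to lie on the stable manifold, along which a Duhamel-type integration using the forcing estimate $a_0 b_+/t = O(e^{-2\phi_\infty t}/t^3)$ gives $b_-(t) = O(e^{-2\phi_\infty t}/t^3) = o(b_+(t))$. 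Hence $|a_1 - a_2| \ll |a_1 + a_2|$ asymptotically, so $a_1$ and $a_2$ have the same sign for large $t$, forcing $a_1 a_2 > 0$ eventually and contradicting the sign condition. The main obstacle I anticipate is making this last step airtight in the non-autonomous setting, since the linearisation has $O(1/t)$ drift corrections and $\phi$ only approaches $\phi_\infty$ polynomially; I would address it by conjugating the $b_\pm$-system with integrating factors $t^{3/2} e^{\pm 2\phi_\infty t}$ to obtain a constant-coefficient principal part, and then closing a Grönwall-style contraction argument on the forward flow to rule out any non-trivial unstable component.
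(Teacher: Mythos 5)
Your overall strategy is sound and overlaps with the paper's at its core, but you reach the key sign condition by a genuinely different (and rather elegant) route. The paper proves forward-invariance of the set $\lbrace a_1>0>a_2,\ \phi>0\rbrace$ by checking each boundary face; you instead verify the identity
\begin{align*}
\frac{d}{dt}\left(a_1a_2\phi\right)+\frac{3}{\lambda}\,a_1a_2\phi=-\frac{2}{\mu}\left[\phi^2\left((u_1-u_0)a_2^2+(u_1+u_0)a_1^2\right)+3a_1^2a_2^2\right]\leq 0,
\end{align*}
which is correct (I checked it against \eqref{dynamicODE} with $v_0=0$, $v_3=\mu$), and together with the local power series it pins down $a_1a_2\phi<0$, hence $a_1a_2<0$ and $\phi$ increasing, for all $t>0$. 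This monotone quantity is a nice alternative to the paper's invariant-set lemma: it avoids tracking the individual signs of $a_1,a_2$ and treats all bundle extensions uniformly. Your intermediate steps (boundedness and convergence of $\phi$ to $\phi_\infty>0$, $a_i=O(1/t)$, $a_0\to0$ via the integral of $(a_0\mu^2)'$) are also fine, though most of them turn out to be unnecessary.

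The weak link is the final step. The $b_\pm$ equations you call ``exact'' are only the conifold-limit approximations (the true coefficients are $\tfrac{3}{2\lambda}$, $\tfrac{2\phi u_1}{\mu}$, $\tfrac{2\phi u_0}{\mu}$, not $\tfrac{3}{2t}$, $2\phi$, $0$), and the stable-manifold/Duhamel argument is circular as stated: to bound the forcing $a_0b_+/t$ by $e^{-2\phi_\infty t}/t^3$ you already need exponential decay of $b_+$, which in turn needs control of $b_-$. You do not need any of this. Since $a_1a_2<0$ you have $b_-^2=b_+^2-4a_1a_2>b_+^2$, so $|b_+|<|b_-|$ and $b_-\neq 0$; the exact equation $\dot b_-=\bigl(\tfrac{2\phi u_1}{\mu}-\tfrac{3}{2\lambda}\bigr)b_-+\bigl(\tfrac{3a_0}{2\lambda}+\tfrac{2\phi u_0}{\mu}\bigr)b_+$ then gives $\tfrac{d}{dt}b_-^2\geq 2\bigl(\tfrac{2\phi u_1}{\mu}-\tfrac{3}{2\lambda}-\bigl|\tfrac{3a_0}{2\lambda}+\tfrac{2\phi u_0}{\mu}\bigr|\bigr)b_-^2\geq 2\phi_\infty b_-^2$ for $t$ large (using $a_0$ bounded, $u_1/\mu\to1$, $u_0/\mu\to0$, $\lambda\sim t$), so $a_1-a_2$ grows exponentially and contradicts the boundedness already forced by quadratic curvature decay. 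This is exactly the paper's concluding lemma, and with it your argument closes without any appeal to convergence to the critical point or to stable manifolds.
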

\begin{proof}
We look at solutions to the monopole equations \eqref{dynamicODE} with $v_0 = 0, v_3 = \mu$, i.e. for a hypo-structure of type $\mathcal{I}$ given by \eqref{hypoA}: 
 \begin{gather}  \label{dynamicODEmonopoleA}
 \begin{aligned} 
\dot{a_0} &= \frac{4 \lambda}{ \mu^2} \left( ( a_1^2 +  a_2^2 - 1 )u_0  - ( a_0 - a_1^2 +  a_2^2 ) u_1 \right)  \\
\dot{\phi} &= -\frac{6}{\mu} a_1 a_2   \\
\dot{a}_1 &= \frac{3}{2 \lambda  }  (a_0 -1 ) a_1 - 2 \frac{u_1-u_0}{\mu} a_2 \phi  \\
 \dot{a}_2 &= -\frac{3}{2 \lambda } (a_0+1) a_2 - 2 \frac{u_1 + u_0}{ \mu } a_1 \phi 
\end{aligned} 
\end{gather}
Recall from Lemma \ref{quadraticdecay} that a weak condition for solutions $\left( a_0, a_1, a_2, \phi \right)$ was to assume at least boundedness of $a_0, a_1, a_2$. We will show that there are no such solutions to \eqref{dynamicODEmonopoleA} existing for all time: in particular, we show that if a solution exists for all time with $a_0$ bounded and $\phi$ non-zero, then $a_1 - a_2$ must have at least exponential growth at infinity, provided certain initial conditions are satisfied. These initial conditions will be satisfied by the local solutions extending to $t=0$ obtained in the previous propositions, up to certain easily-verified symmetries:   
\begin{lemma} The following involutions are symmetries of \eqref{dynamicODEmonopoleA}: 
\begin{align} \label{symmetrygaugemonopole}
\left( a_0, a_1, a_2, \phi \right) \mapsto \left( a_0,  -a_1, -a_2, \phi \right)
\end{align}
\begin{align} \label{symmetrymonopole}
\left( a_0, a_1, a_2, \phi \right) \mapsto \left( a_0,  -a_1, a_2 , - \phi \right)
\end{align}
\end{lemma} 
We now find the set in which our solutions remain for all time:
\begin{lemma} The set $\mathcal{R}^+_\infty : = \lbrace \left(a_0, a_1, a_2,  \phi \right) \in \mathbb{R}^4 \mid a_1 > 0 > a_2, \phi > 0 \rbrace $ is forward-invariant under \eqref{dynamicODEmonopoleA}. 
\end{lemma}
\begin{proof}
We let $t$ be the first time a solution  $\left(a_0, a_1, a_2, \phi \right)$ leaves $\mathcal{R}^+_\infty$. However, none of the possibilities for a solution to leave $\mathcal{R}^+_\infty$ can hold at $t$: 
\begin{enumerate}[(i)]
\item $a_1 =0, a_2<0, \phi>0$, since it implies $\dot{a}_1>0$. 
\item $a_2 =0, a_1>0, \phi>0$, since it implies $\dot{a}_2<0$.
\item $a_1 = a_2 = 0 $, $\phi \geq 0$ coincides with the reducible solution $a_1 \equiv a_2 \equiv 0$, which by uniqueness of solutions implies this solution coincides with the reducible solution for all time. 
\item Since $\phi(t_0)>0$ for some $t_0<t$, if we assume $\phi(t)=0$, then by the mean value theorem $\dot{\phi}(t_1)<0$ for some $t_0<t_1<t$ which implies $a_1 (t_1) a_2 (t_1) > 0$. 
\end{enumerate} 
\end{proof}
\begin{remark} \label{remarkmaximumprinciple} Although we have shown explicitly from the ODEs that solutions preserve $|\phi|>0$, it also follows more generally, since the function $\vert \Phi \vert^2: M \rightarrow \mathbb{R}$ is sub-harmonic for any monopole $\left(A, \Phi\right)$ over an arbitrary Calabi-Yau 3-fold $M$. In the invariant co-homogeneity one setting, this implies that if $\vert \Phi (t) \vert^2 = 0$ for some $t>0$, then it vanishes for all $t$ by the maximum principle.  
\end{remark}
We also prove that solutions lying in this set, if they exist for all time, are (exponentially) unbounded as $t \rightarrow \infty$: 
\begin{lemma} If $\left(a_0, a_1, a_2, \phi\right)$ be a solution to \eqref{dynamicODEmonopoleA} existing for all time $t\geq t^*$ with $a_0$ bounded, lying in $\mathcal{R}^+_\infty$ at initial time $t^*$, then $a_1(t)- a_2 (t)$ is unbounded as $t\rightarrow \infty$.     
\end{lemma}
\begin{proof} Since the CY-structure is AC, $\lambda \sim t$ as $t\rightarrow \infty$, hence $\frac{3}{2 \lambda  }  (a_0 \pm 1 ) \rightarrow 0$ as $t \rightarrow \infty$ by assumption. We also have that $\frac{u_1 \pm u_0}{\mu } \rightarrow 1$ in the same limit. Since $a_1>0> a_2 $, then for every $\epsilon>0$, $\exists T^* \gg 0 $ such that $\forall t> T^*$ the following inequalities hold: 
\begin{gather} 
 \begin{aligned} 
& \frac{3}{2 \lambda  }  (a_0 -1 ) a_1 > - \epsilon a_1& &
- \frac{3}{2 \lambda  }  (a_0 + 1 ) a_2 <  - \epsilon a_2 &   \\
&- \frac{u_1-u_0}{\mu} a_2  >  -(1-\epsilon) a_2& & 
 - \frac{u_1-u_0}{\mu} a_1 < -(1-\epsilon) a_1 &  
\end{aligned}
\end{gather}
Let $T:= \max \lbrace t^*, T^* \rbrace$ for our fixed initial time $t^*$, and $\bar{\phi} := \phi(t^*) >0$. Since $\phi$ is strictly increasing in $\mathcal{R}^+_\infty$, we have $\phi(t)> \bar{\phi}$ for $t>t^*$. Putting all our inequalities together on $t>T$, we obtain the following: 
 \begin{align*} 
\dot{a}_1 -\dot{a}_2 > ( 2 \bar{\phi} - \epsilon ( 2 \bar{\phi}+1 )) ( a_1 - a_2)    
\end{align*}
and if we choose $\epsilon < \dfrac{\bar{\phi}}{ (2\bar{\phi}+1)}$, then by integrating:  
\begin{align*} 
a_1(t) -a_2(t) \geq ( a_1(T) - a_2 (T)) \exp ( (t -T) \bar{\phi} )  
\end{align*} 
\end{proof} 
This completes the proof the proposition, since in all cases, using the symmetries \eqref{symmetrygaugemonopole}, \eqref{symmetrymonopole}, for the power series solutions near the singular orbit, one can reduce to the case of the monopole lying in $\mathcal{R}^+_\infty$ for some small initial time:
\begin{enumerate}
\item For local solutions $\left(R', \Psi' \right)_{\epsilon', \delta'}$ defined by Proposition \ref{localresol6}, since $\epsilon',\delta' \neq 0$ by assumption i.e. we do not have an instanton, then up to symmetry one can assume $\epsilon',\delta'>0$. Hence $\left(a_0, a_1, a_2, \phi \right)_{\epsilon', \delta'}$ lies in $\mathcal{R}^+_\infty$. 
\item For local solutions $\left(Q^l, \Theta^l \right)_{\alpha_l, \beta_l}$ defined by Proposition \ref{localresol4}, since $\alpha_l,\delta_l \neq 0$ by assumption, up to symmetry one can assume $\alpha_l<0,\beta_l>0$. Hence $\left(a_0, a_1, a_2, \phi \right)_{\alpha_l, \beta_l}$ lies in $\mathcal{R}^+_\infty$. This also covers the case $l \leq 0$, by exchanging the factors of $SU(2)$ on the principal orbits, and considering the Calabi-Yau structure on $\mathcal{O}(-2,-2)$ pulled-back via this diffeomorphism. 
\item  For local solutions $\left(R, \Psi \right)_{\epsilon, \delta}$ defined by Proposition \ref{localresol1}, since $\delta \neq 0$ by assumption, then up to \eqref{symmetrymonopole}, one can also assume $\delta>0$. While the image of a solution under \eqref{symmetrygaugemonopole} may not extend to the singular orbit, existence of a bounded solution extending to the singular orbit would imply existence of a bounded solution in $\mathcal{R}^+_\infty$ under the symmetry.   
\end{enumerate}  
\end{proof}
The existence of invariant monopoles on $T^*S^3$ was shown in \cite{Oliveira2015}: by restricting the monopole equations to the $\mathbb{R}^3$-fibre over a point in $S^3$, i.e. solving \eqref{dynamicODE} with $a_0 \equiv 0$,  $a_1 \equiv a_2$, Oliveira constructed a one-parameter family of invariant monopoles for $T^*S^3$, first by considering the local solutions $\left(S, \Phi \right)_{\xi, \chi}$ with $\xi=0$ of this system, and then applying PDE methods for invariant monopoles in $\mathbb{R}^3$. Due to a computational error in \cite[Lemma 6, Appendix A]{Oliveira2015}, Oliveira did not consider local solutions $\left(S, \Phi \right)_{\xi, \chi}$ with $\xi$ non-zero, but we fix the resulting gap in the proof of the main theorem \cite[Theorem 1]{Oliveira2015} by imposing quadratic curvature decay:
\begin{prop}\label{thmmonopoleglobalB} Invariant monopoles with quadratic curvature decay on $P_\mathrm{Id} \rightarrow T^* S^3$ are in a one-parameter family $\left(S, \Phi \right)_{\chi}:=\left(S, \Phi \right)_{0, \chi}$, $\chi>0$, up to gauge. Moreover: 
\begin{enumerate}[\normalfont(i)]
\item $\lim_{t \to \infty} \left(S, \Phi \right)_{\chi} (t) = \left( A^\mathrm{can}, \Phi_\chi \right)$, where $\Phi_\chi$ is a constant non-trivial Higgs field.
\item $\left(S, \Phi \right)_{0,0} = S_0$ where $S_0$ is the instanton of Theorem \ref{stenzelthm}, with a trivial Higgs field. 
\end{enumerate} 
\end{prop}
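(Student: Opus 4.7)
The plan is to use the symmetries and the sub-harmonicity of $|\Phi|^2$ to reduce to the case $\xi>0,\chi>0$, then identify a forward-invariant region for the ODE system, and finally rule out bounded solutions in that region via an exponential-growth argument. The local power series in Proposition \ref{localmonopolesmoothing} gives a two-parameter family of local solutions, and we need to show that only the axis $\xi=0$ extends globally with quadratic curvature decay.

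First I would exploit symmetries. Since the Stenzel Calabi--Yau structure is of type $\mathcal{II}$ (i.e.\ $u_0=0$), the involution \eqref{symmetrymetric} is a symmetry of \eqref{dynamicODE}; in the $(a_0,a_+,a_-,\phi)$-variables of Proposition \ref{localmonopolesmoothing} it acts as $(a_0,a_+,a_-,\phi)\mapsto(-a_0,a_+,-a_-,\phi)$ and conjugates $(S,\Phi)_{\xi,\chi}$ to $(S,\Phi)_{-\xi,\chi}$. Combined with \eqref{symmetrygauge}, this lets me assume $\xi\geq 0,\chi>0$ (the $\chi=0$ sub-case is handled by Theorem \ref{stenzelthm}). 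Remark \ref{remarkmaximumprinciple} then gives $\phi>0$ for all $t>0$.

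Second, I would rewrite \eqref{dynamicODE} in the $a_\pm$ variables, obtaining
\begin{align*}
\dot{a}_0 &= -\tfrac{4\lambda}{\mu}\bigl(a_0 - a_+ a_-\bigr),\\
\dot{a}_+ &= \tfrac{3(v_3+v_0)}{2\lambda\mu}\bigl(a_0 a_- - a_+\bigr)-2\phi\, a_+,\\
\dot{a}_- &= \tfrac{3(v_3-v_0)}{2\lambda\mu}\bigl(a_0 a_+ - a_-\bigr)+2\phi\, a_-,\\
\dot{\phi} &= \tfrac{3}{2\mu^2}\bigl((v_0-v_3)a_+^2+(v_0+v_3)a_-^2-2v_0\bigr),
\end{align*}
and verify that the open quadrant $\mathcal{Q}:=\{a_0>0,\ a_+>0,\ a_->0,\ \phi>0\}$ is forward-invariant. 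Using the Stenzel signs $v_3>0>v_0$ and $v_3\pm v_0>0$ (easy check from \eqref{hypoBsol}), each face of $\partial\mathcal{Q}$ has the relevant derivative strictly positive (e.g.\ $\dot{a}_-|_{a_-=0}=\tfrac{3(v_3-v_0)}{2\lambda\mu}a_0 a_+>0$), while $\phi>0$ is preserved globally by the maximum principle. The local expansions of Proposition \ref{localmonopolesmoothing} then place $(S,\Phi)_{\xi,\chi}(t^*)\in\mathcal{Q}$ for all sufficiently small $t^*>0$ once $\xi>0,\chi>0$, and forward-invariance keeps the solution in $\mathcal{Q}$ for as long as it exists.

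Third, I would derive a contradiction with quadratic curvature decay. By Lemma \ref{quadraticdecay} such a solution has $a_0,a_\pm$ bounded, so by the remark following that lemma it converges as $t\to\infty$ to a $t$-invariant solution of \eqref{instCone}; the only one compatible with $a_\pm\geq 0$ and non-trivial $\phi$ is $(0,0,0,m)$ with $m\geq 0$. Using $\lambda\sim t$, $\mu\sim t^2$, $v_3\sim\mu$ and $v_0\to 0$ exponentially from \eqref{hypoBsol}, the $a_-$-equation is asymptotically $\dot{a}_-\sim -\tfrac{3}{2t}a_-+2\phi a_-$, so once $\phi$ is bounded below by some $c>0$ for $t\gg 1$ and $a_->0$ is preserved, an integration of the differential inequality $\dot{a}_-\geq (2c-o(1))a_-$ yields exponential growth of $a_-$, contradicting boundedness.

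The main obstacle is the degenerate case $m=0$, in which $\phi\to 0$ and the lower bound on $\phi$ above is not automatic. To handle this I would integrate the $\dot\phi$-equation: asymptotically $\dot\phi\sim\tfrac{3}{2\mu}(a_-^2-a_+^2)$, so rewriting as in \eqref{integralformula} and using the $a_0$-equation to get $a_0\sim a_+a_-$ asymptotically, one obtains an algebraic relation between the decay rates of $a_0,a_\pm,\phi$ that is incompatible with all four decaying to zero while $a_-$ remains strictly positive throughout $\mathcal{Q}$. Equivalently, one linearises the system about $(0,0,0,0)$ and shows no decaying trajectory has $a_->0$ on $[T,\infty)$; this eigenvalue/eigenvector computation at the critical point is the most technical part of the argument, but closes the case $m=0$ and so completes the proposition, recovering only the Oliveira family $(S,\Phi)_{0,\chi}$.
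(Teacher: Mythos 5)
Your overall strategy -- pass to $a_\pm=a_1\pm a_2$, exhibit a forward-invariant quadrant $\{a_0,a_+,a_->0,\ \phi>0\}$, and force exponential growth of $a_-$ from a positive lower bound on $\phi$ -- is exactly the paper's (its set $\mathcal{S}^+_\infty$ and Lemma \ref{propblowup}). But the step you yourself flag as the ``main obstacle'' is a genuine gap, and your proposed fix would not close it. The whole argument hinges on a uniform lower bound $\phi\geq c>0$ for large $t$; you try to obtain it by first invoking convergence to a $t$-invariant solution $(0,0,0,m)$ and then excluding $m=0$ by linearising at the origin. That linearisation does not do the job: to leading order $\dot\phi\sim 0$ and $\dot a_-\sim-\tfrac{3}{2t}a_-$, so the linearised system has perfectly good decaying trajectories with $a_->0$ and $\phi\to0$, and nothing at the linear level forbids them. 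The paper's resolution is Lemma \ref{maximumlemma}: computing
\begin{align*}
\left.\ddot{\phi}\right|_{\dot{\phi}=0}=\frac{6}{\mu^2}\left( a_+^2 ( v_3 - v_0 ) + a_-^2 ( v_3 + v_0 ) \right) \phi,
\end{align*}
one sees every critical point of $\phi$ with $\phi>0$ is a strict minimum, so once $\phi(t^*)>0$ and $\dot\phi(t^*)>0$ (true for the local solutions with $\chi>0$), $\phi$ is strictly increasing and hence bounded below by $\phi(t^*)$. This bypasses any discussion of limits or of the critical point at the origin and feeds directly into the integration $a_-(t)\geq a_-(T)\exp((t-T)\bar\phi)$. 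Without this (or an equivalent) monotonicity statement your proof is incomplete.

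Two further points. First, your symmetry reduction to $\chi>0$ is unjustified: on the Stenzel structure $v_0\neq0$, so neither \eqref{symmetrygauge} nor \eqref{symmetrymetric} flips the sign of $\phi$ (the involution \eqref{symmetrymonopole} is only a symmetry of the type-$\mathcal{I}$ system). The case $\chi<0$ needs the mirror argument in $\{a_0,a_+,a_->0,\ \phi<0\}$, where it is $a_+$ that grows exponentially. Second, you only prove the non-existence half of the proposition; part (i) -- that the surviving family $(S,\Phi)_{0,\chi}$, $\chi>0$, actually exists for all time, has $t a_+\phi$ bounded, and converges to $(A^{\mathrm{can}},\Phi_\chi)$ -- still requires an argument on the invariant locus $a_0=a_-\equiv0$ (the paper's Lemmas \ref{blowuplemma} and \ref{fixedlemma}), which your proposal omits.
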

\begin{proof} 
We rewrite the monopole equations \eqref{dynamicODE} with $a_\pm := a_1 \pm a_2$: 
\begin{gather} \label{dynamicODEmonopoleB} 
 \begin{aligned} 
\dot{a_0} &= \frac{4 \lambda}{ \mu} \left( a_+ a_- - a_0  \right)  \\
\dot{a}_+ &= \frac{3 (v_3 + v_0)}{2 \lambda \mu}(  a_0 a_- - a_+ ) - 2 a_+ \phi \\ 
\dot{a}_- &= \frac{3 (v_3 - v_0)}{2 \lambda \mu} ( a_0 a_+ - a_-) + 2 a_- \phi  \\
\dot{\phi} &= \frac{3}{\mu^2} \left( \left( \frac{1}{2} (a_+^2+a_-^2) -1\right) v_0 - \frac{1}{2} ( a_+^2 - a_-^2 )v_3 \right) 
\end{aligned} 
\end{gather}
for $\mu, \lambda, v_3, v_0$ explicit solutions to the hypo-equations given in \eqref{hypoBsol}, and recall from Lemma \ref{quadraticdecay}, that we are interested in solutions with $a_0, a_+, a_-, t\phi a_+, t\phi a_-$ bounded. There are three parts to the proof: 
\begin{enumerate}
\item Solutions to \eqref{dynamicODEmonopoleB} extending over the singular orbit with $a_0, a_- \not \equiv 0$ are unbounded.  
\item Solutions to \eqref{dynamicODEmonopoleB} with $a_0, a_- \equiv 0$, which have local power-series $\left(S, \Phi \right)_{\xi, \chi}$ given in Proposition \ref{localmonopolesmoothing} for $\xi =0$, are bounded iff $\chi \geq 0$. 
\item In this case, solutions with $\chi>0$ have $a_+ \rightarrow 0$, $\phi \rightarrow \phi_\chi$ as $t\rightarrow \infty$ for some constant $\phi_\chi>0$, and $t a_+ \phi$ bounded. The solution with $\xi, \chi=0$ is the explicit instanton \eqref{explicit} found in \cite{Oliveira2015}.  
\end{enumerate}
To prove the first part, we will recall the symmetries \eqref{symmetrygauge} and \eqref{symmetrymetric} of the problem:
 \begin{lemma} The following involutions are symmetries of \eqref{dynamicODEmonopoleB}: 
\begin{align} \label{symmetrygaugemonopoleB}
\left( a_0, a_+, a_- , \phi \right) \mapsto \left( a_0,  -a_+, -a_-, \phi \right)
\end{align}
\begin{align} \label{symmetrymonopoleB}
\left( a_0, a_+, a_-, \phi \right) \mapsto \left( -a_0, a_+, - a_-, \phi \right)
\end{align}
\end{lemma}
We can also prove a strict monotonicity condition for $\phi$:
\begin{lemma}[Monotonicity] \label{maximumlemma} A solution $\left( a_0, a_+, a_- , \phi \right)$ to \eqref{dynamicODEmonopoleB} with $\pm \phi(t^*) > 0, \pm \dot{\phi}(t^*)>0$ at some initial time $t^*>0$, has $\pm \phi(t) > 0, \pm \dot{\phi}(t)>0$ for all $t\geq t^*$. 
\end{lemma} 
\begin{proof}
We calculate:
\begin{align*}
\left.\ddot{\phi} \right|_{\dot{\phi}=0}=\frac{6}{\mu^2}\left( a_+^2 \left( v_3 - v_0 \right) + a_-^2 \left( v_3 + v_0 \right) \right) \phi  
\end{align*}
In particular, since $\dot{\phi}\neq 0$ for $a_+ = a_- = 0$, and $\left( v_3 \pm v_0 \right)>0$ for $t \neq 0$, we have $\ddot{\phi}> 0 $ at $\dot{\phi} = 0$  iff $ \phi > 0$. Hence any critical point for $\phi$ must be minimum, and since $\dot{\phi}(t^*)>0$ , we must have $\dot{\phi}(t)>0, \phi(t)>0$ for all $t>t^*$. The proof for $ \phi(t) < 0,  \dot{\phi}(t)<0$ is similar. 
\end{proof}
Using this, we find a set that contains our solutions for all time: 
\begin{lemma} A solution $\left( a_0, a_+, a_- , \phi \right)$ to \eqref{dynamicODEmonopoleB} lying in $\mathcal{S}^{\pm}_\infty : = \lbrace \left( a_0, a_+, a_- , \phi \right) \in \mathbb{R}^4 \mid a_0 > 0, a_+ > 0, a_->0, \pm \phi > 0\rbrace $ at some initial time $t^*$ with $ \pm \dot{\phi}(t^*)>0$, remains there for all forward time $t\geq t^*$. 
\end{lemma}
\begin{proof}
Let $t$ be the first time a solution $\left( a_0, a_+, a_- , \phi \right)$ leaves $\mathcal{S}^{\pm}_\infty $. However, none of the possibilities for a solution to leave $\mathcal{S}^{\pm}_\infty$ can hold at $t$: 
\begin{enumerate}[(i)]
\item $a_{0} =0, a_->0, a_+>0$, since it implies $\dot{a}_0>0$. The same is true if we permute indices $0,+,-$.  
\item $a_+ =0, a_-= 0,a_0>0$, since then $\dot{a}_+ = \dot{a}_- = 0 $. By local uniqueness and existence for ODEs, from \eqref{dynamicODEmonopoleB}, one sees that the solution must have  $a_+ \equiv 0, a_- \equiv 0$, at least for some small interval $\left( t- \epsilon, t+\epsilon\right) $. Again one obtains similar results by permuting indices.   
\item $a_0 = a_+ = a_- = 0 $ coincides with solution $\left( 0 , 0, 0 , -3 I \right)$, where $\dot{I} = \frac{v_0}{\mu^2} $, which is a solution to \eqref{dynamicODEmonopoleB} for any choice of initial condition $\phi(t)$ for $t>0$.  
\item $\phi=0$ is impossible by monotonicity. 
\end{enumerate} 
\end{proof} 
We can now use monotonicity to bound $\phi$ away from zero, which will show that solutions in $\mathcal{S}^{\pm}_\infty$ must be unbounded as $t\rightarrow \infty$:  
\begin{lemma} \label{propblowup} A solution $\left( a_0, a_+, a_- , \phi \right)$ to \eqref{dynamicODEmonopoleB} lying in $\mathcal{S}^{\pm}_\infty$ with $\pm \dot{\phi}>0$ at some initial time $t^*>0$ cannot have $a_\mp$ bounded for all forward-time $t\geq t^*$.
\end{lemma}
\begin{proof}
We start with the case $\mathcal{S}^{+}_\infty$. Since $a_+ a_0 > 0$, we have the following inequality: 
 \begin{align*} 
\dot{a}_- > \left( 2\phi - \frac{3 (v_3 - v_0)}{2 \lambda \mu} \right)  a_-   
\end{align*}
Since $\frac{3 (v_3 - v_0)}{2 \lambda \mu} \rightarrow 0$ and $\phi$ strictly increasing, then for fixed $t^*$, $\exists T>t^*$ such that $\forall t> T$: 
 \begin{align*} 
\bar{\phi}:= \phi (t^*) > \frac{3 (v_3 - v_0)}{2 \lambda \mu} (t)
\end{align*}
Then, since $a_->0$, integrating the inequality for  $\dot{a}_-$ gives: 
 \begin{align*} 
a_- (t)  \geq a_- (T) \exp ((t-T)  \bar \phi )   
\end{align*}
The proof for $\mathcal{S}^{-}_\infty$ is almost identical, since now: 
 \begin{align} \label{eqnblowup}
\dot{a}_+ > \left( - 2\phi - \frac{3 (v_3 + v_0)}{2 \lambda \mu} \right)  a_+   
\end{align} 
with $\phi<0$ monotonically decreasing and $\frac{3 (v_3 + v_0)}{2 \lambda \mu} \rightarrow 0$.  
\end{proof}
To complete the proof of the first part of the theorem, one only need apply this lemma to the power-series solution $\left(S, \Phi \right)_{\xi, \chi}$ of Proposition \ref{localmonopolesmoothing}. Up to symmetry, we can take $\chi,\xi>0$, so for some $0<t^*$ sufficiently small, the solution $\left( a_0, a_+, a_- , \phi \right)_{\xi,\chi}(t^*) $ lies in $\mathcal{S}^{+}_\infty$ with $\dot{\phi}(t^*)>0$, and hence we obtain that these solutions are unbounded.   

To prove the second and third parts of the theorem (cf. \cite[Theorem 1]{Oliveira2015}), we need to prove local solutions $\left(S, \Phi \right)_{0, \chi}$  with  $\xi=0$, i.e. solutions to the ODE:
\begin{align} \label{dynamicODEmonopoleB2}   
\dot{a}_+ = -   a_+ \left( \frac{3 (v_3 + v_0)}{2 \lambda \mu} + 2\phi \right)& &\dot{\phi} = - \frac{3}{\mu^2} \left(\frac{1}{2} a_+^2 (v_3 - v_0) + v_0 \right)
\end{align}
have fixed asymptotics $a_+ \rightarrow 0$, $\phi \rightarrow \phi_\chi >0$ only in the case $\chi>0$,   and if $\chi<0$ are these solutions are unbounded as $t\rightarrow \infty$. By uniqueness, the local solution with $\chi = 0$, $\xi=0$ is the instanton \eqref{explicit} with $\phi \equiv 0$.
  
We first note that the sign of $a_+$ is preserved by \eqref{dynamicODEmonopoleB2}, hence by using the gauge symmetry \eqref{symmetrygaugemonopoleB} we can always reduce to the case $a_+ >0$ in the following. Assuming this, we can prove the existence of a set in which solutions become unbounded:   
\begin{lemma} \label{blowuplemma} Solutions to \eqref{dynamicODEmonopoleB2} with $a_+>0,\phi<0, \dot{\phi}<0 $ at some initial time $t^*>0$, cannot have $a_+$ bounded for all forward-time $t \geq t^*$. 
\end{lemma}
\begin{proof} This proceeds almost identically to the proof of Proposition \ref{propblowup}, only now we have the inequality \eqref{eqnblowup} is an equality. Again we have $\phi<0$ monotonically decreasing by Lemma \ref{maximumlemma}, and integrating  the inequality for $\dot{a}_+$ in terms of $\phi(t^*)$, we have that there exists $T>t^*$, such that for all $t\geq T$: 
 \begin{align*} 
a_+ (t)  \geq a_+ (T) \exp (-(t-T) \phi(t^*) )   
\end{align*} 
\end{proof}
We also prove the existence of a set in which solutions are bounded for all time, and have the desired asymptotics: 
\begin{lemma} \label{fixedlemma} Solutions to \eqref{dynamicODEmonopoleB2} with $a_+>0,\phi>0, \dot{\phi}>0 $ at some initial time $t^*$, are bounded for all $t \geq t^*$, and have $\left( a_+, \phi \right) \rightarrow \left( 0 , \phi_\chi \right)$ as $t\rightarrow \infty$, for some constant $\phi_\chi>0$. Moreover, $t a_+ \phi$ is bounded for all $t \geq t^*$.  
\end{lemma}
\begin{proof} We already have lower bounds for $a_+$ and $\phi$. We now prove an upper bound for $\phi$: we have the inequality $\dot{\phi} < -\frac{3v_0}{\mu^2}$, and hence by integrating $\phi$ must be bounded above. Since $\phi$ is also strictly increasing, this implies the existence of a limit $\phi \rightarrow \phi_\chi>0$ as $t\rightarrow \infty$.

For $a_+$, since $ \frac{3 (v_3 + v_0)}{2 \lambda \mu}>0$, and $\phi>0$ strictly increasing, we have the inequality $\dot{a}_+ \leq - 2 a_+ \phi(t^*)$. Integrating this, we get:
 \begin{align*} 
0 < a_+ (t)  \leq a_+ (t^*) \exp (- 2(t-t^*) \phi(t^*) )   
\end{align*}
giving us the required asymptotics for $a_+$, $t \phi a_+ $.      
\end{proof}
The final two parts of the proof of the Theorem \ref{thmmonopoleglobalB} are now immediate, since local solution $\left(S, \Phi \right)_{0, \chi}$ to \eqref{dynamicODEmonopoleB2} given by Proposition \ref{localmonopolesmoothing} with $\xi=0$, $\chi<0$ satisfies the conditions of Lemma \ref{blowuplemma}, while for $\chi>0$ they satisfy the conditions of Lemma \ref{fixedlemma}. 
\end{proof}
\appendix

\section{Extending invariant bundle data to the singular orbit} \label{sectioncohobundles}

By considering $SU(2)^2$-invariant instantons and monopoles on the space of principal orbits, we obtained ordinary differential equations depending on geodesic parameter $t\in \mathbb{R}_{>0}$. In this appendix, we check the boundary conditions for these data to extend smoothly to the singular orbits at $t=0$. 
 
First, in a general setting for extending homogeneous bundles over the principal orbits of co-homogeneity one manifolds to the singular orbits, we let $M$ denote a co-homogeneity one manifold with group diagram $H \subset H' \subseteq K$, and $H'$-representation $V$, and let the structure group be denoted $G$. Any $K$-invariant $G$-bundle $P$ over $M$ must be of the form $P_{\lambda} = K \times_{H'} \left( V \times G \right)$ for some homomorphism $\lambda: H' \rightarrow G$, which we denote the \textit{singular} homomorphism. It is not difficult to see that $P_{\lambda}$ restricted to a principal orbit $K/H$ is a homogeneous bundle $K \times_{H} G$, where $H$ acts on $G$ via group homomorphism $\left. \lambda \right|_{H} $.
 
Focusing on the case of $SU(2)^2$-invariant $SU(2)$-bundles over the co-homogeneity one manifolds $T^* S^3$, $\mathcal{O}(-1) \oplus \mathcal{O}(-1)$, and $\mathcal{O} \left(-2,-2\right)$, let us describe and classify the possible extensions of homogeneous bundles in this way. Recall from \S \ref{invmonopole} that homogeneous bundles $P_n$ over the principal orbits are classified by the integer $n$, and an additional $j \in \mathbb{Z}_2$ in the case of homogeneous bundles $P_{n,j}$ over the principal orbit of $\mathcal{O} \left(-2,-2\right)$: 
\begin{prop} \label{bundles} Up to equivariant isomorphism, the $SU(2)^2$-invariant $SU(2)$-bundles extending $P_n$, $P_{n,j}$ to the singular orbit are given by:
\begin{enumerate}[\normalfont(i)]
\item Extending over $S^3 = SU(2)^2 / \triangle SU(2)$: $P_1$, $P_0$ extend to $P_\mathrm{Id}$, $P_\mathbf{0}$ defined by singular homomorphisms $\mathrm{Id}, \mathbf{0}$ respectively. 
\item Extending over $S^2 = SU(2)^2 / U(1) \times SU(2)$: $P_n$ extends to $P_{n, \mathbf{0}}$ defined by singular homomorphism $\iota^n \times \mathbf{0}$ for all $n$, and $P_1$ also extends to $P_{0, \mathrm{Id}}$ defined by singular homomorphism  $\iota^0 \times \mathrm{Id}$. 
\item Extending over $S^2 = SU(2)^2 / SU(2) \times U(1)$: $P_n$ extends to $P_{\mathbf{0},n}$ defined by singular homomorphism $\mathbf{0} \times \iota^n$ for all $n$, and $P_1$ also extends to $P_{\mathrm{Id},0}$ defined by singular homomorphism  $\mathrm{Id} \times \iota^0$.  
\item Extending over $S^2 \times S^2 = SU(2)^2 / U(1)^2$: $P_{n,j}$ extends to $P_{l,m}$ defined by singular homomorphism $\mathbb{\iota}^l\times \mathbb{\iota}^m$, where $l+m =n$, and either $j=m \, \mathrm{mod} 2$ or $j=l \, \mathrm{mod} 2$. 
\end{enumerate}
where $\mathrm{Id}$, $\mathbf{0}: SU(2) \rightarrow SU(2)$ denote the identity and the trivial homomorphism respectively, and $\mathbb{\iota}^n$ denotes the $n^\text{th}$-power of the diagonal embedding $\iota: U(1) \hookrightarrow SU(2)$. 
\end{prop}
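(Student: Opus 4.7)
The plan is to reduce the classification of $K$-invariant $SU(2)$-bundles $P_\lambda = K \times_{H'}(V \times SU(2))$ to the classification of conjugacy classes of homomorphisms $\lambda : H' \to SU(2)$ (conjugation in the structure group), and then match each such $\lambda$ with the principal-orbit data $P_n$ or $P_{n,j}$ by computing the restriction $\lambda|_H$. For this, I would first record two elementary facts: (a) every continuous homomorphism $U(1) \to SU(2)$ is conjugate to $\iota^n$ for a unique $n \geq 0$ (Weyl action identifies $\iota^n \sim \iota^{-n}$ via conjugation by $E_2$), and (b) every continuous homomorphism $SU(2) \to SU(2)$ is either trivial or, since $SU(2)$ is simple and the kernel must be normal, an automorphism, hence conjugate to $\mathrm{Id}$.

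For (i) with $H' = \triangle SU(2)$, the two classes $\mathbf{0}, \mathrm{Id}$ restrict on $\triangle U(1)$ to the trivial map and to $\iota$, yielding $P_0$ and $P_1$ respectively. For (ii) with $H' = U(1)\times SU(2)$, any homomorphism to $SU(2)$ is a pair of commuting maps; if the $SU(2)$-factor is $\mathrm{Id}$, connectedness of $U(1)$ forces the $U(1)$-factor into the identity component of the centraliser of $\mathrm{Id}(SU(2)) = SU(2)$, i.e.~into $\{I\}$, giving $\mathbf{0} \times \mathrm{Id}$, whereas if the $SU(2)$-factor is $\mathbf{0}$ we get $\iota^n \times \mathbf{0}$. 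Restriction along the inclusion $\triangle U(1) \hookrightarrow U(1) \times SU(2)$, $z \mapsto (z, \iota(z))$, sends $\mathbf{0} \times \mathrm{Id} \mapsto \iota$ (so $P_{0, \mathrm{Id}}$ extends $P_1$) and $\iota^n \times \mathbf{0} \mapsto \iota^n$ (so $P_{n,\mathbf{0}}$ extends $P_n$). Case (iii) follows from (ii) by swapping the two $SU(2)$-factors in $K = SU(2)^2$.

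The main bookkeeping is in (iv): since $U(1)^2$ is abelian, any $\lambda: U(1)^2 \to SU(2)$ lands in a maximal torus, so up to conjugation $\lambda = \iota^l \times \iota^m$ for some $(l,m) \in \mathbb{Z}^2$, with the Weyl identification $(l,m)\sim(-l,-m)$. Using the splitting $K_{2,-2} \cong \triangle U(1) \times \mathbb{Z}_2$ recalled in the setup, I would evaluate $\iota^l \times \iota^m$ on a generic element $(e^{i\theta}, e^{i\theta})\cdot(1,(-1)^k) \in K_{2,-2}$ to obtain
\begin{equation*}
\mathrm{diag}(-1,-1)^{mk}\, \mathrm{diag}(e^{i(l+m)\theta}, e^{-i(l+m)\theta}),
\end{equation*}
and compare with \eqref{homomorphism} to read off $n = l+m$ and $j \equiv m \pmod 2$. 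The Weyl symmetry $(l,m)\sim(-l,-m)$ preserves both $n \mod 2$ and the parities of $l,m$ individually, so the parametrisation is consistent; the ``either $j \equiv m$ or $j \equiv l$'' phrasing reflects the fact that the splitting $K_{2,-2} \cong \triangle U(1)\times \mathbb{Z}_2$ is non-unique (a different choice of $\mathbb{Z}_2$-generator, e.g.~$(−1,1)$ instead of $(1,-1)$, would produce the other assignment). The hard part will be verifying that there is no further identification beyond the Weyl reflection — here one uses that the normaliser of the maximal torus in $SU(2)$ is exhausted by this reflection, so two homomorphisms $\iota^l \times \iota^m$ and $\iota^{l'} \times \iota^{m'}$ are $SU(2)$-conjugate only via $(l',m') = \pm(l,m)$, which finishes the classification.
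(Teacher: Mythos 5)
Your proposal is correct and follows essentially the same route as the paper: classify the singular homomorphisms $\lambda : H' \to SU(2)$ up to conjugacy in the structure group and match them to the principal-orbit bundles by restricting along $H \subset H'$. You spell out some standard facts (the classification of homomorphisms $U(1), SU(2) \to SU(2)$, the centraliser argument, the Weyl identification) that the paper leaves implicit in parts (i)--(iii), while your computation in (iv) via the splitting $K_{2,-2} \cong \triangle U(1) \times \mathbb{Z}_2$ and the two choices of $\mathbb{Z}_2$-generator is the same as the paper's.
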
   
\begin{proof} The first two parts of the proposition follow directly from the previous discussion, and the group diagrams $\triangle U(1) \subset \triangle SU(2) \subset SU(2)^2$, and $\triangle U(1) \subset U(1) \times SU(2) \subset SU(2)^2$ for $T^* S^3$ and $\mathcal{O}(-1) \oplus \mathcal{O}(-1)$, respectively. The third part follows via exchanging the factors of $SU(2)^2$ for $\mathcal{O}(-1) \oplus \mathcal{O}(-1)$, i.e. writing the group diagram as $\triangle U(1) \subset SU(2) \times U(1) \subset SU(2)^2$.  

The group diagram for $\mathcal{O} \left(-2,-2\right)$ is given by $K_{2,-2} \subset U(1)^2 \subset SU(2)^2$, so the singular homomorphisms are classified by a pair of integers $(l,m)$:
\begin{gather}\label{homU1squared}
\begin{aligned} 
&(e^{i\theta_1}, e^{i\theta_2}) \mapsto \begin{pmatrix} 
e^{il\theta_1 + im \theta_2} & 0 \\
0 & e^{-il\theta_1 - im \theta_2}
\end{pmatrix}
\end{aligned} 
\end{gather}
where principal isotropy group $K_{2,-2}$ is uniquely defined as the kernel of \eqref{homU1squared} with $\left(l,m\right) = \left(2,-2\right)$.  One can realise the isomorphism $K_{2,-2} \cong \triangle U(1) \times \mathbb{Z}_2 \subset U(1)^2$ in exactly two ways, either with $\mathbb{Z}_2 \subset U(1)^2$ defined as the subgroup generated by $(e^{2i \pi}, e^{i\pi})$ or $\mathbb{Z}_2 \subset U(1)^2$ defined as the subgroup generated by $(e^{i \pi}, e^{2i\pi})$, equivalent up to the automorphism exchanging the factors of $U(1) \subset U(1)^2$. The first of these isomorphisms is given by $K_{2,-2} \ni (e^{i\theta_1}, e^{i\theta_2}) \mapsto (e^{i\theta_1}, e^{i\theta_1}).(e^{2i \pi}, e^{i (\theta_2 -\theta_1)}) \in  \triangle U(1)\times \mathbb{Z}_2$, and if we re-write \eqref{homU1squared} as: 
\begin{align*} 
(e^{i\theta_1}, e^{i\theta_2} ) \mapsto \begin{pmatrix} 
e^{i\theta_2-i\theta_1} & 0 \\
0 & e^{-i\theta_2 + i\theta_1} 
\end{pmatrix}^m \begin{pmatrix} 
e^{i\left(l+m \right)\theta_1} & 0 \\
0 & e^{-i\left(l+m \right)\theta_1} 
\end{pmatrix}  
\end{align*}
and fix the $\mathbb{Z}_2$-generator $(e^{2i \pi}, e^{i\pi})$, then \eqref{homU1squared} restricts to $\triangle U(1) \times \mathbb{Z}_2 \subset U(1)^2$ as the homomorphism \eqref{homomorphism} with $j = m \, \mathrm{mod} 2$ and $l+m =n$. By exchanging the factors of $U(1) \subset U(1)^2$, which also exchanges $\left(l,m\right)$ in \eqref{homU1squared}, we get the homomorphism \eqref{homomorphism} with $j = l \, \mathrm{mod} 2$ and $l+m =n$.
 \end{proof}
With a little extra work, the following proposition can also be seen from the previous discussion: 
\begin{proposition} \label{so3prop} Any $SU(2)^2$-invariant $SO(3)$-bundle over $\mathcal{O}(-1) \oplus \mathcal{O}(-1)$, $T^* S^3$ or $\mathcal{O}(-2,-2)$ admitting irreducible invariant connections has a lift to an $SU(2)^2$-invariant $SU(2)$-bundle.  
\end{proposition}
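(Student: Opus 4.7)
The strategy is case-by-case: for each of the three manifolds, I would classify the $SU(2)^2$-invariant $SO(3)$-bundles via their singular isotropy homomorphism $\lambda : H' \to SO(3)$, parallel to Proposition \ref{bundles}, and then check that any such bundle carrying an irreducible invariant connection has $\lambda$ lifting through the double cover $SU(2) \to SO(3)$.

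The first step is to repeat the computation of Proposition \ref{principalconn} for $SO(3)$ in place of $SU(2)$ on the principal orbit. A homomorphism $\triangle U(1) \to SO(3)$ is classified by a single weight $N \in \mathbb{Z}_{\geq 0}$ via $e^{i\theta} \mapsto R(N\theta)$, with adjoint representation $\mathfrak{so}(3) = \mathbb{R} \oplus \mathbb{C}_N$. Non-abelian intertwiners $\mathfrak{m} \to \mathfrak{so}(3)$ (analogous to the tensors $I_i, J_i$ of \eqref{tensors}) exist only when $N = 2$, exactly as for $SU(2)$, so any invariant $SO(3)$-bundle admitting irreducible invariant connections on the principal orbit must have $N = 2$. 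For $\mathcal{O}(-2,-2)$, the principal isotropy is $K_{2,-2} \cong \triangle U(1) \times \mathbb{Z}_2$; since $\mathbb{Z}_2$ acts trivially on $\mathfrak{m}$ by Remark \ref{remarkz2}, equivariance further forces the $\mathbb{Z}_2$-action on the $\mathbb{C}_N$ summand of $\mathfrak{so}(3)$ to be trivial, otherwise Schur's lemma forces all non-abelian components to vanish.

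Next, I would enumerate singular homomorphisms compatible with these principal-orbit constraints. For $T^*S^3$ with singular isotropy $SU(2)$, the homomorphism $SU(2) \to SO(3)$ restricting to weight $2$ on $\triangle U(1)$ is necessarily the covering projection, which lifts to $\mathrm{Id}: SU(2) \to SU(2)$. For $\mathcal{O}(-1) \oplus \mathcal{O}(-1)$ with singular isotropy $U(1) \times SU(2)$, the triviality of the centre of $SO(3)$ forces one of the factors of $\lambda$ to be trivial; the weight-$2$ condition on $\triangle U(1) \subset U(1) \times SU(2)$ is then satisfied either by $\lambda = \iota^2_{SO(3)} \times \mathbf{0}$ or by $\lambda = \mathbf{0} \times (\text{cover})$, both of which lift to $SU(2)$. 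For $\mathcal{O}(-2,-2)$ with singular isotropy $U(1)^2$, the abelian image in $SO(3)$ factors through a maximal torus, giving $\lambda(e^{i\theta_1}, e^{i\theta_2}) = R(L\theta_1 + M\theta_2)$ for some $(L,M) \in \mathbb{Z}^2$. The principal-orbit restriction then has weight $N = L+M$, and the extra $\mathbb{Z}_2$-action on $\mathfrak{so}(3)$ computed by $R(L\cdot 0 + M\cdot \pi) = R(M\pi)$ is trivial iff $M$ (and hence also $L$) is even. Irreducibility thus forces $L + M = 2$ with $L, M$ both even, and such $\lambda$ lifts via $\tilde{\lambda}(e^{i\theta_1}, e^{i\theta_2}) = \mathrm{diag}(e^{i(L\theta_1 + M\theta_2)/2}, e^{-i(L\theta_1 + M\theta_2)/2})$.

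The main obstacle will be the $\mathbb{Z}_2$-equivariance bookkeeping in the $\mathcal{O}(-2,-2)$ case: I need to identify precisely how the $\mathbb{Z}_2 \subset K_{2,-2}$ factor acts on $\mathfrak{so}(3)$ through $\lambda$, and verify that a non-trivial such action is incompatible with irreducibility. This is a short application of Schur's lemma once the actions are identified, but requires careful use of the isomorphism $K_{2,-2} \cong \triangle U(1) \times \mathbb{Z}_2$ from \S\ref{sectiondefine} and the triviality statement of Remark \ref{remarkz2}. Once this is in place, the lifting statement follows in all three cases from elementary Lie-theoretic considerations.
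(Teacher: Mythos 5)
Your plan is correct and follows essentially the same route as the paper: reduce to the question of lifting the singular isotropy homomorphism, use the weight-$2$ condition on $\triangle U(1)$ forced by irreducibility on the principal orbit, and in the $\mathcal{O}(-2,-2)$ case rule out the non-liftable (odd-weight) homomorphisms by observing that the extra $\mathbb{Z}_2\subset K_{2,-2}$ would then act non-trivially on $\mathfrak{so}(3)$, forcing any invariant connection into the fixed-point subalgebra $\mathfrak{u}(1)$ and hence to be reducible. The details you flag as the main obstacle are exactly the ones the paper's proof carries out.
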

\begin{proof}  
By definition, an invariant $SO(3)$-bundle lifts if the singular homomorphism lifts. On the other hand, to admit irreducible invariant connections, the invariant $SO(3)$-bundle restricted to the space of principal orbits must lift to the invariant $SU(2)$-bundle $P_1$ i.e. if we denote the principal isotropy group by $H$, the homomorphism $H \rightarrow SO(3)$ lifts to the homomorphism $H \rightarrow SU(2)$ given by \eqref{homomorphism} with $n=1$. The statement for $T^* S^3$ and $\mathcal{O}\left(-1 \right) \oplus \mathcal{O}\left(-1 \right)$ is then immediate from Proposition  \ref{bundles}.
 
As for $\mathcal{O}\left(-2, -2 \right)$, the $SO(3)$-bundles are classified by the singular homomorphisms $U(1)^2 \rightarrow SO(2) \subset SO(3)$:
\begin{gather}
\begin{aligned} 
&(e^{i\theta_1}, e^{i\theta_2}) \mapsto \begin{pmatrix} 
1 & 0 & 0  \\
0 & \cos\left(l\theta_1 + m \theta_2 \right) & \sin\left(l\theta_1 + m \theta_2 \right)  \\
0 & - \sin\left(l\theta_1 + m \theta_2 \right) & \cos\left(l\theta_1 + m \theta_2 \right) 
\end{pmatrix}
\end{aligned} 
\end{gather}
which lift to the $SU(2)$-homomorphism \eqref{homU1squared} when $l,m$ are both even. By the assumption of irreducibility, we require $l+m =2$, so it suffices to consider the case where $l,m$ are also both odd. Restricted to $K_{2,-2} \subset U(1)^2$, this gives:   
\begin{gather} \label{so3action}
\begin{aligned} 
(e^{i\theta_1}, e^{i \theta_2}) \mapsto \begin{pmatrix} 
1 & 0 & 0  \\
0 & -1 & 0  \\
0 & 0 & -1 
\end{pmatrix} \begin{pmatrix} 
1 & 0 & 0  \\
0 & \cos\left(2 \theta_1 \right) & \sin\left(2 \theta_1 \right)  \\
0 & - \sin\left(2\theta_1 \right) & \cos\left(2 \theta_1 \right) 
\end{pmatrix}
\end{aligned} 
\end{gather}
up to the automorphism exchanging the factors of $U(1)\subset U(1)^2$. Recall from Remark \ref{remarkz2} that both $(e^{i\pi}, e^{2i\pi}), (e^{2i\pi}, e^{i\pi}) \in K_{2,-2}$ act trivially on the tangent space of the principal orbits of $\mathcal{O}\left(-2, -2 \right)$, but one of $(e^{i\pi}, e^{2i\pi}), (e^{2i\pi}, e^{i\pi})$ acts non-trivially on $\mathfrak{so}(3)$ by \eqref{so3action}, and so every invariant $\mathfrak{so}(3)$-valued connection one-form on the principal orbit can only take values in the set of fixed-points $\mathfrak{u}(1)\subset \mathfrak{so}(3)$ and must therefore be reducible. 
\end{proof}
 If there is a lift of the invariant $SO(3)$-bundle, then clearly any invariant connection or Higgs field can also be lifted. Consequentially, in the irreducible case, the invariant gauge theory can also be lifted. 
 
Returning now to the invariant $SU(2)$-bundles $P_{\lambda}$ in Proposition \ref{bundles}, we seek to describe the conditions for extending $SU(2)^2$-invariant connections on $P_{\lambda}$ and sections of the adjoint bundle $\Omega^0 \left( \mathrm{Ad} P_{\lambda}\right)$ over the singular orbit. For any $SU(2)^2$-invariant connection $A$, it will suffice to describe the conditions for extending $SU(2)^2$-invariant adjoint-valued one-forms $\Omega^1 (\text{Ad}P_{\lambda} )$, since we can use the canonical connection $ d \lambda$ of $P_{\lambda}$ over the singular orbit as an $SU(2)^2$-invariant reference connection to write $A- d \lambda \in \Omega^1 (\text{Ad}P_{\lambda} )$. 

Observe that an $SU(2)^2$-invariant section of $\text{Ad}P_{\lambda}$ can be identified with an $H'$-equivariant map $V \rightarrow\mathfrak{su}(2)$, where $\left(H', V \right)$ are the singular isotropy groups and their representations as above.  Similarly, an $SU(2)^2$-invariant element of $\Omega^1 (\text{Ad}P_{\lambda} )$ is determined by an $H'$-equivariant map $L: V \rightarrow \mathfrak{su}(2) \otimes \left( V^* \oplus \mathfrak{p}^* \right)$, where $\mathfrak{p}^*$ is the space of left-invariant one-forms on the singular orbit $SU(2)^2 / H'$, and $H'$, $\lambda \left( H' \right)$ act via the adjoint on $\mathfrak{p}^*$, $\mathfrak{su}(2)$ respectively. 

In order to calculate these extension conditions, we apply a similar analysis as in \cite[Lemma 1.1]{Eschenburg2000} applied to $SU(2)^2$-invariant adjoint-valued forms, c.f. \cite{goncalog2}. The general procedure is as follows: by evaluating at some non-zero $v_0 \in V$, $H'$-equivariant homogeneous polynomial maps $L: V \rightarrow \mathfrak{su}(2) \otimes \left( V^* \oplus \mathfrak{p}^* \right)$ (respectively $L: V \rightarrow \mathfrak{su}(2)$ for zero-forms) give splitting of the vector-space of $\triangle U(1)$-invariant adjoint-valued forms, indexed by the associated polynomial degree. Any $SU(2)^2$-invariant adjoint-valued form, away from $0 \in V$, can be written as the sum of $\triangle U(1)$-invariant adjoint-valued forms
 
We now summarise the extension conditions for $SU(2)^2$-invariant elements of $\Omega^0 (\text{Ad}P_{\lambda} )$, and $SU(2)^2$-invariant connections in the temporal gauge, as obtained in Proposition \ref{principalconn}, focusing on the case that the bundle $P_{\lambda}$ restricts to the homogeneous bundle $P_1$ over the principal orbit. We adopt the notation of writing invariant adjoint-valued one forms $I_1, J_1, I_2, J_2$ as in \eqref{tensors}:
\begin{align*} 
I_1 := E_2 \otimes v^1 + E_3 \otimes w^1& &J_1:= E_3 \otimes v^1 - E_2 \otimes w^1 \\
I_2:=E_2 \otimes v^2 + E_3 \otimes w^2& &J_2 := E_3 \otimes v^2 - E_2 \otimes w^2   
\end{align*}
\begin{prop} \label{propconnboundary} An invariant connection $A$ on $P_{l,m}$ with $l+m =1$, extends to the singular orbit $S^2 \times S^2 = SU(2)^2/ U(1)^2$ if and only if $A = a_1 I_1 + b_1 J_1 + a_2 I_2+ b_2 J_2 + a_0 E_1 \otimes u^- + (l+m)E_1 \otimes u^+$, with $a_0(0)= l-m$,  $a_0$ even, and the following cases: 
\begin{enumerate}[\normalfont(i)]
\item If $l \geq 1$, then $a_1, b_1$ must be of degree $l-1$ and $a_2, b_2$ of degree $l$
\item If $m \geq 1$, then $a_1, b_1$ must be of degree $m$ and $a_2, b_2$ of degree $m-1$.
\end{enumerate}
\end{prop}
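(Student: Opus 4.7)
My plan is to apply the equivariant-polynomial framework of \cite{Eschenburg2000}, adapted to adjoint-valued $1$-forms as in \cite{goncalog2}. The tubular neighbourhood of the singular orbit is $SU(2)^2 \times_{U(1)^2} V$ with $V \cong \C_{(2,-2)}$, and an $SU(2)^2$-invariant adjoint-valued $1$-form on this neighbourhood corresponds bijectively to a $U(1)^2$-equivariant smooth map $V \to \mathfrak{su}(2) \otimes (V^* \oplus \mathfrak{p}^*)$, where $\mathfrak{p}^* \cong \langle v^1, w^1, v^2, w^2 \rangle$ denotes the space of cotangent vectors along the singular orbit. First I would record the relevant $U(1)^2$-weights under $\mathrm{Ad}\circ\lambda$: $\mathfrak{su}(2)\otimes\C \cong \R_{(0,0)} \oplus \C_{(2l,2m)} \oplus \C_{(-2l,-2m)}$; the complex pairs $(v^1,w^1)$ and $(v^2,w^2)$ span $\C_{(\pm 2,0)}$ and $\C_{(0,\pm 2)}$ respectively; and the angular form $u^-$ identifies, up to normalisation, with $d\phi$ on the fibre $V$, where $\phi = \arg v$.

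Next I would pin down the canonical connection: since $d\lambda\colon \mathfrak{u}(1)^2 \to \mathfrak{su}(2)$ sends $(U^1, U^2) \mapsto (l E_1, m E_1)$, it extends to the principal orbit as $A^\mathrm{can}_{l,m} = (l+m)E_1\otimes u^+ + (l-m)E_1\otimes u^-$. Matching against the form in Proposition \ref{principalconn} fixes the $u^+$-coefficient to $n = l+m = 1$ and forces $a_0(0) = l-m$. Setting $\alpha := A - A^\mathrm{can}_{l,m}$, the extension of $A$ reduces to the smoothness of each summand of $\alpha$ as a $U(1)^2$-equivariant map $V \to W$: for $W = \C_{(r, s)}$ such a smooth equivariant map exists if and only if $r+s = 0$, has minimal polynomial degree $|r|/2$, and admits higher-degree corrections differing from the minimum by even integers (since $|v|^2$ is $U(1)^2$-invariant).

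Applying the weight count produces the stated conditions. The summand $(a_0 - (l-m))E_1 \otimes u^-$ carries trivial weight, but $u^-$ has a coordinate singularity of order $|v|^{-2}$ at the origin, forcing $a_0 - (l-m)$ to be a smooth polynomial in $|v|^2$ vanishing to order two; this shows $a_0$ is even in $t$ with $a_0(0) = l-m$. The pair $a_1 I_1 + b_1 J_1$ lies in the weight $(2l-2, 2m)$ component of $\mathfrak{su}(2)\otimes\mathfrak{p}^*$, hence corresponds to a polynomial of minimal degree $|l-1|$, which is $l-1$ if $l \geq 1$ and $1 - l = m$ if $m \geq 1$. Similarly $a_2 I_2 + b_2 J_2$ lies in weight $(2l, 2m-2)$, with minimal degree $|l|$, giving $l$ if $l \geq 1$ and $-l = m-1$ if $m \geq 1$. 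Higher corrections differ by even integers, matching the statement.

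The main obstacle I anticipate is identifying $u^-$ precisely near the singular orbit. Unlike $v^i, w^i$, which extend smoothly as pullbacks from the cotangent bundle of the singular orbit, $u^-$ is a $1$-form along the anti-diagonal $\triangle^- U(1) \subset U(1)^2$, which is contained in the singular isotropy; the key technical step is to verify that $u^-$ is proportional to $d\phi$ on the disc fibre, weighted by the $\triangle^- U(1)$-action on $V$, which produces the order-$2$ pole responsible for the parity and vanishing-order of $a_0 - (l-m)$. Once this identification is made, the remaining checks are a mechanical classification of $U(1)^2$-equivariant polynomials $\C \to \C$.
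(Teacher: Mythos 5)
Your proposal is correct and follows essentially the same route as the paper: the Eschenburg--Wang equivariant-polynomial analysis with the identical $U(1)^2$-weight decomposition of $\mathfrak{su}(2)\otimes(V^*\oplus\mathfrak{p}^*)$, the same identification of the canonical connection forcing $a_0(0)=l-m$, and the same degree count $|l-1|$, $|l|$ for the $(v^1,w^1)$- and $(v^2,w^2)$-components. The only (harmless) imprecision is describing $u^-$ as having a singularity of order $|v|^{-2}$ -- in norm $d\phi$ blows up like $|v|^{-1}$, and the correct reason the coefficient of $E_1\otimes u^-$ must be $|v|^2\cdot(\text{smooth in }|v|^2)$ is that $d\phi=|v|^{-2}(x^0\,dx^1-x^1\,dx^0)$ with the numerator a smooth form vanishing to first order, which is exactly what the paper's pairing of $4t\,u^-$ with the degree-one polynomial $z\,E_1\otimes(dx^0+i\,dx^1)$ encodes.
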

\begin{prop} \label{propmonboundary} An invariant section $\Phi$ of $\mathrm{Ad}P_{l,m}$ with $l+m =1$ extends to the singular orbit $S^2 \times S^2 = SU(2)^2/ U(1)^2$ if and only if $\Phi = \phi E_1$ with $\phi$ even. 
\end{prop}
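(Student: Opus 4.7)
The plan is to apply the Eschenburg--Wang slice analysis (as used already in the proof of Proposition \ref{propconnboundary}) to the zero-form case, which is simpler because it only involves maps into $\mathfrak{su}(2)$ with no extra form-factor. Concretely, near the singular orbit $SU(2)^2/U(1)^2$, the bundle $\mathrm{Ad}P_{l,m}$ is the associated bundle $SU(2)^2 \times_{U(1)^2} \left( V \times \mathfrak{su}(2) \right)$, with $V = \mathbb{C}_{(2,-2)}$ the slice representation and $U(1)^2$ acting on $\mathfrak{su}(2)$ via $\mathrm{Ad}\circ \lambda_{l,m}$. An $SU(2)^2$-invariant section is therefore the same datum as a $U(1)^2$-equivariant smooth map $\Phi : V \to \mathfrak{su}(2)$, and smoothness at the singular orbit corresponds to smoothness at $0 \in V$. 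The temporal-gauge section on the principal orbit is the restriction of such a $\Phi$ to the radial geodesic $t\mapsto t v_0$, $v_0\in V\setminus\{0\}$.

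First I would decompose $\mathfrak{su}(2)$ into $U(1)^2$-isotypic components under $\mathrm{Ad}\circ\lambda_{l,m}$. Since $\lambda_{l,m}$ takes values in the maximal torus generated by $E_1$, the element $E_1$ is fixed, and the plane $\langle E_2,E_3\rangle$ rotates with weight $(2l,2m)$. Thus
\begin{equation*}
\mathfrak{su}(2) \;\cong\; \mathbb{R}\langle E_1\rangle \,\oplus\, \mathbb{C}_{(2l,2m)}
\end{equation*}
as $U(1)^2$-representations. A $U(1)^2$-equivariant polynomial $p : \mathbb{C}_{(2,-2)} \to \mathfrak{su}(2)$ splits accordingly into $p = p_0 \oplus p_1$.

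For the $\mathbb{R}\langle E_1\rangle$-component, a monomial $v^a \bar v^b$ maps equivariantly to $\mathbb{R}$ iff $(2(a-b), -2(a-b)) = (0,0)$, i.e.\ $a=b$, so $p_0$ is a polynomial in $|v|^2$. On the geodesic $v=t v_0$ this restricts to an even polynomial in $t$, and by the standard Taylor/Borel argument the smooth $U(1)^2$-invariant real-valued germs at $0\in V$ are exactly the smooth even functions of $t$. For the $\mathbb{C}_{(2l,2m)}$-component, equivariance of a monomial $v^a\bar v^b$ requires $(2(a-b),-2(a-b)) = (2l,2m)$, which forces $l+m = 0$; since by hypothesis $l+m = 1$, no non-trivial equivariant map exists, and $p_1 \equiv 0$. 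Hence any $SU(2)^2$-invariant extension $\Phi$ has $\Phi = \phi\, E_1$ with $\phi$ an even smooth function of $t$, proving necessity.

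For sufficiency, observe that the trivial sub-representation $\mathbb{R}\langle E_1\rangle \subset \mathfrak{su}(2)$ gives an $SU(2)^2$-invariant trivial line sub-bundle $\ell \subset \mathrm{Ad}P_{l,m}$ on a tubular neighbourhood of the singular orbit, with global frame $E_1$. Given any even smooth $\phi(t)$, the function $|v|\mapsto \phi(|v|)$ is a smooth function of $v$ (by the classical fact that even smooth functions factor smoothly through $t\mapsto t^2$), so $\phi\cdot E_1$ defines a smooth section of $\ell$, hence of $\mathrm{Ad}P_{l,m}$, whose restriction to the radial geodesic recovers the given $\Phi$. The only step that requires any real thought is the weight bookkeeping in the $\mathbb{C}_{(2l,2m)}$-component, and this is entirely routine once the identification of the slice representation $V = \mathbb{C}_{(2,-2)}$ and the adjoint action via $\lambda_{l,m}$ are fixed.
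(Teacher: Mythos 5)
Your proof is correct and follows essentially the same route as the paper: the Eschenburg--Wang analysis reduces the question to $U(1)^2$-equivariant polynomials $V=\mathbb{C}_{2,-2}\to \mathbb{R}\langle E_1\rangle \oplus \mathbb{C}_{2l,2m}$, the weight computation kills the $\mathbb{C}_{2l,2m}$ component because $l+m=1\neq 0$, and the $E_1$ component forces an even coefficient. Your additional sufficiency paragraph just makes explicit what the paper leaves implicit in the general framework.
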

\begin{prop} \label{propconnboundary2} An invariant connection $A$ on $P_\mathrm{Id}$ extends to the singular orbit $S^3 = SU(2)^2 / \triangle SU(2)$ if an only if $A =a_1 I_1 + b_1 J_1 + a_2 I_2+ b_2 J_2 + a_0 E_1 \otimes u^- + E_1 \otimes u^+$ with $a_1$, $a_2$, $a_0$ even, $b_1, b_2$ odd, $b_1' (0)= - b_2'(0)$, $a_1 (0) - a_2(0) = a_0 (0)$, and $a_1 (0) + a_2(0) =  1$.
\end{prop}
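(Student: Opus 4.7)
The strategy follows the Eschenburg--Wang framework for invariant tensors on cohomogeneity-one manifolds (cf.\ \cite{Eschenburg2000} and the preamble to this appendix), adapted to adjoint-valued $1$-forms, in direct parallel with the verification of Proposition \ref{propconnboundary}. First I would identify the tubular neighbourhood of $S^3 = SU(2)^2/\triangle SU(2)$ with the associated disc bundle $SU(2)^2 \times_{\triangle SU(2)} V$, where $V = \mathfrak{su}(2)$ carries the adjoint action, and fix the radial geodesic along $v_0 = E_1 \in V$ whose $\triangle SU(2)$-stabilizer is exactly the principal isotropy $\triangle U(1)$. Taking the canonical connection $d\mathrm{Id}\circ \theta_{\mathfrak{h}'}$ of $P_\mathrm{Id}$ as $\triangle SU(2)$-invariant reference, any invariant connection in a smooth gauge near $S^3$ corresponds to a $\triangle SU(2)$-equivariant smooth map $L\colon V \to \mathfrak{su}(2)\otimes (V^* \oplus \mathfrak{p}^*)$, where $\mathfrak{p}\cong \mathfrak{su}(2)$ is the complement of $\mathfrak{h}' = \triangle\mathfrak{su}(2)$ in $\mathfrak{su}(2)\oplus\mathfrak{su}(2)$.

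Next I would decompose the target into $\triangle SU(2)$-irreducibles. Writing $V \cong V^* \cong \mathfrak{p}^* \cong V_2$ (the $3$-dimensional adjoint irrep) and applying Clebsch--Gordan,
\[
\mathfrak{su}(2)\otimes (V^* \oplus \mathfrak{p}^*) \;\cong\; 2\bigl(V_2 \otimes V_2\bigr) \;\cong\; 2\bigl(V_0 \oplus V_2 \oplus V_4\bigr),
\]
producing six $1$-dimensional $\triangle U(1)$-invariant amplitudes which parametrize the connection along the ray $tv_0$; these match the six scalar degrees of freedom $a_0, a_1, a_2, b_1, b_2$ together with the radial temporal-gauge component. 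Invoking the Eschenburg--Wang principle that a smooth $\triangle SU(2)$-equivariant polynomial map $V_2 \to V_k$ exists in degree $d$ only when $d \geq k/2$ and $d \equiv k/2 \pmod 2$, the $V_0$-amplitudes are smooth even functions of $t$, the $V_2$-amplitudes are smooth odd functions starting at order $1$, and the $V_4$-amplitudes are smooth even functions vanishing to order $2$ at $t = 0$.

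The remaining step, which is the main technical hurdle, is to match each of the six abstract amplitudes with a specific linear combination of $\{E_1\otimes u^-,\; I_1 \pm I_2,\; J_1 \pm J_2,\; E_1 \otimes dt\}$ and then convert back to temporal gauge. Carrying this out, the two $V_0$-channels give $a_0$ and $a_1 + a_2$ as smooth even functions; the two $V_2$-channels give $b_1 - b_2$ as an odd function and a coupled odd contribution mixing $b_1+b_2$ with the radial smooth-gauge component; and the two $V_4$-channels produce $a_1 - a_2$ and $b_1 - b_2$ with vanishing first-order part. Comparison of the limit of $A$ as $t\to 0$ with the canonical connection on $P_\mathrm{Id}|_{S^3}$ (restricted to the tangent directions $u^-, v^-, w^-$ of $\mathfrak{p}$) fixes the initial-value normalization $a_1(0) + a_2(0) = 1$, while the transformation to temporal gauge together with the coupling of the $V_2$-amplitude to the radial component enforces the relations $a_1(0) - a_2(0) = a_0(0)$ and $b_1'(0) = -b_2'(0)$. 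The main obstacle is the bookkeeping in this identification: tracking how the $\mathfrak{su}(2)\otimes V^*$ radial and angular pieces mix with the $\mathfrak{su}(2)\otimes \mathfrak{p}^*$ pieces under Clebsch--Gordan, and how those mixings are redistributed once the radial smooth-gauge component is killed; the representation-theoretic content is routine, but getting the exact coefficients takes care.
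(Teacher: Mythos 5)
Your framework is the right one and is exactly what the paper intends (the paper only writes out the analogous computation for Proposition \ref{propconnboundary} and defers this case to it): slice model $SU(2)^2\times_{\triangle SU(2)}V$ with $V=\mathfrak{su}(2)$, reference connection the canonical one, target $\mathfrak{su}(2)\otimes(V^*\oplus\mathfrak{p}^*)\cong 2(V_0\oplus V_2\oplus V_4)$, and the degree/parity rule for equivariant polynomials $V_2\to V_k$. However, the channel-to-coefficient matching — which you correctly identify as the crux — is wrong as written, in ways that would prevent you from recovering the stated conditions. First, $E_1\otimes dt$ is the ``diagonal'' tensor $e_1\otimes e_1^*$ under the equivariant identification $\mathfrak{su}(2)\cong V$, hence lies in the symmetric part $V_0\oplus V_4$ of $\mathfrak{su}(2)\otimes V^*$; it cannot couple to a $V_2$-amplitude, since $V_2\subset V_2\otimes V_2$ is the antisymmetric summand. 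Second, the identification of the angular fiber coframe with the left-invariant forms goes through the fundamental vector fields $[E_2,tE_1]=-2tE_3$, $[E_3,tE_1]=2tE_2$, i.e.\ through $\mathrm{ad}_{E_1}$, which interchanges the $E_2$- and $E_3$-directions; consequently the trace and traceless-symmetric elements of $\mathfrak{su}(2)\otimes V^*$ pair $E_1\otimes dt$ with $t(J_1+J_2)$, while $I_1+I_2$ sits alone in the $V_2$-channel of the fiber factor (with an explicit factor of $t$ in the coframe, so its coefficient is $t\cdot(\text{odd})$, i.e.\ even and equal to the canonical value $1$ at $t=0$ — this is where $a_1(0)+a_2(0)=1$ comes from). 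The temporal gauge then kills the $dt$-component, forcing the $V_0$-amplitude of the fiber factor to be a multiple of the $O(t^2)$ $V_4$-amplitude, so the coupled $(J_1+J_2)$-coefficient is $O(t^3)$ odd: this, and not a $V_2$-coupling, is what yields $b_1'(0)=-b_2'(0)$.

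Third, the relation $a_1(0)-a_2(0)=a_0(0)$ has nothing to do with the temporal gauge or the radial direction: it lives entirely in the base factor $\mathfrak{su}(2)\otimes\mathfrak{p}^*$, whose $V_0$-element is a fixed linear combination of $E_1\otimes u^-$ and $I_1-I_2$ with even coefficient, while the only other contributor to these two components is the $V_4$-amplitude, which vanishes to order $t^2$; evaluating at $t=0$ gives the stated proportionality. Finally, you list $b_1-b_2$ under both the $V_2$- and the $V_4$-channels; it is the pure $V_2$-channel of the base factor (no coframe degeneration there), hence simply odd with unconstrained first derivative. With these corrections the six channels are: $V_0\oplus V_4$ of $\mathfrak{su}(2)\otimes\mathfrak{p}^*$ carrying $(a_0,\,a_1-a_2)$, its $V_2$ carrying $b_1-b_2$, $V_0\oplus V_4$ of $\mathfrak{su}(2)\otimes V^*$ carrying $(\gamma,\,b_1+b_2)$, and its $V_2$ carrying $a_1+a_2$ — and the proposition follows.
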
 
\begin{prop} \label{propmonboundary2} An invariant section $\Phi$ of $\mathrm{Ad}P_\mathrm{Id}$ extends to the singular orbit $S^3 = SU(2)^2 / \triangle SU(2)$ if and only if $\Phi = \phi E_1$ with $\phi$ odd.
\end{prop}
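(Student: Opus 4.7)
The approach follows the template established for Proposition \ref{propconnboundary2}, adapting the Eschenburg--Wang analysis of \cite{Eschenburg2000} to adjoint-valued zero-forms. First, I would identify an $SU(2)^2$-invariant section $\Phi$ of $\mathrm{Ad}P_\mathrm{Id}$, on the tubular neighbourhood $SU(2)^2\times_{\triangle SU(2)} V$ of the singular orbit $S^3=SU(2)^2/\triangle SU(2)$, with a smooth $\triangle SU(2)$-equivariant map $F\colon V\to\mathfrak{su}(2)$. Here $V\cong\mathfrak{su}(2)$ is the singular isotropy representation, and $\triangle SU(2)$ acts on both $V$ and the target $\mathfrak{su}(2)$ (via $d\lambda$ for $\lambda=\mathrm{Id}$) by the adjoint representation. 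Smoothness at the singular orbit then amounts to smoothness of $F$ at $0\in V$, which is controlled degree-by-degree by the Taylor expansion: each homogeneous degree-$d$ piece lies in the finite-dimensional subspace $\bigl(\mathrm{Sym}^d V^*\otimes\mathfrak{su}(2)\bigr)^{\triangle SU(2)}$.

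Second, I would compute the dimension of this invariant subspace using representation theory. Identifying $V\cong\mathfrak{su}(2)\cong V_2$ as the irreducible $SU(2)$-representation of highest weight $2$, the Clebsch--Gordan formula gives
\[
\mathrm{Sym}^d V_2 \;\cong\; V_{2d}\oplus V_{2d-4}\oplus V_{2d-8}\oplus\cdots,
\]
with last summand $V_0$ when $d$ is even and $V_2$ when $d$ is odd. Hence
\[
\bigl(\mathrm{Sym}^d V_2\otimes V_2\bigr)^{SU(2)} \;\cong\; \mathrm{Hom}_{SU(2)}(V_2,\mathrm{Sym}^d V_2)
\]
is $1$-dimensional for $d$ odd and $0$-dimensional for $d$ even.

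Third, I would translate this back to the function $\phi$ on the space of principal orbits. Pick a unit $v_0\in V$ whose stabiliser in $\triangle SU(2)$ is $\triangle U(1)$; the radial geodesic emanating from the singular orbit is parametrised by $t\mapsto t v_0$, and $\triangle U(1)$-equivariance forces $F(tv_0)$ to take values in the $\triangle U(1)$-fixed subspace of $\mathfrak{su}(2)$, which is the line spanned by $E_1$. Writing $F(tv_0)=\phi(t)E_1$ and combining with the second step, $\phi(t)$ has a Taylor series supported only in odd powers of $t$, i.e. $\phi$ is odd. Conversely, given any odd smooth $\phi$, one fills in the one-dimensional invariant subspace at each odd degree by $\triangle SU(2)$-equivariance to recover a smooth $F$, and hence a smooth invariant extension of $\Phi$.

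The only genuine content beyond a direct Clebsch--Gordan calculation is checking that the Eschenburg--Wang lemma carries over verbatim from invariant tensors to sections of $\mathrm{Ad}P_\mathrm{Id}$; this is routine since the target $\mathfrak{su}(2)$ is treated as an auxiliary $\triangle SU(2)$-module and does not interact with the $V$-Taylor expansion. No step poses a substantive obstacle.
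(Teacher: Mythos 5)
Your proposal is correct and follows essentially the same route the paper takes (and explicitly omits for this case): identifying invariant sections with $\triangle SU(2)$-equivariant maps $V\cong\mathfrak{su}(2)\to\mathfrak{su}(2)$ and reading off the admissible Taylor degrees from the space of equivariant homogeneous polynomials, exactly as in the worked proof of Proposition \ref{propmonboundary}. Your Clebsch--Gordan computation of $\mathrm{Hom}_{SU(2)}(V_2,\mathrm{Sym}^d V_2)$ correctly identifies the one-dimensional space of odd-degree equivariant polynomials (spanned by $|v|^{d-1}v$) and the vanishing in even degree, which is precisely the content behind the parity condition on $\phi$.
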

\begin{prop}
\label{propconnboundary3} An invariant connection $A$ on $P_{0, \mathrm{Id}}$, $P_{1, \mathbf{0}}$ extends to the singular orbit $S^2 = SU(2)^2 / U(1) \times SU(2)$ iff:
\begin{enumerate}[\normalfont(1)]   
\item On $P_{0,\mathrm{Id}}$, $A= a_1 I_1 + b_1 J_1 + a_2 I_2+ b_2 J_2 + a_0 E_1 \otimes u^- + E_1 \otimes u^+$ with $a_1$, $a_2$, $a_0$ $b_1$, $b_2$ even, $a_1(0)=b_1(0)= b_2(0) =0 $, $a_2 (0) = -a_0 (0) = 1$, and  $a''_0 (0) + 2 a''_2(0) =  b''_2 (0) = 0$. 
\item On $P_{1, \mathbf{0}}$, $A = a_1 I_1 + b_1 J_1 + a_2 I_2+ b_2 J_2  + a_0 E_1 \otimes u^- + n E_1 \otimes u^+$ with $a_1$, $b_1$, $a_2$, $b_2$, $a_0$ even, $a_2 (0)= b_2 (0) = 0$, $a_0(0) = 1$.
\end{enumerate}
\end{prop}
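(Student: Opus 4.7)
The proof will apply the Eschenburg-Wang framework \cite{Eschenburg2000} for invariant tensor fields on cohomogeneity-one manifolds, adapted to adjoint-valued forms as in \cite{goncalog2} and following the pattern of Proposition \ref{propconnboundary}. The first step is to identify the three $H'$-representations at play, where $H' = U(1) \times SU(2)$ is the singular isotropy group: the normal representation $V \cong \mathbb{H}$ with action $(z,q).v = qvz^{-1}$; the cotangent space $\mathfrak{p}^*$ to the singular orbit $S^2$, which is a copy of $\mathbb{C}_2$ under the $U(1)$-factor and trivial under $SU(2)_2$; and the gauge Lie algebra $\mathfrak{g} = \mathfrak{su}(2)$, viewed as an $H'$-module via $\mathrm{Ad}\circ\lambda$. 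The Eschenburg-Wang theorem then identifies $SU(2)^2$-invariant smooth adjoint-valued one-forms on a tubular neighbourhood of the singular orbit with $H'$-equivariant polynomial maps $V \to \mathfrak{g} \otimes (V^* \oplus \mathfrak{p}^*)$, with polynomial degree $k$ contributing the $t^k$-coefficient when restricted to the ray $\{t v_0 : t \geq 0\}$ for a chosen $v_0 \in V \setminus \{0\}$; restricting to this ray recovers the power-series expansions in $t$ of $(a_0, a_1, b_1, a_2, b_2)$ from Proposition \ref{principalconn}.

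For $P_{1,\mathbf{0}}$, with $\lambda(z,q) = \iota(z)$, the gauge algebra decomposes as $\mathfrak{g} = \mathbb{R}_{E_1} \oplus \mathbb{C}_2$ with trivial $SU(2)_2$-action. The components $I_1, J_1 \in \mathfrak{g} \otimes \mathfrak{p}^*$ pair a weight-two piece of $\mathfrak{g}$ with a weight-two piece of $\mathfrak{p}^*$, giving a one-dimensional space of $H'$-invariant constants accounting for generic $a_1(0), b_1(0)$; the canonical connection term contributes the $E_1 \otimes u^+$ constant and the generic $a_0$-value. The components $I_2, J_2 \in \mathfrak{g} \otimes V^*$ involve the non-radial part of $V^*$, and $SU(2)_2$-equivariance of the polynomial map kills the zeroth-order projection, since $\mathrm{Hom}_{SU(2)_2}(\mathbb{R}, \mathbb{C}_2 \otimes V^*)$ is zero when $\mathbb{C}_2 \subset \mathfrak{g}$ carries the trivial $SU(2)_2$-action. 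This forces $a_2(0) = b_2(0) = 0$. Evenness of every coefficient is enforced by the $\mathbb{Z}_2 \subset SU(2)_2$ acting by $-\mathrm{Id}$ on $V$ while fixing both $\mathfrak{g}$ and $\mathfrak{p}^*$ pointwise.

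For $P_{0,\mathrm{Id}}$, with $\lambda(z,q) = q$, the gauge algebra $\mathfrak{g} \cong \mathrm{Ad}_{SU(2)_2}$ is the three-dimensional irreducible $SU(2)_2$-representation, and the canonical connection $d\lambda$ contributes non-trivially to the zeroth-order data, forcing $a_2(0) = -a_0(0) = 1$ and $a_1(0) = b_1(0) = b_2(0) = 0$ after matching the constant-order $SU(2)_2$-equivariant projection. The main step will be to analyse the degree-two $H'$-equivariant polynomial maps $V \to \mathfrak{g} \otimes (V^* \oplus \mathfrak{p}^*)$: via the Clebsch-Gordan decomposition of $\mathrm{Sym}^2 V^* \otimes (V \oplus \mathfrak{p}) \otimes \mathfrak{g}$ under $SU(2)_2$, one finds a single one-dimensional invariant line relating the quadratic behaviour of $a_0$ and $a_2$, producing the linear relation $a_0''(0) + 2 a_2''(0) = 0$, and a separate vanishing of the $b_2$-contribution at this order giving $b_2''(0) = 0$. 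Evenness and the zeroth-order vanishings are again enforced by the $\mathbb{Z}_2$-isotropy. The main obstacle is the Clebsch-Gordan bookkeeping in this second case: one must identify precisely which subspaces of the equivariant polynomial maps appear at each degree and translate the abstract projections onto the invariant lines into the explicit scalar relations stated; everything else reduces to the standard $\triangle U(1) \subset U(1) \times SU(2)_2$ weight-decomposition routine already used in Proposition \ref{propconnboundary}.
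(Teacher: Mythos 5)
The paper does not actually write out a proof of Proposition \ref{propconnboundary3} (it is declared to be ``proved similarly'' to Proposition \ref{propconnboundary}), so the only benchmark is the method of that proof, and your proposal does follow the same Eschenburg--Wang template: same identification of $H'=U(1)\times SU(2)$, $V\cong\mathbb{H}$ with $(z,q).v=qv\bar z$, $\mathfrak{p}^*=\langle v^1,w^1\rangle$, and $\mathfrak{g}$ via $\mathrm{Ad}\circ\lambda$. Your zeroth-order conclusions for both bundles are right and for the right reasons (no degree-zero invariants in $\mathfrak{g}\otimes V^*$, resp.\ in $\mathfrak{g}\otimes\mathfrak{p}^*$ for $P_{0,\mathrm{Id}}$, so $A-d\lambda$ must vanish on the singular orbit in those components).

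There is, however, a genuine off-by-one error in the degree bookkeeping for the components valued in $\mathfrak{g}\otimes V^*$, and for the singular orbit $S^2$ these are exactly the components carrying $a_0$, $a_2$, $b_2$ (here $u^-,v^2,w^2$ are fibre directions, unlike the $S^2\times S^2$ case). A degree-$k$ equivariant polynomial contributes at order $t^{k+1}$, not $t^k$, to these coefficient functions, because the invariant coframe rescales against the flat coframe on $V$: this is visible in the paper's own proof of Proposition \ref{propconnboundary}, where the \emph{degree-one} polynomial $zE_1\otimes(dx^0+idx^1)$ together with $dx^1=4tu^-$ is what produces the $O(t^2)$ behaviour of $a_0$. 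Consequently the relations $a_0''(0)+2a_2''(0)=0=b_2''(0)$ come from the space of \emph{degree-one} equivariant polynomials, i.e.\ $H'$-equivariant bilinear maps $V\times V\to\mathfrak{g}$ (for $P_{0,\mathrm{Id}}$ this is the two-dimensional span of $(v,w)\mapsto\mathrm{Im}(v\bar w)$ and $(v,w)\mapsto\mathrm{Im}(vi\bar w)$), cut down to one dimension by the temporal-gauge requirement that the $E_1\otimes dt$-component vanish. Your proposed analysis of $\mathrm{Sym}^2V^*\otimes V^*\otimes\mathfrak{g}$ is self-defeating: by the very $\mathbb{Z}_2$-parity argument you invoke ($-1\in SU(2)_2$ acts by $-\mathrm{Id}$ on $V$ and hence on $V^*$, but trivially on $\mathfrak{g}=\mathrm{Ad}$), all even-degree equivariant polynomials into $\mathfrak{g}\otimes V^*$ vanish, so the degree-two Clebsch--Gordan computation returns the zero space and would force $a_0''(0)=a_2''(0)=0$ --- contradicting the one-parameter family of Proposition \ref{localresol1} with $\epsilon\neq0$. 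For the same reason your blanket statement that evenness follows because the $\mathbb{Z}_2$ ``fixes $\mathfrak{g}$ and $\mathfrak{p}^*$ pointwise'' only treats the $a_1,b_1$ components correctly; for $a_0,a_2,b_2$ the evenness arises because only odd-degree polynomials survive and the extra factor of $t$ from the coframe rescaling restores even parity. The statement and its proof strategy are salvageable, but the key computational step must be redone at the correct degree.
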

By exchanging the factors of $SU(2) \subset SU(2)^2$ in Proposition \ref{propconnboundary3}, we also obtain the following corollary:
\begin{corollary} \label{propconnboundary4} An invariant connection $A$ on $P_{\mathrm{Id},0}$ and $P_{\mathbf{0},1}$ extends to the singular orbit $S^2 = SU(2)^2 / SU(2) \times U(1)$ if and only if: 
\begin{enumerate}[\normalfont(1)]  
\item On $P_{\mathrm{Id},0}$, $A = a_1 I_1 + b_1 J_1 + a_2 I_2+ b_2 J_2 + a_0 E_1 \otimes u^- +E_1 \otimes u^+$ with $a_1$, $a_2$, $a_0$ $b_1$, $b_2$ even, $a_2(0)=b_2(0)= b_1(0) =0 $, $a_1 (0) = a_0 (0) = 1$, and  $a''_0 (0) - 2 a''_1(0) =  b''_1 (0) = 0$.
\item On $P_{\mathbf{0}, 1}$, $A = a_1 I_1 + b_1 J_1 + a_2 I_2+ b_2 J_2 + a_0 E_1 \otimes u^- +E_1 \otimes u^+$ with $a_1$, $b_1$, $a_2$, $b_2$, $a_0$ even, $a_1 (0)= b_1 (0) = 0$, and $a_0(0) = -1$.
\end{enumerate}
\end{corollary}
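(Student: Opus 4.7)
The plan is to deduce the corollary from Proposition \ref{propconnboundary3} via the outer automorphism $\sigma : SU(2)^2 \to SU(2)^2$ which swaps the two factors. First I would verify that $\sigma$ induces an equivariant diffeomorphism between the co-homogeneity one model with group diagram $\triangle U(1) \subset U(1)\times SU(2) \subset SU(2)^2$ and the one with diagram $\triangle U(1) \subset SU(2)\times U(1) \subset SU(2)^2$; on the level of invariant bundles, it transports $P_{0,\mathrm{Id}}\mapsto P_{\mathrm{Id},0}$ and $P_{1,\mathbf{0}}\mapsto P_{\mathbf{0},1}$, because the singular homomorphism is simply post-composed with $\sigma$ (this is exactly the classification bookkeeping used in Proposition \ref{bundles}(ii)--(iii)).

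Next I would track how $\sigma$ acts on the standard basis of left-invariant forms and on the adjoint-valued tensors of \eqref{tensors}. By construction $\sigma^* u^i = u^{\sigma(i)}$, and similarly for $v^i,w^i$, so
\begin{align*}
\sigma^* I_1 = I_2,\quad \sigma^* I_2 = I_1,\quad \sigma^* J_1 = J_2,\quad \sigma^* J_2 = J_1,\quad \sigma^* u^+ = u^+,\quad \sigma^* u^- = -\, u^-.
\end{align*}
Hence, if $A = a_1 I_1 + b_1 J_1 + a_2 I_2 + b_2 J_2 + a_0 E_1\otimes u^- + E_1\otimes u^+$ is an invariant connection extending over the singular orbit $SU(2)^2/U(1)\times SU(2)$ on $P_{0,\mathrm{Id}}$ or $P_{1,\mathbf{0}}$, then $\sigma^* A$ is an invariant connection on $P_{\mathrm{Id},0}$ or $P_{\mathbf{0},1}$ respectively, whose coefficients in the original basis are obtained by the substitution
\begin{align*}
(a_1, b_1, a_2, b_2, a_0) \longmapsto (a_2, b_2, a_1, b_1, -a_0).
\end{align*}

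Finally I would read off the boundary conditions of Corollary \ref{propconnboundary4} by applying this substitution to the conditions of Proposition \ref{propconnboundary3}. For $P_{0,\mathrm{Id}}$ the conditions $a_1(0) = b_1(0) = b_2(0) = 0$, $a_2(0) = -a_0(0) = 1$, $a_0''(0) + 2 a_2''(0) = b_2''(0) = 0$ translate into $a_2(0) = b_2(0) = b_1(0) = 0$, $a_1(0) = a_0(0) = 1$, $a_0''(0) - 2 a_1''(0) = b_1''(0) = 0$, giving part (1) of the corollary. Likewise the conditions $a_2(0) = b_2(0) = 0$, $a_0(0) = 1$ for $P_{1,\mathbf{0}}$ become $a_1(0) = b_1(0) = 0$, $a_0(0) = -1$ for $P_{\mathbf{0},1}$, giving part (2). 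The parity statements (evenness of each component) are preserved by $\sigma^*$ since $\sigma$ acts trivially on the radial parameter $t$.

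Since this argument is purely formal (everything reduces to tracking the action of $\sigma$ on a finite-dimensional basis), there is no substantial analytic obstacle; the only care needed is in the sign introduced by $\sigma^* u^- = -u^-$, which is what flips the sign of $a_0$ and hence turns $a_0(0) = -1$ into $a_0(0) = 1$ and $a_0''(0) + 2 a_2''(0) = 0$ into $a_0''(0) - 2 a_1''(0) = 0$.
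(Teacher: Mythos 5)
Your proposal is correct and is exactly the route the paper takes: the corollary is stated there as an immediate consequence of Proposition \ref{propconnboundary3} under the outer automorphism exchanging the two $SU(2)$ factors, and your computation of $\sigma^* I_1 = I_2$, $\sigma^* J_1 = J_2$, $\sigma^* u^- = -u^-$ (hence the substitution $(a_1,b_1,a_2,b_2,a_0)\mapsto(a_2,b_2,a_1,b_1,-a_0)$, matching \eqref{symmetrymetric}) correctly fills in the bookkeeping the paper leaves implicit. The translated boundary conditions all check out.
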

\begin{prop}
\label{propadboundary3} An invariant section $\Phi$ of $\mathrm{Ad} P_{0,\mathrm{Id}}$, $\mathrm{Ad} P_{1, \mathbf{0}}$ extends to the singular orbit $S^2 = SU(2)^2 / U(1) \times SU(2)$ if and only if:
\begin{enumerate}[\normalfont(1)]   
\item On $P_{0,\mathrm{Id}}$, $\Phi = \phi E_1$ with $\phi$ even, and $\phi(0) = 0$. 
\item On $P_{1, \mathbf{0}}$, $\Phi = \phi E_1$ with $\phi$ even. 
\end{enumerate}
\end{prop}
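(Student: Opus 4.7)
The plan is to apply the Eschenburg--Wang representation-theoretic framework used in the proofs of Propositions \ref{propconnboundary}--\ref{propconnboundary4}, specialised here to adjoint-valued zero-forms. An $SU(2)^2$-invariant section $\Phi$ of $\mathrm{Ad}\,P_\lambda$ on the tubular neighbourhood $SU(2)^2 \times_{H'} V$ of the singular orbit corresponds to a smooth $H'$-equivariant map $\Phi: V \to \mathfrak{su}(2)$, where $H' = U(1) \times SU(2)$ acts on $V \cong \mathbb{H}$ by $(e^{i\theta}, q) \cdot v = q v e^{-i\theta}$ and on the target via $\mathrm{Ad} \circ \lambda$. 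At the distinguished point $v_0 = 1 \in V$, whose $H'$-stabiliser is $\triangle U(1)$, Proposition \ref{principalhiggs} forces $\Phi(r v_0) = \phi(r) E_1$ for some smooth $\phi \colon [0, \infty) \to \mathbb{R}$, since $\mathbb{R} E_1$ is the $\triangle U(1)$-invariant subspace of $\mathfrak{su}(2)$ in both cases.

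For necessity, I need to extract two conditions. The $H'$-invariance at the fixed point $0 \in V$ forces $\Phi(0) \in \mathfrak{su}(2)^{\lambda(H')}$. For $P_{0,\mathrm{Id}}$ the image $\lambda(H') = SU(2)$ admits only the trivial adjoint fixed vector, so $\phi(0) = 0$; for $P_{1,\mathbf{0}}$ the image $\lambda(H') = \iota(U(1))$ fixes precisely $\mathbb{R} E_1$, imposing no constraint. Separately, the element $(e^{i\pi}, 1) \in H'$ sends $r v_0 \mapsto -r v_0$, while $\mathrm{Ad}(\lambda(e^{i\pi}, 1))$ equals $\mathrm{Ad}(1) = \mathrm{Id}$ for $P_{0,\mathrm{Id}}$ and $\mathrm{Ad}(-I) = \mathrm{Id}$ for $P_{1,\mathbf{0}}$. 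Hence $\Phi(-r v_0) = \Phi(r v_0)$, so along the line $\mathbb{R} \cdot v_0 \subset V$ the map has value $\phi(|r|) E_1$. Smoothness of $\Phi$ at $0 \in V$ along this line is then equivalent to $\phi$ being the restriction of a smooth even function on $\mathbb{R}$, i.e.\ to all odd-order derivatives vanishing at $r = 0$.

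For sufficiency, I will exhibit explicit $H'$-equivariant extensions realising every admissible $\phi$. For $P_{1,\mathbf{0}}$, $SU(2) \subset H'$ acts trivially on $\mathfrak{su}(2)$ and transitively on spheres of $V$, so every equivariant map factors as $F(|v|^2)$ with $F$ valued in $\mathbb{R} E_1$; setting $\Phi(v) := \psi(|v|^2) E_1$ realises any prescribed smooth even $\phi(r) = \psi(r^2)$. For $P_{0,\mathrm{Id}}$, identify $\mathfrak{su}(2) \cong \mathrm{Im}\,\mathbb{H}$ with $E_1 \leftrightarrow i$; then $\Phi(v) := \psi(|v|^2)\, v\, i\, \bar v$ is manifestly $SU(2)$-equivariant via the identity $\mathrm{Ad}(q) x = q x \bar q$ on $\mathrm{Im}\,\mathbb{H}$, is $U(1)$-invariant because $i$ commutes with $e^{i\theta}$ in $\mathbb{H}$, and restricts to $r^2 \psi(r^2) E_1$ on the ray, producing every smooth even $\phi$ with $\phi(0) = 0$. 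The only real subtlety is ruling out hidden higher-order obstructions to the equivariant extension; this is handled either by appealing to the zero-form analogue of \cite[Lem.\ 1.1]{Eschenburg2000}, or by a direct degree-by-degree dimension count of $H'$-invariants in $\mathrm{Sym}^k V^* \otimes \mathfrak{su}(2)$, which matches exactly the free parameters of an even power series.
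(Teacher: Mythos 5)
Your proof is correct and takes essentially the approach the paper indicates for these boundary propositions (the paper omits the proof of this one, stating it is "proved similarly" to Propositions \ref{propconnboundary}, \ref{propmonboundary} via the Eschenburg--Wang identification of invariant sections with $H'$-equivariant maps $V \to \mathfrak{su}(2)$). Your explicit equivariant extensions $\psi(|v|^2)E_1$ and $\psi(|v|^2)\,v\,i\,\bar v$ are precisely the degree-$0$ and degree-$2$ equivariant homogeneous polynomials that the Eschenburg--Wang degree count would produce, so the sufficiency direction needs no further appeal to a dimension count.
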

 
In the remainder of this section, we will explicitly prove \ref{propconnboundary} and \ref{propmonboundary}: we omit details of the others, since these are proved similarly. 
\begin{proof}[Proof of \ref{propconnboundary}]
We first calculate the boundary extension conditions for invariant sections of $\Omega^1 \left( \mathrm{Ad} P_{l,m} \right)$. Here, denote $\mathfrak{g} = \mathfrak{su}(2)$, $\mathfrak{p} = \langle V^1, W^1, V^2, W^2\rangle$, and $V$ the tangent space of the fibre $\mathbb{C}_{2,-2}$, spanned by the Cartesian coordinate vector fields $\tfrac{\partial}{\partial x^0}$, $\tfrac{\partial}{\partial x^1}$. Clearly $\mathfrak{p} \oplus  V $ is a $U(1)^2$-invariant splitting of the tangent space of $\mathcal{O}\left( -2,-2 \right)$, and as $U(1)^2$-representations: 
\begin{align} \label{reps0}
\mathfrak{g} = \langle E_1 \rangle \oplus \langle E_2, E_3 \rangle  \cong \mathbb{R} \oplus \mathbb{C}_{2l,2m}& &\mathfrak{p} = \langle V^1, W^1 \rangle \oplus \langle V^2, W^2 \rangle \cong \mathbb{C}_{2,0} \oplus \mathbb{C}_{0,2}& &V \cong \mathbb{C}_{2,-2}
\end{align}
while as $\triangle U(1) \times \mathbb{Z}_2$-representations\footnote{note the factor of $\mathbb{Z}_2$ in $K_{2-2} \cong \triangle U(1) \times \mathbb{Z}_2$ does not appear in the representation theory, as it always acts trivially on the tangent space and the Lie algebra of the gauge group.}: 
\begin{align}
\mathfrak{g} = \langle E_1 \rangle \oplus \langle E_2, E_3 \rangle  \cong \mathbb{R} \oplus \mathbb{C}_{2(l+m)}& &\mathfrak{p} = \langle V^1, W^1 \rangle \oplus \langle V^2, W^2 \rangle \cong \mathbb{C}_{2} \oplus \mathbb{C}_{2}& &V \cong \mathbb{R}^2
\end{align}
Recall that, since $l+m=1$, the of $\triangle U(1) \times \mathbb{Z}_2$-invariant adjoint-valued one-forms in $\mathfrak{g} \otimes \left( V^* \oplus \mathfrak{p}^* \right)$  is spanned by the real and imaginary parts of $E_1 \otimes \left( dx^0 + i dx^1 \right)$, $\left( E_2 + iE_3 \right) \otimes \left( v^1 - iw^1 \right), \left( E_2 + iE_3 \right) \otimes \left( v^2 - iw^2 \right)$. To apply the power-series analysis of \cite[Lemma 1.1]{Eschenburg2000}, we use \eqref{reps0} to look for a basis in terms of $U(1)^2$-equivariant homogeneous polynomials $p: V \rightarrow \mathfrak{g} \otimes \left( V^* \oplus \mathfrak{p}^* \right)$, evaluated at $1 \in V = \mathbb{C}$.

First assume $l>0$. By making the identification of the fibre Cartesian coordinate one-forms $dx^0=dt$ and $dx^1 =3t\eta^{se} =4tu^-$ along $\gamma(t) = (1,1,t) \in SU(2)^2 \times \mathbb{C}$, and by taking real and imaginary parts, we obtain the following splitting: 
\begin{center}
\begin{tabular}{ c|c|c } 

 degree & polynomial $p(z)$ & evaluation at $z=1$ \\ 
 \hline
 $1$ & $z E_1 \otimes \left( dx^0+ i dx^1 \right)$ & $E_1 \otimes dt, E_1 \otimes 4 t u^-$ \\ 
 $l-1$ & $z^{l-1} \left( E_2 + i E_3 \right)  \otimes \left( v^1 - iw^1 \right)$ & $E_2 \otimes v^1 + E_3 \otimes w^1, - E_2 \otimes w^1 + E_3 \otimes v^1$\\ 
 $l$ & $z^{l} \left( E_2 + i E_3 \right)  \otimes \left( v^2 - iw^2 \right)$ & $E_2 \otimes v^2 + E_3 \otimes w^2, - E_2 \otimes w^2 + E_3 \otimes v^2$ \\
\end{tabular}
\end{center}
We can recover the case $l\leq0$ by exchanging $P_{l,m} \mapsto P_{m,l}$, since clearly, the polynomials of degree $l-1$ and $l$ are exchanged by map.
 
We now apply this calculation to invariant connection $A$ of the proposition: the canonical connection $d\lambda$ on $P_{l,m}$ is given by $d\lambda = l E_1 \otimes u^1 +  m E_1 \otimes u^2$, so writing the $SU(2)^2$-invariant connection $A$ as an invariant section $A - d \lambda \in \Omega^1 \left(\text{Ad}P_{l,m} \right)$, we get: 
\begin{align*} 
A - d \lambda &= a_1 ( E_2 \otimes v^1 + E_3 \otimes w^1 )+ b_1 ( E_3 \otimes v^1 - E_2 \otimes w^1 ) \\ &+ a_2 ( E_2 \otimes v^2 + E_3 \otimes w^2 )+ b_2 ( E_3 \otimes v^2 - E_2 \otimes w^2 ) + (a_0 -2l +1) E_1 \otimes u^-
\end{align*}
So if $l>0$, we require $a_0(0) = 2l -1$, $a_0$ be even, $a_1, b_1$ to have degree $l-1$ and $a_2, b_2$ to have degree $l$ to extend $A$. Again, one gets the corresponding claim for $l \leq 0$ by exchanging the factors of $SU(2)$. 
\end{proof}
\begin{proof}[Proof of \ref{propmonboundary}]
The degree of a function appearing as the coefficient of an $SU(2)^2$-invariant element in $\Omega^0 \left( \mathrm{Ad} P_{l,m} \right)$ on the principal orbits is determined by a $U(1)^2$-equivariant homogeneous polynomial from $V = \mathbb{C}_{2,-2}$ to $\mathfrak{g} = \langle E_1 \rangle \oplus \langle E_2, E_3 \rangle  \cong \mathbb{R} \oplus \mathbb{C}_{2l,2m}$. 

When $l+m=1$, there is a single degree zero polynomial given by the constant map $E_1$, so the invariant section $\Phi = \phi_1 E_1$ must have $\phi_1$ even.   
\end{proof}

\section{Singular Initial Value Problems} \label{singIVPs}
For use in the following proof, we note that the Calabi-Yau structure on $T^* S^3$ is given by \eqref{hypoBsol}, and we compute the power-series near $t=0$ of the following expressions:
\begin{align*}
&\frac{4 \lambda}{ \mu} = \frac{2}{t} + O(t)& &\frac{3 \left( v_3 - v_0 \right)}{ 2 \lambda \mu} = \frac{1}{t} + O(t)& &\frac{3 \left( v_3 + v_0 \right)}{ 2 \lambda \mu} = \frac{9}{4}t + O(t^3)& \\ & \frac{3 v_0}{ \mu^2} = - \frac{1}{2 t^2} + \frac{3}{2} + O (t^2)& &\frac{3 v_3}{ \mu^2} = \frac{1}{2 t^2} + \frac{3}{4} + O (t^2) 
\end{align*}
\begin{proof}[Proof of Proposition \ref{localmonopolesmoothing}] 
Let $\left(a_0, a_1, a_2, \phi\right)$ be a solution to \eqref{dynamicODE} on $T^* S^3$. Using Prop. \ref{propconnboundary2}, define smooth functions $a_-, A_+, \psi$ such that $a_1 -a_2 = a_-$, $a_1 + a_2 = 1 + t^2 A_+$,  $\phi = t \psi$. Then $y(t) = \left( a_0, \psi, A_+, a_- \right)$ must satisfy
a singular initial value problem with linearisation:
\begin{align*}
d_{y_0} M_{-1} = \begin{pmatrix} 
-1 & 0 & 0 & 1 \\
0 & -1 & -1 &  \frac{9}{4} \xi \\
\frac{9}{4} \xi & -2 & -2 & \frac{9}{4} \xi \\
 2 & 0 & 0 & -2 
\end{pmatrix}
\end{align*}
at initial value $y_0 = \left( \xi, \frac{9}{8} (\xi^2 -1) - \chi, \xi, \chi \right)$ for some $\xi, \chi \in \mathbb{R}$. This initial value problem has a unique solution once we fix $y_0$, since $\det ( k \mathrm{Id} - d_{y_0} M_{-1} ) = \left(k+3\right)^2 k^2$. 
\end{proof}
For use in the following proofs, we note that the Calabi-Yau structure on $\mathcal{O}(-1)\oplus \mathcal{O}(-1)$ is given by  \eqref{CYstructure} with $U_1 = - U_0 = - u_0 = 1$, and the power-series of $\lambda$, $u_1$, $\mu$ near $t=0$ satisfy:
\begin{align*} 
\lambda(t)= \frac{3}{2}t + O(t^3)& &u_1 = 1 + \frac{3}{2}t^2 + O(t^4)& & \mu = \sqrt{3} t + O(t^3) 
\end{align*}
\begin{proof}[Proof of Propositions \ref{localresol6}]
Let $\left(a_0, a_1, a_2, \phi\right)$ be a solution to \eqref{dynamicODE} on $\mathcal{O}(-1)\oplus \mathcal{O}(-1)$. Using Prop. \ref{propconnboundary3} for extending on $P_{1, \mathbf{0}}$, $y(t) = \left(a_0, a_1, a_2, \phi\right)$ satisfies a singular initial value problem with linearisation:
\begin{align*}
d_{y_0} M_{-1} =  \begin{pmatrix} 
-2 & 0 & 0 & 0 \\
0 & 0 & 0 &  - 2 \sqrt{3} \epsilon' \\
\epsilon' & 0 & 0 & -\frac{4}{\sqrt{3}} \delta' \\
 0 & 0 & 0 & -2 
\end{pmatrix}
\end{align*}
at initial value $y_0 = \left( 1, \epsilon' , 0, \delta' \right)$ for some $\epsilon', \delta' \in \mathbb{R}$. This initial value problem has a unique solution once we fix $y_0$, since $\det ( k \mathrm{Id} - d_{y_0} M_{-1} ) = \left(k+2\right)^2 k^2$. 
\end{proof}
\begin{proof}[Proof of Proposition \ref{localresol1}]
Let $\left(a_0, a_1, a_2, \phi\right)$ be a solution to \eqref{dynamicODE} on $\mathcal{O}(-1)\oplus \mathcal{O}(-1)$. Using Prop. \ref{propconnboundary2} for extending on $P_{0,\mathrm{Id}}$, we define smooth functions $X_0, X_1, X_2, \psi$ such that $a_0 = -1 + t^2 X_0$, $a_1 = t^2 X_1$, $a_2 = 1 + t^2 X_2$, and $\phi = t^2 \psi$. Then $y(t) = \left(X_0, X_1, X_2, \psi \right)$ satisfies a singular initial value problem with linearisation: 
\begin{align*}
d_{y_0} M_{-1} = \begin{pmatrix} -4 & 0 & 0 & -8 \\
0 & -2 & -2\sqrt{3} & 0 \\
0 & -\frac{4}{\sqrt{3}} & -4 & 0 \\
 -1 & 0 & 0 & -2 
\end{pmatrix}
\end{align*}
at initial value $y_0 = \left( \epsilon, -\tfrac{1}{\sqrt{3}}\delta, - \tfrac{1}{2} \epsilon, \delta \right)$. This initial value problem has a unique solution once we fix $y_0$, since $\det ( k \mathrm{Id} - d_{y_0} M_{-1} ) =  \left(k + 6 \right)^2 k^2$. 
\end{proof}
\bibliographystyle{alpha}
\bibliography{diffgeo1} 
\end{document}